\let\oldabstract\abstract
\renewcommand\abstract{%
  \providecommand\keywords{\par\medskip\noindent\textit{Keywords:}\xspace}
  \oldabstract\noindent\ignorespaces}
\newtheorem{theo}{Theorem}
\newtheorem{defi}{Definition}
\newtheorem{prop}[theo]{Proposition}
\newtheorem{lem}[theo]{Lemma}
\newtheorem{cor}[theo]{Corollary}
\newcommand\R{\mathbb{R}}
\newcommand\N{\mathbb{N}}
\newcommand\E{\mathbb{E}}
\renewcommand\P{\mathbb{P}}
\newcommand\F{\mathcal{F}}
\newcommand{\1}{{\mathbf 1}}
\newcommand{\one}{\ifmmode {\sf 1}\hspace{-.26em}{\sf
l}\hspace{-.35em}{\sf \_} \else ${\sf 1}\hspace{-.26em}{\sf
l}\hspace{-.35em}{\sf \_}$ \fi}
\newcommand{\FF}{\mathcal{F}}
\newcommand{\PP}{\mathcal{P}}
\renewcommand{\AA}{\mathcal{A}}
\newcommand{\BB}{\mathcal{B}}
\newcommand{\DD}{\mathcal{D}}
\newcommand{\EE}{\mathcal{E}}
\newcommand{\MM}{\mathcal{M}}
\renewcommand{\SS}{\mathcal{S}}
\newcommand{\ZZ}{\mathcal{Z}}
\renewcommand{\a}{\alpha}
\renewcommand{\b}{\beta}
\newcommand{\ck}{\check}
\renewcommand{\d}{\delta}
\newcommand{\dd}{\textnormal{d}}
\renewcommand{\t}{\tau}
\renewcommand{\ck}{\check}
\newcommand{\e}{\textnormal{e}}
\newcommand{\ep}{\varepsilon}
\newcommand{\Om}{\Omega}
\newcommand{\s}{\sigma}
\newcommand{\ind}{\1}
\renewcommand{\theequation}{\arabic{equation}}
\renewcommand{\thetheo}{\arabic{theo}}
\renewcommand{\Box}{\mbox{\rule{1ex}{1ex}}}
\newcounter{exno}
\newcommand{\ex}[1]{\refstepcounter{exno}\label{#1}}
\newcounter{rkno}
\newcounter{hypotno}
\title{Feller and ergodic properties of jump-move processes with applications to interacting particle systems}
\author[1]{Fr\'ed\'eric Lavancier}
\author[2]{Ronan Le Gu\'evel}
\author[3]{Emilien Manent}
\affil[1]{LMJL,  2 Rue de la Houssini\`ere, F-44322 Nantes Cedex 03, France, \texttt{frederic.lavancier@univ-nantes.fr}}
\affil[2]{Univ Rennes, CNRS, IRMAR - UMR 6625, F-35000 Rennes, France,
\texttt{ronan.leguevel@univ-rennes2.fr}}
\affil[3]{Univ Rennes, CNRS, IRMAR - UMR 6625, F-35000 Rennes, France,
\texttt{emilien.manent@univ-rennes2.fr}}
\begin{document}

\date{}

\maketitle

\begin{abstract} 

 We consider  Markov processes that alternate continuous motions and jumps in a general locally compact polish space. Starting from a mechanistic construction, a first contribution of this article is to provide conditions on the dynamics so that the associated transition kernel forms a Feller semigroup, and to deduce the  corresponding infinitesimal generator. In a second contribution, we investigate the ergodic properties in the special case where the jumps consist of births and deaths, a situation observed in several applications including epidemiology, ecology and microbiology. Based on a coupling argument, we obtain conditions for the convergence to a stationary measure with a geometric rate of convergence. Throughout the article, we illustrate our results by general examples of systems of  interacting particles in $\R^d$ with births and deaths. 
We show that in some cases the stationary measure can be made explicit and corresponds to a Gibbs measure on a compact subset of $\R^d$. Our examples include in particular Gibbs measure associated to repulsive Lennard-Jones potentials  and to Riesz potentials.

\keywords  Feller processes ; birth-death-move processes ; coupling ; Gibbs measures ; Riesz potential

 \end{abstract}


\section{Introduction}

In the spirit of jump-diffusion models, we consider Markov stochastic processes that alternate continuous motions and jumps in  some locally compact Polish space  $E$. We call these general processes jump-move processes. In this paper, the state-space $E$ is typically not a finite-dimensional Euclidean space, contrary to  standard jump-diffusion models. 
Many examples of such dynamics have been considered in the literature, including piecewise deterministic processes \citep{davis1984}, branching particle systems \citep{skorokhod1964,athreya2012}, spatially structured population models  \citep{bansaye2015} and some variations  \citep{cinlar1991,locherbach2002likelihood}, to cite a few. 
A particular  case that  will be of special interest to us is when $E=\cup_{n\geq 0} E_n$ for some disjoint spaces $E_n$, $E_0$ consisting of a single element, and the jumps can only occur from $E_n$ to $E_{n+1}$ (like a birth) or from $E_n$ to $E_{n-1}$ (like a death). We call the latter specific dynamics a birth-death-move process \citep{preston, Lavancier_LeGuevel}.
We will provide several illustrations in the particular case of interacting particles in $\R^d$, with births and deaths.
 These processes are observed in a various range of applications, including microbiology \citep{Lavancier_LeGuevel}, epidemiology \citep{masuda2017} and ecology \citep{renshaw2001, pommerening2019}. 
The main motivation of this contribution is to provide some foundations for the  statistical inference of such processes, by especially studying their ergodic properties.

We start  in Section~\ref{sec2} from a mechanistic general definition of jump-move processes, in the sense that we explicitly  construct the process iteratively over time, which equivalently provides a simulation algorithm. This defines a Markov process $(X_t)_{t\geq 0}$,  the jump intensity function of which reads $\alpha(X_t)$, for some continuous function $\alpha$, and which follows  between its jumps a continuous Markov motion on $E$.
We then derive in Section~\ref{sec:feller} conditions ensuring that the transition kernel of $(X_t)_{t\geq 0}$ forms a Feller semigroup on $C_b(E)$ or on $C_0(E)$. We obtain  the natural result that if $\alpha$ is bounded, then a jump-move process is Feller whenever the transition kernel of the jumps and the transition kernel of the inter-jumps motion (i.e. the move part) are. Similarly, the infinitesimal generator is just the sum of the generator of the jumps and the generator of the move, the domain corresponding under mild conditions to the domain of the generator of the move. 

In Section~\ref{asymptotics}, we focus on birth-death-move processes and we obtain simple conditions on the birth and death intensity functions ensuring their ergodicity with a geometric rate of convergence, in line with standard results for simple birth-death processes on $\N$ \citep{karlin1957} and for spatial birth-death processes (the case without move) established by \cite{preston} and \cite{moller1989}. Following \cite{preston}, the main ingredient to establish such results is a coupling with a simple birth-death process on $\N$, that provides conditions implying that the single element of $E_0$ is an ergodic state for the process. However the inclusion of inter-jumps motions make this coupling more delicate to justify than for the pure spatial birth-death processes of \cite{preston}. We manage to realise this coupling under the assumption that the birth-death-move process is Feller on $C_0(E),$ making necessary the properties discussed before.

We emphasise that the above results are very general in the sense that neither $E$ is specified, nor the exact jump transition kernel, nor the form of the inter-jumps continuous Markov motion. Our only real working assumption is the boundedness of the intensity function $\alpha$. 
We however provide many illustrations 
in the case where $(X_t)_{t\geq 0}$ represents the dynamics of a system of particles in $\R^d$,  introduced in Section~\ref{sec2.3}. 
In this situation, we consider continuous inter-jumps motions driven by deterministic growth-interacting dynamics, as  already exploited in ecology  \citep{renshaw2001, habel2019}, or driven by interacting SDE systems, in particular overdamped Langevin dynamics, the Feller properties of which translate straightforwardly to the move part of $(X_t)_{t\geq 0}$. 
As to the jumps, they are continuous Feller in general, but not necessarily Feller on $C_0(E)$. The picture becomes  however more intelligible when they only consist of births and deaths.  
 We present in Section~\ref{spatial Feller} general birth transition kernels that imply the Feller properties under mild assumptions. On the other hand, a simple uniform death kernel cannot be Feller on $C_0(E)$ in this setting, unless the particles are restricted to a compact subset of $\R^d$, a situation where any death kernel is Feller. 
We finally  show in Section~\ref{invariant Gibbs} that for a system of interacting particles in $\R^d$ with births and deaths, we may obtain an explicit Gibbs distribution for the invariant probability measure. This happens when the inter-jumps motion is driven by a Langevin dynamics based on some potential function $V$, and the jumps characteristics depend in a proper way on the same potential $V$. Our assumption on $V$  include in particular  Riesz potentials, repulsive Lennard-Jones potentials, soft-core potentials and (regularised) Strauss potentials, that are standard models used in spatial statistics and statistical mechanics. 

We have gathered in Appendix~\ref{sec6.7} the details of the  coupling used in Section~\ref{asymptotics} to get the ergodic properties, while most proofs are postponed to a supplementary material along with additional results.

\section{Jump-move processes} \label{sec2}

\subsection{Iterative construction}\label{jump-move general}

Let $E$ be a Polish space equipped with the Borel $\sigma-$algebra $\EE$ and a distance $d.$ Let $(\Om,\FF)$ be a measurable space and $(\P_x)_{x \in E}$ a family of probability measures on $(\Om,\FF).$ 
In order to define a jump-move process  $(X_t)_{t\geq 0}$ on $E$, we need three ingredients:
\begin{enumerate}
\item An intensity function $\a:E \rightarrow \R_+$ that governs the inter-jumps waiting times. 
\item A transition kernel $K$ for the jumps, defined on $E \times \EE.$
\item A continuous homogeneous Markov process $((Y_t)_{t \geq 0}, (\P_x)_{x \in E})$ on $E$, the distribution of which will drive the inter-jumps motion of $(X_t)_{t\geq 0}$. 
 \end{enumerate}
We will work throughout this paper under the assumption that $\a:E \rightarrow \R_+ $ is continuous and bounded by  $\a^*>0$, i.e. for all $x\in E$
\begin{equation}\label{assumption alpha}
0 \leq \a(x) \leq \a^*.
\end{equation}

We denote by $(Q_t^Y)_{t \geq 0}$ the transition kernel of $(Y_t)_{t\geq 0}$, given by
$$Q_t^Y(x,A)=\P_x(Y_t \in A), \quad x \in E, A \in \EE.$$

The following iterative construction provides a clear intuition of the dynamics of the process $(X_t)_{t\geq 0}$. It follows closely the presentation in the supplementary material of \cite{Lavancier_LeGuevel}, where an algorithm of simulation on a finite time  interval is  also derived.

Let $(Y_t^{(j)})_{t \geq 0}$, $j\geq 0$, be  a sequence of processes on $E$ identically distributed as $(Y_t)_{t \geq 0}$. Set $T_0=0$ and let $x_0\in E$. Then $(X_t)_{t\geq 0}$ can be constructed as follows:  for $j\geq 0$, iteratively do
\begin{enumerate}[label=(\roman*)]
            \item Given $ X_{T_j}=x_j$, generate $(Y_t^{(j)})_{t \geq 0}$ conditional on $ Y_0^{(j)}=x_j$ according to the kernel $(Q_t^{Y}(x_j,.))_{t \geq 0}$.
            \item Given $ X_{T_j}=x_j$ and $(Y_t^{(j)})_{t \geq 0}$, generate $ \t_{j+1}$ according to the cumulative distribution function 
                  \begin{equation}\label{cdf tau}
                    F_{j+1}(t)=1-\exp \left ( - \int_0^t  \a \left ( Y_u^{(j)} \right ) \, \dd u  \right ).
                  \end{equation}
            \item Given $ X_{T_j}=x_j$, $( Y_t^{(j)})_{t \geq 0}$ and $ \t_{j+1}$, generate $x_{j+1}$ according to the transition kernel $ K (Y_{ \t_{j+1}}^{(j)}, .)$.
            \item Set $ T_{j+1}=T_j+\t_{j+1}$, $ X_t=( Y_{t-T_j}^{(j)})$ for $t \in [T_j,T_{j+1})$ and $ X_{T_{j+1}}= x_{j+1}$.
\end{enumerate}

We denote by $(\FF_t^Y)_{t \geq 0}$ the natural filtration of $(Y_t)_{t\geq 0}$, i.e.  $\FF_t^Y= \sigma ( Y_u, u \leq t)$,  and by $(\FF_t)_{t >0}$ the natural filtration of $(X_t)_{t \geq 0}$. We make these filtrations complete \citep[Section 20.1]{Bass} and abusively use the same notation. 
The jump-move process  $((X_t)_{t \geq 0}, (\P_x)_{x \in E})$ constructed above is a homogeneous Markov process with respect to $(\F_t)_{t>0}$.
The trajectories of $(X_t)_{t \geq 0}$ are continuous except at the jump times $(T_j)_{j \geq 1}$ where they are right-continuous. The specific form \eqref{cdf tau} implies that the law of the waiting time $\tau_j$ under $\P_x$ is absolutely continuous with respect to the Lebesgue measure with density 
$\E_x \left [  \a(Y_t^{(j-1)}) \textnormal{exp} \left ( - \int_0^t \a(Y_u^{(j-1)})\, \dd u \right )  \right ] \1_{t > 0}.$
It also implies that the intensity of jumps is $\alpha(X_{t})$. 
Denote by $N_t=\sum_{j\geq 0} \1_{T_j\leq t}$ the number of jumps before $t\geq 0$. Under the assumption~\eqref{assumption alpha}, we have for any $n \geq 0$ and $t \geq 0$,
\begin{equation} \label{dominationpoisson}
\P(N_t > n) \leq \P(N^*_t >n)
\end{equation}
where $N^*_t$ follows a Poisson distribution with rate $\a^*t$. This in particular implies that  $(N_t)_{t \geq 0}$ is a non-explosive counting process. All the aforementionned properties of $(X_t)_{t \geq 0}$ are either immediate or verified in \cite{Lavancier_LeGuevel}.

Note that the above construction only implies the weak Markov property of $(X_t)_{t\geq 0}$ in general, at least because the process $(Y_t)_{t\geq 0}$ is only assumed to be a (weak) Markov process. A more abstract construction obtained by ``piecing out'' strong Markov processes is introduced in \cite{ikeda1968}, leading to a strong Markov jump-move process. The strong Markov property can also be obtained  in our case by strengthening the assumptions, see Section~\ref{sec3.1}.

The transition kernel of $(X_t)_{t \geq 0}$ will be denoted, for any $t \geq 0$, $x \in E$ and $A \in \EE$, by
$$Q_t(x,A) = \P(X_t \in A | X_0=x)=\P_x(X_t \in A).$$
Also for $f \in M^b(E)$, where $M^b(E)$ is the set of real valued bounded and measurable functions on $E$, we will denote $Q_tf(x)=\E_x [ f(X_t)]=\int_E Q_t(x, \dd y) f(y)$. Similarly we will write  $Q_t^Yf(x)=\E_x^Y(f(Y_t))$.

\subsection{Special case of the birth-death-move process} \label{sec2.2}

A birth-death-move process is the particular case  of a jump-move process where $E$ takes the form 
$E=  \bigcup_{n=0}^{\infty} E_n$, with $(E_n)_{n \geq 0}$ a sequence of disjoint Polish spaces, and where the jumps are only births and deaths. 
We assume that each $E_n$ is equipped with the Borel $\sigma-$algebra $\EE_n$, so that $E$ is associated with the $ \displaystyle \sigma-$field $\EE=\sigma \left ( \bigcup_{n=0}^{\infty} \EE_n \right ) $. We further assume that $E_0$ consists of a single element denoted by $\textnormal{\O}$.
In this setting,  the Markov process $(Y_t)_{t \geq 0}$ driving the motions of $(X_t)_{t \geq 0}$ is supposed to satisfy 
\[ \P_x( (Y_t)_{t \geq 0} \subset E_n)= \ind_{E_n}(x), \quad \forall x \in E, \, \forall n \geq 0.  \]

We introduce a birth intensity function $\beta : E \rightarrow \R_+$ and a death intensity function $\d : E \rightarrow \R_+$, both assumed to be continuous on $E$ and satisfying $\a = \b + \d.$ We prevent a death in $E_0$ by assuming that $\d(\textnormal{\O})=0.$ 
The probability transition kernel $K$ for the jumps then reads, for any $x \in E$ and $A \in \EE$,
    \begin{equation}\label{K birth-death}
     K(x,A)=\frac{\b(x)}{\a(x)}K_{\b}(x,A)+ \frac{\d(x)}{\a(x)}K_{\d}(x,A),
     \end{equation}
    where $K_{\b} : E \times \EE \rightarrow [0,1]$ is a probability transition kernel for a birth and $K_{\d} : E \times \EE \rightarrow [0,1]$ is a probability transition kernel for a death. They satisfy, for $x \in E$ and $n \geq 0$,
    \[  K_{\b}(x,E_{n+1})=\ind_{E_n}(x) \quad \text{and} \quad   K_{\d}(x,E_{n})=\ind_{E_{n+1}}(x). \]
 
 Notice that a simple birth-death process is the particular case where $E=\N$, $E_n= \{ n \}$ and the intensity functions $\b$ and $\d$ are sequences.

For later purposes, when $E=  \bigcup_{n=0}^{\infty} E_n$ as in the present section, we define the function $n(.) : E \rightarrow \N$ by  $n(x)=k$ when $x \in E_k$, so that $x \in E_{n(x)}$ is always satisfied.

\subsection{Kolmogorov backward equation} \label{sec2.4}

The goal of this section is to present the Kolmogorov backward equation for the transition kernel of  the general jump-move process $(X_t)_{t \geq 0}$ of Section~\ref{jump-move general}, providing a more probabilistic viewpoint of its dynamics, and to show that the solution exists and is unique. To obtain these results we use similar methods as in \cite{feller1971} for pure jump processes, see also \cite{preston}. The key assumption is the boundedness \eqref{assumption alpha} of the intensity $\alpha$, which prevents the explosion of the process. The proofs are postponed to Section~\ref{proof KBE} in the supplementary material.

\begin{theo} \label{TheoKBEi}
For all $x\in E$ and all $A \in \EE$, the function $t \mapsto Q_t(x,A)$, for $t>0$, satisfies the  following Kolmogorov backward equation:
\begin{align} \label{kbei}
    Q_t(x,A) & = \E_x^Y \left [ \ind_{Y_t \in A} \,\e^{ - \int_0^t \a(Y_u) \, \dd u  }   \right ]  + \int_0^t  \int_E  Q_{t-s}(y,A)    \E_x^Y \left [ K \left  (Y_s,dy \right ) \,    \a(Y_s) \e^{- \int_0^s \a(Y_u) \, \dd u}    \right ] ds.
\end{align}
\end{theo}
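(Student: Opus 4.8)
The plan is to derive \eqref{kbei} as a renewal equation by conditioning on the first jump time $T_1=\t_1$ and exploiting the regenerative structure built into the iterative construction of Section~\ref{jump-move general}. Writing
$$Q_t(x,A)=\P_x(X_t\in A)=\P_x(X_t\in A,\ \t_1>t)+\P_x(X_t\in A,\ \t_1\leq t),$$
I expect the first summand to produce the leading term of \eqref{kbei} and the second to produce the convolution integral. Throughout, the key conditioning object is the whole motion trajectory $(Y_u^{(0)})_{u\geq0}$ started at $x$, relative to which, by step~(ii) of the construction and \eqref{cdf tau}, the waiting time $\t_1$ has conditional survival function $\e^{-\int_0^t\a(Y_u^{(0)})\,\dd u}$ and conditional density $\a(Y_s^{(0)})\,\e^{-\int_0^s\a(Y_u^{(0)})\,\dd u}\,\ind_{s>0}$.

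For the first summand, note that on $\{\t_1>t\}$ step~(iv) gives $X_t=Y_t^{(0)}$. Conditioning on $(Y_u^{(0)})_{u\geq0}$ and integrating out $\t_1$ with the survival function above yields
$$\P_x(X_t\in A,\ \t_1>t)=\E_x^Y\!\left[\ind_{Y_t\in A}\,\e^{-\int_0^t\a(Y_u)\,\dd u}\right],$$
which is exactly the first term of \eqref{kbei}. For the second summand, I would further condition on $\{\t_1=s\}$ for $s\in(0,t]$ and on the post-jump position $X_{T_1}=y$ drawn from $K(Y_s^{(0)},\dd y)$ as in step~(iii). The decisive point is that, given $X_{T_1}=y$, the shifted process $(X_{T_1+r})_{r\geq0}$ is again a jump-move process issued from $y$ and independent of the past, so that $\P_x(X_t\in A\mid \t_1=s,\ X_{T_1}=y)=Q_{t-s}(y,A)$. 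Combining this with the conditional density of $\t_1$ gives
$$\P_x(X_t\in A,\ \t_1\leq t)=\E_x^Y\!\left[\int_0^t\!\int_E Q_{t-s}(y,A)\,K(Y_s,\dd y)\,\a(Y_s)\,\e^{-\int_0^s\a(Y_u)\,\dd u}\,\dd s\right].$$

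It then remains to interchange $\E_x^Y$ with the $s$-integral to reach the exact form of \eqref{kbei}; this is licensed by Fubini's theorem since the integrand is nonnegative, and is in fact uniformly bounded because $0\leq Q_{t-s}(y,A)\leq1$, $0\leq\a\leq\a^*$ by \eqref{assumption alpha}, and the exponential factor lies in $[0,1]$. The main obstacle, and the step deserving the most care, is the regeneration identity used in the second summand: since $(X_t)_{t\geq0}$ is only weakly Markov and $T_1$ is a random time, one cannot simply invoke a strong Markov property. Instead I would justify it directly from the construction --- the copies $(Y_t^{(j)})_{t\geq0}$ being i.i.d.\ and the recursion (i)--(iv) from index $j=1$ onward depending on the past only through $X_{T_1}$ --- which shows that conditionally on $X_{T_1}=y$ the future is a freshly restarted jump-move process with law $\P_y$, independent of $\FF_{T_1}$. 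Establishing this regeneration rigorously, including the measurability needed to integrate over the joint law of $(\t_1,X_{T_1})$ given $(Y_u^{(0)})$, is the crux; the remaining manipulations are routine.
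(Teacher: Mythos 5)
Your proposal is correct and follows essentially the same route as the paper's proof: decomposition at the first jump time $\t_1$, conditioning on the trajectory $(Y_u^{(0)})_{u\geq 0}$ to integrate out $\t_1$ via its conditional survival function and density, the regeneration identity $\E_x[\1_{X_t\in A}\mid\F_{\t_1}]\1_{\t_1\leq t}=Q_{t-\t_1}(X_{\t_1},A)\1_{\t_1\leq t}$ justified by the iterative construction, and a Fubini interchange at the end. The paper likewise invokes this regeneration ``by construction of the process,'' so your identification of it as the crux, and your plan to verify it from the i.i.d.\ copies $(Y^{(j)})$ and the recursive structure, matches the paper's argument exactly.
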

In the case of the birth-death-move process of Section~\ref{sec2.2}, the above equation reads, for $x \in E_n$,
\begin{multline}\label{kbebdm}
    Q_t(x,A)  = \E_x^Y \left [ \ind_{Y_t \in A} \, \e^{ - \int_0^t \a(Y_u) \, \dd u }   \right ] + \int_0^t   \int_{E_{n+1}} Q_{t-s}(y,A) \,  \E_x^Y \left [ \b \left (Y_s \right ) K_\b \left ( Y_s  ,dy \right )  \, \e^{ - \int_0^s \a(Y_u) \, \dd u } \right ] \dd s \\
     + \int_0^t   \int_{E_{n-1}} Q_{t-s}(y,A) \,  \E_x^Y \left [ \d \left (Y_s \right ) K_\d \left ( Y_s  ,dy \right )  \, \e^{ - \int_0^s \a(Y_u) \, \dd u } \right ] \dd s.
\end{multline}

To show the existence of a unique solution to \eqref{kbei}, let $Q_{t,p}(x,A):=\P_x(X_t \in A , T_p >t)$ be the transition probability from state $x$ to $A$ in time $t$ with at most $p$ jumps.
Notice that we can define $Q_{t,\infty}= \lim \limits_{p \to \infty} Q_{t,p}$ because $Q_{t,p} \leq Q_{t,p+1} \leq 1.$ We prove in the following proposition that $Q_{t , \infty}$ is the unique solution to \eqref{kbei}  using a minimality argument as in \cite{feller1971}.
\begin{prop} \label{existuniciteKBE}
    $Q_{t,\infty}$ is the unique sub-stochastic solution of \eqref{kbei}, i.e. it is the unique solution satisfying $Q_{t}(x,E) \leq 1$ for all $x\in E$. Moreover  $Q_{t,\infty}$ is stochastic, \textit{i.e.} $Q_{t,\infty}(x,E)=1$ for all $x\in E$.
\end{prop}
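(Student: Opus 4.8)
The plan is to combine a Feller-style minimality argument for the truncated kernels $Q_{t,p}$ with the non-explosion bound \eqref{dominationpoisson}. I regard solutions of \eqref{kbei} as families $(Q_t(x,\cdot))_{t>0}$ of non-negative measures and write the right-hand side of \eqref{kbei} as $\Phi(Q)$, where for a non-negative time-indexed kernel $w=(w_t)_{t>0}$,
\begin{multline*}
\Phi(w)(t,x,A) = \E_x^Y\left[\ind_{Y_t \in A}\, \e^{-\int_0^t \a(Y_u)\,\dd u}\right] \\
+ \int_0^t \int_E w_{t-s}(y,A)\, \E_x^Y\left[K(Y_s,\dd y)\,\a(Y_s)\,\e^{-\int_0^s \a(Y_u)\,\dd u}\right]\dd s .
\end{multline*}
Thus a solution of \eqref{kbei} is exactly a fixed point $Q=\Phi(Q)$. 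Since $\a\geq 0$, $K$ is a non-negative kernel and the exponential weights are non-negative, $\Phi$ is \emph{monotone}: if $w\leq v$ pointwise (for every $t,x,A$) then $\Phi(w)\leq \Phi(v)$.

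First I would establish the recursion $Q_{\cdot,p}=\Phi(Q_{\cdot,p-1})$ for $p\geq 1$, with base case $Q_{\cdot,0}\equiv 0$ (as $T_0=0$ makes $\{T_0>t\}$ empty for $t>0$). This is the heart of the argument and the step I expect to be the most delicate, since it encodes a first-jump decomposition. Conditioning on the motion $(Y^{(0)}_s)_{s\geq 0}$ and on the first jump time $\tau_1=T_1$, the event $\{T_1>t\}$ forces $X_t=Y_t^{(0)}$ and contributes the no-jump term $\E_x^Y[\ind_{Y_t\in A}\,\e^{-\int_0^t\a(Y_u)\,\dd u}]$, whereas on $\{T_1=s\leq t\}$ the process restarts at the freshly sampled location $y\sim K(Y_s^{(0)},\cdot)$ and, by the iterative construction of Section~\ref{jump-move general}, the continuation is an independent copy of the process started from $y$; requiring it to jump at most $p-1$ further times before $t-s$ yields the factor $Q_{t-s,p-1}(y,A)$. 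The care needed here is to justify the regeneration purely from the construction — the post-jump dynamics is driven by the fresh copy $(Y^{(1)})$ with a fresh clock, so only the construction and not a strong Markov property is used — and to read off the joint law of $(\tau_1,Y_{\tau_1}^{(0)})$ from \eqref{cdf tau} and step~(iii).

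Given the recursion, I would let $p\to\infty$. The family $Q_{t,p}$ is non-decreasing in $p$ and bounded by $1$, so $Q_{t,\infty}=\lim_p Q_{t,p}$ exists; by monotone convergence inside $\Phi$ (all integrands are non-negative and monotone in $p$), $\Phi(Q_{\cdot,p-1})\to\Phi(Q_{\cdot,\infty})$, whence $Q_{\cdot,\infty}=\Phi(Q_{\cdot,\infty})$ solves \eqref{kbei}. Minimality then follows by induction: any non-negative solution $\tilde Q=\Phi(\tilde Q)$ satisfies $\tilde Q\geq 0=Q_{\cdot,0}$, and $\tilde Q\geq Q_{\cdot,p-1}$ implies $\tilde Q=\Phi(\tilde Q)\geq\Phi(Q_{\cdot,p-1})=Q_{\cdot,p}$ by monotonicity, so $\tilde Q\geq Q_{\cdot,\infty}$ pointwise.

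Finally I would prove stochasticity and deduce uniqueness. Since $X_t\in E$ always, $Q_{t,p}(x,E)=\P_x(T_p>t)$, and the events $\{T_p>t\}$ increase in $p$; the Poisson domination \eqref{dominationpoisson} gives $N_t<\infty$ $\P_x$-a.s., so the process is non-explosive and $T_p\to\infty$ $\P_x$-a.s., yielding $Q_{t,\infty}(x,E)=\lim_p\P_x(T_p>t)=1$. For uniqueness among sub-stochastic solutions, let $\tilde Q$ be one, so $\tilde Q(t,x,\cdot)$ is a measure with $\tilde Q(t,x,E)\leq 1$. Minimality gives $\tilde Q(t,x,A)\geq Q_{t,\infty}(x,A)$ for every $A$; taking $A=E$ forces $1\geq \tilde Q(t,x,E)\geq Q_{t,\infty}(x,E)=1$, hence $\tilde Q(t,x,E)=Q_{t,\infty}(x,E)$. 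Then $\tilde Q(t,x,\cdot)-Q_{t,\infty}(x,\cdot)$ is a non-negative measure of total mass zero, hence identically zero, so $\tilde Q=Q_{t,\infty}$.
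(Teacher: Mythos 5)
Your proposal is correct and follows essentially the same route as the paper: a Feller-style minimality argument built on the first-jump recursion $Q_{\cdot,p}=\Phi(Q_{\cdot,p-1})$ obtained by conditioning on $(Y^{(0)},\tau_1)$ from the iterative construction, monotone convergence to show $Q_{t,\infty}$ is a fixed point, minimality of $Q_{t,\infty}$ by induction, and uniqueness by combining minimality with stochasticity. The only local difference is the stochasticity step: the paper proves the quantitative bound $Q_{t,p}(x,E)\geq 1-(1-\e^{-\a^* t})^p$ by induction on the recursion, whereas you use the identity $Q_{t,p}(x,E)=\P_x(T_p>t)$ together with non-explosion from \eqref{dominationpoisson}; both rest on the boundedness \eqref{assumption alpha} of $\a$, and yours is an equally valid, slightly more probabilistic shortcut.
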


To conclude this section, we present an interpretation of $Q_{t,\infty}$ for the birth-death-move process of Section~\ref{sec2.2}, which is much in the spirit of \cite{preston}.
We write $Q_{t,(p)}(x,A)$ for the transition probability from $x$ to $A$ in time $t$ without having entered $ \bigcup_{k=p+1}^{\infty} E_k,$ that is
\[ Q_{t,(p)}(x,A)=\P_x\left(X_t \in A,\,  \forall s\in[0,t] \ n(X_s)\leq p  \right). \]
We can also define  $Q_{t,(\infty)}(x,A) = \lim \limits_{p \rightarrow \infty} \ Q_{t,(p)}(x,A) \leq 1$ by monotonicity. 
\begin{prop}\label{prop13mini}
For all $x\in E$ and all $A \in \EE$, $Q_{t,(\infty)}(x,A)=Q_{t,\infty}(x,A).$
\end{prop}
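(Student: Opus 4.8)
The plan is to identify both limits $Q_{t,\infty}$ and $Q_{t,(\infty)}$ with the full transition probability $\P_x(X_t \in A)$, and then to conclude by a sandwich argument exploiting the birth-death structure.

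First I would record that $Q_{t,\infty}(x,A) = \P_x(X_t \in A)$. Indeed, by non-explosivity of $(N_t)_{t\geq 0}$ (a consequence of the Poisson domination \eqref{dominationpoisson}), the jump times satisfy $T_p \uparrow \infty$ $\P_x$-almost surely, so the events $\{T_p > t\}$ increase to a set of full probability. Monotone convergence then gives
\[
Q_{t,\infty}(x,A) = \lim_{p\to\infty} \P_x(X_t \in A,\, T_p > t) = \P_x(X_t \in A).
\]
Equivalently, this follows from Proposition~\ref{existuniciteKBE} together with the fact that $\P_x(X_t\in\cdot)$ solves \eqref{kbei} by Theorem~\ref{TheoKBEi} and is stochastic.

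The key structural observation, specific to the birth-death-move setting of Section~\ref{sec2.2}, is that the population size $n(X_s)$ is constant between consecutive jumps — because $(Y_t)_{t\geq 0}$ remains in the component $E_{n(x)}$ under $\P_x$ — and changes by exactly $\pm 1$ at each jump, since $K_\b$ maps $E_n$ into $E_{n+1}$ and $K_\d$ maps $E_n$ into $E_{n-1}$. Consequently, starting from $x \in E_{n(x)}$, after at most $k$ jumps the population cannot exceed $n(x)+k$, so, since on $\{T_p > t\}$ at most $p-1$ jumps occur in $(0,t]$,
\[
\{T_p > t\} \subseteq \Big\{ \sup_{s\in[0,t]} n(X_s) \leq n(x)+p-1 \Big\}.
\]

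These two facts give the sandwich. On one hand, the inclusion above yields, for every $p$, the bound $Q_{t,p}(x,A) \leq Q_{t,(n(x)+p-1)}(x,A) \leq Q_{t,(\infty)}(x,A)$, and letting $p\to\infty$ gives $Q_{t,\infty}(x,A) \leq Q_{t,(\infty)}(x,A)$. On the other hand, each $Q_{t,(p)}(x,A) = \P_x(X_t\in A,\, \sup_{s\leq t} n(X_s) \leq p)$ is trivially bounded by $\P_x(X_t\in A) = Q_{t,\infty}(x,A)$, so letting $p\to\infty$ gives $Q_{t,(\infty)}(x,A) \leq Q_{t,\infty}(x,A)$. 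Combining the two inequalities yields the claimed equality. The only real point requiring care is the asymmetry between the two truncations: a bound on the population level does not bound the number of jumps, since oscillating births and deaths may produce arbitrarily many jumps without raising the population, so one cannot dominate $Q_{t,(p)}$ by any single $Q_{t,q}$. This is why one direction of the sandwich must pass through the identification $Q_{t,\infty} = \P_x(X_t\in A)$ rather than through a direct event inclusion, whereas the converse direction uses only the elementary inclusion relating few jumps to a low population. Checking that $\sup_{s\in[0,t]} n(X_s)$ is a genuine measurable maximum over the finitely many jump times $T_j \leq t$ — which holds since $N_t < \infty$ almost surely — completes the argument.
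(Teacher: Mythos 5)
Your proof is correct, and it takes a genuinely different route from the paper's. The paper never identifies $Q_{t,\infty}$ with $\P_x(X_t\in A)$ and sandwiches; instead it derives a backward recursion for $Q_{t,(p)}$ parallel to the one for $Q_{t,p}$, by conditioning on the first jump and noting that a birth contributes only when $p>n$ (this is where the birth--death structure enters for them), then lets $p\to\infty$ to show that $Q_{t,(\infty)}$ solves the Kolmogorov backward equation \eqref{kbei}, and finally invokes the uniqueness of sub-stochastic solutions from Proposition~\ref{existuniciteKBE}. Your argument replaces this analytic machinery by two pathwise observations: the identification $Q_{t,\infty}(x,A)=\P_x(X_t\in A)$ via monotone convergence and non-explosivity (from \eqref{dominationpoisson}), and the event inclusion $\{T_p>t\}\subseteq\{\sup_{s\in[0,t]}n(X_s)\le n(x)+p-1\}$ coming from the $\pm 1$ jump structure, which gives $Q_{t,p}\le Q_{t,(n(x)+p-1)}\le Q_{t,(\infty)}$; the reverse inequality is then trivial once $Q_{t,\infty}$ is known to be the full transition probability. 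What the paper's route buys is that it stays entirely within the KBE framework it has just built — the recursion and the uniqueness statement do all the work, with no need to first prove $Q_{t,\infty}=Q_t$, and the same scheme would apply to other truncations for which a recursion can be written. What your route buys is brevity and transparency: it is purely probabilistic, and it isolates exactly where the birth--death structure matters, including your correct remark that the comparison between the two truncations is genuinely one-sided (few jumps force low population, but not conversely), which is precisely why one direction of the sandwich must pass through non-explosion rather than an event inclusion. Both proofs ultimately rest on the same ingredient, the boundedness of $\alpha$ through \eqref{dominationpoisson}: the paper uses it to prove that $Q_{t,\infty}$ is stochastic, while you use it to get $T_p\uparrow\infty$ almost surely.
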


\subsection{Systems of interacting particles in $\R^d$} \label{sec2.3}

In this section, we focus on the dynamics of a system of interacting particles in $\R^d$ and we provide general examples of birth kernels, death kernels and inter-jumps motions in this setting, that to our opinion constitute realistic models for applications and are actually already used in some domains. Some of them moreover lead to an explicit Gibbs  stationary measure of the dynamics, as we will show in Section~\ref{invariant Gibbs}.  
These running examples will serve in the rest of the paper to illustrate the theoretical results and make explicit our assumptions. 

\medskip

Let $W \subset \R^d$ be a closed set where the particles live, equipped with a $\s-$field $\BB$.  A collection of $n$ particles in $W$ is a point configuration for which the ordering does not matter. For this reason, for $n \geq 1,$ we will identify two elements $(x_1, \dots,x_n)$ and $(y_1,\dots,y_n)$ of $W^n$  if there exists a permutation $\s$ of $\{1,\dots,n\}$ such that $x_i=y_{\s(i)}$ for any $1 \leq i \leq n$. Following \cite{preston}, \cite{locherbach2002likelihood} and others, we thus define $E_n$ as the space obtained by this identification.  Specifically, denoting by $\pi_n : (x_1,\dots,x_n) \in W^n \mapsto \{ x_1,\dots,x_n \}$ the associated projection,  the space $E_n$ corresponds for $n\geq 1$ to $E_n = \pi_n( W^n) $ equipped with the $\s-$field  $\EE_n=\pi_n (\BB^{\otimes n})$, while $E_0=\{\text{\O}\}$ is just composed of the empty configuration. The general state space  of a system of particles is  then $E = \cup_{n \geq 0} E_n$ equipped with the $\sigma-$field $ \displaystyle \EE = \sigma \left ( \cup_{n \geq 0} \EE_n \right ).$
This formalism  allows us to go back and forth quite straightforwardly between the space $E_n$ and the space $W^n$, the latter being in particular more usual to define the inter-jumps motion of  $n$ particles, as detailed below. Note that an alternative formalism consists in viewing a configuration of particles as a  finite point measure in $W$, in which case $E$ becomes the set of finite point measures in $W$, see for instance \cite{kallenberg}. We choose in this paper to adopt the former point of view. 
We denote by $\|.\|$ the Euclidean norm on $\R^d$. If $x=\{x_1,\dots,x_n\} \in E_n$ and $\xi \in W$,   $x \cup \xi$ stands for $\{x_1,\dots,x_n,\xi \} \in E_{n+1}$ and if $1 \leq i \leq n$, we write $x \, \backslash \, x_i$ for $\{ x_1, \dots, x_{i-1},x_{i+1},\dots, x_n \} \in E_{n-1}.$

As long as we are concerned by continuous inter-jumps motions, we need to equip $E$ with a distance. 
Following  \cite{schuhmacher2008}, we consider the distance $d_1$ defined for $x=\{x_1,\dots,x_{n(x)} \}$ and $y=\{y_1,\dots,y_{n(y)} \}$ in $E$ such that $n(x) \leq n(y)$ by 
\begin{equation} \label{defd1} 
     d_1(x,y) = \frac{1}{n(y)} \left ( \min_{\s \in \SS_{n(y)}} \sum_{i=1}^{n(x)} (\|x_i-y_{\s(i)}\| \wedge 1) + (n(y)-n(x)) \right ),
    \end{equation}
with $d_1(x,\textnormal{\O})=1$ and where $\SS_n$ denotes the set of permutations of $\{1,\dots,n\}$. \cite{schuhmacher2008} and Section~\ref{sec6.2} in the supplementary material detail some topological properties of $(E,d_1)$. 
For the purposes of this section, let us quote in particular that $n(.) : (E,d_1) \rightarrow (\N,|.|)$ is continuous and that $\pi_n$ is continuous. 
Note that other distances than $d_1$ could have been chosen, provided these two last properties (at least) are preserved. Incidentally, the Hausdorff distance, which is a common choice of distance between random sets, does not satisfy these properties (see Section~\ref{sec6.2}) and is not appropriate in our setting.

\medskip

We now show how we can easily construct a continuous Markov process  $(Y_t)_{t \geq 0}$ on $E$  from continuous Markov processes on $W^n$ for any $n \geq 1$. We focus on the case where for any $x \in E$ and $n\geq 0$, $\P_x( (Y_t)_{t \geq 0} \subset E_n)= \ind_{E_n}(x)$, as we required it for birth-death-move processes in Section~\ref{sec2.2}. It is then enough to define a process $Y^{|n}$ on each $E_n$. To do so consider a continuous Markov process $(Z_t^{|n})_{t \geq 0}$ on $W^n$ 
whose distribution is permutation equivariant with respect to its initial value $Z^{ \, |n}_0$. This means that for any permutation $\s \in \SS_n$, the law of 
$Z^{ \, |n}_t = ( Z^{ \, |n}_{t,1}, \dots , Z^{ \, |n}_{t,n} )$ given $Z^{ \, |n}_0=(z_{\sigma(1)},\dots,z_{\sigma(n)})$ is the same as the law of  $(Z^{ \, |n}_{t,\s(1)}, \dots ,Z^{ \, |n}_{t,\s(n)} )$ given $Z^{ \, |n}_0=(z_1,\dots,z_n)$.
Let $x= \{x_1,\dots,x_n \} \in E_n$ and take the process $Z^{ \, |n}_{t}$ with initial state $Z^{ \, |n}_0=(x_1,\dots,x_n)$. Note that from the previous permutation equivariance property the choice of ordering for the coordinates of this initial state does not matter, as it will become clear below. We finally define the process $Y^{|n}_t$ on $E_n$ starting from $x$ as 
\begin{equation}\label{defmovesurEn}
   Y^{|n}_t = \pi_n \left ( Z^{ \, |n}_t \right ) = \left \{ Z^{ \, |n}_{t,1}, \dots, Z^{ \, |n}_{t,n} \right \}. 
\end{equation}
Note that the continuity of $t\to Y^{|n}_t$ (with respect to $d_1$) follows from the continuity of $t\to Z^{ \, |n}_t$ and the continuity of $\pi_n$. The continuity of $t\to Y_t$ is then implied by the continuity of $n(.)$.

With this construction, the transition kernel of $Y$ reads, for any $f \in M^b(E)$,
\[ Q_t^Yf(x) = \sum_{n \geq 0}\E \left [ f(Y^{|n}_t) \, |Y^{|n}_0=x\right]\ind_{x \in E_n}  =  \sum_{n \geq 0}\E \left ( f(\pi_n(Z^{ \, |n}_t))  \left | \right. Z^{ \, |n}_0=(x_{1},\dots,x_{n}) \right )\ind_{x \in E_n}, \]
so that denoting by  $Q_t^{Z^{ \, |n}}$ the transition kernel of $Z^{ \, |n}$ in $W^n$ we have
\[ Q_t^Y f(x) = \sum_{n \geq 0} Q_t^{Z^{ \, |n}}(f \circ \pi_n)((x_1,\dots, x_n))\ind_{x \in E_n}.\]
Note that if we had chosen another ordering for the initial state, that is $Z^{ \, |n}_0=(x_{\sigma(1)},\dots,x_{\sigma(n)})$ for some  $\s \in \SS_n$, then the transition kernel of $Y$ would have remained the same, since   by permutation equivariance 
\[ \E \left ( f(\pi_n(Z^{ \, |n}_t))  \left | \right. Z^{ \, |n}_0=(x_{\sigma(1)},\dots,x_{\sigma(n)}) \right ) = 
\E \left ( f(\pi_n(Z^{ \, |n}_{t,\s(1)}, \dots ,Z^{ \, |n}_{t,\s(n)} ))  \left | \right. Z^{ \, |n}_0=(x_{1},\dots,x_{n}) \right )
\]
which is $\E \left ( f(\pi_n(Z^{ \, |n}_t))  \left | \right. Z^{ \, |n}_0=(x_{1},\dots,x_{n}) \right )$.

\medskip

We are now in position to present general examples of jump transition kernels and inter-jumps motions for a system of particles in $W$. The first example introduces a death transition kernel where an existing particle dies with a probability that may depend on the distance to the other particles. The next two examples focus on birth transition kernels, either driven by a mixture of densities around each particle, or by a Gibbs potential. The last two examples apply the above construction of  $(Y_t)_{t \geq 0}$ on $E$ to introduce inter-jumps Langevin  diffusions and growth interaction processes.

\bigskip

\ex{noymort} \noindent \textbf{Example \ref{noymort}}\textit{(death kernel)}:   Let $g : \R_+ \rightarrow \R_+^*$ be a continuous function and for $x=\{x_1,\dots,x_n\}\in E_n$, set $w(x_1,x)=1$ if $n=1$ and if $n\geq 2$, for any $i\in\{1,\dots,n\}$,
\[ w(x_i,x)=\frac{1}{z(x)} \displaystyle \sum_{k \neq i} g \left ( \|x_k-x_i\| \right ),\] 
with $z(x)=\sum_{i=1}^n \sum_{k \neq i} g ( \|x_k-x_i\| )$. A general example of death transition kernel is
\[\displaystyle K_\d(x,A)=  \sum_{i=1}^{n(x)} w(x_i,x) \ind_A(x \, \backslash \, x_i),\quad x\in E,\ A \in \EE.\]
The probability  $w(x_i,x)$ that  $x_i$ disappears then depends on the distance between $x_i$ and the other particles in $x$ through $g$. Uniform deaths correspond to the particular case $w(x_i,x)=1/n(x)$.

\bigskip

\ex{noynaissgauss} \noindent \textbf{Example \ref{noynaissgauss}}\textit{(birth kernel as a mixture)}: Let $\varphi$ be a density function on $W$, and $\phi_1: W\to \R$ and $\phi_2:\R_+\to\R$ be two continuous  functions. 
We set for 
$x=\{x_1,\dots,x_{n(x)}\}\in  E\setminus E_0$, $ v(x_i,x) = \exp\left(\phi_1(x_i)+\sum_{k\neq i} \phi_2 \left ( \|x_k-x_i\| \right )\right)$ and we consider the birth kernel defined for $\Lambda \subset W$  and $x \in E\setminus E_0$ by  $K_\b(\text{\O},\Lambda)=\int_\Lambda \varphi(\xi)  \dd \xi$ and
\[ K_\b (x,\Lambda \cup x) = \dfrac{1}{n(x)}  \sum_{i=1}^{n(x)} \dfrac{1}{z(x_i,x)} \int_\Lambda \varphi \left ( \frac{\xi-x_i}{v(x_i,x)} \right ) \, \dd \xi,\]
where $\Lambda \cup x =\{\{u\} \cup x, u\in \Lambda\}$ and  $z(x_i,x)=\int_W \varphi \left ( (\xi-x_i)/v(x_i,x)\right )\dd \xi$. Note that $z(x_i,x)=v(x_i,x)^d$ if $W=\R^d$. 
It is easily checked that $K_\b (x,E_{n+1})=K_\b ( x, W \cup x)= 1$ for $x \in E_n$, and in particular this kernel is a genuine birth kernel in the sense that the transition from $E_n$ to $E_{n+1}$ is only due to the addition of a new particle, the existing ones remaining unchanged. Moreover, the new particle is distributed as a  mixture of distributions driven by $\varphi$, each of them being centred at the existing particles. The term $v(x_i,x)$ quantifies the dispersion of births around the particle $x_i$ and it depends on the distance between $x_i$ and the other particles through $\phi_2$. A natural example is a mixture of isotropic Gaussian distributions on $\R^d$ (restricted to $W$), respectively centred at $x_i$ with standard deviation $v(x_i,x)$.

\bigskip

\ex{noynaissgibbs} \noindent \textbf{Example \ref{noynaissgibbs}}\textit{(birth kernel based on a Gibbs potential)}:  We introduce a measurable function $V: E \rightarrow \R$, so called a potential,  satisfying $ z(x):=\int_W \exp(-(V(x \cup \xi)-V(x))) \, \dd \xi < \infty$ for all $x\in E$, and we
consider the birth kernel defined for $\Lambda \subset W$ and  $x \in E$  by
\[
K_\b(x,\Lambda \cup x) = \frac{1}{z(x)} \int_\Lambda \e^{-(V(x \cup \xi)-V(x))} \, \dd \xi.
\]
Note that $K_\b ( x, W \cup x)= 1$ for $x \in E$. 
With this kernel, given a configuration $x$, a new particle is more likely to appear in the vicinity of points $\xi\in W$ that make $V(x \cup \xi)-V(x)$ minimal.
This kind of kernels $K_\b$ has been introduced in \cite{preston} for spatial birth-death processes, the case of a birth-death-move process with no move. Their importance is due to the fact that the invariant measure of a spatial birth-death process associated to $K_\b$, uniform deaths and specific birth and death intensities has been explicitly obtained in \cite{preston} and corresponds to the Gibbs measure with potential $V$. This result is at the basis of perfect simulation of  spatial Gibbs point process models, see \cite{moeller:waagepetersen:03}. 
We will similarly show in Section~\ref{invariant Gibbs} that the same Gibbs measure is also invariant for a birth-death-move process associated to the same characteristics for the jumps and a well chosen inter-jumps move process $(Y_t)_{t \geq 0}$ constructed as in the next example.

\bigskip

\ex{moveito} \noindent \textbf{Example \ref{moveito}}\textit{(Langevin diffusions as inter-jumps motions)}:  Let $g : \R^d \rightarrow \R^d$ be a globally Lipschitz continuous function, $\b >0$ and $\{ B_{t,i} \} _{1 \leq i \leq n}$, $n\geq 1$, a collection of $n$ independent Brownian motions on $\R^d$. We start from the following system of SDEs usually called overdamped Langevin equations
\[ \dd Z_{t,i}^{\, |n} = - \sum_{j \neq i} g (  Z_{t,i}^{\, |n}- Z_{t,j}^{\, |n}) \, \dd t + \sqrt{2 \beta^{-1}} \, \dd B_{t,i}, 
\quad 1 \leq i \leq n.\]
For $z=(z_1,\dots,z_n) \in (\R^d)^n$, denoting by $\Phi_n: (\R^d)^n \rightarrow (\R^d)^n$ the function defined by $\Phi_n(z) = (\Phi_{n,1}(z),\dots,\Phi_{n,n}(z))$ with $\Phi_{n,i}(z)= \sum_{j \neq i} g (  z_{i}- z_{j})$,
this system of SDEs reads
\begin{equation} \label{sdeito}
    \dd Z_t^{\, |n} = - \Phi_n(Z_t^{\, |n}) \, \dd t + \sqrt{2 \beta^{-1}} \, \dd B_t^{\, |n},
\end{equation}
where $B_t^{\, |n}=(B_{t,1}, \dots, B_{t,n})$. 
Since $\Phi_n$ is a permutation equivariant function, that is for any  $\s \in \SS_n$
\[\Phi_n(z_{\sigma(1)},\dots,z_{\sigma(n)})=(\Phi_{n,\sigma(1)}(z),\dots,\Phi_{n,\sigma(n)}(z)),\] 
and  since $B_t^{\, |n}$ is exchangeable,  
we can verify by writing \eqref{sdeito} in integral form that the law of $Z_t^{|n}$ is permutation equivariant with respect to its initial state. 
So when $W=\R^d$, we can define each inter-jumps process $Y^{\, |n}$ in $E_n$ from $Z^{\, |n}$ as in \eqref{defmovesurEn}, yielding  $(Y_t)_{t \geq 0}$ on $E$. The same construction can be generalised if $W\subsetneq \R^d$ by considering a Langevin equation with reflecting boundary conditions \citep{fattler2007}. This inter-jumps dynamics, associated with the birth kernel of Example~\ref{noynaissgibbs}
and a drift function $g$ related to the potential $V$, converges to a Gibbs measure on $W$ with potential $V$ (see Section~\ref{invariant Gibbs}).

\bigskip

\ex{growthprocess} \noindent \textbf{Example \ref{growthprocess}}\textit{(Growth interaction processes)}:  
This example is motivated by models used in ecology \citep{renshaw2001, renshaw2009, comas2009, habel2019}. Each particle consists of a plant located in $S\subset\R^d$ and  associated with a positive mark, that typically represents the size of the plant, so that $W=S\times \R^+$ here. Births and deaths of plants occur according to a spatial birth and death process, while a deterministic growth applies to their mark. Specifically, when a plant appears, its mark is set to zero or generated according to a uniform distribution on $[0,\ep]$ for some $\ep >0$ (\cite{renshaw2001}). Then the mark increases over time in interaction with the other marks. 
In order to formally define this inter-jumps dynamics, let us denote by $(U_i(t),m_i(t))_{t \geq 0}$, for $i = 1, \dots , n$, each component of the process $(Z_t^{|n})_{t \geq 0}$ where $U_i(t) \in S$  and  $m_i(t)>0$, so that  $Z_t^{|n} \in W^n$. We introduce the system
\begin{align}\label{Z growth}
    & \dfrac{\dd Z^{\, |n}_t}{\dd t} = \left  ((0,F_{1,n}(Z^{\, |n}_t)),\dots,(0,F_{n,n}( Z^{\, |n}_t)) \right ), 
\end{align}
where for all $1\leq i\leq n$,  $F_{i,n}$ is a function from $W^n$ into  $\R_+$.
We thus have 
  $U_i(t) = U_i(0)$ for all $i$ and the evolution of the marks $(m_1(t), \dots ,m_n(t))$ is driven by a deterministic differential equation depending on $(U_1(0),\dots,U_n(0))$ as expected. To define $Y^{\, |n}$ by \eqref{defmovesurEn}, we finally assume permutation equivariance, namely that $F_{\sigma(i),n}(z_1,\dots,z_n)=F_{i,n}(z_{\sigma(1)},\dots,z_{\sigma(n)})$ for all $i$ and all  $\s \in \SS_n$, which is satisfied for all examples in the aforementioned references.

\section{Feller properties and infinitesimal generator}\label{sec:feller}

\subsection{Feller properties} \label{sec3.1}

We assume henceforth that $E$ is a locally compact Polish space. Let $C_b(E)$ be the set of continuous and bounded functions on $E$ and  $C_0(E)$ be the set of continuous functions that vanish at infinity in the sense that for all $ \epsilon>0$, there exists a compact set $B\in E$ such that $x\notin B \Rightarrow |f(x)|<\epsilon$. 

Following \cite{dynkin1965markov} and \cite{oksendal}, we say that the jump-move process $(X_t)_{t \geq 0}$ on $E$ with transition kernel $Q_t$ is  Feller continuous if $Q_t C_b(E) \subset C_b(E)$, and we say that it is Feller if both $  \lim_{t \rightarrow 0} \| Q_tf - f \|_{\infty}= 0$ for any $f \in C_0(E)$ (strong continuity) and $Q_t C_0(E) \subset  C_0(E)$.

 The following proposition, proved in Section~\ref{proofs Feller} of the supplementary material, provides information on  the continuity property of $Q_t$ when $t$ goes to $0$.
\begin{prop}\label{prop24}
We have
    \begin{enumerate} 
        \item  For any $f \in C_b(E)$ and any $x \in E$,  $\lim_{t \rightarrow 0}\limits Q_tf(x) = f(x)$.
        \item Let $ f \in M^b(E)$. Then $ \lim_{t \rightarrow 0}\limits \|Q_tf-f\|_{\infty} = 0 $ iff $ \, \lim_{t \rightarrow 0}\limits \|Q_t^Yf-f\|_{\infty} = 0 .  $
    \end{enumerate}
\end{prop}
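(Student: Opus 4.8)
The plan is to read both statements off the Kolmogorov backward equation of Theorem~\ref{TheoKBEi}, integrated against $f$. Writing $\Lambda_t := \int_0^t \a(Y_u)\,\dd u$ and observing that $\int_E Q_{t-s}f(y)\,K(Y_s,\dd y) = (KQ_{t-s}f)(Y_s)$, equation~\eqref{kbei} applied to $f \in M^b(E)$ becomes
\[
Q_t f(x) = \E_x^Y\!\left[f(Y_t)\,\e^{-\Lambda_t}\right] + \int_0^t \E_x^Y\!\left[(KQ_{t-s}f)(Y_s)\,\a(Y_s)\,\e^{-\Lambda_s}\right]\dd s .
\]
Since $K$ is a probability kernel and $\|Q_{t-s}f\|_\infty \le \|f\|_\infty$ (Proposition~\ref{existuniciteKBE}), while $\a(Y_s)\le \a^*$ by~\eqref{assumption alpha} and $\e^{-\Lambda_s}\le 1$, the integrand of the second term is bounded in modulus by $\a^*\|f\|_\infty$. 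Denoting that second term by $R_t(x)$, this gives the key uniform bound $\|R_t\|_\infty \le \a^* t\,\|f\|_\infty$, which tends to $0$ as $t\to 0$. The whole argument rests on the fact that this remainder, and the correction comparing the leading term to $Q_t^Y f$, are both $O(t)$ uniformly in $x$ thanks to the boundedness of $\a$.

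For the first assertion, fix $f \in C_b(E)$ and $x \in E$. The trajectories of $(Y_t)_{t\ge0}$ are continuous, so $Y_t \to Y_0 = x$ $\P_x$-almost surely as $t\to0$, whence $f(Y_t)\to f(x)$ by continuity of $f$; moreover $\Lambda_t \le \a^* t \to 0$, so $\e^{-\Lambda_t}\to 1$. As $|f(Y_t)\,\e^{-\Lambda_t}| \le \|f\|_\infty$, dominated convergence yields $\E_x^Y[f(Y_t)\,\e^{-\Lambda_t}] \to f(x)$. Combined with $R_t(x)\to 0$, this gives $Q_t f(x)\to f(x)$.

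For the second assertion, I compare the leading term with $Q_t^Y f(x) = \E_x^Y[f(Y_t)]$. Using the elementary inequality $|\e^{-u}-1|\le u$ for $u\ge0$ with $u=\Lambda_t\le \a^* t$,
\[
\left| \E_x^Y\!\left[f(Y_t)\,\e^{-\Lambda_t}\right] - Q_t^Y f(x) \right| \le \|f\|_\infty\,\E_x^Y\!\left[\,|\e^{-\Lambda_t}-1|\,\right] \le \a^* t\,\|f\|_\infty ,
\]
uniformly in $x$. Writing $Q_t f - f$ as the sum of $(\E_x^Y[f(Y_t)\e^{-\Lambda_t}] - Q_t^Y f)$, $(Q_t^Y f - f)$ and $R_t$, and bounding the first and third terms by $\a^* t\,\|f\|_\infty$, one obtains the two-sided estimate
\[
\big|\,\|Q_t f - f\|_\infty - \|Q_t^Y f - f\|_\infty\,\big| \le 2\a^* t\,\|f\|_\infty ,
\]
so that the two norms vanish simultaneously as $t\to0$, which is exactly the claimed equivalence.

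I do not expect a genuine obstacle here: once the integrated backward equation is in hand, the proof is a matter of controlling the two $O(t)$ error terms. The only points requiring care are the justification of the integrated form of~\eqref{kbei} (Fubini and measurability of $(s,\om)\mapsto (KQ_{t-s}f)(Y_s)$), the bound $\|Q_{t-s}f\|_\infty\le\|f\|_\infty$ invoked from Proposition~\ref{existuniciteKBE}, and the path-continuity of $(Y_t)$ underpinning the dominated convergence step in part~1; all three are routine given the assumptions already in force.
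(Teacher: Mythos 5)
Your proof is correct and follows essentially the same route as the paper's: both isolate the no-jump term $\E_x^Y\left[f(Y_t)\,\e^{-\int_0^t \a(Y_u)\,\dd u}\right]$, bound the remaining jump contribution and the $\e^{-\Lambda_t}-1$ correction by $O(\a^* t\,\|f\|_\infty)$ uniformly in $x$, and obtain part~1 by dominated convergence along the continuous paths of $Y$. The only cosmetic difference is that you extract the remainder from the integrated backward equation \eqref{kbei}, whereas the paper writes it directly from the construction as $\E_x\left[f(X_t)\1_{N_t\geq 1}\right]$ and bounds it by $\|f\|_\infty\,\P_x(N_t\geq 1)\leq \a^* t\,\|f\|_\infty$ --- the same estimate.
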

By the second point above, the strong continuity of $Q_t$ is implied by the strong continuity of $Q_t^Y$, which in turn holds automatically true if $Q_t^Y \, C_0(E) \subset C_0(E)$  by continuity of $Y_t$. We thus obtain the following natural conditions on the jump-move process on $E$ to be Feller continuous or Feller. The proof is given in Section~\ref{proofs Feller} of the supplementary material.
\begin{theo} \label{conditionsfeller}
Let $(X_t)_{t \geq 0}$ be a general jump-move process on $E$.
\begin{enumerate}
    \item If $Q_t^Y \, C_b(E) \subset C_b(E)$ and $K \, C_b(E) \subset C_b(E)$ then $(X_t)_{t \geq 0}$ is a Feller continuous process.
    \item If $Q_t^Y \, C_0(E) \subset C_0(E)$ and $K \, C_0(E) \subset C_0(E)$ then $(X_t)_{t \geq 0}$ is a Feller process.
\end{enumerate}
\end{theo}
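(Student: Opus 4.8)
The plan is to exploit the decomposition $Q_t = Q_{t,\infty} = \lim_{p\to\infty} Q_{t,p}$ from Proposition~\ref{existuniciteKBE}, where $Q_{t,p}(x,\cdot)$ is the sub-probability kernel allowing at most $p$ jumps before time $t$. For bounded $f$ one has $Q_tf(x) - Q_{t,p}f(x) = \E_x[f(X_t)\,\ind_{T_p \le t}]$, so $\|Q_tf - Q_{t,p}f\|_\infty \le \|f\|_\infty \sup_x \P_x(T_p \le t)$, and by the Poisson domination \eqref{dominationpoisson} this supremum is bounded by a Poisson tail $\P(N^*_t \ge p)\to 0$, where $N^*_t$ has rate $\alpha^* t$; hence $Q_{t,p}f \to Q_tf$ uniformly in $x$. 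Since both $C_b(E)$ and $C_0(E)$ are closed under uniform convergence (the latter being a closed subspace of $(C_b(E),\|\cdot\|_\infty)$), it suffices to prove that each $Q_{t,p}$ maps $C_b(E)$ into itself (for part~1) and $C_0(E)$ into itself (for part~2), for every $t \ge 0$. The strong-continuity half of the Feller property in part~2 need not be reproved: it follows from Proposition~\ref{prop24}(2) together with the strong continuity of $Q_t^Y$, which holds automatically under the hypothesis $Q_t^Y\,C_0(E)\subset C_0(E)$.

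I would then argue by induction on $p$, using the recursion obtained from \eqref{kbei} by conditioning on the first jump,
\begin{equation*}
Q_{t,p}f = R_t f + \int_0^t R_s\!\left(\alpha\, K Q_{t-s,p-1}f\right)\,\dd s, \qquad R_s g(x) := \E_x^Y\!\left[g(Y_s)\,\e^{-\int_0^s \alpha(Y_u)\,\dd u}\right],
\end{equation*}
with base case $Q_{t,0}f = R_t f$ (no jump). Here $R_s$ is the killed (Feynman--Kac) operator attached to the motion $Y$. Granting that (i) $R_s$ preserves $C_b(E)$ (resp.\ $C_0(E)$), (ii) multiplication by $\alpha$ preserves these spaces since $\alpha \in C_b(E)$, (iii) $K$ preserves them by hypothesis, and (iv) the time integral preserves them, the inductive step is immediate: if $Q_{\cdot,p-1}f$ takes values in the space, then so does $\alpha\, KQ_{t-s,p-1}f$, hence so does $R_s(\alpha\, KQ_{t-s,p-1}f)$, and finally so does its integral over $s$.

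The one genuinely non-trivial input is point (i), the stability of the killed operator $R_s$. I would establish it through the Duhamel identity $R_s = Q_s^Y - \int_0^s Q_u^Y(\alpha\, R_{s-u}\,\cdot)\,\dd u$, which upon iteration yields the uniformly convergent Neumann series $R_s = \sum_{k\ge 0}(-1)^k R_s^{(k)}$ with $R_s^{(0)} = Q_s^Y$ and $R_s^{(k)}g = \int_0^s Q_u^Y(\alpha R_{s-u}^{(k-1)}g)\,\dd u$, the uniform convergence being guaranteed by $\|R_s^{(k)}g\|_\infty \le (\alpha^* s)^k/k!\,\|g\|_\infty$. Each $R_s^{(k)}g$ is built only from $Q_u^Y$, multiplication by $\alpha$, and time integration, so its stability again reduces to the stability of the time integral.

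Everything therefore funnels into point (iv): do the integrals $\int_0^t(\cdots)\,\dd s$ preserve the target space? For part~1 ($C_b(E)$) this is routine: the integrands are bounded by $\alpha^*\|f\|_\infty$ and continuous in $x$ for each fixed $s$, so dominated convergence lets continuity in $x$ pass through the integral. I expect the $C_0(E)$ case of part~2 to be the main obstacle, because vanishing at infinity is not obviously preserved when one integrates an $s$-indexed family whose controlling compact sets may depend on $s$. The remedy I would use is to show that the integrand $s \mapsto R_s(\alpha\, KQ_{t-s,p-1}f)$ is continuous from $[0,t]$ into $(C_0(E),\|\cdot\|_\infty)$, which requires the strong continuity of $R_s$ (itself following from the strong continuity of $Q_s^Y$, available here exactly as noted after Proposition~\ref{prop24}) together with an inductive sup-norm continuity of $t \mapsto Q_{t,p-1}f$; one then approximates the integral by Riemann sums, which are finite linear combinations of $C_0(E)$ functions, and invokes the closedness of $C_0(E)$ to conclude that the limit lies in $C_0(E)$. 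Carrying this sup-norm continuity in $t$ through the induction alongside the stability statement is the bookkeeping that makes the step work.
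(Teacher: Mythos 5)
Your proposal is correct, but it takes a genuinely different route from the paper. The paper never passes through the truncated kernels $Q_{t,p}$ or the backward-equation recursion: it expands $Q_tf(x)=\sum_{k\geq 0}\E_x[f(X_t)\ind_{N_t=k}]$, rewrites each term probabilistically as $\E_x[\psi(X_{T_k},t-T_k)\ind_{T_k\leq t}]$ with $\psi(x,s)=\E_x^Y[f(Y_s)\e^{-\int_0^s\alpha(Y_u)\,\dd u}]$, and proves by induction on $k$ (Lemmas~\ref{lem23}--\ref{lem24} for $C_b(E)$, Lemmas~\ref{csqdeH3}--\ref{lemC0} for $C_0(E)$) that each term is continuous, respectively vanishes at infinity, the tail of the series being controlled by the same Poisson domination \eqref{dominationpoisson} that you use. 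Your induction instead runs on the analytic recursion $Q_{t,p}f=R_tf+\int_0^tR_s(\alpha\,KQ_{t-s,p-1}f)\,\dd s$, with the killing handled by a Duhamel/Neumann series; note that your Neumann terms, once unfolded, coincide with the paper's expansion of $\e^{-\int_0^t\alpha(Y_u)\,\dd u}$ in Lemma~\ref{lem23}, so that ingredient is the same in different clothing. The substantive divergence is in the device that pushes ``vanishing at infinity'' through a time integral: the paper proves that $\sup_{s\in[0,t]}Q^Y_s$ maps $C_0(E)$ into $C_0(E)$ (dyadic discretization in time, strong continuity, closedness of $C_0$) and uses it to dominate $\E_x[g(X_{T_k})\ind_{T_k\leq t}]$, whereas you prove sup-norm continuity in $s$ of $C_0(E)$-valued integrands and conclude via Riemann sums and closedness of $C_0$; both hinge on the automatic strong continuity of $Q^Y$, and both are valid. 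What your version buys is modularity (it reuses Proposition~\ref{existuniciteKBE} and is purely operator-theoretic); what it costs is exactly the bookkeeping you flag, namely carrying a norm-continuity-in-time statement through two nested inductions (inside the Neumann series for $R_s$, and over $p$), while the paper avoids Banach-space-valued integration altogether by conditioning pointwise at the jump times. One indexing nit: with the paper's convention $Q_{t,p}(x,A)=\P_x(X_t\in A,\,T_p>t)$ and $T_0=0$, the ``no jump'' kernel is $Q_{t,1}=R_t$, not $Q_{t,0}$ (which is identically zero); this shifts your induction by one but affects nothing else.
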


We deduce in particular from this theorem that 
if $Q_t^Y \, C_b(E) \subset C_b(E)$ and $K \, C_b(E) \subset C_b(E)$ (or alternatively with $C_0(E)$ instead of  $C_b(E)$), then $(X_t)_{t \geq 0}$  is a strong Markov process for the filtration $(\FF_t)_{t \geq 0}$, a property implied by the Feller continuous and Feller properties (\cite{Bass}). 
The Feller property will also be useful to us in Section~\ref{asymptotics} to construct a coupling between a birth-death-move process and a simple birth-death process on $\N$, in a view to establish ergodic properties. 

 We investigate in Section~\ref{spatial Feller} the conditions of Theorem~\ref{conditionsfeller} for the examples of dynamics of systems of  interacting particles in $\R^d$ introduced  in Section~\ref{sec2.3}. They turn out to be generally satisfied under mild conditions for these examples.

\subsection{Infinitesimal generator} \label{sec3.3}

In this section we compute the infinitesimal generator associated to the jump-move process $(X_t)_{t \geq 0}$. We first introduce some notations and recall below the definition of the generator, see for instance  \cite{dynkin1965markov}.
In connection, remember that the family $(Q_t)_{t \geq 0}$ of transition operators is a semigroup on $(M_b(E),\|.\|_{\infty}).$ If moreover the process $(X_t)_{t \geq 0}$ is Feller continuous (\textit{resp.} Feller), then $(Q_t)_{t \geq 0}$ is a semigroup on $(C_b(E),\|.\|_{\infty})$ (\textit{resp.} $(C_0(E),\|.\|_{\infty})$). 
\begin{defi} \label{defgenerator}
    Let $L \subset M^b(E)$ and $(U_t)_{t \geq 0}$ be a semigroup on $(L,\|.\|_{\infty})$. We set 
\begin{equation} \label{defL0}
L_0 = \{f \in L : \lim_{t \rightarrow 0}\limits \|U_t f-f\|_{\infty}=0\}
\quad\text{and}\quad
\DD_{\AA} = \{f \in L : \lim_{t \rightarrow 0}\limits \frac{U_t f-f}{t} \; \textnormal{exists in } (L,\|.\|_{\infty})\}.
\end{equation}
For $f \in \DD_\AA$, define $\AA f = \lim_{t \searrow 0} (U_t f -f)/t.$ 
The operator $\AA : \DD_{\AA}   \rightarrow L  $ is called the infinitesimal generator associated to the semigroup $(U_t)_{t \geq 0}$ and $\DD_{\AA}$ is called the domain of the generator $\AA.$
\end{defi}

In the following we denote by $L_0$ (resp. $L_0^Y$) and $\AA$ (resp. $\AA^Y$) the set as in \eqref{defL0} and the infinitesimal operator associated to $(Q_t)_{t \geq 0}$ (resp. $(Q_t^Y)_{t \geq 0}$). Note that $L_0=L_0^Y$ by Proposition \ref{prop24}.

\begin{theo} \label{theogenerator}
    Let $(X_t)_{t \geq 0}$ be a general jump-move process on a Polish space $E$.
    Suppose that if $ f \in L_0^Y$, then $\a \times f \in L_0^Y$ and $Kf \in L_0^Y$.
    Then $\DD_\AA=\DD_{\AA^Y}$ and for any $f \in \DD_{\AA^Y}$,
    \[ 
    \AA f = \AA^Y f + \a \times Kf  - \a \times f .
    \]
\end{theo}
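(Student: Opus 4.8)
The starting point is the integrated form of the Kolmogorov backward equation \eqref{kbei}. Integrating \eqref{kbei} against a bounded measurable $f$ (the interchange with $\E_x^Y$ being justified by the bound \eqref{assumption alpha}) and using $\int_E Q_{t-s}f(y)\,K(Y_s,\dd y)=K(Q_{t-s}f)(Y_s)$, one gets for every $f\in M^b(E)$ and $x\in E$
\[ Q_t f(x) = \underbrace{\E_x^Y\!\left[f(Y_t)\,\e^{-\int_0^t \a(Y_u)\,\dd u}\right]}_{=:P_t f(x)} + \underbrace{\int_0^t \E_x^Y\!\left[\a(Y_s)\,K(Q_{t-s}f)(Y_s)\,\e^{-\int_0^s \a(Y_u)\,\dd u}\right]\dd s}_{=:R_t f(x)}. \]
Here $P_t$ is the sub-Markovian semigroup of $Y$ killed at rate $\a$, while $R_t$ encodes the first jump. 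The plan is to show, for $f\in L_0^Y$, that $(R_tf)/t\to \a\,Kf$ and that $(P_tf-f)/t$ behaves uniformly like $(Q_t^Yf-f)/t$ up to the additive term $-\a f$; summing the two contributions will yield both the domain identity and the formula.

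I would first treat the jump term $R_tf$. Writing $R_tf(x)=\int_0^t G_{t,s}(x)\,\dd s$ with $G_{t,s}(x)=\E_x^Y[\a(Y_s)K(Q_{t-s}f)(Y_s)\e^{-\int_0^s\a(Y_u)\dd u}]$, it suffices to prove $\sup_{s\in[0,t]}\|G_{t,s}-\a\,Kf\|_\infty\to0$, since $\|(R_tf)/t-\a Kf\|_\infty\le \sup_{s\le t}\|G_{t,s}-\a Kf\|_\infty$. Using that $K$ and $Q_r^Y$ are contractions on $(M^b(E),\|.\|_{\infty})$ and that $|\e^{-\int_0^s\a(Y_u)\dd u}-1|\le \a^* t$ for $s\le t$ by \eqref{assumption alpha}, one compares $G_{t,s}$ to $Q_s^Y(\a Kf)$ and splits the difference into a factor controlled by $\sup_{r\le t}\|Q_rf-f\|_\infty$ (which tends to $0$ because $f\in L_0^Y=L_0$ via Proposition~\ref{prop24}), a factor of order $t$, and the term $\|Q_s^Y(\a Kf)-\a Kf\|_\infty$. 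This last term vanishes as $t\to0$ precisely because $\a\,Kf\in L_0^Y$: the hypothesis gives $Kf\in L_0^Y$, and applying it once more to $Kf$ gives $\a\cdot(Kf)\in L_0^Y$.

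The crux is the killed term $P_tf$, where I expect the real difficulty. I would prove $\|(P_tf-Q_t^Yf)/t+\a f\|_\infty\to0$, so that $(P_tf-f)/t=(Q_t^Yf-f)/t-\a f+o(1)$ uniformly. Since $(P_tf-Q_t^Yf)(x)=\E_x^Y[f(Y_t)(\e^{-\int_0^t\a(Y_u)\dd u}-1)]$ and the second-order remainder of the exponential is $O(t)$ uniformly, this reduces to $\tfrac1t\E_x^Y[f(Y_t)\int_0^t\a(Y_u)\dd u]\to \a f$ uniformly. The key manoeuvre is the Markov property: for $u\le t$ one has $\E_x^Y[f(Y_t)\a(Y_u)]=Q_u^Y(\a\,Q_{t-u}^Yf)(x)$, so replacing $f(Y_t)$ by $f(Y_u)$ costs $Q_u^Y(\a(Q_{t-u}^Yf-f))(x)$, bounded uniformly by $\a^*\sup_{r\le t}\|Q_r^Yf-f\|_\infty\to0$. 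What remains is $\tfrac1t\int_0^tQ_u^Y(\a f)(x)\,\dd u$, which converges to $\a f$ uniformly since $\a f\in L_0^Y$ by hypothesis (the Ces\`aro estimate $\|\tfrac1t\int_0^tQ_u^Yg\,\dd u-g\|_\infty\le\sup_{u\le t}\|Q_u^Yg-g\|_\infty$). This uniform identification of the killed generator as $\AA^Y-\a$ is the most delicate step, being the only place where $f(Y_t)$ and $f(Y_u)$ must be compared and where $\a f\in L_0^Y$ is genuinely used.

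Combining the two estimates gives, for every $f\in L_0^Y$,
\[ \frac{Q_tf-f}{t}=\frac{Q_t^Yf-f}{t}-\a f+\a\,Kf+o(1)\quad\text{in }\|.\|_{\infty}. \]
Since $\DD_{\AA}\subset L_0=L_0^Y$ and $\DD_{\AA^Y}\subset L_0^Y$, and since $-\a f+\a Kf$ is a fixed element of $L$, the left-hand side converges in $(L,\|.\|_{\infty})$ if and only if $(Q_t^Yf-f)/t$ does. This yields $\DD_\AA=\DD_{\AA^Y}$ and, on this common domain, $\AA f=\AA^Y f+\a\,Kf-\a f$.
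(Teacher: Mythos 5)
Your proof is correct, and it takes a genuinely different route from the paper's, even though the key analytic estimates coincide. The paper argues directly from the probabilistic construction: it splits $Q_hf(x)=\E_x[f(X_h)\1_{N_h=0}]+\E_x[f(X_h)\1_{N_h=1}]+\E_x[f(X_h)\1_{N_h\geq 2}]$, identifies the no-jump term with your killed semigroup $P_hf$, shows by a lengthy conditioning on $(\t_1,Y^{(0)},Y^{(1)})$ that the one-jump term divided by $h$ converges uniformly to $\a\, Kf$, and kills the $\{N_h\geq 2\}$ term with an $O(h^2)$ bound coming from the Poisson domination \eqref{dominationpoisson}. You instead start from the integrated Kolmogorov backward equation (Theorem~\ref{TheoKBEi}), whose second term composes the first jump with the \emph{full} semigroup $Q_{t-s}$ rather than the move semigroup; this lets you treat all jump events at once, replacing the paper's one-jump computation plus two-jump bound by the single estimate $\sup_{r\leq t}\|Q_r f-f\|_\infty\to 0$, i.e. strong continuity of $Q$ itself, which you correctly retrieve from $f\in L_0^Y=L_0$ via Proposition~\ref{prop24}. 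The killed part is treated identically in both proofs (the paper's terms $T_1$ and $T_2$): the same Markov-property insertion of $Q^Y_{t-u}$, the same Ces\`aro-type bound, and the same use of the hypothesis $\a\times f\in L_0^Y$; likewise both proofs invoke $\a\times Kf\in L_0^Y$ (obtained by applying the hypothesis to $Kf\in L_0^Y$) at the same spot, so the assumptions are consumed in exactly the same way. What your route buys is economy: it recycles Theorem~\ref{TheoKBEi} and Proposition~\ref{prop24} and thereby avoids the longest computation in the paper's proof (the term $T_3$). What the paper's route buys is self-containedness: it needs only the iterative construction, whereas yours rests on extending \eqref{kbei} from indicator functions to bounded measurable integrands --- a routine linearity/monotone-convergence step, which you rightly flag but should carry out explicitly. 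Your closing domain argument ($\DD_\AA\subset L_0=L_0^Y$, $\DD_{\AA^Y}\subset L_0^Y$, and $\a\, Kf-\a f$ being a fixed element of $L$) is exactly what is needed to obtain both inclusions in $\DD_\AA=\DD_{\AA^Y}$ together with the formula for $\AA f$.
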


This result, proved in Section~\ref{proof generator} of the supplementary material, shows that the generator $\AA$ of the jump-move process $(X_t)_{t \geq 0}$ is just the sum of the generator of the move $\AA^Y$ and the generator of the jump, specifically of a pure jump Markov process with intensity $\a$ and transition kernel $K$, that is $\a \times (K- Id)$ (see \cite{feller1971}). 

Note that for a pure jump process, $Q_t^Y=Id$ for any $t \geq 0$, $L_0^Y=\DD_{\AA^Y}=M^b(E)$ and $\AA^Y \equiv 0$, so that all assumptions of Theorem~\ref{theogenerator} are trivially true in this setting. More generally, consider a jump-move process with a Feller inter-jumps process, i.e. $Q_t^Y \, C_0(E) \subset C_0(E)$, and a Feller jump transition, i.e. $K \, C_0(E) \subset C_0(E)$, so that $(X_t)_{t \geq 0}$ is Feller by Theorem~\ref{conditionsfeller}, then we can take $L_0^Y=C_0(E)$ and again the assumptions of Theorem~\ref{theogenerator} are satisfied since $\alpha$ is bounded.

%

\subsection{Application to systems of interacting particles in $\R^d$}\label{spatial Feller}

We go back to the setting of Section~\ref{sec2.3} concerned with systems of interacting particles in $W\subset \R^d$, in order to inspect whether the examples of dynamics presented therein are (continuous) Feller or not. To do so and be able to check the conditions of Theorem~\ref{conditionsfeller}, we need to first clarify what are the sets $ C_b(E)$ and $ C_0(E) $ in this framework. 
Remember that in this setting $E = \cup_{n \geq 0} E_n$ where $E_n = \pi_n( W^n)$ corresponds to the set of unordered $n$-tuples of $W$, and we have equipped $E$ with the distance $d_1$ defined by \eqref{defd1}.
As a first result, it can be verified that $(E,d_1)$ is a locally compact Polish space, see \cite[Proposition 2.2]{schuhmacher2008} and Section~\ref{sec6.2} in the supplementary material. To characterise the elements of $ C_b(E)$, we shall use the following proposition, proved in Section~\ref{sec6.2}. 
\begin{prop} \label{suitecvgentedeE}
 Let $x \in E$ and $(x^{(p)})_{p \geq 1}$ a sequence converging to $x$, i.e. $d_1(x^{(p)},x)\to 0$ as $p\to\infty$.  Then there exist $p_0 \geq 1$ such that for all $p \geq p_0$ $n(x^{(p)})=n(x)$  
                and, when $n(x)\geq 1$, a sequence $(\sigma_{p})_{p \geq p_0}$ of $\SS_{n(x)}$
        such that for any $i \in \{ 1 , \dots, n(x) \},$
        \begin{equation}
            \|x_{\sigma_{p}(i)}^{(p)}-x_{i}\| \underset{p \rightarrow \infty}{\longrightarrow} 0.
        \end{equation}
\end{prop}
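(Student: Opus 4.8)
The plan is to prove Proposition~\ref{suitecvgentedeE} directly from the definition \eqref{defd1} of $d_1$, exploiting the structure of the minimum over permutations. Let me sketch the two assertions in turn.

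\medskip

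\textbf{Step 1: stabilisation of the cardinality.} First I would establish that $n(x^{(p)}) = n(x)$ for $p$ large. The key observation from \eqref{defd1} is that the term $(n(y)-n(x))/n(y)$ contributes a purely combinatorial lower bound whenever the cardinalities differ: if $n(x^{(p)}) \neq n(x)$, then writing $m = \min(n(x^{(p)}),n(x))$ and $M = \max(n(x^{(p)}),n(x))$, the minimisation over $\SS_M$ in \eqref{defd1} leaves at least $M-m \geq 1$ unmatched points, each forcing a contribution of $1$ after division by $M$. Hence $d_1(x^{(p)},x) \geq (M-m)/M \geq 1/M$. The only way this can tend to $0$ is if $M$ is unbounded; but if $n(x)\geq 1$ is fixed and $M=\max(n(x^{(p)}),n(x))$ grew without bound along a subsequence, then $n(x^{(p)})\to\infty$ along it, forcing $d_1(x^{(p)},x)\ge (n(x^{(p)})-n(x))/n(x^{(p)})\to 1\neq 0$, a contradiction. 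The case $n(x)=0$ (where $x=\text{\O}$) is handled separately using $d_1(x^{(p)},\text{\O})=1$ whenever $x^{(p)}\neq\text{\O}$. In all cases convergence forces $n(x^{(p)})=n(x)$ for all $p\geq p_0$, which is the first claim.

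\medskip

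\textbf{Step 2: matching the coordinates.} For $p \geq p_0$ we now have $n:=n(x^{(p)})=n(x)$, so \eqref{defd1} simplifies to
\[
d_1(x^{(p)},x) = \frac{1}{n}\min_{\s \in \SS_n} \sum_{i=1}^{n} \bigl(\|x^{(p)}_{\s(i)}-x_i\|\wedge 1\bigr),
\]
since the cardinality-difference term vanishes. For each $p$ let $\s_p\in\SS_n$ be a minimiser (the set $\SS_n$ is finite so a minimiser exists). Then $\sum_{i=1}^n (\|x^{(p)}_{\s_p(i)}-x_i\|\wedge 1) = n\, d_1(x^{(p)},x) \to 0$, and since each summand is nonnegative, every individual term $\|x^{(p)}_{\s_p(i)}-x_i\|\wedge 1 \to 0$. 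For $p$ large enough this truncated distance is below $1$, hence equals $\|x^{(p)}_{\s_p(i)}-x_i\|$ itself, giving $\|x^{(p)}_{\s_p(i)}-x_i\|\to 0$ for every $i$. Relabelling $\s_p$ to match the indexing in the statement (i.e. taking the permutation that sends $i$ to $\s_p(i)$) yields the desired sequence of permutations.

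\medskip

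\textbf{Main obstacle.} The routine parts are the inequalities bounding $d_1$ from below by cardinality mismatches; the one point deserving genuine care is the interplay between the truncation $\wedge 1$ in \eqref{defd1} and the passage to the actual Euclidean norm in the conclusion. One must argue that convergence of the truncated per-coordinate distances to $0$ (which is what the vanishing of the sum directly gives) upgrades to convergence of the untruncated norms; this is immediate once each term drops below the truncation threshold $1$, but it is worth stating explicitly. A secondary subtlety is that the minimising permutation $\s_p$ may change with $p$ and need not converge to a single permutation, but the statement only asks for a sequence $(\s_p)$, so no convergence of the permutations themselves is required — only termwise convergence of the matched coordinates, which is exactly what the argument delivers.
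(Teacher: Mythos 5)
Your proposal is correct and follows essentially the same route as the paper: the paper first stabilises the cardinality via the continuity of $n(.)$ (Lemma on $n(.)$, which rests on the same lower bound $d_1(x,y)\geq |n(y)-n(x)|/(n(x)\vee n(y))$ that you rederive), and then, exactly as you do, selects for each $p$ a minimising permutation $\sigma_p\in\SS_{n}$ and deduces coordinatewise convergence from the vanishing of the sum $\frac{1}{n}\sum_i(\|x_i-x^{(p)}_{\sigma_p(i)}\|\wedge 1)$. The only cosmetic difference is that the paper argues the last step by contradiction along a subsequence, whereas you note directly that a vanishing sum of nonnegative terms forces each term to vanish and then remove the truncation $\wedge 1$, which is an equivalent and slightly cleaner phrasing.
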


On the other side,  to deal with $C_0(E)$, we provide a characterization of the compact sets  of each $E_n$, for $n \geq 1$, and an important property about the compact sets of $E$.

\begin{prop} \label{compactsetofE}
Suppose that $W$ is a closed set of $\R^d$. 
\begin{enumerate}
    \item Let $n \geq 1$ and $A$ be a closed subset of $(E_n,d_1).$ Then $A$ is compact if and only if the following property holds:
    \[ \forall \, w \in W, \; \exists \, R \geq 0, \; s.t. \; \forall \, x = \{ x_1,...,x_n\} \in A, \; \underset{1 \leq k \leq n}{\max} \{ \|x_k-w\|  \} \leq R. \]
    \item Let $A$ be a compact set of $E$. Then there exists $n_0 \geq 0$ such that $\displaystyle A \subset \bigcup_{n=0}^{n_0} E_n. $
\end{enumerate}
\end{prop}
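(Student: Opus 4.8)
The plan is to prove the two assertions separately, relying on the two facts recalled just before the statement: the projection $\pi_n$ is continuous from $(W^n,\|\cdot\|)$ to $(E_n,d_1)$, and $n(\cdot)$ is continuous on $(E,d_1)$, together with the coordinatewise convergence provided by Proposition~\ref{suitecvgentedeE}. Since $n(\cdot)$ is continuous, $E_n=n^{-1}(\{n\})$ is closed in $E$, and on $E_n$ the term $(n(y)-n(x))$ in \eqref{defd1} vanishes, so that $d_1$ restricted to $E_n$ is just the (truncated, symmetrised) matching distance between two unordered $n$-tuples. For Part~1 I would establish the two implications of the equivalence by a sequential argument and by Heine--Borel, respectively.

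For the direct implication, assume $A$ is compact and, aiming at a contradiction, suppose the stated property fails for some $w\in W$: then for every $p\geq 1$ there is $x^{(p)}=\{x_1^{(p)},\dots,x_n^{(p)}\}\in A$ with $\max_{1\le k\le n}\|x_k^{(p)}-w\|>p$. By compactness of $A$, a subsequence of $(x^{(p)})_p$ converges in $d_1$ to some $x\in A\subset E_n$. Proposition~\ref{suitecvgentedeE} then yields, along this subsequence, permutations $\sigma_p\in\SS_n$ with $\|x^{(p)}_{\sigma_p(i)}-x_i\|\to 0$ for each $i$; in particular each relabelled coordinate stays bounded, hence $\max_{1\le k\le n}\|x_k^{(p)}-w\|\le \max_i\|x^{(p)}_{\sigma_p(i)}-x_i\|+\max_i\|x_i-w\|$ is bounded along the subsequence, contradicting $\max_k\|x_k^{(p)}-w\|>p\to\infty$. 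Thus the property holds for every $w\in W$.

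For the converse, assume the property and fix any $w\in W$ with associated radius $R$. Since $W$ is closed, $W_R:=\bar B(w,R)\cap W$ is closed and bounded in $\R^d$, hence compact, and every configuration of $A$ has all its points in $W_R$, i.e. $A\subset\pi_n(W_R^{\,n})$. As $W_R^{\,n}$ is compact (a finite product of compacts) and $\pi_n$ is continuous, $\pi_n(W_R^{\,n})$ is a compact subset of $E_n$; being a closed subset of this compact set, $A$ is compact. This proves Part~1.

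Part~2 is then immediate: if $A\subset E$ is compact, continuity of $n(\cdot):(E,d_1)\to(\N,|\cdot|)$ gives that $n(A)$ is a compact, hence bounded, subset of $\N$; letting $n_0=\max n(A)$ yields $A\subset\bigcup_{n=0}^{n_0}E_n$. I expect the only genuinely delicate point to be the direct implication of Part~1, where one must pass to a convergent subsequence and correctly invoke Proposition~\ref{suitecvgentedeE} to turn $d_1$-convergence into coordinatewise control; the remaining steps are routine consequences of Heine--Borel and the continuity of $\pi_n$ and $n(\cdot)$.
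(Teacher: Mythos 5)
Your proof is correct, but it follows a genuinely different route from the paper's in the key direction of Part~1. The paper proves the forward implication (compact $\Rightarrow$ bounded) via total boundedness: assuming the property fails at some $w$, it shows that no finite subset of $A$ can be an $\ep$-net, because one can always find a configuration whose farthest point from $w$ exceeds the reach of every candidate net point, so $A$ is not totally bounded and hence not compact. You instead use sequential compactness: extract a $d_1$-convergent subsequence from the unbounded configurations and invoke Proposition~\ref{suitecvgentedeE} to turn $d_1$-convergence into coordinatewise control, contradicting unboundedness via the triangle inequality $\max_k\|x^{(p)}_k-w\|\le\max_i\|x^{(p)}_{\sigma_p(i)}-x_i\|+\max_i\|x_i-w\|$ (which is valid since $\sigma_p$ merely relabels). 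Your argument is shorter and reuses machinery already established, while the paper's $\ep$-net argument is self-contained and does not need Proposition~\ref{suitecvgentedeE} at all. For the converse you both use the same idea --- $A$ sits inside a continuous $\pi_n$-image of a compact subset of $W^n$ --- though your choice of the product $W_R^{\,n}$ of balls is slightly cleaner than the paper's ball in the averaged norm $\|\cdot\|_n$. For Part~2, your direct argument (continuous image $n(A)$ of a compact set is bounded in $\N$) streamlines the paper's proof by contradiction with sequences; both rest on the continuity of $n(\cdot)$.
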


The two previous propositions are the main tools to investigate the continuous Feller and Feller properties of the jump kernel $K$ of a jump-move process. Concerning the inter-jumps move process $(Y_t)_{t\geq 0}$, remember that we can easily define it on each $E_n$ from a continuous process $(Z^{|n}_t)_{t\geq 0}$ on $W^n$ by the projection \eqref{defmovesurEn}. Similarly as for the continuity property discussed in Section~\ref{sec2.3}, the Feller properties of  $(Y_t)_{t\geq 0}$ on $(E,d_1)$ inherit from that of  $(Z^{|n}_t)_{t\geq 0}$  on $W^n$. 

\begin{prop} \label{prop33}
    Let $(Y_t)_{t \geq 0}$ be defined on $E$ by \eqref{defmovesurEn}, then
 if $( Z^{\, |n}_t)_{t \geq 0}$ is a Feller continuous (resp. Feller) process on $W^n$ for every $n \geq 1$ then $(Y_t)_{t \geq 0}$ is a Feller continuous (resp. Feller) process on $E$. 
\end{prop}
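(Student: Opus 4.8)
The plan is to reduce the Feller properties of $(Y_t)_{t\geq 0}$ on $(E,d_1)$ to those of each $(Z^{|n}_t)_{t\geq 0}$ on $W^n$, exploiting the decomposition $E=\cup_{n\geq 0}E_n$ together with the fact that the move stays confined to a single $E_n$, i.e.\ $\P_x((Y_t)_{t\geq 0}\subset E_n)=\ind_{E_n}(x)$. Since $Q_t^Yf(x)=\sum_{n\geq 0}Q_t^{Z^{|n}}(f\circ\pi_n)((x_1,\dots,x_n))\ind_{x\in E_n}$ as derived in Section~\ref{sec2.3}, the value of $Q_t^Yf$ on $E_n$ depends only on the restriction $f|_{E_n}$ through the single summand $Q_t^{Z^{|n}}(f\circ\pi_n)$. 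The strategy is therefore to verify the two defining requirements---$Q_t^Y C_b(E)\subset C_b(E)$ (resp.\ $Q_t^YC_0(E)\subset C_0(E)$) and, for the Feller case, strong continuity---by treating the behaviour \emph{within} each $E_n$ and the transition \emph{between} different $E_n$ separately.

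First I would handle the Feller continuous case. Fix $f\in C_b(E)$ and a convergent sequence $x^{(p)}\to x$ in $(E,d_1)$. By Proposition~\ref{suitecvgentedeE} there is $p_0$ with $n(x^{(p)})=n(x)=:n$ for all $p\geq p_0$, and (when $n\geq1$) permutations $\sigma_p$ with $\|x^{(p)}_{\sigma_p(i)}-x_i\|\to0$. Hence for large $p$ the quantity $Q_t^Yf(x^{(p)})$ equals $Q_t^{Z^{|n}}(f\circ\pi_n)$ evaluated at an ordered tuple converging in $W^n$ to $(x_1,\dots,x_n)$ up to the permutation $\sigma_p$; by permutation equivariance of $Z^{|n}$ the choice of ordering is immaterial. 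Now $f\circ\pi_n\in C_b(W^n)$ because $\pi_n$ is continuous and $f$ is continuous and bounded, so Feller continuity of $Z^{|n}$ gives $Q_t^{Z^{|n}}(f\circ\pi_n)\in C_b(W^n)$, whence $Q_t^Yf(x^{(p)})\to Q_t^Yf(x)$. Boundedness of $Q_t^Yf$ is immediate from $\|Q_t^Yf\|_\infty\leq\|f\|_\infty$. This establishes $Q_t^YC_b(E)\subset C_b(E)$.

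For the Feller case I would add two ingredients. Strong continuity $\|Q_t^Yf-f\|_\infty\to0$ for $f\in C_0(E)$ follows, as noted just before Theorem~\ref{conditionsfeller}, automatically from $Q_t^YC_0(E)\subset C_0(E)$ by continuity of the trajectories $t\mapsto Y_t$; so it suffices to prove the inclusion for $C_0(E)$. The within-$E_n$ continuity argument is identical to the one above, noting that $f\in C_0(E)$ restricts to $f\circ\pi_n\in C_0(W^n)$ (using Proposition~\ref{compactsetofE}(1), which identifies the compact sets of $E_n$ with those whose ordered preimages are bounded, hence relatively compact in $W^n$). The genuinely new point---and the step I expect to be the main obstacle---is vanishing at infinity on all of $E$ simultaneously. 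Here I would invoke Proposition~\ref{compactsetofE}(2): any compact $B\subset E$ is contained in $\cup_{n=0}^{n_0}E_n$, so escaping to infinity in $E$ means either $n(x)\to\infty$ or, within a fixed $E_n$, escaping to infinity in $W^n$. For the second mode, $f\circ\pi_n\in C_0(W^n)$ and the contraction $|Q_t^{Z^{|n}}(f\circ\pi_n)|\leq Q_t^{Z^{|n}}|f\circ\pi_n|$ must be shown to vanish at infinity in $W^n$, which is exactly $Q_t^{Z^{|n}}C_0(W^n)\subset C_0(W^n)$, i.e.\ the Feller hypothesis on $Z^{|n}$. The delicate part is the first mode: one must control the tail over the infinitely many spaces $E_n$ uniformly, showing that $\sup_{n\geq N}\|Q_t^{Z^{|n}}(f\circ\pi_n)\|_\infty$ is small for large $N$. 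This is handled by observing that for $f\in C_0(E)$ and any $\epsilon>0$ the compact set $B=\{|f|\geq\epsilon\}$ lies in $\cup_{n=0}^{n_0}E_n$, so $f$ vanishes identically outside this finite union up to $\epsilon$; consequently $f\circ\pi_n\equiv0$ (or is uniformly below $\epsilon$) for $n>n_0$, forcing $Q_t^Yf$ to be below $\epsilon$ on $\cup_{n>n_0}E_n$ as well. Combining the two modes shows $Q_t^Yf$ vanishes at infinity on $E$, completing the proof.
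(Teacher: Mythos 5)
Your proposal is correct and follows essentially the same route as the paper's own proof: Feller continuity via Proposition~\ref{suitecvgentedeE}, permutation equivariance and $f\circ\pi_n\in C_b(W^n)$, and the $C_0$-inclusion by splitting escape to infinity into the high-$n(x)$ mode (controlled by Proposition~\ref{compactsetofE} together with confinement of the move to each $E_n$) and the within-$E_n$ mode (the Feller hypothesis on $Z^{|n}$ applied to $f\circ\pi_n\in C_0(W^n)$). The only cosmetic difference is the strong-continuity step, which you dispatch by the standard equivalence (pointwise continuity at $0$ plus $C_0$-preservation implies strong continuity) that the paper itself invokes just before Theorem~\ref{conditionsfeller}, whereas the paper's supplementary proof re-derives it directly from the strong continuity of each $Z^{|n}$ and the tail bound over $n>n_0$.
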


By this result, standard inter-jumps motions are Feller continuous and Feller, as this is the case under mild assumptions for our examples \ref{moveito} and \ref{growthprocess} detailed below. 
Concerning the jump kernels, the global picture is as follows. They are generally Feller continuous, but not necessarily Feller even if  the underlying space $W$ is compact, as showed  in the following example. 
However if we restrict ourselves to birth kernels, then they are generally Feller (see Examples~\ref{noynaissgauss}  and \ref{noynaissgibbs} below). On the other hand, if we restrict ourselves to death kernels, then they are Feller if $W$ is compact, but not otherwise, see Example~\ref{noymort} below. Remark that a birth-and-death  jump kernel as in \eqref{K birth-death} is (continuous) Feller when the birth kernel $K_\beta$ and the death kernel $K_\delta$ are. So it is generally continuous Feller, and if $W$ is compact, it is generally Feller.

Let us make these informal claims more specific through some examples. The first one presents an example of jump kernel on a set $W$, possibly compact, that is continuous Feller but not Feller. The other ones correspond to the examples introduced in Section~\ref{sec2.3}.

\bigskip

\noindent \textbf{Example.} 
Consider the jump kernel $K$ defined for $f \in M^b(E)$ by $Kf(x)= \sum_{i=1}^{n(x)} f(\{x_i\})/n(x)$ for $x=\{x_1,\dots,x_{n(x)}\}\in E$, so that  $K(x,E_1)=1$ for any $x\in E$. Let $x^{(p)}$ be a sequence converging to $x$, from which we define $p_0$ and $(\sigma_p)_{p\geq p_0}$ as in Proposition~\ref{suitecvgentedeE}. Let $f \in C_b(E)$. Then  $K f(x^{(p)})  = \sum_{i=1}^{n(x)}   f( x^{(p)}_{\s_p(i)})/n(x)$ tends to $\sum_{i=1}^{n(x)}  f( x_{i})/n(x) = Kf(x)$ as $p\to\infty$, which shows the continuous Feller property of $K$, i.e. $K \, C_b(E) \subset C_b(E)$. Let us now show that $K$ is not Feller. Assume without loss of generality that $0\in W$. Consider the function $f(x)=\max(1-\|x\|,0)\1_{n(x)=1}$, where we abusively write $\|x\|:=\|x_1\|$ when $x=\{x_1\}$, $x_1\in W$. 
Note that $f\in C_0(E)$. Let $B$ be a compact subset of $E$. From  Theorem \ref{compactsetofE} there exists $n_0 \geq 0$ such that $ B \subset \cup_{n=0}^{n_0} E_n$. Choose $y=\{0,\dots,0\}\in E_{n_0+1}$. Then $y\notin B$ but $Kf(y)=1$ proving that $Kf\notin C_0(E)$.

\bigskip

\noindent \textbf{Example \ref{noymort} (continued)}\textit{(death kernel)}: For the death kernel $K_\delta$ of this example, we have
\begin{itemize}
\item[(i)] $K_\d C_b (E) \subset C_b(E)$
\item [(ii)] $K_\d C_0(E)\subset  C_0(E)$ if $W$ is compact, but not necessarily otherwise.
\end{itemize}
To prove the first property, take $x\in E$, a sequence $(x^{(p)})_{p \geq 0}$ converging to $x$, and $p_0$ and $(\sigma_p)_{p\geq p_0}$ from Proposition~\ref{suitecvgentedeE}. Then it is not difficult to verify that  $\lim_{p \rightarrow \infty} w(x_{\sigma_p(i)}^{(p)},x^{(p)}) = w(x_i,x)$ by continuity of $g$. Moreover, $ d_1 \left ( x^{(p)} \, \backslash \, x_{ \sigma_p(i)}^{(p)}, x \, \backslash \, x_{i} \right ) \leq  \sum_{j \neq i} \|x_{\sigma_p(j)}^{(p)} - x_{j} \|/(n-1)$ which shows that $x^{(p)} \, \backslash \, x_{\sigma_p(i)}^{(p)} \underset{p \rightarrow \infty}{\longrightarrow} x \, \backslash \, x_{i}$. 
Therefore, for any $f\in C_b(E)$,
\begin{align*}
    \lim \limits_{p \rightarrow \infty} K_\d f(x^{(p)}) & = \lim \limits_{p \rightarrow \infty}  \sum_{i=1}^{n(x)}     w(x_i^{(p)},x^{(p)}) f(x^{(p)} \, \backslash \, x^{(p)}_i) \\
    & = \lim \limits_{p \rightarrow \infty}  \sum_{i=1}^{n(x)}     w(x_{\sigma_p(i)}^{(p)},x^{(p)}) f(x^{(p)} \, \backslash \, x^{(p)}_{\sigma_p(i)})  \\
    & = \sum_{i=1}^{n(x)} w(x_{i},x) f(x \backslash \, x_{i})   \\
    &= K_\d f(x).
\end{align*}
Let us now consider the second claim $(ii)$. Take $f \in C_0(E)$ and $\ep >0$. We fix $A$ a compact set of $(E,d_{1})$ such that $|f(x)|< \ep$ for $x \notin A$. By Proposition~\ref{compactsetofE},  $ A \subset \bigcup_{n=0}^{n_0} E_n$ for some $n_0$. As a straightforward consequence of Proposition~\ref{compactsetofE} (see Corollary \ref{Cor9} in Section~\ref{sec6.2} of the supplementary material) the set $ B :=  \bigcup_{n=0}^{n_0+1} E_n $ is a compact set when $W$ is compact and it satisfies $K_\d(x,A)=0$ for $x \notin B$. 
This implies that for $x\notin B$,  
\begin{equation}\label{C0 example}
    |K_\d f(x)|  \leq \left | \int_A f(y) K_\d (x,\dd y)  \right | + \left | \int_{A^c} f(y) K_\d (x, \dd y)  \right | \leq ||f||_{\infty} K_\d (x,A) + \ep \, K_\d (x,A^c) \leq \ep,
\end{equation}
and so $K_\delta f\in C_0(E)$. Let us finally show that this result is not valid any more if $W$ is not compact. Assume without loss of generality that $0\in W$ and consider as in the previous example the function $f\in C_0(E)$ defined by  $f(x)=\max(1-\|x\|,0)\1_{n(x)=1}$. Let $B$ be any compact subset of $E$. Then $B_2=B\cap E_2$ is compact because $E_2$ is closed and by Proposition~\ref{compactsetofE},  for any $x=\{x_1,x_2\} \in B_2$, there exists  $R > 0$ such that $\max\{\|x_1\|,\|x_2\|\} \leq R$. Take $y=\{0,y_2\}$ in $E_2$ such that $\|y_2\|>R+1$, which is possible since $W$ is not compact. Then $y\notin B$ but  $K_\d f(y)=w(y_2,y)$ proving that $Kf\notin C_0(E)$.

\bigskip

\noindent \textbf{Example \ref{noynaissgauss} (continued)}\textit{(birth kernel as a mixture)}: For this example,  we shall prove that if $\mathring W \neq \varnothing$ and if  the dispersion function $v$ is continuous, then $K_\b C_b (E) \subset C_b(E)$
and  $K_\b C_0(E)\subset  C_0(E)$. 
Take $f\in C_b (E)$,  $x\in E$ and a sequence $(x^{(p)})_{p \geq 0}$ converging to $x$, from which we define $p_0$ and $(\sigma_p)_{p\geq p_0}$ from Proposition~\ref{suitecvgentedeE}. We have, for $p\geq p_0$,
\begin{align*} 
    K_\b f(x^{(p)})
    &=  \dfrac{1}{n(x)}  \sum_{i=1}^{n(x)} \dfrac{1}{z(x_{\sigma_p(i)}^{(p)},x^{(p)})}  \int_{W} f(x^{(p)} \cup \{ \xi \} )\varphi \left ( \frac{\xi-x_{\sigma_p(i)}^{(p)}}{v(x_{\sigma_p(i)}^{(p)},x^{(p)})} \right )  d\xi\\
    &=\dfrac{1}{n(x)}  \sum_{i=1}^{n(x)}  \frac{\int_{\R^d} \1_W(x_{\sigma_p(i)}^{(p)}+ v(x_{\sigma_p(i)}^{(p)} ,x^{(p)})\xi ) f(x^{(p)} \cup \{ x_{\sigma_p(i)}^{(p)}+ v(x_{\sigma_p(i)}^{(p)} ,x^{(p)})\xi \} ) \varphi(\xi)  d\xi}{\int_{\R^d}  \1_W(x_{\sigma_p(i)}^{(p)}+ v(x_{\sigma_p(i)}^{(p)} ,x^{(p)})\xi ) \varphi(\xi) d\xi}.
    \end{align*}
  By continuity of $v$, the involved indicator functions tend to $\1_W(x_i+ v(x_i ,x)\xi )$ for any $x_i+ v(x_i ,x)\xi\in \mathring W$. On the other hand, for any $i\in \{1,...,n(x)\}$ and any $\xi$, 
\begin{align*}
    & d_1 \left ( x^{(p)} \cup \{ x_{\sigma_p(i)}^{(p)}+ v(x_{\sigma_p(i)}^{(p)} ,x^{(p)})\xi \}, x \cup \{ x_{i} + v(x_{i},x) \, \xi \} \right ) \\
    & \leq \frac{1}{n(x)+1} \left ( \sum_{j =1}^{n(x)} \|x_{\sigma_p(j)}^{(p)} - x_{j} \| + \|x_{\sigma_p(i)}^{(p)} + v(x_{\sigma_p(i)}^{(p)},x^{(p)}) \, \xi  - x_{i} - v(x_{i},x) \, \xi  \| \right ) \\
    & \leq \frac{1}{n(x)+1} \left ( \sum_{j =1}^{n(x)} \|x_{\sigma_p(j)}^{(p)} - x_{j} \| + \|x_{\sigma_p(i)}^{(p)} - x_{i} \| + \|\xi\| \, |v(x_{\sigma_p(i)}^{(p)},x^{(p)})-v(x_{i},x)| \right )
    \end{align*}
which tends to 0 as $p\to\infty$. So by continuity of $f$, $ f(x^{(p)} \cup \{ x_{\sigma_p(i)}^{(p)}+ v(x_{\sigma_p(i)}^{(p)} ,x^{(p)})\xi \} )$ tends to $f(x \cup \{x_i + v(x_i,x)\xi)$. We conclude by the dominated convergence theorem, since $f$ is bounded and $\varphi$ is a density, that  $K_\b f(x^{(p)})$ converges to $ K_\b f(x)$ as $p\to\infty$, which proves that $K_\b C_b (E) \subset C_b(E)$.

Let us now prove that $K_\b C_0(E) \subset C_0(E).$
Let $f \in C_0(E)$ and $\ep >0$. We fix $A \subset E$ a compact set such that $x \notin A \Rightarrow | f(x) | < \ep$. By Proposition~\ref{compactsetofE}, $A \subset \bigcup_{n=0}^{n_0} E_n$ for some $n_0$. Letting $A_n=A \cap E_n$, for $n=0,\dots,n_0,$ we remark that $A_n$ is a compact set because $E_n$ is closed. 
By Proposition~\ref{compactsetofE}, there exists $R_n \geq 0$ such that for every $a=\{a_1,\dots,a_n\} \in A_n,$ 
$\max_{1 \leq k \leq n} \|a_k\| \leq R_n$. Let now $B_{n}=  \{ x \in E_{n}, \sum_{k=1}^{n} \|x_k\|/n \leq R_n \}$ and $B = \bigcup_{n=0}^{n_0-1} B_{n}$. We can verify (see the proof of Proposition~\ref{compactsetofE}) that $B_n$ is compact and so is $B$. We claim that if $x\notin B$, then $K_\beta(x,A)=0$. Indeed, if $K_\b (x,A)>0$ then $K_\b (x,A_n) >0$ for some $n\in\{0,\dots,n_0\}$, but since $  K_\b(x,A_0) \leq K_\b(x, \{ \textnormal{\O} \})=0$ it cannot be $n=0$. Now, for $n=1,\dots,n_0$,  $K_\b (x,A_n) >0$ implies that $x \in E_{n-1}$ and $A_n \subset \{ z \cup x, \, z \in W \}$ since $K_\b ( x, W \cup x)= 1$. So  $\max_{1 \leq k \leq n-1} \|x_k\| \leq R_n$, whereby $x\in B_{n-1}$. This shows that if $K_\b (x,A)>0$ then $x\in B$, as we claimed it. We deduce that for any $x\notin B$,  $|K_\b f(x)|\leq \epsilon$ as in \eqref{C0 example}.

\bigskip

\noindent \textbf{Example \ref{noynaissgibbs} (continued)}\textit{(birth kernel based on a Gibbs potential)}:  
This birth kernel $K_\b$ is both Feller continuous and Feller, whenever the potential $V$ is continuous and  locally stable. By the latter, we mean that there exists $\psi\in L^1(W)$ such that for any $x\in E$, 
$\exp(-(V(x \cup \xi)-V(x)))\leq \psi(\xi)$ \citep{moeller:waagepetersen:03}. Under these conditions, we can prove similarly as in Example~\ref{noynaissgauss} that $K_\b C_b(E) \subset C_b(E)$ by use of the dominated convergence theorem and that $K_\b C_0(E) \subset C_0(E)$. Note that the examples of potentials considered in 
Section~\ref{invariant Gibbs}, leading to an invariant Gibbs measure, are continuous and locally stable.

\bigskip

\noindent \textbf{Example \ref{moveito} (continued)}\textit{(Langevin diffusions as inter-jumps motions)}:
The inter-jumps process $(Y_t)_{t \geq 0}$, defined through the stochastic differential equation \eqref{sdeito}, is a Feller continuous and a Feller process on $E$. This is due to the fact that $g$ being globally Lipschitz, the function $\Phi_n$ in \eqref{sdeito} is also globally Lipschitz for any $n\geq 1$, and so the solution $(Z^{\, |n}_t)_{t \geq 0}$ of \eqref{sdeito} is Feller continuous and Feller \citep{schilling2012}. The conclusion then follows from Proposition \ref{prop33}.

\bigskip

\noindent \textbf{Example \ref{growthprocess} (continued)}\textit{(Growth interaction processes)}: 
In this example, the inter-jumps motion  is driven by \eqref{Z growth}. If the functions $F_{1,n}, \dots, F_{n,n}$ are Lipschitz continuous, then $(Y_t)_{t \geq 0}$ is Feller continuous and Feller. Indeed, the solution of  \eqref{Z growth} is continuous in the initial condition $Z_0^{|n}$ under this assumption \citep{markley}, implying the Feller continuity of $(Y_t)_{t \geq 0}$ by Proposition \ref{prop33}. Moreover, since the marks $m_i(t)$ in  $(Z_t^{|n})_{t \geq 0}$ are all increasing functions, we have that $\|Z_t^{|n}\|\geq \|Z_0^{|n}\|$. Let $f\in C_0(W^n)$, $\epsilon>0$ and $R>0$ such that $\|x\|>R\Rightarrow |f(x)|<\epsilon$. Therefore if $\|Z_0^{|n}\|>R$, we have $\|Z_t^{|n}\|\geq R$ and so $f(Z_t^{|n})<\epsilon$, proving that $Z_t^{|n}$ is Feller and so is $(Y_t)_{t \geq 0}$ by Proposition \ref{prop33}.

\section{Ergodic properties of birth-death-move processes}\label{asymptotics}

In this section we focus on birth-death-move processes as described in Section~\ref{sec2.2}. Accordingly, the state space is $E=\bigcup_{n =0}^{\infty} E_n$ where $(E_n)_{n \geq 1}$ is a sequence of disjoint locally compact Polish spaces with $E_0=\{ \textnormal{\O} \}$, and the jump-kernel $K$ reads as in \eqref{K birth-death}.  Remember that in this setting the jump intensity function is $\alpha=\beta+\delta$ where $\beta$ and $\delta$ are the birth and death intensity functions. We introduce the following notation
\begin{equation}\label{bndn}
    \b_n = \underset{x \in E_n}{\sup} \b(x), \quad \d_n = \underset{x \in E_n}{\inf} \d(x)\quad \textnormal{and}\quad \a_n = \b_n + \d_n.
\end{equation}
We present in Section~\ref{sec4.1} conditions on the sequences $(\beta_n)$ and $(\delta_n)$ ensuring the convergence of the birth-death-move process towards a unique invariant probability measure. Section~\ref{sec4.2} 
 provides a geometric rate of convergence and we characterize some invariant measures in Section~\ref{sec:invariant}.

\subsection{Convergence to an invariant measure} \label{sec4.1}

Let $(X_t)_{t \geq 0}$ be a birth-death-move process as in Section~\ref{sec2.2}.  Inspired by \cite{preston}, the first step to establish the ergodic properties of $(X_t)_{t \geq 0}$ is to construct a coupling between  $(X_t)_{t \geq 0}$ and a  simple birth-death process $(\eta_t)_{t \geq 0}$  on $\N$ with birth rates $\b_n $ and death rates $\d_n$. This coupling is detailed in Appendix~\ref{sec6.7}. In a nutshell, we built a jump-move process $\ck C_t=(X'_t, \eta'_t)$ so that the  properties stated in the following theorem are satisfied, in particular  we have the equality in distribution $(X'_t)_{t\geq 0}=(X_t)_{t\geq 0}$ and $( \eta'_t)_{t\geq 0}=(\eta_t)_{t\geq 0}$.   We denote by $\ck Q_t$ the transition kernel of $\ck C_t$ and by $\PP(\N)$  the power set of $\N$. 

\begin{theo} \label{Marginals}
Suppose that $(Y_t)_{t \geq 0}$ is a Feller process and that $K C_0(E) \subset C_0(E)$. Then for any $t \geq 0$, $(x,n) \in E \times \N$, $A \in \EE$ and $S \in \PP(\N)$ we have :
\begin{enumerate}
    \item $\ck Q_t((x,n);E \times S)=q_t(n,S)$,
    \item $\ck Q_t((x,n);A \times \N)=Q_t(x,A)$,
    \item if $x \in E$ with $n(x) \leq n$, then $\ck Q_t((x,n);\Gamma)=0$ where $\Gamma= \{ (y, m) \in E \times \N \, ; \,  n(y) >  m\}$.
\end{enumerate}
\end{theo}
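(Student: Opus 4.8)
The plan is to construct the coupled process $\ck C_t=(X'_t,\eta'_t)$ explicitly via an iterative scheme mirroring the mechanistic construction of Section~\ref{jump-move general}, and then verify the three marginal properties essentially by induction on the number of jumps. The coupling should be designed so that both components jump at the same times, governed by a common jump intensity $\ck\alpha(x,n)=\max(\alpha(x),\alpha_n)$ (or a similar dominating rate), with the two components sharing the same exponential clock, and so that each jump is resolved simultaneously as a birth or a death for both coordinates, using a common uniform random variable. Between jumps, the first coordinate $X'_t$ moves according to $(Y_t)_{t\geq0}$ while $\eta'_t$ stays constant (since a birth-death process on $\N$ has no inter-jump motion). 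The Feller assumption on $(Y_t)$ and $K\,C_0(E)\subset C_0(E)$ enter precisely to guarantee that this coupled jump-move process is itself well-defined as a genuine (Feller) jump-move process on $E\times\N$ — which is why those hypotheses are stated — so that its transition kernel $\ck Q_t$ exists and satisfies its own Kolmogorov backward equation in the sense of Theorem~\ref{TheoKBEi}.

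For properties (1) and (2), the strategy is to check that the $\eta'$-marginal dynamics reproduce a simple birth-death process with rates $\beta_n,\delta_n$, and that the $X'$-marginal dynamics reproduce the original birth-death-move process with kernel $K$ and intensity $\alpha$. The cleanest route is to show that $(x,n)\mapsto \ck Q_t((x,n);E\times S)$ and $(x,n)\mapsto\ck Q_t((x,n);A\times\N)$ each solve the Kolmogorov backward equation \eqref{kbei} associated respectively to $(\eta_t)$ and to $(X_t)$, and then invoke the uniqueness of the sub-stochastic solution from Proposition~\ref{existuniciteKBE}. Concretely, I would integrate out the unused coordinate in the coupled backward equation: marginalising over $E$ must collapse the motion part (since $Y$ does not affect $\eta'$) and collapse the dominating intensity back to $\beta_n+\delta_n$ on the event relevant to the $\N$-component, yielding exactly the generator of the simple birth-death process; marginalising over $\N$ must collapse the coupling's thinning back to the plain kernel $K$ and intensity $\alpha$. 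Matching the resulting equation to \eqref{kbei} and applying uniqueness gives the two equalities in distribution.

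For property (3), the key observation is that the coupling is built so that a death in the $X'$-component can only occur when the $\eta'$-component also undergoes a death, and a birth in $\eta'$ always accompanies (or dominates) a birth in $X'$; the upshot is that the number of particles $n(X'_t)$ is dominated by $\eta'_t$ pathwise whenever $n(X'_0)\leq\eta'_0$. I would prove the invariance of the region $\{n(y)\leq m\}$ by induction over the jump times $T_j$: if $n(X'_{T_j})\leq\eta'_{T_j}$, then since the inter-jump motion preserves the number of particles (i.e. $n(Y_t)$ is constant along a move, by the assumption $\P_x((Y_t)\subset E_{n(x)})=1$ of Section~\ref{sec2.2}), the inequality is maintained up to $T_{j+1}$, and the simultaneous-jump design ensures it is preserved across the jump as well. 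Hence the set $\Gamma=\{n(y)>m\}$ is never entered, giving $\ck Q_t((x,n);\Gamma)=0$.

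The main obstacle I expect is not any single computation but justifying that the coupled process is a legitimate jump-move process to which Theorem~\ref{TheoKBEi} and Proposition~\ref{existuniciteKBE} apply — that is, constructing $\ck C_t$ rigorously so that its waiting times, jump resolution, and inter-jump motion fit the framework of Section~\ref{jump-move general} on the product space $E\times\N$, with a bounded continuous intensity and a Feller jump kernel. This is exactly where the Feller hypotheses on $(Y_t)$ and $K$ are indispensable, and it is the reason the detailed construction is deferred to Appendix~\ref{sec6.7}; the marginal identities (1)--(3) then follow comparatively mechanically from uniqueness of the backward equation and a pathwise domination argument.
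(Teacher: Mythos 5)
Your proposal has three genuine gaps, and together they mean the plan would not go through as written.

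First, the coupling you describe is not the right one. To make part 3 work while keeping both marginals, the design must be: when the two counts agree ($n(x)=n$), a death of $\eta'$ \emph{forces} a simultaneous death of $X'$ (possible because $\d_n=\inf_{E_n}\d\leq\d(x)$), and a birth of $X'$ \emph{forces} a simultaneous birth of $\eta'$ (possible because $\b(x)\leq\b_n$); in addition, one-sided jumps ($\eta'$-birth alone at rate $\b_n-\b(x)$, $X'$-death alone at rate $\d(x)-\d_n$) are indispensable to restore the exact marginal rates. You state the reverse inclusion for deaths (``a death in the $X'$-component can only occur when the $\eta'$-component also undergoes a death''): that would cap the death rate of $X'$ at $\d_n\leq\d(x)$, destroying property 2, and it does nothing to forbid an $\eta'$-death alone from a diagonal state, which is exactly the transition that enters $\Gamma$. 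Likewise, ``each jump is resolved simultaneously as a birth or a death for both coordinates'' is incompatible with the marginals; the paper's kernel $\ck K$ (Preston's) explicitly contains the one-sided moves. With the corrected kernel, your induction over jump times for part 3 is essentially the paper's proof.

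Second, your route to properties 1 and 2 --- marginalise the coupled backward equation and invoke the uniqueness of Proposition~\ref{existuniciteKBE} --- is precisely the strategy the authors say breaks down once there is a move (it is Preston's argument for the pure-jump case). Two concrete obstructions: (a) the uniqueness in Proposition~\ref{existuniciteKBE} is among sub-stochastic \emph{kernels on a fixed state space}, whereas you are comparing a function on $E\times\N$ with a kernel on $\N$ (resp.\ on $E$), so it does not apply as stated; (b) more seriously, for property 2 the intensity $\ck\a(Y'_u,k)$ felt by the $X'$-component depends on the hidden coordinate $k$, which changes at $\eta'$-only jumps, and at each such jump the inter-jump motion of $X'$ is stopped and restarted with a fresh copy of $Y$; consequently the exponential weights in the coupled backward equation do not collapse to $\e^{-\int_0^t\a(Y_u)\,\dd u}$ and the $E$-marginal does not visibly satisfy \eqref{kbei}. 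The paper avoids this entirely: it proves that $\psi_f(t)=\ck Q_t(f\times\ind_\N)(x,n)$ is differentiable with $\partial_t\psi_f=\psi_{\AA f}$ for $f\in\DD_{\AA^Y}$ (Lemma~\ref{suptendsvers0}), where $\AA$ is the generator from Theorem~\ref{theogenerator}, then runs an interpolation argument (constancy of $s\mapsto\psi_{Q_sg}(t-s)$), a density argument $C_0(E)=\overline{\DD_\AA}$, and Dynkin's theorem; part 1 is handled by an analogous ODE-plus-interpolation argument that needs no Feller assumption at all.

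Third, you misidentify where the hypotheses enter: the Feller property of $(Y_t)_{t\geq0}$ and $K\,C_0(E)\subset C_0(E)$ are \emph{not} needed for the coupled process to be well defined --- the construction of Section~\ref{jump-move general} only requires the bounded continuous intensity ($\ck\a\leq 2\a^*$), the kernel $\ck K$ and the continuous Markov move $\ck Y$. They are needed to identify the generator $\AA$ and its domain, and to have $C_0(E)=\overline{\DD_\AA}$, i.e.\ they are the engine of the proof of property 2, not of the existence of $\ck C$.
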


This theorem is proved in Appendix~\ref{sec6.7}. 
When the move $(Y_t)_{t \geq 0}$ is constant, which is the setting in  \cite{preston}, then the proof is easy under \eqref{assumption alpha} by use of the derivative form of the Kolmogorov backward equation. In the general case of a birth-death-move process, this strategy does not work anymore and the proof becomes more challenging. We managed to do it by exploiting the generator of $(X_t)_{t \geq 0}$, see Theorem~\ref{theogenerator}, which explains the Feller conditions in Theorem~\ref{Marginals}.

   We deduce from the third point of Theorem~\ref{Marginals} that for any $x \in E_m$ with $m \leq n$, then 
   $$\P_{(x,n)}\left (  ( \ck C_s)_{s \geq 0 } \subset \Gamma^c  \right ) =1.$$
     This means  that the simple process $(\eta_t)_{t \geq 0}$ converges more slowly to the state $0$ than  $(X_t)_{t \geq 0}$ converges to the state \O. We can thus build upon the renewal theory \citep{feller1971} to prove that \O \  is an ergodic state for $(X_t)_{t \geq 0}$ whenever $0$ is an ergodic state for $(\eta_t)_{t \geq 0}$. Conditions ensuring the latter are either  \eqref{eq30} or \eqref{eq31} below \citep{karlin1957}, so that we obtain the following, proved in Section~\ref{proof 4.1}. 

 \begin{theo} \label{existunicitémesinvariante}
    Suppose that $(Y_t)_{t \geq 0}$ is a Feller process and that $K C_0(E) \subset C_0(E)$. Suppose that $\d_n>0$ for all $n \geq 1 $ and one of the following condition holds :
\begin{align} 
(i)&  \text{ there exists }  n_0 \geq 1 \text{ such that } \b_{n}=0 \text{ for any } n \geq n_0,  \label{eq30}\\
(ii)& \ \b_n>0 \text{ for all } n \geq 1, \ \sum_{n=2}^{\infty} \dfrac{\b_1 \dots \b_{n-1}}{\d_1 \dots \d_n} < \infty \text{ and }  \displaystyle \sum_{n=1}^{\infty} \dfrac{\delta_1  \dots \delta_n}{\beta_1  \dots \beta_n} = \infty.  \label{eq31}
\end{align}
Then, $\mu(A):=\lim_{t \rightarrow \infty} Q_t(x,A)$ exists for all $x \in E$ and $A \in \EE$, and is independent of $x$.  
Moreover $\mu$ is a probability measure on $(E,\EE)$ and it is the unique invariant probability measure for the process, i.e. such that $\mu(A) = \int_E Q_t(x,A) \, \mu(\dd x)$ for any $A \in \EE$ and $t \geq 0$.
\end{theo}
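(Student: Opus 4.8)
The plan is to follow the renewal strategy of \cite{preston}: use the coupling of Theorem~\ref{Marginals} to show that $\text{\O}$ is a positive recurrent state for $(X_t)_{t\geq0}$, and then exploit the regeneration at the single point $\text{\O}$ to deduce convergence via the key renewal theorem. Throughout I rely on the fact that $(X_t)_{t\geq0}$ is Feller, hence strong Markov, which holds by Theorem~\ref{conditionsfeller} since $(Y_t)_{t\geq0}$ is Feller (so $Q_t^Y C_0(E)\subset C_0(E)$) and $KC_0(E)\subset C_0(E)$. The first step is to control the hitting time $\tau=\inf\{t\geq0:X_t=\text{\O}\}$. Taking the coupling $\ck C_t=(X'_t,\eta'_t)$ of Theorem~\ref{Marginals} with $X'_0=x$ and $\eta'_0=n(x)$, the third point of that theorem yields $n(X'_s)\leq\eta'_s$ for all $s$ almost surely, so $\{s:\eta'_s=0\}\subset\{s:X'_s=\text{\O}\}$ and the hitting time of $\text{\O}$ by $X'$ is dominated by the hitting time of $0$ by $\eta'$. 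Since $(X'_t)\ed(X_t)$, $(\eta'_t)\ed(\eta_t)$, and conditions~\eqref{eq30} or \eqref{eq31} together with $\delta_n>0$ are exactly the Karlin--McGregor criteria \citep{karlin1957} for $0$ to be positive recurrent for the simple birth--death process $(\eta_t)_{t\geq0}$, the expected hitting time of $0$ from $n(x)$ is finite. Hence $\E_x[\tau]<\infty$, and in particular the first return time $R$ to $\text{\O}$ satisfies $\E_{\text{\O}}[R]<\infty$.

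Next I would set up the regeneration. Because $\text{\O}$ is a single point and $(X_t)_{t\geq0}$ is strong Markov, the process starts afresh at every entrance into $\text{\O}$, so its successive excursions are i.i.d.\ and $(X_t)_{t\geq0}$ is regenerative with generic cycle length $R$. Conditioning at the return time $R$ via the strong Markov property gives the renewal equation
\[
Q_t(\text{\O},A)=\P_{\text{\O}}(X_t\in A,\,R>t)+\int_0^t Q_{t-s}(\text{\O},A)\,dF(s),
\]
where $F$ denotes the law of $R$. The sojourn at $\text{\O}$ before leaving is exponential with rate $\a(\text{\O})=\b(\text{\O})$, so that (in the non-degenerate case $\b(\text{\O})>0$; otherwise $\text{\O}$ is absorbing and $\mu=\delta_{\text{\O}}$ trivially) $F$ has an absolutely continuous component and is non-lattice, while $s\mapsto\P_{\text{\O}}(X_s\in A,\,R>s)$ is dominated by the non-increasing integrable map $s\mapsto\P_{\text{\O}}(R>s)$ and is therefore directly Riemann integrable. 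The key renewal theorem \citep{feller1971} then yields
\[
\mu(A):=\lim_{t\to\infty}Q_t(\text{\O},A)=\frac{1}{\E_{\text{\O}}[R]}\,\E_{\text{\O}}\!\left[\int_0^R\ind_{X_s\in A}\,ds\right].
\]
Taking $A=E$ gives $\mu(E)=1$, and nonnegativity and countable additivity are clear from this occupation-measure expression, so $\mu$ is a probability measure on $(E,\EE)$.

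Finally I would remove the dependence on $x$ and establish invariance and uniqueness. For general $x$, decomposing at the first entrance into $\text{\O}$ gives
\[
Q_t(x,A)=\P_x(X_t\in A,\,\tau>t)+\int_0^t Q_{t-s}(\text{\O},A)\,dG_x(s),
\]
with $G_x$ the law of $\tau$; since $\tau<\infty$ almost surely, the first term vanishes as $t\to\infty$ and the second converges to $\mu(A)$ because $Q_{t-s}(\text{\O},A)\to\mu(A)$ for each $s$ and $G_x$ is a probability distribution, so $\lim_{t\to\infty}Q_t(x,A)=\mu(A)$ independently of $x$. Invariance follows from Chapman--Kolmogorov: writing $Q_{s+t}(x,A)=\int_E Q_t(y,A)\,Q_s(x,dy)$ and letting $s\to\infty$, the left-hand side tends to $\mu(A)$ while, $Q_s(x,\cdot)$ converging setwise to $\mu$ and $y\mapsto Q_t(y,A)$ being bounded and measurable, the right-hand side tends to $\int_E Q_t(y,A)\,\mu(dy)$, giving $\mu(A)=\int_E Q_t(y,A)\,\mu(dy)$. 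Uniqueness is then immediate: any invariant probability measure $\nu$ satisfies $\nu(A)=\int_E Q_t(x,A)\,\nu(dx)$, and letting $t\to\infty$ with dominated convergence gives $\nu(A)=\mu(A)$.

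I expect the main obstacle to be the rigorous deployment of the regenerative/renewal machinery in this general, non-countable state space: justifying the strong Markov restart at entrances to the single state $\text{\O}$, verifying the non-lattice and direct Riemann integrability hypotheses of the key renewal theorem, and ensuring that the limit interchange in the invariance step rests on setwise (not merely weak) convergence of $Q_s(x,\cdot)$. By contrast, the positive-recurrence input is comparatively painless, precisely because Theorem~\ref{Marginals} has already done the delicate work: the inequality $n(X'_s)\leq\eta'_s$, which encodes that the inter-jumps motion never lets $X$ climb above the dominating chain $\eta$, is exactly what transfers finiteness of the expected hitting time from the simple birth--death process to $(X_t)_{t\geq0}$.
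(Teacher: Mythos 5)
Your proposal is correct and follows essentially the same route as the paper: the coupling of Theorem~\ref{Marginals} transfers the Karlin--McGregor ergodicity of the dominating simple birth--death process to finiteness of the (return) hitting time of $\textnormal{\O}$, the key renewal theorem of \cite{feller1971} applied at the regeneration state $\textnormal{\O}$ gives $\lim_{t\to\infty}Q_t(\textnormal{\O},A)$, a first-entrance decomposition extends the limit to arbitrary $x$, and Chapman--Kolmogorov plus dominated convergence yield invariance and uniqueness. Your extra remarks (explicit strong Markov justification of the regeneration, the absorbing case $\b(\textnormal{\O})=0$, and the occupation-measure form of $\mu$) are harmless refinements of the same argument, not a different method.
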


\subsection{Rate of convergence} \label{sec4.2}

Based on the coupling given by Theorem~\ref{Marginals}, and under the assumptions of Theorem~\ref{existunicitémesinvariante}, the rate of convergence of $Q_t$  towards the invariant measure $\mu$ follows from the rate of convergence of the simple birth-death process $\eta$ towards its invariant distribution. This is proven and exploited in 
\cite{moller1989} in the case of  spatial birth-death processes (without move), based upon the coupling of \cite{preston}.
Since Theorem~\ref{Marginals} extends this coupling, we deduce in the following theorem the same rates of convergence as in \cite{moller1989}. The proof is readily the same and we omit the details. 

\begin{theo}\label{theo:rate}
Suppose that $(Y_t)_{t \geq 0}$ is a Feller process and that $K C_0(E) \subset C_0(E)$. 
 Let $\gamma_1$ and  $\gamma_2$ be two probability measures on $(E,\EE)$, such that  one of the two following conditions holds: 
 \begin{align} 
(i)& \text{  \eqref{eq30} holds true and for }k=1,2,\  \gamma_k \left ( \bigcup_{n=0}^{n_0} E_n \right ) = 1,  \label{eq32}\\
(ii)& \text{  \eqref{eq31} holds true and for }k=1,2,\  \sum_{n=2}^{\infty } \gamma_k ( E_n) \sqrt{\frac{\d_1 \dots \d_n}{\b_1 \dots \b_{n-1}}}< \infty.   \label{eq33}
\end{align}
Then there exist real constants $c >0$ and $0<r<1$ such that for any $t \geq 0$ 
$$ \underset{A \in \EE}{\sup} \left | \int_E Q_t(x,A) \gamma_1 (dx) - \int_E Q_t(y,A) \gamma_2(dy) \right | \leq c r^t . $$
Moreover when the condition \eqref{eq32} holds, the constants $c$ and $r$ can be chosen independently of $\gamma$ and $\kappa.$
\end{theo}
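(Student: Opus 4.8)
The plan is to transfer the geometric ergodicity of a simple birth-death process on $\N$ to the birth-death-move process through the coupling of Theorem~\ref{Marginals}, following \cite{moller1989} for the move-free case. First I would realise, on a common probability space, two copies $(X^1_t)_{t\geq 0}$ and $(X^2_t)_{t\geq 0}$ of the birth-death-move process with initial laws $\gamma_1$ and $\gamma_2$, each obtained as the $E$-marginal of a coupling $\check C^k_t=(X^k_t,\eta^k_t)$ supplied by Theorem~\ref{Marginals}. Here $(\eta^k_t)_{t\geq 0}$ is a simple birth-death process on $\N$ with birth rates $\beta_n$ and death rates $\delta_n$, started from the push-forward law $\gamma_k\circ n(\cdot)^{-1}$, and point~3 of Theorem~\ref{Marginals} yields the key domination $n(X^k_t)\leq\eta^k_t$ for all $t\geq 0$ almost surely. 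I would moreover couple $\eta^1$ and $\eta^2$ with one another so as to control their first simultaneous visit to $0$.

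The second step is the coupling argument on $E$. Set $\tau=\inf\{t\geq 0:\eta^1_t=\eta^2_t=0\}$. Since $\eta^k_t=0$ forces $n(X^k_t)=0$, i.e. $X^k_t=\textnormal{\O}$, at time $\tau$ both copies sit in the single state $\textnormal{\O}$ of $E_0$; I may then glue them, replacing $(X^2_t)_{t\geq\tau}$ by $(X^1_t)_{t\geq\tau}$. Because the process is Feller, hence strong Markov (\cite{Bass}), this gluing at the stopping time $\tau$ preserves the law of $X^2$ while ensuring $X^1_t=X^2_t$ for all $t\geq\tau$. The standard coupling inequality then gives
\[
\sup_{A\in\EE}\left|\int_E Q_t(x,A)\,\gamma_1(\dd x)-\int_E Q_t(y,A)\,\gamma_2(\dd y)\right|
\leq \P(X^1_t\neq X^2_t)\leq \P(\tau>t),
\]
so the whole problem reduces to a geometric tail bound for $\tau$, an object living entirely on $\N$.

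The third step is the estimate for the coupled simple birth-death processes, where \cite{karlin1957} and \cite{moller1989} enter. Under $\delta_n>0$ together with \eqref{eq30} or \eqref{eq31}, the state $0$ is ergodic for the simple birth-death process, so $(\eta^1_t,\eta^2_t)$ reaches $(0,0)$ almost surely and $\tau<\infty$. The geometric decay $\P(\tau>t)\leq c\,r^t$ for suitable $c>0$ and $0<r<1$ is exactly the estimate established in \cite{moller1989}: the hypotheses \eqref{eq32} and \eqref{eq33} are precisely the required conditions on the starting laws $\gamma_k\circ n(\cdot)^{-1}$ (finite support in $\{0,\dots,n_0\}$ under \eqref{eq30}, and the weighted tail summability under \eqref{eq31}). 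Combining with the display yields the claimed bound, and under \eqref{eq32} the initial particle number is bounded by $n_0$, so the constants depend only on $n_0$ and on $(\beta_n,\delta_n)$, giving the stated uniformity in $\gamma_1,\gamma_2$.

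The one genuinely delicate point in the presence of inter-jumps motion, namely the pathwise domination $n(X^k_t)\leq\eta^k_t$, is already furnished by Theorem~\ref{Marginals}; consequently the remaining arguments reproduce those of \cite{moller1989} and \cite{preston} essentially verbatim. I therefore expect the main obstacle to be not any new estimate but the bookkeeping of the reduction: verifying that the strong Markov gluing at $\textnormal{\O}$ is legitimate and, above all, that the two conditions \eqref{eq32}--\eqref{eq33} on the initial distributions match exactly the hypotheses under which \cite{moller1989} obtains the geometric control of the simultaneous return of two simple birth-death processes to $0$. This is why the detailed computations may safely be omitted.
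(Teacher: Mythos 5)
Your proposal is correct and takes essentially the same route as the paper: the paper's own proof of this theorem consists precisely in invoking the coupling of Theorem~\ref{Marginals} (the extension of Preston's coupling to processes with inter-jump motion) and then noting that the argument of \cite{moller1989} carries over verbatim, with \eqref{eq32}--\eqref{eq33} playing the role of conditions on the initial laws $\gamma_k\circ n(\cdot)^{-1}$ of the dominating simple birth-death processes. Your sketch of the gluing at the empty configuration via the strong Markov property and the reduction of everything to a geometric tail bound on $\mathbb{N}$ is exactly the content that the paper, like you, defers to \cite{moller1989}.
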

This result is declined into several particular cases in \cite[Corollary 3.1]{moller1989}, that are also valid in our setting.
In particular, when $\gamma_1$ corresponds to the invariant measure $\mu$ obtained in Theorem~\ref{existunicitémesinvariante}, and $\gamma_2$ is a point measure, the assumptions \eqref{eq32} and \eqref{eq33} simplify and we get the following corrollary. 
\begin{cor} 
Suppose that $(Y_t)_{t \geq 0}$ is a Feller process and that $K C_0(E) \subset C_0(E)$. 
Assume  either \eqref{eq30} or \eqref{eq31} along with
 \begin{align} 
 \sum_{n=2}^{\infty} \sqrt{\frac{\b_1 \dots \b_{n-1}}{\d_1 \dots \d_n}} < \infty \quad\text{ and }  \quad
 \exists N \geq 0, \; s.t. \; \forall \, n \geq N, \; \b_n \leq \d_{n+1}. \end{align}
Denote by $\mu$ the invariant measure given by Theorem~\ref{existunicitémesinvariante}. Then for any $y \in E$,  there exists $c>0$ and $0<r<1$ such that 
\[ \underset{A \in \EE}{\sup} \left |  \mu(A)-  Q_t(y,A)  \right | \leq c r^t . \]
\end{cor}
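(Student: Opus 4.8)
The result is a direct consequence of Theorem~\ref{theo:rate}, obtained by taking $\gamma_1=\mu$ and letting $\gamma_2$ be the Dirac measure at the point $y$. Since $\mu$ is invariant we have $\int_E Q_t(x,A)\,\mu(\dd x)=\mu(A)$ for every $t\geq0$ and $A\in\EE$, while $\int_E Q_t(x,A)\,\gamma_2(\dd x)=Q_t(y,A)$, so the left-hand side in the conclusion of Theorem~\ref{theo:rate} reduces exactly to $\sup_{A\in\EE}|\mu(A)-Q_t(y,A)|$. It therefore suffices to verify that the pair $(\mu,\gamma_2)$ satisfies \eqref{eq32} when \eqref{eq30} holds, and \eqref{eq33} when \eqref{eq31} holds.

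The condition on $\gamma_2$ is the easy half. Under \eqref{eq31} the sum in \eqref{eq33} for $\gamma_2$ reduces to the single term $\sqrt{\d_1\cdots\d_{n(y)}/(\b_1\cdots\b_{n(y)-1})}$ (and vanishes if $n(y)\leq1$), hence is finite for every $y\in E$. Under \eqref{eq30}, condition \eqref{eq32} for $\gamma_2$ holds directly whenever $n(y)\leq n_0$; for a starting point with $n(y)>n_0$ one observes that, since $\b_n=0$ for $n\geq n_0$ while $\d_n>0$, the process started at $y$ can only decrease in $n(\cdot)$ and a.s.\ enters $\bigcup_{n=0}^{n_0}E_n$ at a finite time $T$ with exponentially decaying tail $\P_y(T>t)$, so that combining the strong Markov property at $T$ with the \emph{uniform} bound of Theorem~\ref{theo:rate} (available under \eqref{eq32}) yields the claimed geometric rate, now with a constant $c$ depending on $y$.

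The substantial step is the condition on $\gamma_1=\mu$. Set $a_n:=\sqrt{\d_1\cdots\d_n/(\b_1\cdots\b_{n-1})}$. By the third item of Theorem~\ref{Marginals}, the coupled pair $(X'_t,\eta'_t)$ satisfies $n(X'_t)\leq\eta'_t$ almost surely, so in stationarity $n(X)$ under $\mu$ is stochastically dominated by $\eta_\infty$, whose law $\pi$ is the invariant distribution of the simple birth-death process and satisfies $\pi_n\propto\b_0\b_1\cdots\b_{n-1}/(\d_1\cdots\d_n)$, normalisable precisely because of \eqref{eq31}. Under \eqref{eq30} both $\pi$ and $\mu$ are then supported on $\bigcup_{n=0}^{n_0}E_n$, which is \eqref{eq32} for $\mu$. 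Under \eqref{eq31}, the additional hypothesis $\b_n\leq\d_{n+1}$ for $n\geq N$ gives $a_{n+1}/a_n=\sqrt{\d_{n+1}/\b_n}\geq1$, so $(a_n)$ is eventually non-decreasing; modifying $a_n$ into a genuinely non-decreasing weight for $n<N$ (which only adds finitely many finite terms) and applying the stochastic domination to this non-decreasing function gives
\begin{equation*}
\sum_{n=2}^{\infty}\mu(E_n)\,a_n \;\leq\; C+\sum_{n=2}^{\infty}\pi_n\,a_n .
\end{equation*}
A direct computation yields $\pi_n a_n=\pi_0\b_0\sqrt{\b_1\cdots\b_{n-1}/(\d_1\cdots\d_n)}$, so $\sum_n\pi_n a_n<\infty$ exactly by the first part of the additional hypothesis, establishing \eqref{eq33} for $\mu$.

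The main obstacle is precisely the estimate $\sum_n\mu(E_n)a_n<\infty$: the measure $\mu$ is known only implicitly as the limit produced by Theorem~\ref{existunicitémesinvariante}, and the comparison between its mass on the levels $E_n$ and the explicit weights $\pi_n$ has to be routed entirely through the coupling of Theorem~\ref{Marginals}. The two parts of the extra hypothesis play complementary roles here—$\b_n\leq\d_{n+1}$ provides the monotonicity that lets the stochastic domination be transferred into the weighted sum, while the summability of $\sqrt{\b_1\cdots\b_{n-1}/(\d_1\cdots\d_n)}$ controls $\sum_n\pi_n a_n$. The remaining difficulty, treating initial points with $n(y)>n_0$ under \eqref{eq30}, is more routine and is dealt with by the first-entrance argument above.
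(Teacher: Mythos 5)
Your proof is correct and takes essentially the same route as the paper: the paper obtains this corollary by applying Theorem~\ref{theo:rate} with $\gamma_1=\mu$ and $\gamma_2$ the Dirac mass at $y$, delegating the verification of \eqref{eq32}/\eqref{eq33} to \cite[Corollary 3.1]{moller1989}, and your stochastic-domination argument (level distribution of $\mu$ dominated by the invariant law $\pi$ of the simple process via the third item of Theorem~\ref{Marginals}, monotonicity of the weights from $\b_n\le\d_{n+1}$, and the identity $\pi_n a_n\propto\sqrt{\b_1\cdots\b_{n-1}/(\d_1\cdots\d_n)}$) is precisely that verification carried out explicitly. Your additional first-entrance argument for initial points with $n(y)>n_0$ under \eqref{eq30} is in fact needed for the conclusion ``for any $y\in E$'' as stated, since the Dirac mass at such a $y$ satisfies neither \eqref{eq32} nor \eqref{eq33}, a case the paper glosses over.
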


\subsection{Characterisation of some invariant measures}\label{sec:invariant}

In general the invariant measure $\mu$ of a birth-death-move process $(X_t)_{t \geq 0}$, provided it exists, can be a very complicated distribution that mixes the repartition in $E$ due to births and deaths of points, including the probability to be in $E_n$ for each $n$, with the average distribution on each $E_n$ due to the move process $Y$. 
In particular, note that according to Theorem~\ref{existunicitémesinvariante}, $Y$ does not need to be a stationary process for $(X_t)_{t \geq 0}$ to converge to an invariant measure. Heuristically, this is because the move process is always eventually ``killed'' by a return to \O \  of $(X_t)_{t \geq 0}$ under the hypotheses of Theorem~\ref{existunicitémesinvariante}. 

The situation becomes more intelligible when $Y$ admits an invariant measure that  is compatible with the jumps of  $(X_t)_{t \geq 0}$, as formalised in the next proposition.

\begin{prop} \label{mesinv}
Suppose that $(Y_t)_{t \geq 0}$ is a Feller process and that $K C_0(E) \subset C_0(E)$. 
Assume moreover that there exists a finite measure $\mu$ on $E$ such that for any $f \in \DD_{\AA^Y}$, 
    \begin{align}
        & \int_{E_n} \AA^Y f(x) \, \dd \mu_{|E_n}(x) = 0,\quad \forall n\geq 0, \label{invariant Y} \\
       \text{and }\quad & \int_E \left ( \a (x)K f(x)-\a(x)f(x) \right ) \, \dd \mu(x)=0. \label{invariant jump}
    \end{align}
    Then for any $f \in \DD_{\AA^Y}$, $\displaystyle \int_E \AA f(x) \, \dd \mu(x) = 0$.
\end{prop}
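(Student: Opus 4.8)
The plan is to combine the additive structure of the generator from Theorem~\ref{theogenerator} with the two hypotheses \eqref{invariant Y} and \eqref{invariant jump}. Recall that Theorem~\ref{theogenerator} gives, for $f \in \DD_{\AA^Y}$, the decomposition
\[
\AA f = \AA^Y f + \a \times Kf - \a \times f.
\]
So the quantity to control splits as
\[
\int_E \AA f \, \dd\mu = \int_E \AA^Y f \, \dd\mu + \int_E \left( \a\, Kf - \a\, f \right) \dd\mu.
\]
The second integral on the right is exactly zero by hypothesis \eqref{invariant jump}. It therefore remains to show that the first integral, $\int_E \AA^Y f \, \dd\mu$, vanishes, and this is where the decomposition of $E$ into the disjoint pieces $E_n$ comes in.

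**The key step** is to justify that $\int_E \AA^Y f \, \dd\mu = \sum_{n\geq 0} \int_{E_n} \AA^Y f \, \dd\mu_{|E_n}$, after which each summand is zero by hypothesis \eqref{invariant Y}, giving the claim. Writing $\mu = \sum_{n\geq 0}\mu_{|E_n}$ since the $E_n$ are disjoint and their union is $E$, I would like to interchange the integral and the (countable) sum. Because $\mu$ is a finite measure and one can apply the decomposition of $\mu$ over the partition $(E_n)_{n\geq 0}$, the identity
\[
\int_E \AA^Y f \, \dd\mu = \sum_{n\geq 0} \int_{E_n} \AA^Y f \, \dd\mu_{|E_n} = 0
\]
follows provided $\AA^Y f$ is $\mu$-integrable, which holds since $\AA^Y f \in L_0^Y \subset M^b(E)$ is bounded and $\mu$ is finite. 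Combining the two vanishing contributions then yields $\int_E \AA f \, \dd\mu = 0$.

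**The main obstacle** I anticipate is not the splitting of the jump term, which is immediate from \eqref{invariant jump}, but rather making the decomposition of $\int_E \AA^Y f\,\dd\mu$ over the pieces $E_n$ fully rigorous, and in particular checking that the hypotheses of Theorem~\ref{theogenerator} are in force so that the generator formula applies. Here the standing assumptions of the proposition---that $(Y_t)_{t\geq 0}$ is Feller and $K C_0(E)\subset C_0(E)$---guarantee, together with the boundedness of $\a$, that we may take $L_0^Y = C_0(E)$ and that $\a \times f \in L_0^Y$ and $Kf \in L_0^Y$ whenever $f \in L_0^Y$, exactly as noted after Theorem~\ref{theogenerator}. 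This ensures $\DD_\AA = \DD_{\AA^Y}$ and validates the generator decomposition for every $f \in \DD_{\AA^Y}$. With integrability of the bounded function $\AA^Y f$ against the finite measure $\mu$ settled, the interchange of sum and integral over the partition $(E_n)$ is justified, and the proof is complete.
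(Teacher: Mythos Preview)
Your proof is correct and follows essentially the same approach as the paper: apply the generator decomposition from Theorem~\ref{theogenerator}, use hypothesis \eqref{invariant jump} to kill the jump part, and split $\int_E \AA^Y f\,\dd\mu$ over the partition $(E_n)$ to invoke \eqref{invariant Y}. Your additional justifications (checking the hypotheses of Theorem~\ref{theogenerator} via the Feller assumptions, and verifying integrability to interchange sum and integral) are more detailed than the paper's terse proof but not substantively different.
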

\begin{proof}
    By Theorem~\ref{theogenerator}, for any $f \in \DD_{\AA^Y}$, 
    \begin{align*}
        \int_E \AA f(x) \, \dd \mu(x) & = \int_E \left ( \a (x)K f(x)-\a(x)f(x) \right ) \, \dd \mu(x) + \int_E \AA^Y f(x) \, \dd \mu(x) \\
       &  = \sum_{n \geq 0} \int_{E_n} \AA^Y f(x) \, \dd \mu_{|E_n}(x) =0.
    \end{align*}
\end{proof}

This proposition will be useful to characterize the invariant measure of the birth-death-move processes considered in Section~\ref{invariant Gibbs}. Indeed,  suppose that the hypotheses of Theorem~\ref{existunicitémesinvariante} are satisfied. Then $(X_t)_{t \geq 0}$ converges to a unique invariant measure $\nu$. 
Suppose moreover that the pure jump Markov process with intensity $\a$ and transition kernel $K$ admits some invariant measure $\mu$, and that for any $n\geq 0$, $\mu_{|E_n}$ is also invariant for the move process $Y^{|n}$ on $E_n$. Then by Proposition~\ref{mesinv} and the unicity of $\nu$, we have that $\nu=\mu$.

\section{Application to pairwise interaction processes on $\R^d$} \label{invariant Gibbs}

We present in this section examples of birth-death-move processes, defined through a pairwise potential function $V$ on a compact set $W\subset \R^d$, that converge to the Gibbs probability measure associated to $V$. The specificity is that we make compatible the jumps dynamics with the inter-jumps diffusion, so that Proposition~\ref{mesinv} applies and allows us to characterize this Gibbs measure as the invariant measure. 

When there is no inter-jumps motion, this type of convergence is proved in \cite{preston} and is a prerequisite for perfect simulation of spatial Gibbs point process models (see \cite[Chapter 11]{moeller:waagepetersen:03}).
However the weakness of this approach is that for rigid interactions (as  for instance induced by a Lennard-Jones or a Riesz potential, see the examples below), the dynamics based on spatial births and deaths may mix poorly,  so that the convergence to the associated Gibbs measure may  be very slow. Adding inter-jumps motions  that do not affect the stationary measure, as carried out in this section, may alleviate this issue.

Let $W:= I_1 \times  \dots \times I_d$ where, for $i \in \{1,\dots,d\}$, $I_i$ is a compact interval of $\R$.
Define $\tilde W_n =  \{ (x_1,\dots,x_n) \in  (\mathring{W})^n, \; i \neq j \Rightarrow x_i \neq x_j \}$. As in  Section~\ref{sec2.3}, we let $E_0 = \{ \textnormal{\O} \}$, $E_n=\pi_n( \tilde W_n)$ for $n\geq 1$, and $E=\bigcup_{n=0}^\infty E_n$.

We consider a pairwise potential function $V: E \to \R \cup \{\infty\}$, in the sense that there exist $a>0$ and  $\phi: \R^d\to\R \cup\{\infty\}$ satisfying  $\phi(\xi)=\phi(-\xi)$ for all $\xi\in \R^d$ such that for any $x=\{x_1,\dots,x_n\}\in E_n$, 
$$V(x)=a \,n(x) + \sum_{1\leq i\neq j\leq n} \phi(x_i-x_j),$$ 
when $n\geq 2$, while $V(\{\textnormal{\O} \})=0$ and $V(\{\xi\})=a$ for $\xi\in W$. 
 Let $\phi_0 : (0,\infty)\to \R_+$ be a decreasing function with  $\phi_0(r)\to\infty$ as $r \to 0$. We will assume the following on $\phi$.
\begin{enumerate}[label=(\Alph*)]
\item The potential is locallly stable, i.e there exists $\psi : W \rightarrow \R_+$ integrable such that : 
\begin{equation*}
        \forall \, n \geq 1, \; \forall \, x  \in E_n, \; \forall \, \xi \in W, \; \; \exp \left ( - \sum_{i=1}^n \phi(x_i-\xi) \right ) \leq \psi(\xi).
    \end{equation*}  \label{conditionLS}
 \item $\phi$ is bounded, or otherwise there exists $r_1>0$ such that $\phi(\xi)\geq \phi_0(\|\xi\|)$ for all $\|\xi\|<r_1$. \label{condition repulsive}
   \item $\phi$ is weakly differentiable on $\R^d \backslash \{ 0 \}$,  $\exp(-\phi)$ is weakly differentiable on $\R^d$ and for any $p > d$ we have $\e^{-\phi} \nabla \phi \in L^p_{loc}.$ \label{conditionC}
\end{enumerate}
Let us present some examples of pairwise potentials $\phi$ that fulfil these assumptions. These are standard instances used in spatial statistics and statistical mechanics.

\medskip

\noindent \textbf{Example.} \textit{(repulsive Lennard-Jones potential)}: 
For $\xi\in\R^d$,  $\phi(\xi)=c\| \xi \|^{-12}$ with $c>0$. This potential verifies  condition \ref{conditionLS} with $\psi \equiv 1$ and  condition \ref{condition repulsive}. It is moreover  differentiable on $\R^d \backslash \{0 \}$ and for any $\xi \in \R^d \backslash \{0 \}$, $\nabla \phi(\xi)= - 12c  \xi/\| \xi  \|^{14}.$ We deduce that the function $ \e^{-\phi}\nabla \phi$ can be extended to a continuous function on $\R^d$ by setting $( \e^{-\phi}\nabla \phi)(0)=0$. As a consequence the condition \ref{conditionC} is satisfied.

\medskip

\noindent \textbf{Example.} \textit{(Riesz potential)}:  It is defined on $\R^d \backslash \{ 0 \}$ by $\phi(\xi)=c\| \xi \|^{\a-d}$ for $c>0$ and $0< \a < d$. As in the previous example, we obtain that $\phi$ satisfies the conditions \ref{conditionLS}, \ref{condition repulsive} and \ref{conditionC}.

\medskip

\noindent \textbf{Example.} \textit{(soft-core potential)}:  $\phi(\xi)=- \ln \left ( 1-\exp(-c \|\xi \|^2) \right )$ for $c>0$. Again this potential verifies condition \ref{conditionLS} with $\psi \equiv 1$ and condition \ref{condition repulsive}.  Moreover, we compute for $\xi \in \R^d \backslash \{ 0 \}$, $ \nabla \phi(\xi)= - (2 c \e^{-c \|\xi \|^2})(1-\e^{-c \|\xi \|^2}) \, \xi.$ As $\| \nabla \phi(\xi) \| \sim 1/(c \| \xi \|)$ as  $\| \xi \|\to 0$, we also obtain that the function $ \e^{-\phi}\nabla \phi$ can be extended to a continuous function on $\R^d$ and  condition \ref{conditionC} follows.

\medskip

\noindent \textbf{Example.} \textit{(regularised Strauss potential)}: For $R>0$ and $\gamma\geq 0$, the standard Strauss potential corresponds to $\phi(\xi)=\gamma \1_{\|\xi\|<R}$. We consider a regularised version by introducing a parameter $0<\epsilon<R$, so that  $\phi(\xi)= \gamma$ if $ \| \xi \| \leq R - \ep$, $\phi(\xi)=0$  if $ \| \xi \| \geq R + \ep$, and $\phi$ is interpolated between $R - \ep$ and $R + \ep$ in such a way that it is differentiable. With this regularised version, $\phi$  satisfies the conditions \ref{conditionLS} with $\psi \equiv 1$, \ref{condition repulsive} and \ref{conditionC}.

\medskip

Based on a potential $V$ as above, we construct a birth-death-move process $(X_t)_{t\geq 0}$ with the following characteristics. The birth transition kernel is given as in Example~\ref{noynaissgibbs} by 
\[
K_\b(x,\Lambda \cup x) = \frac{1}{z(x)} \int_\Lambda \e^{-(V(x \cup \xi)-V(x))} \, \dd \xi,
\]
for any $x\in E$ and $\Lambda\subset W$, where $ z(x)=\int_W \exp(-(V(x \cup \xi)-V(x))) \, \dd \xi$. Note that by the local stability assumption \ref{conditionLS}, $z(x)<\infty$ for any $x\in E$.  The death transition kernel is just the uniform kernel, a particular case of Example~\ref{noymort}, i.e. 
$$K_\d f(x) = \dfrac{1}{n(x)} \sum_{i=1}^{n(x)} f(x \, \backslash \, x_i)$$
 for any  $f \in \MM_b(E)$ and $x=\{x_1,\dots,x_{n(x)}\}\in E$. For the birth  and death intensity functions, we take 
 $$\beta(x)=\frac{z(x)}{n(x)\vee 1} \quad \text{and}\quad \delta(x)=\1_{n(x)\geq 1},$$
 for any $x\in E$. Finally, for the move process, we  start with the following Langevin diffusion on $\tilde W_n$:
 \[
\dd Z_{t,i}^{|n} = - \sum_{j \neq i} \nabla \phi(Z_{t,i}^{|n} - Z_{t,j}^{|n}) \, \dd t + \sqrt{2} \, \dd B_{t,i},\quad 1 \leq i \leq n,
\]
with reflecting boundary conditions \citep{fattler2007}, and we deduce the move process $Y$ on $E$ as in Example~\ref{moveito}.

\begin{prop}
The birth-death-move process $(X_t)_{t\geq 0}$ defined above is a Feller process  and converges towards the invariant Gibbs probability measure on $W$ with potential $V$, i.e. the mesure having a density proportional to $\exp(-V(x))$ with respect to the unit rate Poisson point process on $W$.
\end{prop}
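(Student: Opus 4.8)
The plan is to prove the two assertions separately: that $(X_t)_{t\geq 0}$ is Feller, and that it converges to the announced Gibbs measure. The Feller property will be read off from Theorem~\ref{conditionsfeller}, while the convergence together with the identification of its limit will be obtained by combining Theorem~\ref{existunicitémesinvariante} (existence and uniqueness of an invariant measure) with Proposition~\ref{mesinv} (characterisation of a candidate invariant measure through the generator).

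For the Feller property, by Theorem~\ref{conditionsfeller} it suffices to check $Q_t^Y C_0(E)\subset C_0(E)$ and $K C_0(E)\subset C_0(E)$. Since the jump kernel is the birth--death kernel \eqref{K birth-death} built from $K_\beta$ and $K_\delta$, it is enough to treat each factor. The death part is the uniform kernel of Example~\ref{noymort}, which maps $C_0(E)$ into itself because $W$ is compact (Example~\ref{noymort} continued); the birth part is the Gibbs kernel of Example~\ref{noynaissgibbs}, which is Feller as soon as $V$ is continuous and locally stable. Continuity of $V$ on $E$ follows from that of $\phi$ away from the origin (recall that each $E_n$ is built from \emph{distinct} points of $\mathring W$, so pairwise distances stay away from $0$), and local stability is exactly assumption~\ref{conditionLS}. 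For the move, Proposition~\ref{prop33} reduces the Feller property of $(Y_t)_{t\geq 0}$ on $E$ to that of the reflected Langevin diffusion $Z^{|n}$ on $W^n$. Because the drift $-\nabla\phi$ is singular on the diagonal, Example~\ref{moveito} does not apply verbatim and one invokes instead the construction of \cite{fattler2007}, whose hypotheses are secured by assumptions~\ref{condition repulsive} (the repulsion $\phi_0(r)\to\infty$ keeps the particles apart, confining the diffusion to the non-collision set) and~\ref{conditionC} (the integrability $\e^{-\phi}\nabla\phi\in L^p_{loc}$, $p>d$, makes the associated Dirichlet form well posed). This yields $Q_t^Y C_0(E)\subset C_0(E)$.

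For the convergence, I would invoke Theorem~\ref{existunicitémesinvariante}, whose Feller hypotheses are now met. With $\beta_n,\delta_n$ as in \eqref{bndn}, one has $\delta_n=1>0$ for $n\geq 1$, while local stability~\ref{conditionLS} bounds $z(x)$ uniformly over $W$, so that $\beta_n=\sup_{x\in E_n}z(x)/n\leq C/n$ for some constant $C$. Hence $\beta_1\cdots\beta_{n-1}\leq C^{n-1}/(n-1)!$ and $\delta_1\cdots\delta_n=1$, which makes $\sum_{n\geq 2}\beta_1\cdots\beta_{n-1}/(\delta_1\cdots\delta_n)$ converge and $\sum_{n\geq 1}\delta_1\cdots\delta_n/(\beta_1\cdots\beta_n)\geq\sum_n n!/C^n$ diverge; condition~\eqref{eq31} therefore holds, and Theorem~\ref{existunicitémesinvariante} furnishes a unique invariant probability measure $\nu$ to which $Q_t(x,\cdot)$ converges for every $x$.

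It remains to identify $\nu$ with the Gibbs measure $\mu$ of density proportional to $\e^{-V}$ with respect to the unit rate Poisson process on $W$, which is a genuine probability measure by~\ref{conditionLS}. I would apply Proposition~\ref{mesinv} to $\mu$. Condition~\eqref{invariant jump} says that $\mu$ is invariant for the pure-jump spatial birth--death dynamics with intensity $\alpha$ and kernel $K$; the birth and death characteristics have been chosen precisely so that the birth-rate density (proportional to $\e^{-(V(x\cup\xi)-V(x))}$, the $1/z(x)$ cancelling against $\beta$) multiplied by the Gibbs weight $\e^{-V(x)}$ reproduces $\e^{-V(x\cup\xi)}$ up to a constant balanced by the uniform death rate, which is Preston's reversibility identity \citep{preston}; a Mecke-formula computation then yields \eqref{invariant jump}. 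Condition~\eqref{invariant Y} requires that $\mu_{|E_n}$ be invariant for $Y^{|n}$ on each $E_n$, and this holds because the inter-jumps diffusion is the overdamped Langevin dynamics whose drift is the gradient of $V$ and whose reflecting (no-flux) boundary conditions on the box $W$ preserve the stationary density $\e^{-V}$, stationarity being justified within the framework of \cite{fattler2007} under~\ref{condition repulsive}--\ref{conditionC}. Proposition~\ref{mesinv} then gives $\int_E\AA f\,\dd\mu=0$ for all $f\in\DD_{\AA^Y}$, and the uniqueness in Theorem~\ref{existunicitémesinvariante} forces $\nu=\mu$, as explained after Proposition~\ref{mesinv}. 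The main obstacle is the treatment of the move: both the Feller property of the singular reflected Langevin diffusion and the stationarity of $\mu_{|E_n}$ for $Y^{|n}$ hinge on controlling a drift that blows up on the diagonal together with a reflecting boundary, which is exactly where assumptions~\ref{condition repulsive} and~\ref{conditionC} and the machinery of \cite{fattler2007} are indispensable, whereas the detailed balance of the jump part is the more classical ingredient.
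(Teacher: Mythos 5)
Your proposal is correct and follows essentially the same route as the paper: Feller via Theorem~\ref{conditionsfeller} combined with Proposition~\ref{prop33} and the results of \cite{fattler2007} under assumptions~\ref{condition repulsive}--\ref{conditionC}, convergence via condition~\eqref{eq31} of Theorem~\ref{existunicitémesinvariante} using the bound $\beta_n\leq C/n$ from local stability, and identification of the limit via Proposition~\ref{mesinv}, with \eqref{invariant jump} supplied by Preston's result for the pure jump dynamics and \eqref{invariant Y} by the stationarity of the reflected Langevin diffusion from \cite{fattler2007}. The only (harmless) difference is that the paper first records explicitly that local stability makes $\alpha=\beta+\delta$ bounded, as required by the standing assumption~\eqref{assumption alpha}, a fact that is implicit in your bound on $\beta_n$.
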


\begin{proof}
First note that by the local stability assumption \ref{conditionLS}, $\beta(x)\leq e^{-c} \|\psi\|_1/(n(x)\vee 1)$, where $\|\psi\|_1=\int_W \psi(\xi)d\xi$, so that $\alpha(x)=\beta(x)+\delta(x)$ is uniformly bounded as required by \eqref{assumption alpha}. 

Under the assumptions \ref{condition repulsive} and \ref{conditionC}, \cite{fattler2007} proved that the process $(Z_{t}^{|n})_{t\geq 0}$ is a well defined Markov process on $\tilde W_n$ and it is a Feller process. By Proposition~\ref{prop33}, $Y$ is then a Feller process on $E$. On the other hand the jump transition kernel $K$ given by \eqref{K birth-death} satisfies $KC_0(E)\subset C_0(E)$, as verified in Examples~\ref{noymort} and \ref{noynaissgibbs}  in Section~\ref{spatial Feller} since $W$ is compact. We thus obtain by Theorem~\ref{conditionsfeller} that $(X_t)_{t\geq 0}$ is a Feller process.
Moreover, by \ref{conditionLS} we have that for all $n\geq 1$, $\beta_n\leq e^{-c} \|\psi\|_1/n$, so that \eqref{eq31} is verified. 
All assumptions of Theorem~\ref{existunicitémesinvariante} are satisfied, which implies that  
$(X_t)_{t\geq 0}$ converges to a unique invariant probability measure as $t\to\infty$.

It remains to characterize this invariant measure. The choice of $\beta$, $\delta$, $K_\beta$ and $K_\delta$ satisfy the conditions of \cite[Theorem~8.1]{preston}, see also  \cite[Chapter 11]{moeller:waagepetersen:03}, which implies that the invariant measure $\mu$ for the birth-death process (without move) having the previous characteristics is the one claimed in the proposition. We deduce that \eqref{invariant jump} holds true. On the other hand,  \cite{fattler2007} proved under \ref{condition repulsive} and \ref{conditionC} that $(Z_{t}^{|n})_{t\geq 0}$ converges towards the invariant measure on $\tilde W_n$ with a density (with respect to the Lebesgue measure) proportional  to $\exp(-\sum_{1\leq i\neq j\leq n} \phi(x_i-x_j))$. After projection on $E_n$, this means that \eqref{invariant Y} follows, with the same measure $\mu$ as before. Proposition~\ref{mesinv} then applies and $\mu$ is the invariant measure of $(X_t)_{t\geq 0}$. 
\end{proof}

\appendix

\section{Appendix: coupling of birth-death-move processes} \label{sec6.7}

\subsection{Construction of the coupling}\label{construction coupling}

We start from a birth-death-move process $(X_t)_{t \geq 0}$ as defined in Section~\ref{sec2.2}. 
We consider a simple birth-death process  $(\eta_t)_{t \geq 0}$ on $\N$ with birth rate $\b_n$ and death rate $\d_n$ given by \eqref{bndn}.  Note that  $(\eta_t)_{t \geq 0}$ can be viewed as a birth-death-move process on $\N$ having a constant move process $y_t =y_0$, for all $t \geq 0$.  We denote by $(t_j)_{j \geq 1}$ the jump times of $(\eta_t)_{t \geq 0}$ and by $n_t := \sum_{j \geq 1} \ind_{t_j \leq t}$ the number of jumps before $t \geq 0$. We also denote by $q_t$ the transition kernel of $(\eta_t)_{t \geq 0}$, i.e. $q_t(n,S)=\P(\eta_t\in S|\eta_0=n)$ for any $n\in \N$ and $S\in \PP(\N)$.

We define the coupled process $\ck C=(X',\eta')$ as a jump-move process on the state space $\ck E = E \times \N$ equipped with the $\s-$algebra $\ck \EE = \EE \otimes \PP (\N)$. Denoting by $d$ the distance on $E$, we also equip $\ck E$ with the distance $\ck d((x,k);(y,n)):=d(x,y)+  |n-k|/(n\wedge k)\ind_{nk \neq 0}$. To fully characterize $\ck C$, we now specify its jump intensity function $ \ck \a$, its jump kernel $\ck K$ and its inter-jump move process $\ck Y$.

The intensity function $\ck \a : E \times \N \rightarrow \R_+$ is given by 
\begin{equation*}
    \ck \a (x,n)= \left \{    \begin{array}{lcl}
    \beta(x) + \delta(x) + \beta_n + \delta_n & & \textnormal{if } x \in E_m, \, m \neq n  , \\
    \beta_n + \delta(x) & & \textnormal{if } x \in E_n.  
\end{array} \right.
\end{equation*}
One can easily check that $\ck \a $ is a continuous function on $\ck E$, bounded by $2\alpha^*$. 

The transition kernel $\ck K : \ck E \times \ck \EE \rightarrow [0,1]$ takes the same specific form as in \cite{preston}:
\begin{enumerate}
    \item If $x \in E_m, \, m \neq n$ : 
    \[ 
        \ck K((x,n);A \times\{n\})  =  \dfrac{\a (x)}{ \ck \a (x,n)} K(x,A); \]
       \[ \ck K((x,n);\{x\}\times\{n+1\})  =  \dfrac{\beta_n}{ \ck \a (x,n)} ;\]
       \[ \ck K((x,n);\{x\}\times\{n-1\})  =  \dfrac{\delta_n}{ \ck \a (x,n)}.
        \]
    \item If $x \in E_n$ : 
    \[ 
        \ck K((x,n);A_{n+1}\times\{n+1\})  =  \dfrac{\beta (x)}{\ck \a (x,n)} K_\beta(x,A_{n+1}) ;\]
      \[  \ck K((x,n);\{x\}\times\{n+1\})  =  \dfrac{\beta_n-\beta(x)}{\ck \a (x,n)}; \]
       \[ \ck K((x,n);A_{n-1}\times\{n-1\})  =  \dfrac{\delta_n}{\ck \a (x,n)}K_\delta(x,A_{n-1}); \]
      \[  \ck K((x,n);A_{n-1}\times\{n\})  =  \dfrac{\delta(x)-\delta_n}{\ck \a (x,n)}K_\delta(x,A_{n-1}) .
     \]
\end{enumerate}

The inter-jump move process $\ck Y$ is finally obtained by an independent coupling of $(Y_t)_{ t \geq 0}$ and $(y_t)_{t \geq 0}$, specifically its transition kernel $Q_t^{\ck Y}$ is given for any $(x,p)\in\ck E$ and $A\times S\in \ck \EE$ by 
\begin{equation}\label{QtYcheck}
      Q_t^{\ck Y}((x,p);A \times S)= \P( \ck Y_t \in A \times S | \ck Y_0=(x,p))=\P(Y_t \in A | Y_0=x) \mathds{1}_S(p) = Q_t^{Y}(x,A) \mathds{1}_S(p).
\end{equation}
This means that $\ck Y_t = (Y'_t,y'_t)=(Y'_t,y'_0)$ for any $t\geq 0$, where $(Y'_t)_{t\geq 0}$ and $(y'_t)_{t\geq 0}$ are independent and follow the same distribution as $(Y_t)_{t\geq 0}$ and $(y_t)_{t\geq 0}$, respectively. 
Since $Y$ is a continuous Markov process for the distance $d$, we can choose a version of $Y'$ such that  $\ck Y$ is also continuous for $\ck d$. Remark moreover that  $(\ck Y_t)_{t \geq 0}$ satisfies 
$$ \P( ( \ck Y_t)_{t \geq 0} \subset E_n \times \{ k \} \, | \, \ck Y_0=(x,p))= \ind_{E_n}(x) \, \ind_{k=p}, \; \; \; \forall x \in E, \, \forall n \geq 0.  $$

Given  $ \ck \a$, $\ck K$ and $\ck Y$ as above, the jump-move process   $\ck C$ is well defined and can be constructed as in Section~\ref{jump-move general}. We denote by $\ck Q_t$ its transition kernel, by $(\ck T_j)_{j \geq 1}$ its jump times and by $\ck N_t := \sum_{j \geq 1} \ind_{ \ck T_j \leq t }$  the number of jumps before $t \geq 0$.  We also set $ \ck \t_{j}=\ck T_j - \ck T_{j-1}$. 
  The fact that $\ck C$ defines a relevant coupling of $X$ with $\eta$ is the object of Theorem~\ref{Marginals}.

\subsection{Proof of Theorem~\ref{Marginals}}

To prove the first point of the theorem, we use the following results, proved in Section~\ref{proofs coupling} of the supplementary material. 
Fix $(x,n) \in E \times \N$ and $q \geq 0$, and let  $$\psi_q: t \in \R_+ \mapsto \ck Q_t((x,n),E \times \{ q \}).$$

\begin{lem} \label{QqisC0}
For any $(x,n) \in E \times \N$ and $q \geq 0$, $\psi_q$  is a continuous function. 
\end{lem}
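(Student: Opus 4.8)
The plan is to deduce the continuity of $\psi_q$ from the Chapman--Kolmogorov (semigroup) identity satisfied by the transition kernel $\ck Q_t$ of the Markov process $\ck C$, combined with a bound on the probability that $\ck C$ jumps at all during a short time interval that is \emph{uniform} in the starting state. The point to keep in mind is that $\ind_{E\times\{q\}}$ is not continuous on $\ck E$, so one cannot simply invoke a Feller-type continuity at $t=0$; instead I would use that the second coordinate $\eta'$ of $\ck C$ only ever changes at a jump time, and that the number of jumps $\ck N_t$ is dominated by a Poisson process. Indeed, since $\ck\a\le 2\a^*$, the analogue of \eqref{dominationpoisson} for $\ck C$ gives $\P_{(y,m)}(\ck N_h\ge 1)\le 1-\e^{-2\a^* h}$ for \emph{every} $(y,m)\in\ck E$.

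First I would record the key uniform estimate
$$\sup_{(y,m)\in\ck E}\bigl|\ck Q_h\bigl((y,m),E\times\{q\}\bigr)-\ind_{m=q}\bigr|\le 1-\e^{-2\a^* h}.$$
To obtain it, observe that if $m=q$ then the event of no jump in $[0,h]$ is contained in $\{\eta'_h=q\}$, because the inter-jump move $\ck Y$ leaves the second coordinate constant; hence $\ck Q_h((y,m),E\times\{q\})\ge\P_{(y,m)}(\ck N_h=0)$ and the displayed quantity is at most $\P_{(y,m)}(\ck N_h\ge1)$. If instead $m\neq q$, then reaching level $q$ from level $m$ requires at least one jump, so $\{\eta'_h=q\}\subset\{\ck N_h\ge1\}$ and again the quantity is at most $\P_{(y,m)}(\ck N_h\ge1)$. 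In both cases the Poisson domination bounds this by $1-\e^{-2\a^* h}$, uniformly in $(y,m)$.

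Next I would fix $t_0>0$ and treat right and left continuity in parallel through Chapman--Kolmogorov. For the right limit, write $\ck Q_{t_0+h}((x,n),E\times\{q\})=\int_{\ck E}\ck Q_{t_0}((x,n),\dd(y,m))\,\ck Q_h((y,m),E\times\{q\})$ and subtract $\psi_q(t_0)=\int_{\ck E}\ck Q_{t_0}((x,n),\dd(y,m))\,\ind_{m=q}$; the uniform estimate then gives $|\psi_q(t_0+h)-\psi_q(t_0)|\le 1-\e^{-2\a^* h}\to0$. For the left limit, apply the same identity in the form $\psi_q(t_0)=\int_{\ck E}\ck Q_{t_0-h}((x,n),\dd(y,m))\,\ck Q_h((y,m),E\times\{q\})$ and compare it with $\psi_q(t_0-h)=\int_{\ck E}\ck Q_{t_0-h}((x,n),\dd(y,m))\,\ind_{m=q}$, obtaining $|\psi_q(t_0)-\psi_q(t_0-h)|\le 1-\e^{-2\a^* h}\to0$. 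These two bounds yield continuity at every $t_0>0$, and continuity at $0$ follows from the right-hand argument alone, since $\psi_q(0)=\ind_{n=q}$.

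The step demanding the most care is the uniform estimate: the whole argument rests on controlling $\ck Q_h(\cdot,E\times\{q\})-\ind_{m=q}$ independently of the starting state, which is precisely what allows the discontinuity of $\ind_{E\times\{q\}}$ to be absorbed inside the Chapman--Kolmogorov integral. This is where the boundedness \eqref{assumption alpha} of $\a$---and hence of $\ck\a$---together with the ensuing Poisson domination of $\ck N_h$ is indispensable; without a state-independent bound on the jump count the comparison would break down.
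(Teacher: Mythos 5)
Your proof is correct and follows essentially the same route as the paper's: a uniform-in-starting-state $o(1)$ bound on $\ck Q_h\bigl((y,m),E\times\{q\}\bigr)-\ind_{\{q\}}(m)$, obtained from the fact that the second coordinate can only change at a jump time together with the uniform bound $\ck\a\le 2\a^*$, followed by the Markov property (Chapman--Kolmogorov) applied at $t$ and at $t-h$ to get two-sided continuity. The only difference is cosmetic: the paper bounds the discrepancy by $4\a^* h$ by splitting on the event $\{\ck T_1\le h\}$, while you obtain $1-\e^{-2\a^* h}$ via the case analysis $m=q$ versus $m\neq q$.
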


\begin{lem} \label{lem41}
For any $(x,n) \in E \times \N$ and $q \geq 0$, $\psi_q$  is right differentiable and satisfies :
     \begin{align*}
        \dfrac{\partial_+}{\partial t} \psi_q(t) = - \a_q \, \psi_q(t) + \b_{q-1} \, \psi_{q-1}(t) + \d_{q+1} \,  \psi_{q+1}(t).
    \end{align*}
    \end{lem}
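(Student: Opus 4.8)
The plan is to combine the semigroup (Chapman--Kolmogorov) property of $\ck Q_t$ with a \emph{forward} first-jump analysis on a small increment $[t,t+h]$, the decisive point being that the second coordinate $\eta'$ of $\ck C=(X',\eta')$ moves autonomously as a birth-death process with rates $\b_m,\d_m$. Recalling that $\psi_q(t)=\ck Q_t((x,n),E\times\{q\})=\P_{(x,n)}(\eta'_t=q)$, I would start from $\ck Q_{t+h}=\ck Q_t\,\ck Q_h$, which gives
\[
\psi_q(t+h) = \E_{(x,n)}\left[ \ck Q_h(\ck C_t, E \times \{q\}) \right],
\]
so that everything reduces to a short-time expansion of $(y,m)\mapsto \ck Q_h((y,m),E\times\{q\})$ that is \emph{uniform in the first coordinate} $y$.

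First I would read off from the definitions of $\ck\a$ and $\ck K$ the key structural fact: starting from any state $(y,m)$, the second coordinate increases to $m+1$ at rate $\b_m$ and decreases to $m-1$ at rate $\d_m$, whatever $y$ is. Indeed, the rate of a given transition is $\ck\a(y,m)$ times the corresponding entry of $\ck K((y,m),\cdot)$, and in both cases $m\neq n(y)$ and $m=n(y)$ this normalisation cancels: when $m\neq n(y)$ the up/down rates are $\ck\a\cdot\b_m/\ck\a=\b_m$ and $\ck\a\cdot\d_m/\ck\a=\d_m$; when $m=n(y)$ the two "up" contributions sum to $\b(y)+(\b_m-\b(y))=\b_m$ (using $K_\b(y,E_{m+1})=1$) and the "down" contribution gives $\d_m$ (using $K_\d(y,E_{m-1})=1$), every remaining transition keeping the second coordinate at $m$. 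Since $\ck Y$ holds the second coordinate constant between jumps, $\eta'$ changes only at jump times of $\ck C$.

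Next I would turn this into uniform short-time estimates. Because $\ck\a\leq 2\a^*$, the number of jumps of $\ck C$ on $[0,h]$ is dominated by a Poisson variable of parameter $2\a^* h$ as in \eqref{dominationpoisson}, so the probability of two or more jumps is $O(h^2)$ uniformly in the starting state; conditioning on exactly one jump and using the $\ck\a$-cancellation above for the direction probabilities, I would obtain
\[
\ck Q_h((y,m),E\times\{q\}) = \1_{m=q}(1-\a_q h) + \1_{m=q-1}\,\b_{q-1}h + \1_{m=q+1}\,\d_{q+1}h + R(y,h),
\]
with $\sup_y|R(y,h)| = o(h)$ (the boundary case $q=0$ being covered by the conventions $\psi_{-1}\equiv 0$, $\b_{-1}:=0$ and $\d_0=0$, since $\d(\textnormal{\O})=0$). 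Inserting this into the first display, using $\P_{(x,n)}(\eta'_t=q')=\psi_{q'}(t)$ and $|\E_{(x,n)}[R(X'_t,h)]|\leq\sup_y|R(y,h)|=o(h)$, yields
\[
\psi_q(t+h) = \psi_q(t)(1-\a_q h) + \b_{q-1}\psi_{q-1}(t)\,h + \d_{q+1}\psi_{q+1}(t)\,h + o(h),
\]
and dividing by $h$ and letting $h\downarrow 0$ produces the announced right derivative.

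The main obstacle is precisely the \textbf{uniformity in the first coordinate} of this expansion: the exact one-jump probabilities depend, through the waiting-time law, on the whole trajectory of $\ck Y$ and hence on $y$, so I must verify that only the constant $2\a^*$ enters the remainder $R(y,h)$. This is why the cancellation of $\ck\a$ in the direction probabilities is essential, and it is the place where \eqref{assumption alpha} and the resulting Poisson domination are used, rather than any finer property of $Y$ or $K$. Finally, the continuity of $\psi_q$ from Lemma~\ref{QqisC0} upgrades the right-differentiability into a genuine continuously right-differentiable solution of the stated system, as needed for the subsequent identification of $\psi_q$ with the transition function of the simple birth-death process.
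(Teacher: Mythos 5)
Your proposal is correct and follows essentially the same route as the paper's proof: a Markov/semigroup decomposition of $\psi_q(t+h)-\psi_q(t)$ at time $t$, followed by a short-time expansion of $\ck Q_h((y,m);E\times\{q\})$ uniform over starting states, obtained by splitting on $0$, $1$, or at least $2$ jumps in $[0,h]$, where the cancellation of the $\ck\a$-normalisation in $\ck K$ yields the marginal rates $\b_m$ (up) and $\d_m$ (down) for the second coordinate irrespective of the first, and the Poisson domination \eqref{dominationpoisson} with rate $2\a^*$ controls the remainder. The paper organises this same computation as $A_1(h)+A_2(h)+A_3(h)$ (no jump, one jump, at least two jumps) with a supremum over $(y,k)\in\ck E$, which matches your $\sup_y|R(y,h)|=o(h)$ provided you note, as the argument indeed gives, that the bound is uniform in the second coordinate $m$ as well.
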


\begin{cor} \label{Marg1diff}
For any $(x,n) \in E \times \N$ and $q \geq 0$, 
 \begin{align*}
           \psi_q(t) = \ind_{q}(n) + \int_0^t \left ( - \a_q \, \psi_q(s) + \b_{q-1} \, \psi_{q-1}(s) + \d_{q+1} \,  \psi_{q+1}(s) \right ) \, \dd s
        \end{align*}
 and in particular $\psi_q$  is differentiable. 
\end{cor}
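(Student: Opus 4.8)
The plan is to recognise Corollary~\ref{Marg1diff} as the integrated form of the right-differential equation of Lemma~\ref{lem41}, obtained via the fundamental theorem of calculus once the right-hand side is known to depend continuously on $t$. First I would observe that, by Lemma~\ref{QqisC0}, each of $\psi_{q-1}$, $\psi_q$ and $\psi_{q+1}$ is continuous on $\R_+$. Consequently the function
\[
g_q(t) := -\a_q\,\psi_q(t) + \b_{q-1}\,\psi_{q-1}(t) + \d_{q+1}\,\psi_{q+1}(t),
\]
which by Lemma~\ref{lem41} equals the right derivative $\frac{\partial_+}{\partial t}\psi_q(t)$, is itself continuous on $\R_+$, being a finite linear combination of continuous functions with constant coefficients.

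Next I would invoke the elementary real-analysis fact that a continuous function on an interval whose right derivative exists at every point and is continuous is in fact (two-sided) differentiable, with derivative equal to that right derivative, and obeys the fundamental theorem of calculus. Concretely, set $F(t):=\psi_q(t)-\int_0^t g_q(s)\,\dd s$. Since $g_q$ is continuous, $s\mapsto\int_0^s g_q$ is continuously differentiable with derivative $g_q$, so $F$ is continuous with $\frac{\partial_+}{\partial t}F(t)=g_q(t)-g_q(t)=0$ everywhere. The key step is then the lemma that a continuous function on $\R_+$ with vanishing right derivative everywhere is constant. I would prove it by the standard argument that a continuous function with nonnegative right derivative on an interval is nondecreasing — perturbing by $\ep t$ to make the right derivative strictly positive, then using a supremum/connectedness argument on the set where the inequality holds — and applying this to both $F$ and $-F$. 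This yields $F(t)=F(0)=\psi_q(0)$ for all $t\geq 0$.

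Finally I would evaluate the constant: since $\ck C_0=(x,n)$, we have $\psi_q(0)=\ck Q_0((x,n);E\times\{q\})=\ind_{E\times\{q\}}((x,n))=\ind_q(n)$. Substituting back gives the claimed identity $\psi_q(t)=\ind_q(n)+\int_0^t g_q(s)\,\dd s$, and the differentiability of $\psi_q$ (with $\psi_q'=g_q$) is then immediate from the continuity of the integrand. I expect the only genuinely non-routine point to be the passage from the right derivative to the full derivative, i.e.\ the lemma that a continuous function with everywhere-vanishing right derivative is constant; everything else is bookkeeping, and the corollary is really a packaging of Lemmas~\ref{QqisC0} and~\ref{lem41}.
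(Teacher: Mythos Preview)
Your proposal is correct and is essentially the same argument as the paper's: define $F(t)=\psi_q(t)-\int_0^t g_q(s)\,\dd s$ (the paper's $G$ differs only by the constant $\ind_q(n)$), use Lemma~\ref{QqisC0} for continuity and Lemma~\ref{lem41} for the vanishing right derivative, conclude $F$ is constant, and evaluate at $t=0$. Your explicit computation $\psi_q(0)=\ck Q_0((x,n);E\times\{q\})=\ind_q(n)$ is in fact clearer than the paper's terse justification of $G(0)=0$.
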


Let now $ w_s(x,n) = \ck Q_{t-s} ( \ind_E \times q_s \ind_{\{q\}})(x,n)$ for $s \in [0,t].$ Then using Corollary~\ref{Marg1diff}, we have:
\begin{lem}\label{wsdiff}
For any $(x,n) \in E \times \N$ and $q \geq 0$, $s\mapsto w_s$ is differentiable on $[0,t]$ and $\partial w_s/\partial s \equiv0$.
\end{lem}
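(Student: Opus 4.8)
The plan is to make the series representation of $w_s$ explicit in the levels $p\ge 0$, differentiate it term by term, and then check that the resulting terms cancel after a shift of the summation index. Writing $g_s:=\ind_E\times q_s\ind_{\{q\}}$, i.e.\ $g_s(y,m)=q_s(m,\{q\})$, and decomposing $g_s=\sum_{p\ge 0}q_s(p,\{q\})\,\ind_{E\times\{p\}}$, the linearity of $\ck Q_{t-s}$ together with monotone convergence (all terms being non‑negative and bounded by $1$) yields
\[
w_s(x,n)=\ck Q_{t-s}g_s(x,n)=\E_{(x,n)}\!\left[q_s(\eta'_{t-s},\{q\})\right]=\sum_{p\ge 0}\psi_p(t-s)\,q_s(p,\{q\}),
\]
where $\psi_p(u)=\ck Q_u((x,n);E\times\{p\})$ is exactly the function of Corollary~\ref{Marg1diff}. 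I would then record the two evolution equations that drive the two factors. By Corollary~\ref{Marg1diff}, $\psi_p$ is differentiable with $\psi_p'(u)=-\a_p\psi_p(u)+\b_{p-1}\psi_{p-1}(u)+\d_{p+1}\psi_{p+1}(u)$. On the other side, $(\eta_t)_{t\ge0}$ is a continuous-time birth–death chain on $\N$ whose rates are bounded: since $\b,\d\le\a\le\a^*$, \eqref{bndn} gives $\b_n,\d_n\le\a^*$ and $\a_n\le 2\a^*$, so its transition function obeys the backward Kolmogorov equation $\partial_s q_s(p,\{q\})=-\a_p\,q_s(p,\{q\})+\b_p\,q_s(p+1,\{q\})+\d_p\,q_s(p-1,\{q\})$, with the boundary conventions $\b_{-1}=\d_0=0$ and $q_s(-1,\cdot)=0$.

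The delicate point, which I expect to be the main obstacle, is to justify differentiating the infinite series term by term. Each summand $f_p(s):=\psi_p(t-s)\,q_s(p,\{q\})$ is a product of differentiable functions, and using the uniform bound $\a_n\le 2\a^*$ together with $0\le\psi_p,q_s\le 1$ one gets $|f_p'(s)|\le C\bigl(\psi_{p-1}(t-s)+\psi_p(t-s)+\psi_{p+1}(t-s)\bigr)$ for a constant $C$ depending only on $\a^*$. Because $\sum_{p}\psi_p(u)\le 1$ and, by the Poisson domination \eqref{dominationpoisson} applied to $\ck C$ (whose intensity $\ck\a$ is bounded by $2\a^*$, so $\eta'_u\le n+\ck N_u\le n+\ck N_t$), the tails satisfy $\sum_{p\ge P}\psi_p(u)=\P_{(x,n)}(\eta'_u\ge P)\to 0$ uniformly in $u\in[0,t]$, the series $\sum_p f_p'$ converges uniformly on $[0,t]$. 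This legitimises $w_s=\sum_p f_p(s)$ being differentiable with $\partial_s w_s=\sum_p f_p'(s)$.

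Finally I would carry out the cancellation. Substituting the two evolution equations and $\tfrac{d}{ds}\psi_p(t-s)=-\psi_p'(t-s)$ into $\partial_s w_s=\sum_p\bigl(\tfrac{d}{ds}\psi_p(t-s)\bigr)q_s(p,\{q\})+\sum_p\psi_p(t-s)\,\partial_s q_s(p,\{q\})$, the terms proportional to $\a_p\,\psi_p(t-s)\,q_s(p,\{q\})$ cancel between the two sums. For the surviving birth and death contributions, a shift of the summation index ($p\mapsto p-1$ for the birth terms, $p\mapsto p+1$ for the death terms) shows that $\sum_p\b_p\psi_p(t-s)\,q_s(p+1,\{q\})$ cancels $\sum_p\b_{p-1}\psi_{p-1}(t-s)\,q_s(p,\{q\})$, and $\sum_p\d_p\psi_p(t-s)\,q_s(p-1,\{q\})$ cancels $\sum_p\d_{p+1}\psi_{p+1}(t-s)\,q_s(p,\{q\})$, the boundary conventions ensuring the absence of spurious edge terms. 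Hence $\partial_s w_s\equiv 0$ on $[0,t]$, as claimed.
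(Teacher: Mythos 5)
Your proposal is correct and follows essentially the same route as the paper's proof: the same expansion $w_s=\sum_{p\geq 0} q_s(p,\{q\})\,\psi_p(t-s)$, the same two evolution equations (Corollary~\ref{Marg1diff} for $\psi_p$ and the backward Kolmogorov equation for the simple birth--death chain), term-by-term differentiation justified by uniform convergence of the series of derivatives, and cancellation of the $\a_p$, birth and death terms. The only cosmetic differences are that the paper controls the tails through the factors $q_s(k,\{q\})\leq \P(n_t^*>k-q)$ and organizes the final cancellation as telescoping series whose general term vanishes, whereas you control the tails through $\sum_{p\geq P}\psi_p(u)=\P_{(x,n)}(\eta'_u\geq P)$ (via Poisson domination of $\ck N_t$) and cancel by index shifts on absolutely convergent sub-series; both are valid.
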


Since $ w_0(x,n) = \ck Q_t((x,n); E \times \{ q \})$ and $w_t(x,n) = q_t(n, \{ q \})$,  Lemma~\ref{wsdiff}  entails that these two quantities are equal.
 The first part of Theorem~\ref{Marginals}  then  follows from the decomposition:
    $$ q_t(n,S) = \sum_{q \in S} q_t(n, \{ q \} ) = \sum_{q \in S} \ck Q_t((x,n); E \times \{ q \}) = \ck Q_t((x,n); E \times S).  $$

We turn to the proof of the second point of Theorem~\ref{Marginals}. Similarly as for the first part, it is based on the following results proved in Section~\ref{proofs coupling}. For  $(x,n) \in E \times \N$ and $f \in C_0(E)$, we set  $$\psi_f: t \in \R_+ \mapsto \ck Q_t(f \times \ind_\N)(x,n).$$

\begin{lem} \label{marg2C0}
    Suppose that $(Y_t)_{t \geq 0}$ is a Feller process. Then for any  $(x,n) \in E \times \N$ and any $f \in C_0(E)$, $\psi_f$  is a continuous function.
\end{lem}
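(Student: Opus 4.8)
The plan is to reduce the statement to the strong continuity at $0$ of the semigroup $(\ck Q_t)_{t\geq0}$ evaluated at the specific function $g := f \times \ind_\N$, and then to upgrade strong continuity at $0$ to continuity at every $t\geq 0$ by a standard semigroup contraction argument. First observe that since $\ind_\N\equiv 1$ on $\N$, the function $g$ satisfies $g(y,m)=f(y)$ for all $(y,m)\in\ck E$; it is bounded and measurable, so $g\in M^b(\ck E)$ with $\|g\|_\infty=\|f\|_\infty$, and $\psi_f(t)=\ck Q_t g(x,n)$. The whole point is therefore to control $\ck Q_h g$ as $h\to 0$.

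The crux is to show that $\|\ck Q_h g - g\|_\infty\to 0$ as $h\to 0$. The key observation is that the $\N$-component of $\ck Y$ is frozen, so by \eqref{QtYcheck} the move kernel acts on $g$ exactly as $Q_t^Y$ acts on $f$: for every $(x,p)\in\ck E$,
\[
Q_t^{\ck Y} g(x,p)=\E\!\left[g(\ck Y_t)\mid \ck Y_0=(x,p)\right]=\E\!\left[f(Y'_t)\mid Y'_0=x\right]=Q_t^Y f(x).
\]
Consequently $\|Q_t^{\ck Y} g - g\|_\infty=\|Q_t^Y f - f\|_\infty$. Since $(Y_t)_{t\geq 0}$ is Feller and $f\in C_0(E)$, the strong continuity part of the Feller property gives $\|Q_t^Y f - f\|_\infty\to 0$ as $t\to 0$, whence $\|Q_t^{\ck Y} g - g\|_\infty\to 0$. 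Now $\ck C$ is itself a jump-move process in the sense of Section~\ref{jump-move general}, built from the continuous move $\ck Y$ and the intensity $\ck\a$ bounded by $2\a^*$; Proposition~\ref{prop24} therefore applies to $\ck C$, and its second item turns the strong continuity of the move kernel $Q_t^{\ck Y}$ on $g$ into the strong continuity of $\ck Q_t$ on $g$, i.e. $\|\ck Q_h g - g\|_\infty\to 0$ as $h\to 0$.

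Finally I would upgrade this to continuity everywhere. Fix $t_0\geq 0$ and $h>0$. By the semigroup property $\ck Q_{t_0+h}=\ck Q_{t_0}\ck Q_h$, and $\ck Q_{t_0}=\ck Q_{t_0-h}\ck Q_h$ when $h\leq t_0$, so that
\[
|\psi_f(t_0+h)-\psi_f(t_0)|=\left|\ck Q_{t_0}(\ck Q_h g - g)(x,n)\right|\leq \|\ck Q_h g - g\|_\infty,
\]
and likewise $|\psi_f(t_0)-\psi_f(t_0-h)|=\left|\ck Q_{t_0-h}(\ck Q_h g - g)(x,n)\right|\leq \|\ck Q_h g - g\|_\infty$, using that each $\ck Q_s$ is a contraction on $(M^b(\ck E),\|\cdot\|_\infty)$. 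Both right-hand sides tend to $0$ by the previous step, which yields continuity of $\psi_f$ at $t_0$, and hence on all of $\R_+$.

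The step requiring the most care is the justification that Proposition~\ref{prop24} genuinely applies to the coupled process $\ck C$, together with the reduction $Q_t^{\ck Y} g = Q_t^Y f$: this is precisely where the two hypotheses enter, the boundedness of $\ck\a$ by $2\a^*$ guaranteeing that $\ck C$ is a non-explosive jump-move process to which the proposition applies, and the Feller assumption on $(Y_t)_{t\geq 0}$ providing the strong continuity of $Q_t^Y$ on $C_0(E)$. Once this reduction is in place, the semigroup contraction argument is routine and no further estimate is needed.
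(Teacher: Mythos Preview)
Your proof is correct and follows essentially the same strategy as the paper's: both reduce continuity of $\psi_f$ at an arbitrary $t$ to a uniform bound on $\|\ck Q_h g - g\|_\infty$ via the semigroup property and contraction, and both deduce this bound from the strong continuity of $Q_h^Y$ on $f\in C_0(E)$. The only difference is packaging: the paper reproves the bound by hand, splitting on $\{\ck T_1\leq h\}$ versus $\{\ck T_1>h\}$ to obtain $|\ck Q_h g(y,k)-g(y,k)|\leq 8\a^*\|f\|_\infty h + \|Q_h^Y f - f\|_\infty$, whereas you obtain the same conclusion more cleanly by invoking Proposition~\ref{prop24}(2) for the coupled process $\ck C$ together with the identity $Q_t^{\ck Y}g=Q_t^Y f$ --- a legitimate shortcut since $\ck C$ is itself a jump-move process with bounded intensity $\ck\a\leq 2\a^*$.
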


\begin{lem} \label{suptendsvers0}
    Suppose that $(Y_t)_{t \geq 0}$ is a Feller process and that $K C_0(E) \subset C_0(E)$. 
    Then for any $(x,n) \in E \times \N$ and any $f \in \DD_{\AA^Y}$ the function $\psi_f$ is right-differentiable and satisfies :
    \begin{equation}\label{diffpsi}
    \dfrac{\partial_+}{\partial t} \psi_f(t) =   \psi_{\AA f}(t),
        \end{equation}
     where $\AA$ is the infinitesimal generator of  $X$ given by Theorem~\ref{theogenerator}.
 \end{lem}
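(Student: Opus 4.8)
The plan is to recognise $\psi_f(t)=\ck Q_t(f\times\ind_\N)(x,n)$ as the action of the semigroup of the coupled process $\ck C$ on the bounded function $g:=f\times\ind_\N$ (that is, $g(x,p)=f(x)$ for every $p\in\N$), and to show that $g$ lies in the domain of the infinitesimal generator $\ck{\AA}$ of $\ck C$ with
\begin{equation*}
\ck{\AA}\,g=(\AA f)\times\ind_\N .
\end{equation*}
Once this identity is established, right-differentiability follows from the semigroup and contraction structure: for $h>0$,
\begin{equation*}
\frac{\psi_f(t+h)-\psi_f(t)}{h}=\ck Q_t\!\left(\frac{\ck Q_h g-g}{h}\right)(x,n),
\end{equation*}
and since each $\ck Q_t$ is a contraction on $(M^b(\ck E),\|.\|_\infty)$, the right-hand side converges, as $h\downarrow0$, to $\ck Q_t(\ck{\AA}\,g)(x,n)=\ck Q_t((\AA f)\times\ind_\N)(x,n)=\psi_{\AA f}(t)$, which is exactly the claimed formula \eqref{diffpsi}.

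The core of the argument is therefore the generator identity, which I would obtain by applying Theorem~\ref{theogenerator} to $\ck C$, itself a jump-move process on $\ck E=E\times\N$. First, for the move part, the independent coupling \eqref{QtYcheck} gives $Q_t^{\ck Y}g(x,p)=Q_t^Y f(x)$, so the difference quotient $t^{-1}(Q_t^{\ck Y}g-g)$ coincides with $t^{-1}(Q_t^Y f-f)$ read in the first coordinate; as $f\in\DD_{\AA^Y}$ this converges uniformly, whence $g$ belongs to the domain of the generator of $\ck Y$, which sends $g$ to $(\AA^Y f)\times\ind_\N$. Second, the jump part $\ck\a\,\ck K g-\ck\a\,g$ is computed directly from the explicit expressions of $\ck\a$ and $\ck K$. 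Splitting into the two cases $x\in E_m$ with $m\neq n$ and $x\in E_n$, and using $K=\tfrac{\b}{\a}K_\b+\tfrac{\d}{\a}K_\d$ so that $\a Kf=\b K_\b f+\d K_\d f$, both cases yield after cancellation of the $\b_n$, $\d_n$ terms
\begin{equation*}
\ck\a(x,n)\,\ck K g(x,n)-\ck\a(x,n)\,g(x,n)=\a(x)Kf(x)-\a(x)f(x).
\end{equation*}
Summing the move and jump contributions and invoking Theorem~\ref{theogenerator} for $X$ gives $\ck{\AA}\,g=(\AA^Y f+\a Kf-\a f)\times\ind_\N=(\AA f)\times\ind_\N$, as wanted.

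The main obstacle is the function-space bookkeeping needed to run Theorem~\ref{theogenerator} for $\ck C$. The function $g=f\times\ind_\N$ is bounded and continuous but constant in the $\N$-direction, hence it does \emph{not} vanish at infinity on $\ck E=E\times\N$: it lies in $C_b(\ck E)$ but not in $C_0(\ck E)$. One must therefore work in the $(M^b(\ck E),\|.\|_\infty)$ framework of Definition~\ref{defgenerator} rather than in $C_0(\ck E)$, and verify that the hypotheses of Theorem~\ref{theogenerator} ($h\in L_0^{\ck Y}\Rightarrow\ck\a\, h\in L_0^{\ck Y}$ and $\ck K h\in L_0^{\ck Y}$) hold there, the set $L_0^{\ck Y}$ being defined with the supremum norm over both coordinates; the delicate point is the uniformity in $p\in\N$ of the relevant limits. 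Alternatively, and perhaps more safely, one can bypass the abstract domain identification and establish directly the uniform limit $h^{-1}(\ck Q_h g-g)\to(\AA f)\times\ind_\N$ for this particular $g$, by reproducing for $\ck C$ the computation underlying the proof of Theorem~\ref{theogenerator}; the continuity statement of Lemma~\ref{marg2C0} together with the boundedness of $\ck\a$ by $2\a^*$ then furnish the estimates on the number of jumps required to pass to the limit.
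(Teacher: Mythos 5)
Your skeleton is exactly the paper's: the paper also starts from
\[
\Bigl|\tfrac{1}{h}\bigl(\psi_f(t+h)-\psi_f(t)\bigr)-\psi_{\AA f}(t)\Bigr|
\leq \sup_{(y,k)\in \ck E}\Bigl|\tfrac{1}{h}\bigl(\ck Q_h(f\times \ind_\N)(y,k)-f(y)\bigr)-\AA f(y)\Bigr|,
\]
and then proves that this supremum vanishes as $h\searrow 0$; your formal identity $\ck \a\, \ck K g-\ck \a\, g=(\a Kf-\a f)\times \ind_\N$ (in both cases $x\in E_n$ and $x\notin E_n$) is correct and is implicitly what the paper's case analysis produces. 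The problem is that neither of your two routes to the uniform limit is actually carried out, and the primary one fails as stated. Theorem~\ref{theogenerator} cannot be invoked as a black box for $\ck C$ with $L=M^b(\ck E)$: its hypothesis must hold for \emph{every} $h\in L_0^{\ck Y}$, and under the lemma's assumptions it is false there. Concretely, $h\equiv 1$ lies in $L_0^{\ck Y}$, so the hypothesis would force $\ck\a=\ck\a\times 1\in L_0^{\ck Y}$, which (looking at points $(x,p)$ with $n(x)\neq p$) amounts to $\|Q_t^Y\a-\a\|_\infty\to 0$. But $\a$ is only bounded and continuous, and a Feller semigroup is strongly continuous on $C_0(E)$, not on $C_b(E)$; for a diffusive move and an intensity oscillating ever faster at infinity this uniform convergence fails. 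So the ``uniformity in $p$'' you flag is not bookkeeping — it is fatal to the abstract route.

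What makes the statement true, and what the paper's proof exploits, is that the particular function $g=f\times\ind_\N$ is constant in the $\N$-coordinate: unpacking $\ck\a$ and $\ck K$, all the functions that the semigroup $Q^Y$ must act on are drawn from the \emph{finite} family $f$, $\a f$, $\b f$, $\d f$, $\a Kf$ — each in $C_0(E)$ because $f\in\DD_{\AA^Y}\subset C_0(E)$, $\a,\b,\d\in C_b(E)$ and $K C_0(E)\subset C_0(E)$ — with coefficients $\b_k,\d_k$ bounded by $\a^*$ uniformly in $k$; Feller strong continuity on $C_0(E)$ then yields bounds independent of $(y,k)$. The paper implements this through a five-term decomposition (move part; intensity part; second-order exponential error; one-jump part, with the case distinction $y\in E_k$ versus $y\notin E_k$ to unpack $\ck K$; and a $\geq 2$-jumps remainder killed by the Poisson domination with rate $2\a^*$). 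A cleaner way to salvage your route, close to what you intend, is to observe that the \emph{proof} of Theorem~\ref{theogenerator}, for a fixed function, only uses the memberships $g\in L_0^{\ck Y}\cap\DD_{\ck\AA^{\ck Y}}$, $\ck\a g\in L_0^{\ck Y}$ and $\ck\a\,\ck K g\in L_0^{\ck Y}$, all of which are verifiable for this $g$ by the reductions just described; but this still means rerunning that proof for $\ck C$ rather than citing the theorem. As written, your proposal establishes the lemma only modulo the unproved uniform limit, which is where essentially all of the work lies.
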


\begin{cor} \label{coro46}
    Suppose that $(Y_t)_{t \geq 0}$ is a Feller process and that $K C_0(E) \subset C_0(E)$.  Then for any  $(x,n) \in E \times \N$ and any $f \in \DD_{\AA^Y}$, 
      \begin{align} \label{formuledeckQt}
       \psi_f(t)   = f(x) + \int_0^t    \psi_{\AA f}(s)  \, \dd s,
    \end{align}
  and in particular the function $\psi_f$ is differentiable with derivative corresponding to \eqref{diffpsi}.
  \end{cor}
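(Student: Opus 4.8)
The plan is to promote the one-sided statement of Lemma~\ref{suptendsvers0} to the genuine integral identity \eqref{formuledeckQt} by a standard calculus argument, the only substantive new ingredient being the continuity of the right-derivative $t \mapsto \psi_{\AA f}(t)$. Once $\psi_{\AA f}$ is known to be continuous, the fundamental theorem of calculus will deliver both \eqref{formuledeckQt} and the (two-sided) differentiability of $\psi_f$ at once.

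First I would verify that $\AA f \in C_0(E)$. Since $(X_t)_{t \geq 0}$ is Feller under the present hypotheses, we may take $L_0^Y = C_0(E)$, so that $f \in \DD_{\AA^Y} \subset C_0(E)$ with $\AA^Y f \in C_0(E)$, and the assumptions of Theorem~\ref{theogenerator} are met. That theorem then gives
\[ \AA f = \AA^Y f + \a\, Kf - \a f, \]
and every summand lies in $C_0(E)$: indeed the hypotheses of Theorem~\ref{theogenerator} yield $\a f \in L_0^Y = C_0(E)$ and $Kf \in C_0(E)$, whence also $\a\, Kf \in C_0(E)$ as a product of the bounded continuous function $\a$ with a $C_0(E)$ function. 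Consequently $\AA f \in C_0(E)$, and applying Lemma~\ref{marg2C0} to $\AA f$ (in place of $f$) shows that $\psi_{\AA f}$ is continuous on $\R_+$.

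It then remains to invoke the elementary fact that a continuous function on $\R_+$ admitting at every point a right-derivative equal to a continuous function is in fact continuously differentiable, with derivative that right-derivative, and obeys the fundamental theorem of calculus. Concretely, set $G(t) = f(x) + \int_0^t \psi_{\AA f}(s)\,\dd s$, which is $C^1$ with $G' = \psi_{\AA f}$. The function $\psi_f$ is continuous (Lemma~\ref{marg2C0} applied to $f$) and, by Lemma~\ref{suptendsvers0}, the continuous function $\psi_f - G$ has right-derivative identically zero. Since a continuous function with vanishing right-derivative is constant and $\psi_f(0) = f(x) = G(0)$, we conclude $\psi_f \equiv G$, which is precisely \eqref{formuledeckQt}; the differentiability of $\psi_f$ with derivative $\psi_{\AA f}$ follows immediately.

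The hard part is really only the continuity of the right-derivative, which is why the preliminary check that $\AA f \in C_0(E)$---so that Lemma~\ref{marg2C0} applies to $\AA f$ and not merely to $f$---is the crux of the matter; the closing step is the classical lemma that a continuous function with continuous right-derivative is $C^1$ and satisfies the fundamental theorem of calculus, so no genuine obstacle remains.
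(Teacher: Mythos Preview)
Your argument is correct and follows essentially the same route as the paper's proof: define an auxiliary function (your $G$ is the paper's up to a sign), observe it is continuous with identically zero right-derivative, conclude it is constant, and evaluate at $t=0$. The one cosmetic difference is that you establish $\AA f \in C_0(E)$ and hence the continuity of $\psi_{\AA f}$ \emph{before} forming the integral, whereas the paper invokes this only at the end; your ordering is arguably cleaner since it makes the right-differentiability of $\int_0^t \psi_{\AA f}(s)\,\dd s$ immediate, but the substance is the same.
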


By the Dynkin theorem, the second point of Theorem~\ref{Marginals} is implied by the equality $\ck Q_t((x,n);U \times \N) = Q_t(x,U)$ for any open set $U\subset E$, or equivalently 
\begin{equation}\label{marg2}
\ck Q_t(g \times \ind_\N)(x,n) = Q_t (g)(x)
\end{equation}
for $g=\1_U$. We first prove \eqref{marg2} for $g\in \DD_{\AA^Y}$, second for $g \in C_0(E)$, before getting the result for  $g=\1_U$.

 Let $g\in \DD_{\AA^Y}$ and for $s\in[0,t]$, define $v_s(x,n)=\psi_{Q_s g}(t-s) =  \ck Q_{t-s} \left ( Q_s g \times \ind_\N \right )(x,n)$. We shall prove that $s\mapsto v_s$ is differentiable with $v'_s=0$. For any $h\in \R$, write $(v_{s+h}(x,n)-v_s(x,n))/h=A_1+A_2+A_3$
 with 
 \begin{align*}
A_1 &= \frac1 h\left(\psi_{Q_{s+h} g}(t-s-h) - \psi_{Q_s g}(t-s-h)\right)  - \psi_{\AA Q_s g}(t-s-h),  \\
A_2 &= \psi_{\AA Q_s g}(t-s-h)  - \psi_{\AA Q_s g}(t-s), \\ 
A_3 &=  \frac 1 h \left(\psi_{Q_s g}(t-s-h) -  \psi_{Q_s g}(t-s)\right)+ \psi_{\AA Q_s g}(t-s).
\end{align*}
We know by Theorem~\ref{theogenerator}, with $L_0^Y=C_0(E)$,  that $\DD_{\AA} =  \DD_{\AA^Y}$ and since $Q_s \DD_{\AA}\subset \DD_{\AA}$  \citep[Chapter 1, \S 2]{dynkin1965markov}, we deduce from Corollary~\ref{coro46} that $A_3$ tends to $-\partial  \psi_{Q_s g}(t-s)/\partial t + \psi_{\AA Q_s g}(t-s)=0$ as $h\to 0$. Regarding $A_2$, note that $Q_s g\in   \DD_{\AA}$ implies that $\AA Q_s g\in C_0(E)$ \citep[Chapter 1, \S 2]{dynkin1965markov}, so that Lemma~\ref{marg2C0} applies and $A_2\to 0$ as $h\to 0$. Concerning $A_1$, using the linearity of $\psi_f(t)$ in $f$, we can write 
$$|A_1|=|\psi_{(Q_{s+h} - Q_s) g/h - \AA Q_s g}(t-s-h)|\leq \| (Q_{s+h} - Q_s) g/h - \AA Q_s g\|_\infty$$
which also tends to 0 as $h\to 0$ \citep[Chapter 1, \S 2]{dynkin1965markov}. We therefore obtain that  $v'_s=0$ and so     $v_t(x,n) =  (Q_t g \times \ind_\N)  (x,n) = \ck Q_t(g \times \ind_\N)(x,n) = v_0(x,n)$, proving \eqref{marg2} when $g\in \DD_{\AA^Y}$.

   Now let $g \in C_0(E).$ By our assumptions and Theorem~\ref{conditionsfeller},  $ (X_t)_{t \geq 0}$ is Feller, which implies that $C_0(E) = \overline{\DD_\AA}$ \citep{dynkin1965markov}. So there exists a sequence of functions $(g_p)_{p \geq 0}$ in $\DD_{\AA^Y}$ so that $\|g_p-g\|_{\infty} \to 0$ as $p\to\infty$. The two linear operators  $f \in \MM_b(E) \mapsto \ck Q_t(f \times \ind_\N)$ and $f \in \MM_b(E) \mapsto Q_t(f)$ being bounded, we can take the limit  in  \eqref{marg2} when applied to $g_p$ to get the same relation for $g \in C_0(E).$
   
    Finally take $U \subset E$ an open subset. Then define for any $p \geq 0$ the function
    $$ \phi_p : x \in E \mapsto \dfrac{d(x,E \backslash U)}{d(x,E \backslash U) + d(x,U_p)},  $$
    where $U_p = \{ y \in E, \, d(y,E \backslash U) \geq 1/p \}$. Then $\phi_p \in C_0(E)$ for any $p \geq 0$, and for any $x \in E$  we have $ \phi_p(x) \to \ind_U(x)$ as $p\to\infty$.  Taking the limit we obtain by the dominated convergence theorem the relation \eqref{marg2} for $g=\1_U$, concluding the proof of the second statement of Theorem~\ref{Marginals}.

\medskip

We finish by the proof of the third point of Theorem~\ref{Marginals}. Let $x\in E$ and $n\in\N$ such that $n(x)\leq n$. 
We show by induction on $k \geq 0$ that $\P_{(x,n)}(\ck C_{\ck T_k} \in \Gamma )=0.$ 
If $k=0$, then $\P_{(x,n)}(\ck C_{\ck T_0} \in \Gamma )=\ind_{(x,n) \in \Gamma } = 0.$ 
Suppose next that there exists $k \geq 0$ such that $\P_{(x,n)}(\ck C_{\ck T_k} \in \Gamma )=0$, then
\begin{align*}
     \P_{(x,n)}(\ck C_{\ck T_{k+1}} \in \Gamma )  &= \E_{(x,n)} \left [ \E_{(x,n)} \left ( \ind_{ \ck C_{\ck T_{k+1}} \in \Gamma} \left | \ck C_{\ck T_k}, \left ( \ck Y_t^{(k)} \right )_{t \geq 0} , \ck \t_{k+1}  \right. \right )   \right ] \\
    & = \E_{(x,n)} \left [ \ck K \left ( \ck Y^{(k)}_{\ck \t_{k+1}}, \Gamma \right )   \right ] \\ 
    & = \E_{(x,n)} \left [ \ck K \left ( \ck Y^{(k)}_{\ck \t_{k+1}}, \Gamma \right )  \ind_{ \ck Y^{(k)}_{\ck \t_{k+1}} \in \Gamma } \right ] + \E_{(x,n)} \left [ \ck K \left ( \ck Y^{(k)}_{\ck \t_{k+1}}, \Gamma \right )  \ind_{ \ck Y^{(k)}_{\ck \t_{k+1}} \in \Gamma^c } \right ] \\
    & = \E_{(x,n)} \left [ \ck K \left ( \ck Y^{(k)}_{\ck \t_{k+1}}, \Gamma \right )  \ind_{ \ck Y^{(k)}_{\ck \t_{k+1}} \in \Gamma } \right ] \; \; \; \textnormal{(by definition of } \ck K ) \\
    & = \E_{(x,n)} \left [ \ck K \left ( \ck Y^{(k)}_{\ck \t_{k+1}}, \Gamma \right )  \ind_{ \ck C_{\ck T_k} \in \Gamma } \right ] \; \; \; \textnormal{(for any } t \geq 0, \;  \ck C_{\ck T_k} \in \Gamma \Leftrightarrow \ck Y_t^{(k)} \in \Gamma )\\
    & \leq  \E_{(x,n)} \left [   \ind_{ \ck C_{\ck T_k} \in \Gamma } \right ] 
    =  \P_{(x,n)} \left [    \ck C_{\ck T_k} \in \Gamma \right ] =0,
\end{align*}
which proves the induction step. To conclude, 
recall that  $\P_{(x,n)}(  \ck N_t < \infty )=1$ and notice that due to the form of $\Gamma$ one has 
    $ \{ \ck C_t \in \Gamma \} = \{ \ck C_{\ck T_{\ck N_t} } \in \Gamma \} $ for any $t \geq 0$.
Then 
\begin{align*}
    \ck Q_t((x,n); \Gamma)  & =  \P_{(x,n)} (\ck C_t \in \Gamma)  = \P_{(x,n)} (\ck C_{\ck T_{\ck N_t} } \in \Gamma) 
          =  \sum_{k=0}^{\infty}  \P_{(x,n)} (\ck C_{\ck T_{k} } \in \Gamma, \ck N_t = k)=0.  
\end{align*}


%
%


\newpage 
 
 \appendix
 
 \renewcommand{\thefigure}{S\arabic{figure}}
\renewcommand{\theequation}{S\arabic{equation}}
\renewcommand{\thetheo}{S\arabic{theo}}
\renewcommand{\Box}{\qedsymbol}
\renewcommand{\thesection}{S-\arabic{section}}

\section*{\centering Supplementary material: proofs and additional results}

This supplementary material  contains the proofs of some results of the article. It also describes some topological properties of the space $E$ endowed with the distance $d_1$ in the case of interacting particles in $\R^d$, as introduced in Section~\ref{sec2.3}. All numbering and references in this supplementary material begin with the letter S, the other references referring to the main article.

\section{Proofs of Section \ref{sec2.4} about the Kolmogorov backward equation}\label{proof KBE}

\subsection{Proof of Theorem \ref{TheoKBEi}}

On the one hand, for any  $x\in E$, $t>0$ and $A \in \EE$,
\begin{align}
    \P_x(X_t \in A, \t_1 > t) & = \E_x(\ind_{X_t \in A} \, \ind_{\t_1 > t}) \nonumber\\
    &= \E_x \left [ \E_x( \ind_{X_t \in A} \, \ind_{\t_1 > t} | (Y_u^{(0)})_{u \geq 0}) \right ] \nonumber\\
    & = \E_x \left [ \E_x( \ind_{Y_t^{(0)} \in A} \, \ind_{\t_1 > t} | Y^{(0)}) \right ]  \nonumber\\
    &= \E_x \left [  \ind_{Y_t^{(0)} \in A} \, \E_x( \ind_{\t_1 > t}  | Y^{(0)})  \right ] \nonumber\\
    & = \E_x \left [ \ind_{Y_t^{(0)} \in A} \,\e^{ - \int_0^t \a(Y_u^{(0)}) \, \dd u }   \right ] \nonumber \\
    & = \E_x^Y \left [ \ind_{Y_t \in A} \, \e^{ - \int_0^t \a(Y_u)  \, \dd u }    \right ]. \label{Kolmopart1}
\end{align}
On the other hand, by construction of the process
\[\E_x[ \1_{X_t \in A}|\F_{\tau_1}]\1_{\tau_1 \leq t} = Q_{t-\tau_1}(X_{\tau_1},A)\1_{\tau_1 \leq t} \]
where $\F_{\tau_1} = \left \{ F \in \FF :   F \cap  \{ \tau_1 \leq t \} \in \FF_t, \; \forall \, t \geq 0   \right \}.$ Then
\begin{align}
    \P_x(X_t \in A, \t_1 \leq t)&=\E_x[\E_x[ \1_{X_t \in A}|\F_{\tau_1}]\1_{\tau_1 \leq t} ] \nonumber\\
    &=\E_x[Q_{t-\tau_1}(X_{\tau_1},A)\1_{\tau_1 \leq t}] \nonumber\\
    &=\E_x[\E_x[ Q_{t-\tau_1}(X_{\tau_1},A)\1_{\tau_1 \leq t}|\tau_1,Y^{(0)}]] \nonumber\\
    &= \E_x \left [ \int_{y\in E} K( Y^{(0)}_{\tau_1},dy)Q_{t-\tau_1}(y,A)\1_{\tau_1 \leq t}  \right ] \nonumber \\
    &= \E_x \left [ \E_x[\int_{y\in E} K( Y^{(0)}_{\tau_1},dy)Q_{t-\tau_1}(y,A)\1_{\tau_1 \leq t} | Y^{(0)}]  \right ] \nonumber\\
    &= \E_x \left [ \int_0^t \int_{y\in E} K( Y^{(0)}_{s},dy)Q_{t-s}(y,A) \a(Y_s^{(0)}) \e^{ - \int_0^s \a(Y_u^{(0)})  \, \dd u }  \, \dd s \right ] \nonumber\\
    &=\int_0^t  \int_E  Q_{t-s}(y,A)    \E_x \left [ K \left  (Y_s^{(0)},dy \right ) \,    \a(Y_s^{(0)}) \e^{ - \int_0^s \a(Y_u^{(0)})  \, \dd u }     \right ] \, \dd s  \nonumber \\
    & = \int_0^t  \int_E  Q_{t-s}(y,A)    \E_x^Y \left [ K \left  (Y_s,dy \right ) \,    \a(Y_s) \e^{- \int_0^s \a(Y_u) \, \dd u}    \right ] \, \dd s.  \label{Kolmopart2}
\end{align}
The result then follows gathering \eqref{Kolmopart1} and \eqref{Kolmopart2}.

\subsection{Proof of Proposition \ref{existuniciteKBE}}

The proof  is made up from Lemmas \ref{propSol}, \ref{lemminimal} and \ref{propStoch}, the approach being similar to \cite{feller1971}. In Lemma~\ref{propSol} we built a solution $ Q_{t,\infty}(x,A)$ of \eqref{kbei} for any $x \in E$ and $A \in \EE$, while  
 Lemmas~\ref{lemminimal} and \ref{propStoch} will imply the unicity of the solution. 

\begin{lem}\label{propSol}
For all $x \in E$ and $A \in \EE$, the function $t \in \R_+ \mapsto Q_{t,\infty}(x,A)$ is a solution of \eqref{kbei}.
\end{lem}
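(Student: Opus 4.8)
The plan is to derive a recursion for the truncated transition probabilities $Q_{t,p}$ by conditioning on the first jump, exactly as in the proof of Theorem~\ref{TheoKBEi}, and then to pass to the limit $p\to\infty$ by monotone convergence. Recall that $Q_{t,p}(x,A)=\P_x(X_t\in A,\,T_p>t)$, that $0\le Q_{t,p}\le Q_{t,p+1}\le 1$, and that $Q_{t,p}\uparrow Q_{t,\infty}$ by definition.

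First I would establish, for every $p\geq 1$, the identity
\begin{equation}\label{recursionplan}
Q_{t,p}(x,A)=\E_x^Y\left[\ind_{Y_t\in A}\,\e^{-\int_0^t\a(Y_u)\,\dd u}\right]+\int_0^t\int_E Q_{t-s,p-1}(y,A)\,\E_x^Y\left[K(Y_s,\dd y)\,\a(Y_s)\,\e^{-\int_0^s\a(Y_u)\,\dd u}\right]\dd s.
\end{equation}
This is obtained by splitting the event $\{X_t\in A,\,T_p>t\}$ according to whether the first jump time $\t_1$ exceeds $t$ or not, reproducing verbatim the two computations \eqref{Kolmopart1} and \eqref{Kolmopart2}. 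On $\{\t_1>t\}$ no jump has occurred, so $X_t=Y_t^{(0)}$ and the constraint $T_p>t$ is automatic for $p\geq1$; this yields the first term. On $\{\t_1\leq t\}$, conditioning on $(Y_u^{(0)})_{u\geq0}$, on $\t_1=s$, and on the post-jump location $X_{\t_1}=y$ drawn from $K(Y_s^{(0)},\dd y)$, the iterative construction (i)--(iv) shows that from time $s$ the process evolves as an independent copy of the dynamics started at $y$; the event $\{T_p>t\}$ then becomes the requirement that this copy performs at most $p-1$ jumps before time $t-s$, contributing $Q_{t-s,p-1}(y,A)$. Inserting the law \eqref{cdf tau} of $\t_1$ reproduces the kernel appearing in \eqref{recursionplan}. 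Note that for $p=1$ the second term vanishes, since $Q_{t-s,0}\equiv0$ for $t-s>0$, so that \eqref{recursionplan} correctly recovers $Q_{t,1}$ as the pure no-jump term.

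Finally I would let $p\to\infty$ in \eqref{recursionplan}. The first term does not depend on $p$. In the second term, $Q_{t-s,p-1}(y,A)\uparrow Q_{t-s,\infty}(y,A)$ by definition of $Q_{t,\infty}$, and the integration is performed against the positive measure $\E_x^Y[K(Y_s,\dd y)\,\a(Y_s)\,\e^{-\int_0^s\a(Y_u)\dd u}]\,\dd s$ on $E\times[0,t]$, which is finite because its total mass equals $\E_x^Y[1-\e^{-\int_0^t\a(Y_u)\dd u}]\le1$ under \eqref{assumption alpha}. The monotone convergence theorem therefore permits the exchange of limit and integral, yielding precisely \eqref{kbei} with $Q_{t,\infty}$ in place of $Q_t$, which is the assertion.

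I expect the main obstacle to be the rigorous justification of the recursion \eqref{recursionplan}, and in particular the conditioning argument showing that, after the first jump, the process restarts as an independent copy of the dynamics with its jump count reduced by one. This rests on the (weak) Markov property together with the explicit construction of Section~\ref{jump-move general}, and requires some care with the measurability and Fubini steps already handled in \eqref{Kolmopart2}. Once this recursion is in place, tracking the number of jumps through the decomposition and passing to the limit are routine.
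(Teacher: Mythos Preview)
Your proposal is correct and follows essentially the same approach as the paper: derive the one-step recursion $Q_{t,p+1}=\text{(no-jump term)}+\int Q_{t-s,p}\,(\cdots)$ by conditioning on the first jump exactly as in the proof of Theorem~\ref{TheoKBEi}, then let $p\to\infty$ via monotone convergence. The only cosmetic difference is that the paper indexes the recursion as $Q_{t,p+1}$ in terms of $Q_{t-s,p}$, whereas you write $Q_{t,p}$ in terms of $Q_{t-s,p-1}$.
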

\begin{proof}
We will proceed as in the proof of Theorem \ref{TheoKBEi}.
First
\[ \P_x(X_t \in A , T_{p+1} >t, \t_1>t)= \P_x(X_t \in A , \t_{1} >t)=\E_x^Y \left [ \ind_{Y_t \in A} \, \e^{- \int_0^t \a(Y_u) \, \dd u}   \right ].\]
Secondly, if the process jumps once before $t$ (at time $\tau_1$) and is in $A$ at time $t$ with at most $p+1$ jumps, the process has at most $p$ jumps after the time $\tau_1.$ By construction of the process, the law of $X_t$ given $X_{T_1}$ is the same as the one of $X_{t - \tau_1}$ given $X_0=X_{\tau_1}.$ We then obtain
\[\E_x[\1_{X_t \in A} \1_{\tau_1 \leq t <T_{p+1}} | \F_{\tau_1}] = Q_{t-\tau_1,p}(X_{\tau_1},A)\1_{\tau_1 \leq t}.\]
 This leads to
\begin{align*}
    \P_x(X_t \in A, T_{p+1}>t, \t_1 \leq t)& =\E_x[Q_{t-\tau_1,p}(X_{\tau_1},A)\1_{\tau_1 \leq t}]\\
    & = \E_x[ \E_x[Q_{t-\tau_1,p}(X_{\tau_1},A)\1_{\tau_1 \leq t}|Y^{(0)},\tau_1]]\\
    &= \E_x[ \int_{y \in E} Q_{t-\tau_1,p}(y,A)\1_{\tau_1 \leq t} K( Y^{(0)}_{\tau_1},dy ) ]\\
  &=  \E_x[ \E_x[\int_{y \in E} Q_{t-\tau_1,p}(y,A)\1_{\tau_1 \leq t} K( Y^{(0)}_{\tau_1},dy ) |Y^{(0)} ]]\\
  &= \E_x[ \int_0^t \int_{y \in E} Q_{t-s,p}(y,A)\1_{\tau_1 \leq t} K( Y^{(0)}_{s},dy ) \a(Y_s^{(0)}) \e^{ - \int_0^s \a(Y_u^{(0)})  \, \dd u }  ]]\\
  &= \int_0^t  \int_E  Q_{t-s,p}(y,A)     \E_x^Y \left [ K \left (Y_s,dy \right )  \,    \a(Y_s) \e^{ - \int_0^s \a(Y_u)  \, \dd u }  \right ] \,  \dd s.
\end{align*}

We then obtain the induction formula
\begin{align*}
     Q_{t,p+1}(x,A) & = \E_x^Y \left [ \ind_{Y_t \in A} \, \e^{- \int_0^t \a(Y_u) \, \dd u}   \right ]  + \int_0^t  \int_E  Q_{t-s,p}(y,A)     \E_x^Y \left [ K \left (Y_s,dy \right )  \,    \a(Y_s) \e^{- \int_0^s \a(Y_u) \, \dd u}    \right ] \,  \dd s.
\end{align*}
This leads  by monotone convergence to
\begin{align*}
    Q_{t,\infty}(x,A) & = \E_x^Y \left [ \ind_{Y_t \in A} \, \e^{- \int_0^t \a(Y_u) \, \dd u}   \right ]  + \int_0^t  \int_E  Q_{t-s,\infty}(y,A)     \E_x^Y \left [  K \left (Y_s,dy \right ) \,    \a(Y_s) \e^{- \int_0^s \a(Y_u) \, \dd u}    \right ] \,  \dd s.
\end{align*}
\end{proof}

\begin{lem}\label{lemminimal}
$Q_{t,\infty}$ is called the minimal solution of \eqref{kbei} in the sense that for any non-negative solution $Q_t$ of \eqref{kbei}, we have $Q_t \geq Q_{t,\infty}.$ 
\end{lem}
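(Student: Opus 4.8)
The plan is to establish minimality by a monotone induction on the number of jumps, followed by a passage to the limit. Concretely, I would prove by induction on $p \geq 0$ that every non-negative solution $Q_t$ of \eqref{kbei} satisfies
\[
Q_t(x,A) \geq Q_{t,p}(x,A) \qquad \text{for all } x \in E,\ A \in \EE,\ t \geq 0;
\]
letting $p \to \infty$ and invoking the monotone convergence $Q_{t,p} \uparrow Q_{t,\infty}$ then yields $Q_t \geq Q_{t,\infty}$, which is the assertion of the lemma.

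For the base case $p=0$, the convention $T_0 = 0$ forces $Q_{t,0}(x,A) = \P_x(X_t \in A,\, T_0 > t) = 0$ for every $t \geq 0$, so $Q_t \geq 0 = Q_{t,0}$ holds trivially by non-negativity of $Q_t$. For the induction step I would combine the recursion obtained in the proof of Lemma~\ref{propSol},
\begin{equation*}
Q_{t,p+1}(x,A) = \E_x^Y\!\left[\ind_{Y_t \in A}\,\e^{-\int_0^t \a(Y_u)\,\dd u}\right] + \int_0^t\!\int_E Q_{t-s,p}(y,A)\,\E_x^Y\!\left[K(Y_s,\dd y)\,\a(Y_s)\e^{-\int_0^s \a(Y_u)\,\dd u}\right]\dd s,
\end{equation*}
with the fact that $Q_t$ satisfies the identical integral equation \eqref{kbei}, carrying $Q_{t-s}(y,A)$ in place of $Q_{t-s,p}(y,A)$. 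The two identities share the same inhomogeneous term, and the integration in both is performed against the measure $\E_x^Y[K(Y_s,\dd y)\,\a(Y_s)\e^{-\int_0^s \a(Y_u)\,\dd u}]$, which is non-negative because $K$ is a non-negative kernel, $\a \geq 0$ and the exponential is positive. Inserting the induction hypothesis $Q_{t-s} \geq Q_{t-s,p}$ under the integral then gives $Q_t \geq Q_{t,p+1}$, completing the step.

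The argument is essentially routine; the only point requiring care is that the induction statement must be formulated uniformly in the time variable (for all $t \geq 0$ simultaneously), precisely so that the hypothesis is available at the shifted times $t-s \in [0,t]$ appearing under the convolution integral. The structural input that turns a shared integral equation into the desired inequality is exactly the positivity of the inhomogeneous term together with the positivity of the integral kernel, so no estimate beyond these monotonicity observations is needed.
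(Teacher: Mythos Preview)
Your proof is correct and follows essentially the same approach as the paper: induction on $p$ using the recursion for $Q_{t,p+1}$ against the equation \eqref{kbei} for $Q_t$, then passage to the limit. The only cosmetic difference is that the paper anchors the induction at $p=1$ (using that $Q_t \geq \E_x^Y[\ind_{Y_t \in A}\,\e^{-\int_0^t \a(Y_u)\,\dd u}] = Q_{t,1}$ directly from \eqref{kbei} and non-negativity of the integral term), whereas you start at $p=0$ with the trivial $Q_{t,0}=0$; your remark about quantifying uniformly in $t$ is exactly the care needed and is implicit in the paper's argument.
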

\begin{proof}
Let $Q_t$ be a non-negative solution of \eqref{kbei}. Then for any $x \in E$ and $A \in \EE$
\[Q_t(x,A) \geq Q_{t,1}(x,A) = \E_x^Y \left [ \ind_{Y_t \in A} \, \e^{- \int_0^t \a(Y_u) \, \dd u}   \right ] . \]
We then proceed by induction. If $Q_t \geq Q_{t,p}$  then
\begin{align*}
    Q_t(x,A) & = \E_x^Y \left [ \ind_{Y_t \in A} \,\e^{ - \int_0^t \a(Y_u) \, \dd u  }   \right ] 
     + \int_0^t  \int_E  Q_{t-s}(y,A)    \E_x^Y \left [ K \left  (Y_s,dy \right ) \,    \a(Y_s) \e^{- \int_0^s \a(Y_u) \, \dd u}    \right ] \, \dd s \\
    & \geq  \E_x^Y \left [ \ind_{Y_t \in A} \,\e^{ - \int_0^t \a(Y_u) \, \dd u  }   \right ] 
    + \int_0^t  \int_E  Q_{t-s,p}(y,A)    \E_x^Y \left [ K \left  (Y_s,dy \right ) \,    \a(Y_s) \e^{- \int_0^s \a(Y_u) \, \dd u}    \right ] \, \dd s \\
     & = Q_{t,p+1}(x,A).
\end{align*}
Finally $Q_t(x,A) \geq Q_{t,p}(x,A)$ for every $p\geq 1$ and the result follows by letting $p$ go to infinity.
\end{proof}

\begin{lem}\label{propStoch}
The minimal solution $Q_{t,\infty}$ is stochastic, i.e. $Q_{t,\infty}(x,E)=1.$
\end{lem}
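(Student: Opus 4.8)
The plan is to evaluate $Q_{t,\infty}(x,E)$ directly from the definition $Q_{t,\infty}=\lim_{p\to\infty}Q_{t,p}$ and to reduce the stochasticity of the minimal solution to the non-explosiveness of the jump counter, which itself rests on the Poisson domination \eqref{dominationpoisson}. First I would note that since $X_t\in E$ holds surely, the defining formula $Q_{t,p}(x,A)=\P_x(X_t\in A,\,T_p>t)$ collapses, for $A=E$, to $Q_{t,p}(x,E)=\P_x(T_p>t)$. Hence, taking $p\to\infty$, it suffices to prove that $\lim_{p\to\infty}\P_x(T_p>t)=1$ for every $x\in E$ and $t\geq 0$.

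Next I would exploit monotonicity: the events $\{T_p>t\}$ increase with $p$ (since $T_p$ is nondecreasing in $p$), and their union is $\{\sup_p T_p>t\}$. Because $(N_t)_{t\geq 0}$ is a non-explosive counting process — a fact already recorded after \eqref{dominationpoisson} — we have $T_p\to\infty$ almost surely under $\P_x$, so $\sup_p T_p=+\infty$ a.s.\ and the union event has probability $1$. By monotone convergence this yields $\lim_{p\to\infty}\P_x(T_p>t)=1$, and therefore $Q_{t,\infty}(x,E)=1$. Equivalently, one may argue with the counter directly: since $\{T_p\leq t\}=\{N_t>p\}$ up to the convention $T_0=0$, the bound \eqref{dominationpoisson} gives $\P_x(N_t>p)\leq \P(N^*_t>p)$ with $N^*_t$ Poisson of rate $\a^* t$, and the right-hand side tends to $0$ as $p\to\infty$ because $N^*_t$ is almost surely finite.

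The substance of the argument — and the only point that is not purely bookkeeping — is the non-explosion, i.e.\ that infinitely many jumps cannot accumulate before the finite time $t$. This is exactly where the standing boundedness assumption $\a\leq\a^*$ from \eqref{assumption alpha} enters, through the stochastic domination \eqref{dominationpoisson} of $N_t$ by a Poisson variable. Once that is in hand, the conclusion that the minimal solution is stochastic is immediate, so I do not anticipate any genuine obstacle beyond invoking these already-established facts.
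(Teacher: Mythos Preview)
Your proof is correct but takes a different route from the paper. The paper argues analytically, by induction on the recursive integral formula for $Q_{t,p}$ established in the proof of Lemma~\ref{propSol}: it shows $Q_{t,p}(x,E) \geq 1-(1-\e^{-\a^*t})^p$ for all $p\geq 1$, using only the bound $\a\leq\a^*$ inside the recursion, and then lets $p\to\infty$. You instead exploit the probabilistic definition directly, reading off $Q_{t,p}(x,E)=\P_x(T_p>t)$ and invoking the already-recorded non-explosion from \eqref{dominationpoisson}. Your argument is shorter and conceptually cleaner; the paper's approach has the virtue of staying entirely within the integral-equation framework, in the spirit of Feller's treatment of pure jump processes that the section explicitly follows, so it does not need to recall the probabilistic origin of $Q_{t,p}$.
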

\begin{proof}
 Recall that $\a$ is bounded by $\a^* > 0$. 
It is then enough to show by induction that $Q_{t,p}(x,E) \geq 1-(1-\textnormal{e}^{- \a^*t})^p$ for any $p \geq 1.$ 
First
\begin{align*}
    Q_{t,1}(x,E) & = \E_x^Y \left [ \ind_{Y_t \in E} \,\e^{ - \int_0^t \a(Y_u) \, \dd u  }   \right ]  = \E_x^Y \left [  \,\e^{ - \int_0^t \a(Y_u) \, \dd u  }   \right ]  \geq  \E_x \left ( \e^{-\a^*t}   \right )
    = \e^{-\a^*t}.
\end{align*}
Then notice that
\begin{align*}
    \P_x( \tau_1 \leq t)  & =  \E_x^Y \left [  1- \e^{ - \int_0^t \a(Y_u) \, \dd u  }   \right ]
    \leq  1- \e^{- \a^* t}.
\end{align*}
We then obtain by induction
\begin{align*}
     Q_{t,p+1}(x,E) & = \E_x^Y \left [ \ind_{Y_t \in E} \, \e^{- \int_0^t \a(Y_u) \, \dd u}   \right ]  + \int_0^t  \int_E  Q_{t-s,p}(y,A)     \E_x^Y \left [ K \left (Y_s,dy \right )  \,    \a(Y_s) \e^{- \int_0^s \a(Y_u) \, \dd u}    \right ] \, \dd s \\
    & \geq   \E_x^Y \left [ \, \e^{- \int_0^t \a(Y_u) \, \dd u}   \right ] + \int_0^t \left ( 1-(1-\textnormal{e}^{- \a^*(t-s)})^p \right )    \E_x^Y \left [ \int_E K \left (Y_s,dy \right )  \,    \a(Y_s) \e^{- \int_0^s \a(Y_u) \, \dd u}    \right ] \dd s  \\ 
    &  \geq    \E_x^Y \left [ \, \e^{- \int_0^t \a(Y_u) \, \dd u}   \right ] + \int_0^t \left ( 1-(1-\textnormal{e}^{- \a^*t})^p \right )    \E_x^Y \left [  \a(Y_s) \e^{- \int_0^s \a(Y_u) \, \dd u}    \right ] \dd s  \\ 
    & =  \P_x( \tau_1 > t)    + \left ( 1-(1-\textnormal{e}^{- \a^*t})^p \right ) \, \P_x( \tau_1 \leq t) \\
    & = 1 - (1-\textnormal{e}^{- \a^*t})^p \, \P_x( \tau_1 \leq t) \geq 1-(1-\textnormal{e}^{- \a^*t})^{p +1}.
\end{align*} 
\end{proof}

By bringing together the last three lemmas it does not take long to prove Proposition \ref{existuniciteKBE}.
   By Lemma \ref{propSol}, $Q_{t,\infty}$ is a solution of \eqref{kbei}. We now prove the unicity.  Let $Q_t$ be a non-negative sub-stochastic solution of \eqref{kbei}. Lemma \ref{lemminimal} entails $Q_t(x,A) \geq Q_{t,\infty}(x,A)$ and $Q_t(x,E \backslash A) \geq Q_{t,\infty}(x,E \backslash A)$ for every $A \in \mathcal{E}.$ We get then
\[    1 \geq Q_t(x,E) = Q_t(x,A)+Q_t(x,E \backslash A)   
      \geq   Q_{t,\infty}(x,A)+Q_{t,\infty}(x,E \backslash A) =  Q_{t,\infty}(x,E) =1\]
by Lemma \ref{propStoch} so $Q_t(x,A) = Q_{t,\infty}(x,A)$ for every $A \in \mathcal{E}$.

\subsection{Proof of Proposition \ref{prop13mini}}

We first show that $Q_{t,(\infty)}$ is a solution of \eqref{kbei}. Let $n \geq 0$, $x \in E_n$ and $p \geq n$.
If there is no jump before $t,$ then
\[ \P_x(X_t \in A, \t_1 > t,\,  \forall s\in[0,t] \ n(X_s)\leq p )=\P_x(X_t \in A, \t_1 > t).  \]

By construction of the process, if the first jump before $t$ is a death,
\[ \P_x(X_t \in A ,\,  \forall s\in[0,t] \ n(X_s)\leq p   \, | \, \F_{\t_1}, \textrm{ a death occurs at } \t_1)  =  Q_{t-\t_1,(p)}(X_{\t_1},A),\]
and if the first jump before $t$ is a birth,
\[ \P_x(X_t \in A ,\,  \forall s\in[0,t] \ n(X_s)\leq p  \, | \, \F_{\t_1}, \textrm{ a birth occurs at } \t_1)  =  Q_{t-\t_1,(p)}(X_{\t_1},A) \1_{p >n}.\]

Following the same computations as in the proof of Theorem \ref{TheoKBEi}, we obtain
\begin{multline*}
     Q_{t,(p)}(x,A) = \E_x^Y \left [ \ind_{Y_t \in A} \, \e^{ - \int_0^t \a(Y_u) \, \dd u }   \right ]  \\ + \int_0^t  \int_{E_{n+1}} Q_{t-s,(p)}(y,A) \,    \E_x^Y \left [ \b \left (Y_s \right ) K_\b \left ( Y_s  ,dy \right )  \, \e^{ - \int_0^s \a(Y_u) \, \dd u } \right ] \dd s  \, \1_{p > n} \\
     + \int_0^t  \int_{E_{n-1}} Q_{t-s,(p)}(y,A) \,  \E_x^Y \left [ \d \left (Y_s \right ) K_\d \left ( Y_s  ,dy \right )  \, \e^{ - \int_0^s \a(Y_u) \, \dd u } \right ] \dd s ,
\end{multline*}
and $Q_{t,(\infty)}(x,A)$ satisfies \eqref{kbei} by continuity of the probability.
The proof is then complete thanks to the unicity of the solution to \eqref{kbei}.

\section{Proofs of  Section \ref{sec3.1} about Feller properties}\label{proofs Feller}

\subsection{Proof of Proposition \ref{prop24}}

Both results of the proposition are based on the following calculation, for any $f \in M^b(E)$:

\begin{align*}
    Q_tf(x)-f(x) & = \E_x^Y \left [ f(Y_t) \e^{- \int_0^t \a(Y_u) \, \dd u}  \right ] - f(x) + \E_x \left [ f(X_t) \ind_{N_t \geq 1} \right ] \\
    & = Q_t^Yf(x)- f(x) + \E_x^Y \left [ f(Y_t) \left ( \e^{- \int_0^t \a(Y_u) \, \dd u} -1 \right ) \right ] + \E_x \left [ f(X_t) \ind_{N_t \geq 1} \right ].
\end{align*}
The last two terms goes uniformly to $0$ when $t\to 0$. Indeed,

\begin{align*}
    \left | \E_x^Y \left [ f(Y_t) \left ( \e^{- \int_0^t \a(Y_u) \, \dd u} -1 \right ) \right ] + \E_x \left [ f(X_t) \ind_{N_t \geq 1} \right ] \right | & \leq ||f||_{\infty} \a^* t + ||f||_{\infty} \P_x(N_t \geq 1) \\
    & = ||f||_{\infty} \a^* t + ||f||_{\infty}  \E_x^Y \left [ \left (1- \e^{- \int_0^t \a(Y_u) \, \dd u}\right )\right ] \\
    & \leq 2 \a^* t ||f||_{\infty}.
\end{align*}

So we obtain directly the second point of the proposition. For the first point remark that when  $f \in C_b(E)$, by continuity of $f \circ Y$  and the dominated convergence theorem, $\lim_{t \rightarrow 0} Q_t^Yf(x) = f(x)$.

\subsection{Proof of Theorem \ref{conditionsfeller} (part 1)}

The proof of the  Feller continuous property of $(X_t)_{t\geq 0}$ is based on the following Lemma~\ref{lem23} that exploits the Feller continuous property of $Q_t^Y$, and on Lemma~\ref{lem24} which  in addition makes use of the Feller continuous property of the jump kernel $K$. 

\begin{lem} \label{lem23}  Assume that for any $t\geq 0$, $Q_t^Y \, C_b(E) \subset C_b(E)$. Then for any $p \geq 1$, $f_1, \dots f_p \in C_b(E)$ and $0\leq t_1  < \dots < t_p$ the function 
    $x \mapsto \E_x^Y \left [ f_1 (Y_{t_1})  \dots f_p (Y_{t_p}) \right ]$ is continuous. Furthermore, for any $f \in C_b(E)$ the function
    $x \mapsto \E_x^Y [ f (Y_{t})  \e^{ - \int_0^{t} \alpha( Y_u) \, \dd u } ]$ 
    is continuous. 
\end{lem}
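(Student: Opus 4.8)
The plan is to establish the two statements in turn, deducing the exponential functional from the finite-dimensional one.

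For the finite-dimensional functionals I would argue by induction on $p$, using the (weak) Markov property of $(Y_t)_{t\geq 0}$ together with the hypothesis $Q_t^Y\,C_b(E)\subset C_b(E)$. The case $p=1$ is exactly this hypothesis, since $x\mapsto\E_x^Y[f_1(Y_{t_1})]=Q_{t_1}^Yf_1(x)$. For the inductive step, conditioning on $\FF_{t_1}^Y$ and using the Markov property at time $t_1$ gives
\[
\E_x^Y\!\left[f_1(Y_{t_1})\cdots f_p(Y_{t_p})\right]=Q_{t_1}^Y\!\left(f_1\cdot g\right)(x),\qquad g(y):=\E_y^Y\!\left[f_2(Y_{t_2-t_1})\cdots f_p(Y_{t_p-t_1})\right].
\]
By the induction hypothesis applied to the $p-1$ functions $f_2,\dots,f_p$ and the strictly increasing times $0<t_2-t_1<\dots<t_p-t_1$, the function $g$ is continuous, and it is bounded by $\prod_{i\geq 2}\|f_i\|_\infty$; hence $f_1\cdot g\in C_b(E)$ and the right-hand side is continuous by the hypothesis on $Q_{t_1}^Y$, closing the induction.

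For the exponential functional, the aim is to reduce it to finitely many time points so that the first part applies. Rather than approximating $\int_0^t\alpha(Y_u)\,\dd u$ by Riemann sums — which would force me to control the oscillation of $u\mapsto\alpha(Y_u)$ uniformly in the starting point, something the Feller continuous property $Q_t^Y\,C_b(E)\subset C_b(E)$ (being strictly weaker than strong continuity on $C_b(E)$) does not provide — I would expand the exponential as a Dyson/Neumann series, a pathwise identity valid because $\alpha$ is bounded:
\[
\e^{-\int_0^t\alpha(Y_u)\,\dd u}=\sum_{k\geq0}(-1)^k\int_{0<s_1<\dots<s_k<t}\alpha(Y_{s_1})\cdots\alpha(Y_{s_k})\,\dd s_1\cdots\dd s_k.
\]
Multiplying by $f(Y_t)$, taking $\E_x^Y$, and interchanging the sum, the expectation and the simplex integral — all justified through Fubini–Tonelli by the absolute bound $\sum_k (\a^* t)^k/k!=\e^{\a^*t}<\infty$ — yields
\[
\E_x^Y\!\left[f(Y_t)\,\e^{-\int_0^t\alpha(Y_u)\,\dd u}\right]=\sum_{k\geq0}(-1)^k\int_{0<s_1<\dots<s_k<t}\E_x^Y\!\left[\alpha(Y_{s_1})\cdots\alpha(Y_{s_k})f(Y_t)\right]\dd s_1\cdots\dd s_k.
\]
For each fixed $k$ and each fixed ordered tuple $s_1<\dots<s_k<t$, the integrand is continuous in $x$ by the first part of the lemma (applied to the functions $\alpha,\dots,\alpha,f\in C_b(E)$ at the strictly increasing times $s_1,\dots,s_k,t$), and it is bounded by $(\a^*)^k\|f\|_\infty$ uniformly in $x$ and $s$; by dominated convergence the $k$-th term is therefore continuous in $x$, with modulus bounded by $(\a^* t)^k\|f\|_\infty/k!$.

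The crux is precisely this passage from continuous to discrete time, and the Dyson expansion is what makes it clean. Because the $k$-th term is dominated by $(\a^* t)^k\|f\|_\infty/k!$ \emph{uniformly in $x$}, the series converges uniformly on $E$ by the Weierstrass M-test, so its sum is continuous as a uniform limit of continuous functions. This is exactly where the boundedness of $\alpha$ (assumption~\eqref{assumption alpha}) enters, and it is what allows me to bypass the uniform-in-$x$ trajectory estimates that would be the genuine obstacle in a Riemann-sum argument. The remaining verifications — joint measurability of the integrands and the Fubini interchanges — are routine consequences of the same uniform bound.
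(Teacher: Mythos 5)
Your proposal is correct and takes essentially the same route as the paper: an induction on $p$ via the Markov property for the finite-dimensional functionals (the paper conditions at the last time point and absorbs $Q^Y_{t_{p+1}-t_p}f_{p+1}$ into the $p$-th factor, whereas you condition at the first time point — a cosmetic difference), followed by the power-series expansion of the exponential, a Fubini interchange justified by $\alpha\leq\alpha^*$, continuity of each term from the first part, and uniform (normal) convergence of the series. Your ordered-simplex (Dyson) form of the expansion is equivalent to the paper's $\frac{1}{k!}\int_{[0,t]^k}$ version, and in fact slightly cleaner since the times $s_1<\dots<s_k<t$ are automatically distinct and ordered when the first part is invoked.
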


\begin{proof}
To prove the first statement, we proceed first  by induction on $p \geq 1$.  
    Since $ x \mapsto \E_x^Y \left [ f_1 (Y_{t_1}) \right ]= Q_{t_1}^Y f_1(x),$ the property is satisfied for $p=1$ because $Q_t^Y \, C_b(E) \subset C_b(E)$ for any $t\geq 0$ by assumption. Suppose now that the property is true for some $p \geq 1$. Let $f_1,  \dots, f_{p+1} \in C_b(E)$ and $0\leq t_1 <   \dots < t_{p+1}$. Then
    \begin{align*}
        \E_x^Y \left [  f_1 (Y_{t_1})  \dots f_{p+1} (Y_{t_{p+1}}) \right ] & = \E_x^Y \left [ \E_x^Y \left ( f_1 (Y_{t_1})  \dots f_{p+1} (Y_{t_{p+1}}) \left | Y_{t_1}, \dots, Y_{t_p}  \right. \right ) \right ] \\
        & = \E_x^Y \left [  f_1 (Y_{t_1})  \dots  f_p (Y_{t_p}) \E_x^Y \left ( f_{p+1} (Y_{t_{p+1}}) | Y_{t_p}  \right ) \right ] \\
        & =  \E_x^Y \left [  f_1 (Y_{t_1}) \dots  f_p (Y_{t_p}) Q^Y_{t_{p+1}-t_p}f_{p+1}(Y_{t_p}) \right ] .
    \end{align*}
    The function $f_p \times Q^Y_{t_{p+1}-t_p}f_{p+1}$ is continuous by assumption so we can apply the induction hypothesis. 

Regarding the second statement of the lemma, let us take $f \in C_b(E)$ and $t \geq 0$.
  We have
    \begin{align*}
        \hspace{- 1cm} \E_x^Y \left [ f(Y_t) \e^{ - \int_0^t \a(Y_u) \, \dd u } \right ] & = \E_x^Y \left [ f(Y_t) \sum_{k \geq 0} \dfrac{(-1)^k}{k!} \left (  \int_0^t \a(Y_u) \, \dd u \right )^k \right ] \\
        & = \sum_{k \geq 0} \dfrac{(-1)^k}{k!} \E_x^Y \left [  f(Y_t) \left (  \int_0^t \a(Y_u) \, \dd u \right )^k  \right ]  \\
        & = \sum_{k \geq 0} \dfrac{(-1)^k}{k!} \E_x^Y \left [  f(Y_t) \prod_{j=1}^k  \int_0^t \a(Y_{u_j}) \, \dd u_j   \right ]  \\
        & = \sum_{k \geq 0} \dfrac{(-1)^k}{k!} \int_{u_1=0}^t \dots \int_{u_k=0}^t  \E_x^Y \left [  f(Y_t)  \a(Y_{u_1})  \dots \a(Y_{u_k}) \right ]  \, \dd u_1 \dots \dd u_k 
    \end{align*}
   which is valid because $f \times \a^k$ is bounded.
    For any $u_1 \geq 0, \dots, u_k \geq 0$, the function $ x \in E \mapsto \E_x^Y \left [  f(Y_t)  \a(Y_{u_1})  \dots \a(Y_{u_k}) \right ]  $ is continuous by the first part of the proof and this expression is bounded uniformly in $x$ by $\|f\|_{\infty} \times (\a^*)^k \in L^1([0,t]^k). $  Again, by normal convergence, we obtain the expected result.
\end{proof}

\begin{lem}\label{lem24}
 Assume that $Q_t^Y \, C_b(E) \subset C_b(E)$ for any $t\geq 0$ and that $K \, C_b(E) \subset C_b(E)$. Let $t >0.$ Then for any  $k \geq 1,$ for any bounded measurable function $\varphi$ on $E \times \R_+$ such that $\varphi(.,u)$ is continuous for any $ u \leq t$,  
the function $x \mapsto \E_x[ \varphi(X_{T_k},T_k)\1_{T_k \leq t}]$ 
is continuous.
 \end{lem}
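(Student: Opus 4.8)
The plan is to prove the statement by induction on $k$, in the slightly reinforced form that \emph{for every horizon $T>0$ and every bounded measurable $\varphi$ on $E\times\R_+$ whose section $\varphi(\cdot,u)$ is continuous for each $u\le T$, the map $x\mapsto \E_x[\varphi(X_{T_k},T_k)\1_{T_k\le T}]$ is continuous}; the lemma is then the case $T=t$. Carrying the horizon through the statement is what makes the induction close up, so I would state it this way from the start.

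For the base case $k=1$, I would first extract an explicit integral representation from the iterative construction of Section~\ref{jump-move general}. Conditioning on the first inter-jump trajectory $(Y_u^{(0)})_{u\ge0}$ started at $x$, the jump time $\tau_1=T_1$ has conditional density $s\mapsto \a(Y_s)\e^{-\int_0^s\a(Y_u)\,\dd u}$ by \eqref{cdf tau}, and given $\tau_1$ the post-jump state $X_{T_1}$ is drawn from $K(Y_{\tau_1},\cdot)$. Writing $\psi_s:=K\varphi(\cdot,s)$ and using Fubini (everything is bounded by $\|\varphi\|_\infty$), this yields
$$\E_x\big[\varphi(X_{T_1},T_1)\1_{T_1\le T}\big]=\int_0^T \E_x^Y\!\left[(\psi_s\,\a)(Y_s)\,\e^{-\int_0^s\a(Y_u)\,\dd u}\right]\dd s.$$
For each $s\le T$ we have $\varphi(\cdot,s)\in C_b(E)$, hence $\psi_s\in C_b(E)$ since $K\,C_b(E)\subset C_b(E)$, and so $\psi_s\,\a\in C_b(E)$ because $\a$ is continuous and bounded. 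The second statement of Lemma~\ref{lem23} then makes $x\mapsto \E_x^Y[(\psi_s\a)(Y_s)\e^{-\int_0^s\a(Y_u)\,\dd u}]$ continuous for each fixed $s$, and the integrand is dominated uniformly on $[0,T]$ by $\|\varphi\|_\infty\,\a^*$, so dominated convergence passes a limit $x^{(p)}\to x$ under the integral and gives continuity.

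For the induction step I would exploit the restart property of the construction at the first jump: conditionally on $X_{T_1}=y$ and $\tau_1=s$, the pair $(T_{k+1}-T_1,\,X_{T_{k+1}})$ has the law of $(T_k,X_{T_k})$ under $\P_y$. Setting $\Psi_k(y,s):=\E_y[\varphi(X_{T_k},T_k+s)\1_{T_k\le T-s}]$ and noting that $\1_{T_{k+1}\le T}$ already forces $\tau_1\le T$, conditioning on $\F_{\tau_1}$ gives
$$\E_x\big[\varphi(X_{T_{k+1}},T_{k+1})\1_{T_{k+1}\le T}\big]=\E_x\!\left[\Psi_k(X_{T_1},\tau_1)\,\1_{\tau_1\le T}\right].$$
The shifted function $\tilde\varphi_s(z,u):=\varphi(z,u+s)$ has continuous sections for $u\le T-s$ (since then $u+s\le T$), so the induction hypothesis applied with horizon $T-s$ shows that $\Psi_k(\cdot,s)$ is continuous for each $s\le T$; moreover $\Psi_k$ is bounded by $\|\varphi\|_\infty$. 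Thus $\Psi_k$ is an admissible integrand for the base case, and applying the $k=1$ result to $\Psi_k$ in place of $\varphi$ yields continuity of the right-hand side, completing the induction.

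The analytic core is thus routine once the recursion is set up: everything reduces to Lemma~\ref{lem23} plus dominated convergence. The main obstacle I anticipate is the rigorous justification of the conditioning and restart identities from the merely weak Markov construction---namely the existence of the relevant regular conditional laws and the joint measurability of $(y,s)\mapsto\Psi_k(y,s)$---together with carefully tracking the time-shift and the shrinking horizon $T-s$ so that the continuity hypothesis on the sections of $\varphi$ is preserved at every rank.
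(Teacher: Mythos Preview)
Your proposal is correct and follows essentially the same scheme as the paper: induction on $k$, with the base case reduced via conditioning to an integral of expressions $\E_x^Y[g(Y_s)\a(Y_s)\e^{-\int_0^s\a(Y_u)\,\dd u}]$ handled by Lemma~\ref{lem23} and dominated convergence, and the induction step obtained by peeling off one jump.

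The only noteworthy difference is in how the induction step is organised. You condition on $\F_{\tau_1}$ and peel off the \emph{first} jump, which forces you to apply the induction hypothesis at rank $k$ with the reduced horizon $T-s$; this is why you (correctly) carry a variable horizon through the induction. The paper instead conditions on $\F_{T_k}$ and $Y^{(k)}$, peeling off the \emph{last} jump: it integrates the $(k{+}1)$-th transition directly to produce a new function $\tilde\varphi(y,u)=\int_0^{t-u}\E_y^Y[K\varphi(\cdot,\tau+u)(Y_\tau)\a(Y_\tau)\e^{-\int_0^\tau\a}]\,\dd\tau$, checks via Lemma~\ref{lem23} that $\tilde\varphi(\cdot,u)$ is continuous for every $u\le t$, and then applies the rank-$k$ hypothesis with the \emph{same} horizon $t$. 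Both routes are valid; the paper's avoids strengthening the inductive statement at the cost of one extra explicit integral, while yours keeps the recursion symmetric with the base case but needs the horizon quantifier.
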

 
 \begin{proof}
  We shall proceed by induction. For $k=1,$
  \begin{align*}
   \E_x[\varphi(X_{T_1},T_1)\1_{T_1 \leq t}] &= \E_x[\1_{T_1 \leq t} \, \E_x [\varphi(X_{T_1},T_1)|Y^{(0)},T_1]] \\
   & = \E_x[ \int_E K(Y^{(0)}_{T_1} ,dz) \varphi( z,T_1) \1_{T_1 \leq t} ]\\ 
   & = \E_x[  \int_E \E_x [ K(Y^{(0)}_{T_1} ,dz) \varphi( z,T_1) \1_{T_1 \leq t} | Y^{(0)} ] ]  \\
   &= \E_x[\int_E \int_0^t  K(Y^{(0)}_{t_1} ,dz) \varphi( z,t_1) \alpha(Y^{(0)}_{t_1}) e^{-\int_0^{t_1} \alpha(Y_u^{(0)})\, \dd u} \, \dd t_1]\\
   &= \int_0^t \E_x^Y[ H(Y_{t_1},t_1)\alpha(Y_{t_1}) e^{-\int_0^{t_1} \alpha(Y_u)\, \dd u}] \, \dd t_1
  \end{align*}
where $H(x,u)= \int_E K(x,dz )\varphi(z,u).$ Since $z \mapsto \varphi(z,t_1)$ belongs to $C_b(E)$ for every $t_1\leq t$, the Feller continuous property of $K$ entails the continuity of $x \mapsto H(x,t_1)$ for every $t_1 \leq t.$ Consequently the function  
\[x \mapsto \E_x^Y[ H(Y_{t_1},t_1)\alpha(Y_{t_1}) e^{-\int_0^{t_1} \alpha(Y_u)\, \dd u}]\]
is continuous for every $t_1$ by Lemma \ref{lem23}. The functions $H$ and $\alpha$ being bounded, the dominated convergence theorem yields the continuity of $x \mapsto  \E_x[\varphi(X_{T_1},T_1)\1_{T_1 \leq t}]$, proving the statement for $k=1$. Assume now that the property holds for $k \geq 1.$ We compute similarly
\begin{align*}
 \E_x[ \varphi(X_{T_{k+1}},T_{k+1})\1_{T_{k+1}\leq t}] & = \E_x[\E_x[\int_E K(Y^{(k)}_{T_{k+1}-T_k},dz) \varphi( z,T_{k+1})\1_{T_{k+1}\leq t}|\mathcal{F}_{T_k},Y^{(k)}]]\\
 &= \E_x[\E^Y_{X_{T_k}}[\int_0^{t-T_k} \int_E K(Y_{\tau},dz)\varphi(z,\tau+T_k) \alpha( Y_{\tau})e^{-\int_0^{\tau}\alpha(Y_u)\, \dd u}] \1_{T_k \leq t}]\\
 &= \E_x[\tilde{\varphi}( X_{T_k},T_k)\1_{T_k\leq t}],
\end{align*}
where 
\begin{align*}
 \tilde{\varphi}(x,u) &= \E_x^Y[ \int_0^{t-u} \int_E K(Y_{\tau},dz) \varphi(z,\tau+u) \alpha(Y_{\tau}) e^{-\int_0^{\tau} \alpha(Y_u) \, \dd u} \, \dd \tau]\\
 &= \int_0^{t-u} \E_x^Y[ H(Y_{\tau},\tau+u) \alpha(Y_{\tau})e^{-\int_0^{\tau} \alpha(Y_u) \, \dd u} ] \, \dd \tau.
\end{align*}
By Lemma \ref{lem23}, $x \mapsto \E_x^Y[ H(Y_{\tau},\tau+u) \alpha(Y_{\tau})e^{-\int_0^{\tau} \alpha(Y_u) \, \dd u} ]$ is continuous for each $u,\tau,$ so $\tilde{\varphi}(.,u)$ is continuous for every $u \leq  t.$ We then obtain the result applying the induction hypothesis.
 \end{proof}

 We are now in position to prove the first part of Theorem~\ref{conditionsfeller} about the Feller continuous property of  $(X_t)_{t \geq 0}$.
    We compute for $t >0,$ $x \in E$ and $f \in C_b(E)$
    \begin{align*}
        Q_tf(x) &= \sum_{k=0}^{\infty}\limits \E_x[f(X_t)\1_{N_t=k}]\\
        &= \E_x^Y[f(Y_t)e^{-\int_0^t \alpha(Y_u)\, \dd u}]+ \sum_{k\geq 1} \E_x[f(X_t)\1_{T_k \leq t < T_{k+1}}]\\
        &= \psi(x,t)+ \sum_{k\geq 1}\E_x[f(X_t)\1_{T_{k+1}-T_k >t-T_k}\1_{T_k \leq t}]
    \end{align*}
    where $\psi(x,t) = \E_x^Y[f(Y_t)e^{-\int_0^t \alpha(Y_u)\, \dd u}].$ We get from Lemma \ref{lem23} that $\psi(.,t)$ belongs to $C_b(E)$ for every $t>0.$ Then
    \begin{align}
        \E_x[f(X_t)\1_{T_{k+1}-T_k >t-T_k}\1_{T_k \leq t}] &= \E_x[ \1_{T_k \leq t} f(Y^{(k)}_{t-T_k}) \, \E_x [\1_{T_{k+1}-T_k >t-T_k}| \FF_{T_k}, Y^{(k)} ]] \nonumber \\
        & = \E_x[f(Y^{(k)}_{t-T_k}) \e^{-\int_0^{t-T_k}\alpha( Y^{(k)}_u)\, \dd u}  \1_{T_k \leq t}] \\
        & = \E_x[\E_{X_{T_k}}^Y[f(Y_{t-T_k}) e^{-\int_0^{t-T_k}\alpha( Y_u)\, \dd u}]\1_{T_k\leq t}]\label{Qtfegalpsi}\\
        &= \E_x[\psi(X_{T_k},t-T_k)\1_{T_k\leq t}],\nonumber
    \end{align}
    so Lemma \ref{lem24} entails that $x\mapsto \E_x[\psi(X_{T_k},t-T_k)\1_{T_k\leq t}]$ is continuous for every $k \geq 1.$ The domination 
    \begin{align*}
        \left|\E_x[\psi(X_{T_k},t-T_k)\1_{T_k\leq t}]\right|& \leq \|f\|_{\infty} \P_x( T_k \leq t)\\
        &\leq \|f\|_{\infty} \P(N^*_t \geq k )
    \end{align*}
    where $N^*t \sim \mathcal{P}(\alpha^*t )$ (by \eqref{dominationpoisson}) allows us to conclude that $x \mapsto Q_tf(x)$ is continuous.

\subsection{Proof of Theorem \ref{conditionsfeller} (part 2)}

Our aim is to prove the Feller property of $(X_t)_{t\geq 0}$ assuming that for every $t >0$, $ Q_t^Y C_0(E) \subset C_0(E)$ and that $K \, C_0(E) \subset C_0(E)$. We follow the same steps as for the proof of Theorem \ref{conditionsfeller} (part 1), by first inspecting the consequences of $ Q_t^Y C_0(E) \subset C_0(E)$ in Lemma~\ref{csqdeH3} and second the additional effect of $K \, C_0(E) \subset C_0(E)$  in Lemma~\ref{lemC0}.

\begin{lem} \label{csqdeH3}
    Suppose that for every $t >0$, $Q_t^Y C_0(E) \subset C_0(E)$. Then 
    \begin{enumerate}
        \item  for any $f \in C_0(E)$, $\lim_{t \rightarrow 0} \|Q_t^Yf - f\|_{\infty} = 0$,
        \item for any $t>0$, $\sup_{s \in [0,t]} Q_s^Y C_0(E) \subset C_0(E)$,
        \item for any $f \in C_0(E)$ the function
    $x \mapsto \E_x^Y [ f (Y_{t})  \e^{ - \int_0^{t} \alpha( Y_u) \, \dd u } ]$ 
    is continuous. 
    \end{enumerate}
\end{lem}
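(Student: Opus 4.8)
The three assertions build on one another, so the plan is to establish them in order.

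For the first assertion, the plan is to upgrade the pointwise continuity of $Q_t^Y$ at $t=0$ into uniform continuity. First, exactly as in the first part of Proposition~\ref{prop24} but applied to $Y$ instead of $X$, the continuity of the trajectories of $Y$ together with the dominated convergence theorem give $Q_t^Y f(x)\to f(x)$ as $t\to 0$, for every $f\in C_b(E)$ and every $x\in E$. Now $(Q_t^Y)_{t\ge 0}$ is a positive contraction semigroup on the Banach space $(C_0(E),\|\cdot\|_\infty)$, since by hypothesis it maps $C_0(E)$ into itself and $\|Q_t^Y\|\le 1$. For any finite signed measure $\mu$ on $E$, the bound $|Q_t^Y f|\le \|f\|_\infty$ and dominated convergence turn the pointwise statement into $\int Q_t^Y f\,\dd\mu\to\int f\,\dd\mu$, i.e. $t\mapsto Q_t^Y f$ is weakly continuous at $0$ for the duality between $C_0(E)$ and the space $M(E)$ of finite signed measures; the semigroup property extends this to all $t$. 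The strong continuity $\|Q_t^Y f-f\|_\infty\to 0$ then follows from the classical fact that a weakly continuous contraction semigroup on a Banach space is strongly continuous (see, e.g., the Feller semigroup theory in \citet{dynkin1965markov}). This last step is the main obstacle, being the only non-elementary ingredient; everything else is a routine application of dominated convergence.

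For the second assertion, interpreted as $x\mapsto\sup_{s\in[0,t]}|Q_s^Y f(x)|\in C_0(E)$, I would deduce from the first assertion that $s\mapsto Q_s^Y f$ is $\|\cdot\|_\infty$-continuous on $[0,t]$: for $0\le s\le s'$, $\|Q_{s'}^Y f-Q_s^Y f\|_\infty=\|Q_s^Y(Q_{s'-s}^Y f-f)\|_\infty\le\|Q_{s'-s}^Y f-f\|_\infty\to 0$ by contractivity and the first assertion. Hence $\{Q_s^Y f:s\in[0,t]\}$ is a compact subset of $C_0(E)$, and for any $\ep>0$ one may choose $s_1,\dots,s_m\in[0,t]$ so that every $Q_s^Y f$ lies within $\ep$ in sup-norm of some $Q_{s_i}^Y f$. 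Then $x\mapsto\sup_{s\in[0,t]}|Q_s^Y f(x)|$ is within $\ep$, uniformly in $x$, of $x\mapsto\max_{1\le i\le m}|Q_{s_i}^Y f(x)|\in C_0(E)$, so it is a uniform limit of $C_0(E)$ functions and therefore belongs to $C_0(E)$.

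For the third assertion I would treat continuity and vanishing at infinity separately. The vanishing at infinity is immediate by domination: since $0\le \e^{-\int_0^t\a(Y_u)\,\dd u}\le 1$, we have $|\E_x^Y[f(Y_t)\e^{-\int_0^t\a(Y_u)\dd u}]|\le\E_x^Y[|f(Y_t)|]=Q_t^Y|f|(x)$, and $|f|\in C_0(E)$ forces $Q_t^Y|f|\in C_0(E)$ by hypothesis, so the function in question is dominated by a $C_0(E)$ function and hence vanishes at infinity. For the continuity I would reuse the series expansion of the proof of Lemma~\ref{lem23}, namely $\E_x^Y[f(Y_t)\e^{-\int_0^t\a(Y_u)\dd u}]=\sum_{k\ge 0}\frac{(-1)^k}{k!}\int_{[0,t]^k}\E_x^Y[f(Y_t)\a(Y_{u_1})\cdots\a(Y_{u_k})]\,\dd u$, which converges normally since the $k$-th term is bounded in sup-norm by $\|f\|_\infty(\a^* t)^k/k!$. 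The point requiring care, and the place where the argument genuinely differs from Lemma~\ref{lem23}, is that the latter was proved under $Q_t^Y C_b(E)\subset C_b(E)$, whereas here only $Q_t^Y C_0(E)\subset C_0(E)$ is assumed; so I would re-derive that each multilinear term lies in $C_0(E)$. Ordering the times and conditioning successively as in Lemma~\ref{lem23}, the factor $f$ carried at the largest time $t$ first produces $Q_{\cdot}^Y f\in C_0(E)$; multiplying by $\a\in C_b(E)$ keeps us in $C_0(E)$, because the product of a bounded continuous function with a $C_0(E)$ function is again in $C_0(E)$; and each further conditioning applies some $Q_{\cdot}^Y$, which preserves $C_0(E)$. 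Each term is thus in $C_0(E)$, in particular continuous, dominated convergence in $u$ makes the $u$-integral continuous, and normal convergence of the series yields the continuity of $x\mapsto\E_x^Y[f(Y_t)\e^{-\int_0^t\a(Y_u)\dd u}]$.
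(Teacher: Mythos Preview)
Your proposal is correct and follows essentially the same route as the paper: item~1 by upgrading pointwise to strong continuity via the classical Feller-semigroup equivalence, item~2 by approximating the supremum by a finite maximum of $Q_{s_i}^Y f$'s (you phrase it through compactness of the orbit in $C_0(E)$, the paper through an explicit dyadic net, but the mechanism is identical), and item~3 by the same series expansion and inductive conditioning, noting that $\alpha\in C_b(E)$ multiplied into $C_0(E)$ stays in $C_0(E)$. Your extra observation that the function in item~3 also vanishes at infinity is correct but superfluous, since the statement only asks for continuity.
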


\begin{proof} 
     By continuity of $(Y_t)_{t \geq 0}$, $\lim_{t \rightarrow 0}\limits Q_t^Yf(x) = f(x)$ for every $f \in C_0(E)$ and every $x \in E$. As proved  in \cite{RevuzYor1991}, this is equivalent when $Q_t^Y C_0(E) \subset C_0(E)$  to $\lim_{t \rightarrow 0} \|Q_t^Yf - f\|_{\infty} = 0$, which proves the first statement of the lemma.
      
 Concerning the second property, let $\varepsilon >0$ and $f \in C_0(E).$ Fix $\eta(f) >0$ such that for every $s < \eta(f),$ $\|Q_s^Y f - f\|_{\infty} \leq \varepsilon$ and $s(x) \in [0,t]$ satisfying $\sup_{s \in [0,t]} Q_s^Yf(x) = Q^Y_{s(x)}f(x).$ Then we have

\[ Q^Y_{\frac{\lfloor 2^n s(x) /t\rfloor t}{2^n}}f(x) \leq \max_{k=0,...,2^n}\limits Q^Y_{\frac{kt}{2^n}}f(x) \leq \sup_{s \in [0,t]}\limits Q_s^Yf(x).\] 

So 
\begin{align*}
 \left| \sup_{s \in [0,t]}\limits Q_s^Yf(x) -  \max_{k=0,...,2^n}\limits Q^Y_{\frac{kt}{2^n}}f(x)\right|& \leq \left| \sup_{s \in [0,t]}\limits Q_s^Yf(x) - Q^Y_{\frac{\lfloor 2^n s(x) /t\rfloor t}{2^n}}f(x) \right|\\
 & =  \left| Q^Y_{s(x)}f(x) - Q^Y_{\frac{\lfloor 2^n s(x) /t\rfloor t}{2^n}}f(x) \right|\\
 & =  \left| Q^Y_{\frac{\lfloor 2^n s(x) /t\rfloor t}{2^n}} (Q^Y_{s(x) - \frac{\lfloor 2^n s(x) /t\rfloor t}{2^n}}f(x) - f(x)  ) \right|\\
  &\leq \|Q^Y_{s(x) - \frac{\lfloor 2^n s(x) /t\rfloor t}{2^n}}f -f\|_{\infty}\\
  &\leq \varepsilon
\end{align*}
whenever $t 2^{-n} \leq \eta(f).$ This leads to $\lim_{n \rightarrow \infty} \|  \sup_{s \in [0,t]} Q_s^Yf -  \max_{k=0,...,2^n} Q^Y_{kt 2^{-n}}f \|_{\infty}=0.$ 
Since $ \max_{k=0,...,2^n} Q^Y_{kt 2^{-n}}f \in C_0(E)$ for $f \in C_0(E)$ by assumption and $C_0(E)$ is a  closed subset of $\mathcal{M}_b(E)$ for $\|.\|_{\infty}$, we deduce that $\sup_{s \in [0,t]} Q_s^Yf \in C_0(E).$

We finally prove the third point of the lemma in a similar way as in the proof of Lemma \ref{lem23}. First we show by induction on $p \geq 1$ that for any $t \geq 0$ and $0 \leq u_1 \leq \dots \leq u_p \leq t$ and $f \in C_0(E)$ the function $x \mapsto \E_x^Y \left [ f (Y_{t}) \a (Y_{u_1}) \dots \a (Y_{u_p}) \right ]$ is in $C_0(E)$. Indeed for $p = 1$
    \begin{equation*}
        \E_x \left [ f (Y_{t}) \a (Y_{u_1})  \right ] = \E_x \left [ \a (Y_{u_1}) \E_x \left [  f (Y_{t}) | \FF_{u_1} \right ]  \right ] = \E_x \left [ \a (Y_{u_1}) Q_{t-u_1}^Y f(Y_{u_1})\right ] = Q_{u_1}^Y \left ( \a \times Q_{t-u_1}^Y f \right )(x)
    \end{equation*} 
   and $Q_{u_1}^Y \left ( \a \times Q_{t-u_1}^Y f \right ) \in C_0(E)$ by  assumption. For the induction step we just write 
    \begin{align*}
        \E_x^Y \left [ f (Y_{t}) \a (Y_{u_1}) \dots \a (Y_{u_p}) \a (Y_{u_{p+1}})\right ] & =  \E_x^Y \left [  \a (Y_{u_1}) \dots \a (Y_{u_p}) ( Q_{t-u_{p+1}}^Y f \times \a) (Y_{u_{p+1}})\right ]
    \end{align*} 
    that is in $C_0(E)$ by assumption and the induction hypothesis. 
    We then obtain the continuity of the function
    \[x \mapsto \E_x^Y [ f (Y_{t})  \e^{ - \int_0^{t} \alpha( Y_u) \, \dd u } ]\]
   similarly as in the proof of Lemma \ref{lem23}.
\end{proof}

\begin{lem}\label{lemC0}
 Assume that for every $t >0$, $ Q_t^Y C_0(E) \subset C_0(E)$ and that $K \, C_0(E) \subset C_0(E)$.  Let $t>0$. Then for every $k \geq 1$ and all $g \in C_0(E)$, $x \mapsto \E_x[g(X_{T_k})\1_{T_k \leq t}]$ 
 vanishes at infinity.
\end{lem}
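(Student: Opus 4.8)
The plan is to combine the continuity that is already available from Lemma~\ref{lem24} with a monotone domination by a function of $C_0(E)$. The underlying elementary fact is that if $h$ is continuous and $|h|\leq H$ for some $H\in C_0(E)$, then $h\in C_0(E)$; in particular $h$ vanishes at infinity. Continuity is free here: applying Lemma~\ref{lem24} to $\varphi(z,u):=g(z)$, which is bounded, measurable, and continuous in $z$ for each $u$, shows that $x\mapsto\E_x[g(X_{T_k})\ind_{T_k\leq t}]$ is continuous. The whole difficulty is thus to produce a dominating $C_0$ function, and this is exactly where the two standing hypotheses $K\,C_0(E)\subset C_0(E)$ and $Q_s^Y\,C_0(E)\subset C_0(E)$ (the latter through point~(2) of Lemma~\ref{csqdeH3}) enter.

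To this end I would introduce the operator acting on nonnegative $h\in C_0(E)$ by $\Theta h:=\alpha^*\,t\,\sup_{s\in[0,t]}Q_s^Y(Kh)$, and first record that $\Theta$ maps nonnegative elements of $C_0(E)$ into nonnegative elements of $C_0(E)$: indeed $Kh\in C_0(E)$ by assumption, then $\sup_{s\in[0,t]}Q_s^Y(Kh)\in C_0(E)$ by point~(2) of Lemma~\ref{csqdeH3}, and multiplication by the constant $\alpha^*t$ preserves $C_0(E)$. Moreover $\Theta$ is monotone on nonnegative functions, since $K$, each $Q_s^Y$, and the supremum all preserve the pointwise order. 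Consequently $\Theta^k h\in C_0(E)$ for every $k\geq1$ whenever $h\in C_0(E)$ is nonnegative.

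Next I would prove, by induction on $k$, the pointwise bound $\E_x[\varphi(X_{T_k},T_k)\ind_{T_k\leq t}]\leq\Theta^k\bar\varphi(x)$ for every nonnegative bounded measurable $\varphi$ on $E\times\R_+$, where $\bar\varphi(z):=\sup_{u\leq t}\varphi(z,u)$. For $k=1$ the explicit expression from the proof of Lemma~\ref{lem24}, namely $\int_0^t\E_x^Y[(K\varphi(\cdot,t_1))(Y_{t_1})\,\alpha(Y_{t_1})\,\e^{-\int_0^{t_1}\alpha(Y_u)\,\dd u}]\,\dd t_1$, is bounded using $0\leq\alpha\leq\alpha^*$, $\e^{-\int_0^{t_1}\alpha(Y_u)\,\dd u}\leq1$ and $K\varphi(\cdot,t_1)\leq K\bar\varphi$, giving exactly $\alpha^*t\,\sup_{s\leq t}Q_s^Y(K\bar\varphi)=\Theta\bar\varphi$. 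For the inductive step I would use the recursion $\E_x[\varphi(X_{T_{k+1}},T_{k+1})\ind_{T_{k+1}\leq t}]=\E_x[\tilde\varphi(X_{T_k},T_k)\ind_{T_k\leq t}]$ from the proof of Lemma~\ref{lem24}, observe that the very same bounds yield $\overline{\tilde\varphi}\leq\Theta\bar\varphi$ (here it is crucial that $\tau+u\leq t$ throughout the inner integral, so only values $\varphi(\cdot,w)$ with $w\leq t$ occur), and then combine the induction hypothesis with the monotonicity of $\Theta$ to obtain $\E_x[\tilde\varphi(X_{T_k},T_k)\ind_{T_k\leq t}]\leq\Theta^k\overline{\tilde\varphi}\leq\Theta^{k+1}\bar\varphi$. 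Specialising to $\varphi(z,u)=|g(z)|$, for which $\bar\varphi=|g|$, gives $|\E_x[g(X_{T_k})\ind_{T_k\leq t}]|\leq\E_x[|g|(X_{T_k})\ind_{T_k\leq t}]\leq\Theta^k|g|(x)$, and $\Theta^k|g|\in C_0(E)$ by the previous paragraph; together with continuity this concludes the proof.

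The main obstacle is not any individual estimate but the bookkeeping of the induction: the recursion of Lemma~\ref{lem24} turns the time-independent integrand $g$ at level $k+1$ into the genuinely time-dependent $\tilde\varphi$ at level $k$, so the inductive statement must be formulated for arbitrary time-dependent $\varphi$ and the domination must be carried through the time-supremum $\bar\varphi$. The key structural point to verify carefully is that $K$ followed by $\sup_{s\leq t}Q_s^Y$ sends nonnegative $C_0(E)$ functions to $C_0(E)$ functions \emph{uniformly} in the intermediate time variable; this is precisely what point~(2) of Lemma~\ref{csqdeH3} supplies, and it is the reason the full Feller property of $Q_t^Y$, rather than mere Feller continuity, is required here.
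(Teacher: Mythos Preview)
Your proof is correct and follows essentially the same route as the paper: both bound the $k$-th expectation by iterating the operation $h\mapsto \alpha^*t\,\sup_{s\le t}Q_s^Y(Kh)$ and invoke point~(2) of Lemma~\ref{csqdeH3} to keep the dominating function in $C_0(E)$. One small simplification the paper exploits is that the induction need not be carried for time-dependent $\varphi$: at the step from $k$ to $k+1$ one bounds directly $|\E_x[g(X_{T_{k+1}})\1_{T_{k+1}\le t}]|\le \alpha^*t\,\E_x[\sup_{s\le t}Q_s^YK|g|(X_{T_k})\1_{T_k\le t}]$ and then applies the induction hypothesis to the new $C_0$ function $\sup_{s\le t}Q_s^YK|g|$, so your time-dependent packaging, while correct, is not forced.
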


\begin{proof}

 Let us prove the result by induction. For $k=1,$
 \begin{align*}
  \left| \E_x [ g(X_{T_1})\1_{T_1\leq t}]\right|& = \left| \int_0^t \E_x[ Kg(Y_s)\alpha(Y_s)e^{-\int_0^s \alpha(Y_u)\, \dd u}] \, \dd s\right|\\
  &\leq \alpha^* \int_0^t \E_x[K|g|(Y_s)]\, \dd s\\
  &\leq \alpha^* t\sup_{s \in [0,t]}Q_s^Y K|g|(x).
 \end{align*}
Since $K \, C_0(E) \subset C_0(E)$, the function $K|g|$ belongs to $C_0(E)$,  so $\sup_{s \in [0,t]}Q_s^Y K|g| \in C_0(E)$ by Lemma~\ref{csqdeH3}. This entails in particular that $x \mapsto \E_x [ g(X_{T_1})\1_{T_1\leq t}]$ vanishes at infinity. Let now $k\geq 1$ and assume that $x \mapsto  \E_x[g(X_{T_k})\1_{T_k \leq t}])$ vanishes at infinity.  We compute similarly
\begin{align*}
 \E_x[g(X_{T_{k+1}})\1_{T_{k+1} \leq t}] &= \E_x[\E^Y_{X_{T_k}} [\int_0^{t-T_k} \int_E K(Y_s,dz)g(z) \alpha(Y_s)e^{-\int_0^{s}\alpha(Y_u)\, \dd u}]\1_{T_k\leq t}]
\end{align*}
and
\begin{align*}
 \left|\E_x[g(X_{T_{k+1}})\1_{T_{k+1} \leq t}] \right|&\leq \alpha^*\E_x[\E_{X_{T_k}}[\int_0^t K|g|(Y_s) \, \dd s]\1_{T_k\leq t}]\\
 &= \alpha^* \E_x[\int_0^t Q_s^Y K|g|(X_{T_k}) \, \dd s \1_{T_k\leq t}]\\
 &\leq \alpha^* t \E_x[ \sup_{s \in [0,t]}Q_s^Y K|g| (X_{T_k})\1_{T_k \leq t}].
\end{align*}
Since $\sup_{s \in [0,t]}Q_s^Y K|g| \in C_0(E),$ the result follows from the induction hypothesis.
\end{proof}

In order to prove Theorem \ref{conditionsfeller} (part 2), first remark that $x \mapsto Q_t f(x)$ is continuous for $f \in C_0(E)$ and any $t\geq 0$. 
This follows by the same arguments as in the proof of Theorem \ref{conditionsfeller} (part 1) taking $f \in C_0(E)$. 
Indeed, using the same notation as in the proof of Lemma~\ref{lem24}, we obtain that the function $H(.,u)$ belongs to $C_0(E)$ for any $u \leq t$ by the assumption  $K \, C_0(E) \subset C_0(E)$ and Lemma~\ref{csqdeH3} (item 3.). The conclusion of Lemma~\ref{lem24} then follows by the same proof, using Lemma~\ref{csqdeH3} instead of Lemma~\ref{lem23}. Similarly, the proof of Theorem \ref{conditionsfeller}  (part 1) with the same substitution entails that $Q_t f \in C_b(E)$. 
   
The strong continuity of $Q_t$ follows by Proposition~\ref{prop24} and the first statement of Lemma~\ref{csqdeH3}. 

It remains to prove that $x \mapsto Q_t f(x)$ vanishes at infinity.  By the same decomposition of $Q_t f$ as in the proof of Theorem \ref{conditionsfeller}  (part 1), we obtain using in particular \eqref{Qtfegalpsi}  that for any $j \geq 1$ 
    \begin{equation}\label{decomp Qt}
    |Q_tf(x)| \leq Q_t^Y|f|(x) + \sum_{k=1}^j \E_x[\sup_{s \in [0,t]}Q^Y_{s}|f|(X_{T_k})\1_{T_k \leq t}] + \|f\|_{\infty}\P(N^*_t \geq j)
    \end{equation}
    where $N^*_t \sim \mathcal{P}(\alpha^* t).$ Let $\varepsilon >0.$ First, $Q_t^Y|f| \in C_0(E)$ by assumption, so that $Q_t^Y|f|(x) \leq \varepsilon/3$ for $x$ outside a compact set. Second,     
    since $\lim_{j \rightarrow \infty}\P(N^*_t \geq j)=0,$ there exists $j_0 \geq 1$ such that $  \|f\|_{\infty}\P(N^*_t \geq j) \leq \varepsilon /3$. Third, Lemma \ref{lemC0} entails that for every $ k \leq j_0$ the function 
    \[x \mapsto \E_x[\sup_{s \in [0,t]}Q^Y_{s}|f|(X_{T_k})\1_{T_k \leq t}]\] 
    vanishes at infinity because $\sup_{s \in [0,t]}Q^Y_{s}|f| \in C_0(E)$ by Lemma~\ref{csqdeH3}. It is therefore bounded by $\epsilon/j_0$  for $x$ outside a compact set. Combining these three results in \eqref{decomp Qt} concludes the proof.

\section{Proof of Theorem~\ref{theogenerator} about the infinitesimal generator}\label{proof generator}

Let $f \in L_0^Y$, $x \in E$ and $h >0$. We decompose $\frac{1}{h}(Q_hf(x)-f(x))$ as
     \begin{align*}
        \dfrac{1}{h}  \left ( Q_hf(x) - f(x) \right )
          &=  \dfrac{1}{h} \left ( \E_{x}^Y \left [ f(Y_h) \e^{- \int_0^h \a( Y_u) \, \dd u} \right ] - f(x) + \E_{x} \left [ f(X_h) \ind_{N_h=1}  \right]  + \E_{x} \left [ f(X_h) \ind_{N_h \geq 2} \right ] \right ) \\
           & = \E_x^Y \left [ \dfrac{f(Y_h)-f(x)}{h} \right ] + T(x),
      \end{align*}
    where
    \begin{multline}\label{defT}
    T(x) = -  \dfrac{1}{h} \E_{x}^Y \left [ f(Y_h) \int_0^h  \a( Y_u) \, \dd u \right ]  + \dfrac{1}{h} \E_{x}^Y \left [ f(Y_h) \left ( \e^{- \int_0^h  \a( Y_u) \, \dd u} -1 + \int_0^h \a( Y_u) \, \dd u \right ) \right ] \\
           + \dfrac{1}{h} \E_{x} \left [ f(X_h) \ind_{N_h=1}  \right] + \dfrac{1}{h}  \E_{x} \left [ f(X_h) \ind_{N_h \geq 2}  \right].\end{multline}
   To prove the theorem, we thus need to show that  for any $f \in L_0^Y$    
   \[ \underset{x \in E}{\sup} \left |   T(x)+ \a(x)f(x)-\a(x)Kf(x) \right | \underset{h \searrow 0}{\longrightarrow} 0.\]
  Following \eqref{defT}, we denote $T(x)=T_1(x) + T_2(x) + T_3(x) + T_4(x)$ and we shall prove that 
\begin{equation}\label{T1}
  \underset{x \in E}{\sup} \left | T_1(x) + \a(x)f(x) \right|\underset{h \searrow 0}{\longrightarrow } 0,
\end{equation}
\begin{equation}\label{T2}
 \underset{x \in E}{\sup} \left | T_2(x)\right| \underset{h \searrow 0}{\longrightarrow } 0,
 \end{equation}
\begin{equation}\label{T3}
 \underset{x \in E}{\sup} \left | T_3(x) - \a(x)Kf(x) \right | \underset{h \searrow 0}{\longrightarrow} 0,
 \end{equation}
\begin{equation}\label{T4}
 \underset{x \in E}{\sup} \left | T_4(x)\right| \underset{h \searrow 0}{\longrightarrow } 0.
 \end{equation}

For \eqref{T1},  we compute for $h>0$ and $x \in E$,
    \begin{align*}
      T_1(x) + \a(x)f(x) & =  \a(x)f(x) -  \dfrac{1}{h} \E_{x}^Y \left [ f(Y_h) \int_0^h  \a( Y_u) \, \dd u \right ]  \\
       &= \dfrac{1}{h} \int_0^h \E_{x}^Y \left [  \a(x)f(x)  -   f(Y_h)  \a( Y_u) \right ]  \, \dd u \\
        & = \dfrac{1}{h} \int_0^h \E_x^Y \left [ \E_x^Y \left [ \a(x) f(x)  -   f(Y_h) \a( Y_u) \left | Y_u \right. \right ] \right ]  \, \dd u \\
        & =  \dfrac{1}{h} \int_0^h \E_x^Y \left [   \a(x) f(x)  -   Q^Y_{h-u} f(Y_u)  \a( Y_u) \right ]  \, \dd u \\
        & =  \dfrac{1}{h} \int_0^h \E_x^Y \left [   \a(x) f(x)  -    f(Y_u) \a( Y_u) \right ]  \, \dd u  + \dfrac{1}{h} \int_0^h \E_x^Y \left [    f(Y_u) \a( Y_u)-    Q^Y_{h-u} f(Y_u)  \a( Y_u) \right ]  \, \dd u \\
        & =  \dfrac{1}{h} \int_0^h  \left ( f \times \a  -   Q_u^Y (f \times \a) \right  )(x)  \, \dd u + \dfrac{1}{h} \int_0^h \E_x^Y \left [   \a( Y_u) \left ( f -  Q^Y_{h-u} f \right )(Y_u) \right ]  \, \dd u \\
        & =  \int_0^1  \left ( f \times \a  -   Q_{hv}^Y (f \times \a) \right  )(x)  \, \dd v +  \int_0^1 \E_x^Y \left [   \a( Y_{hv}) \left ( f -  Q^Y_{h(1-v)} f \right )(Y_{hv}) \right ]  \, \dd v.
    \end{align*}
    So,
    \begin{align*}
        \left |  T_1(x) + \a(x)f(x)\right | 
         \leq \int_0^1 \|Q_{hv}^Y (f \times \a) -  f \times \a\|_{\infty} \, \dd v + \a^* \int_0^1 \|  Q^Y_{h(1-v)} f - f \|_{\infty}  \, \dd v ,
    \end{align*}
    that does not depend on $x \in E$ and converges to zero when $h \searrow 0$ by the dominated convergence theorem, the fact that $f\in L_0^Y$ and the assumption $\a \times f \in L_0^Y$. This proves \eqref{T1}.

 Now for $f \in L_0^Y$ and $x \in E$ 
    \begin{align*}
        \left | T_2(x) \right | & \leq \dfrac{\|f\|_{\infty}}{2h}  \E_{x}^Y \left [ \left ( \int_0^h  \a( Y_u) \, \dd u \right )^2 \right ] \\
        & \leq  \dfrac{\|f\|_{\infty}  \, ( \a^* h)^2}{2h} = \dfrac{  \|f\|_{\infty} ( \a^*)^2}{2} \, h,
    \end{align*}
    that does not depend on $x \in E$ and converges to zero when $h \searrow 0$, proving \eqref{T2}.
    
    For \eqref{T3}, we have for any $f\in L_0^Y$,
    \begin{align}\label{T3control}
     T_3(x) & =  \dfrac{1}{h} \E_x \left [ f(X_h) \ind_{  \t_1 \leq h} \ind_{ \t_2 > h-  \t_1}  \right ] \nonumber\\
        &  =  \dfrac{1}{h} \E_x \left [  \E_x \left [ f(X_h) \ind_{  \t_1 \leq h} \ind_{ \t_2 > h-  \t_1}  \left |  \FF_{ \t_1},  Y^{(1)} \right. \right ]   \right ] \nonumber\\
         &  =  \dfrac{1}{h} \E_x \left [ f(Y_{h-\t_1}^{(1)}) \ind_{ \t_1 \leq h}  \P_{x} \left ( \t_2 > h- \t_1 \left | \FF_{ \t_1},  Y^{(1)} \right. \right )   \right ] \nonumber\\
         &  =  \dfrac{1}{h} \E_x \left [  f(Y^{(1)}_{h- \t_1})   \ind_{ \t_1 \leq h}  \e^{ - \int_0^{h- \t_1}  \a \left (  Y_u^{(1)} \right ) \, \dd u } \right ] \nonumber\\
         &  =  \dfrac{1}{h} \E_x \left [ \ind_{ \t_1 \leq h}  \E_x \left [      f(Y^{(1)}_{h- \t_1})  \e^{ - \int_0^{h- \t_1}  \a \left (  Y_u^{(1)} \right ) \, \dd u } \left | \FF_{ \t_1} \right. \right ] \right ] \nonumber\\
         &  =  \dfrac{1}{h}  \E_x \left [ \ind_{ \t_1 \leq h}   \E^{ Y}_{ X_{ \t_1}} \left [     f(Y_{h- \t_1})  \e^{ - \int_0^{h- \t_1}  \a \left (  Y_u \right ) \, \dd u } \right ] \right ] \nonumber\\
         &  = \dfrac{1}{h}  \E_x \left [ \ind_{ \t_1 \leq h}   \E^{ Y}_{ X_{ \t_1}} \left [     f(Y_{h- \t_1})  \right ] \right ] + \dfrac{1}{h}  \E_x \left [ \ind_{ \t_1 \leq h}   \E^{ Y}_{ X_{ \t_1}} \left [     f(Y_{h- \t_1}) \left ( \e^{ - \int_0^{h- \t_1}  \a \left (  Y_u \right ) \, \dd u } -1 \right ) \right ] \right ] .
    \end{align}
  The second term above converges uniformly to $0$ when $h \searrow 0$ because   
    \begin{align*}
         \left |\dfrac{1}{h}  \E_x \left [ \ind_{ \t_1 \leq h}   \E^{ Y}_{ X_{ \t_1}} \left [     f(Y_{h- \t_1}) \left ( \e^{ - \int_0^{h- \t_1}  \a \left (  Y_u \right ) \, \dd u } -1 \right ) \right ] \right ] \right | & \leq \E_x  \left [ \dfrac{\ind_{ \t_1 \leq h} \, \| f \|_{\infty}}{h}  \E^{ Y}_{ X_{ \t_1}} \left | \int_0^{h- \t_1}  \a( Y_u) \, \dd u \right | \right ] \\
         & \leq \frac{ h \a^* \| f \|_{\infty}}{h} \P_{x}(\t_1 \leq h) \\
         & =   \frac{ h \a^* \| f \|_{\infty}}{h} \E_x^Y \left ( 1-\e^{ - \int_0^{h}  \a \left (  Y_u \right ) \, \dd u }  \right ) \\
         & \leq   \a^* \| f \|_{\infty} \, \E_x^Y \left (\int_0^{h}  \a \left (  Y_u \right ) \, \dd u  \right ) \\
         & \leq (\a^*)^2 \| f \|_{\infty} h.
    \end{align*}
    Let us now consider the first term in   \eqref{T3control} and prove that it converges uniformly to $ \a(x) Kf(x)$.
    
    \begin{align*}
     \dfrac{1}{h}  \E_x    \left [ \ind_{ \t_1 \leq h}   \E^{ Y}_{ X_{ \t_1}} \left [     f(Y_{h- \t_1})  \right ] \right ] 
        & = \dfrac{1}{h} \E_x  \left [  \ind_{  \t_1 \leq h}  \E_x  \left [ \E^{ Y}_{ X_{ \t_1}} \left [  f(Y_{h- \t_1}) \right ] \left |  Y^{(0)},  \t_1 \right. \right ] \right ] \\
        & = \dfrac{1}{h} \E_x  \left [ \ind_{  \t_1 \leq h}  \int_{ E}  \E^{ Y}_{z} \left [  f(Y_{h- \t_1}) \right ] K \left ( Y_{ \t_1}^{(0)} , \dd  z  \right ) \right ] \\
        & = \dfrac{1}{h} \E_x \left [ \ind_{  \t_1 \leq h}  \int_{ E}  Q^Y_{h - \t_1}f(z)  K \left ( Y_{ \t_1}^{(0)} , \dd  z  \right ) \right ]   \\
        & =  \dfrac{1}{h} \E_x \left [ \E_x \left [ \ind_{  \t_1 \leq h}  \int_{ E}  Q^Y_{h - \t_1}f(z)  K \left ( Y_{ \t_1}^{(0)} , \dd  z  \right ) \left | Y^{(0)} \right. \right ] \right ]   \\
        & = \dfrac{1}{h} \E_x \left [ \int_0^h   \int_{ E}  Q^Y_{h - s}f(z)      K \left ( Y_{ s}^{(0)} , \dd  z  \right ) \a(Y_s^{(0)}) \e^{-\int_0^s \a(Y_u^{(0)}) \, \dd u} \, \dd s \right ]   \\
        & = \E_x^Y \left [ \int_0^1   \int_{ E}  Q^Y_{h(1 - v)}f(z)  K \left ( Y_{ hv} , \dd  z  \right ) \a(Y_{hv}) \e^{-\int_0^{hv} \a(Y_u) \, \dd u} \, \dd v \right ]  \\
        & =  \E_x^Y \left [ \int_0^1   \int_{ E}  Q^Y_{h(1 - v)}f(z)  K \left ( Y_{ hv} , \dd  z  \right ) \a(Y_{hv})  \, \dd v \right ]  \\
        & \hspace{1cm} + \E_x^Y \left [ \int_0^1   \int_{ E}  Q^Y_{h(1 - v)}f(z)  K \left ( Y_{ hv} , \dd  z  \right ) \a(Y_{hv}) \left ( \e^{-\int_0^{hv} \a(Y_u) \, \dd u} - 1 \right ) \, \dd v  \right ] .
    \end{align*}
    
    On one hand, 
    \begin{align*}
        \left |  \E_x^Y \left [ \int_0^1   \int_{ E}  Q^Y_{h(1 - v)}f(z)  K \left ( Y_{ hv} , \dd  z  \right ) \a(Y_{hv}) \left ( \e^{-\int_0^{hv} \a(Y_u) \, \dd u} - 1 \right ) \, \dd s  \right ] \right | & \leq \a^* \| f \|_{\infty} \E_x^Y \left [ \int_0^1 \int_0^{hv} \a(Y_u) \, \dd u \, \dd v \right ] \\
        & \leq  (\a^*)^2 \| f \|_{\infty} \, h,
    \end{align*}
    which tends uniformly to 0 when $h \searrow 0$. 
    And on the other hand, 
    \begin{align*}
        & \left | \E_x^Y \left [ \int_0^1   \int_{ E}  Q^Y_{h(1 - v)}f(z)  K \left ( Y_{ hv} , \dd  z  \right ) \a(Y_{hv})  \, \dd v \right ]  -  \a(x) Kf(x) \right | \\
         & \leq  \int_0^1 \left |  \E_x^Y \left [ \a ( Y_{hv}  ) \, K  Q^Y_{h(1 - v)}f(Y_{hv}) - \a(Y_{hv}) \, Kf(Y_{hv}) \right ] \right | \, \dd v 
          +  \int_0^1 \left | \E_x^Y \left [ \a(Y_{hv}) \, Kf(Y_{hv}) - \a(x) Kf(x) \right ] \right |  \, \dd v  \\
         & \leq \a^* \int_0^1 \| KQ_{h(1-v)}^Yf - Kf \|_{\infty} \, \dd v + \int_0^1 \left | Q_{hv}^Y(\a \times Kf) (x) - (\a \times Kf)(x) \right | \, \dd v \\
         & \leq \a^* \int_0^1 \| Q_{h(1-v)}^Yf - f \|_{\infty} \, \dd v + \int_0^1 \| Q_{hv}^Y(\a \times Kf)  - (\a \times Kf) \|_{\infty} \, \dd v,
    \end{align*}
    converges to $0$ when $h \searrow 0$ by the dominated convergence theorem and the fact that $f \in L_0^Y$ and $\alpha\times Kf \in L_0^Y$. The latter is implied by the fact that by assumption $g:=Kf \in L_0^Y$,  implying $\alpha\times g\in L_0^Y$. This proves \eqref{T3}.

  To complete the proof, it remains to remark that \eqref{T4} follows from the following, using  \eqref{dominationpoisson}, 
    \begin{align*}
        \left | T_4(x)\right | & \leq \dfrac{\| f \|_{\infty}}{h} \P_x(N_h \geq 2) \\
        & \leq \dfrac{\| f \|_{\infty}}{h} \left ( 1 - \e^{-\a^*h}- \a^*h \e^{-\a^*h} \right ) \\
        & =  \dfrac{\| f \|_{\infty}}{h} \left ( \dfrac{(\a^* h)^2}{2} +  \underset{h \searrow 0}{o}(h^2) \right ) \\
        & = \dfrac{\| f \|_{\infty} \, (\a^*)^2}{2} \, h + \underset{h \searrow 0}{o}(h).
    \end{align*}

\section{Topological results for systems of interacting particles in $\R^d$ } \label{sec6.2}

We detail the topological properties of the state space $E$ for systems of interacting particles in $W\subset \R^d$, introduced in Section~\ref{sec2.3}. Remember that in this setting $E=\cup_{n=0}^\infty E_n$
where $E_n=\pi_n(W^n)$ with $\pi_n((x_1,\dots,x_n))=\{x_1,\dots,x_n\}$, and we have equipped the space $E$  with the distance $d_1$ defined for $x=\{x_1,\dots,x_{n(x)} \}$ and $y=\{y_1,\dots,y_{n(y)} \}$ in $E$ such that $n(x) \leq n(y)$ by 
\begin{equation*} 
     d_1(x,y) = \frac{1}{n(y)} \left ( \min_{\s \in \SS_{n(y)}} \sum_{i=1}^{n(x)} (\|x_i-y_{\s(i)}\| \wedge 1) + (n(y)-n(x)) \right ),
    \end{equation*}
with $d_1(x,\textnormal{\O})=1$ and where $\SS_n$ denotes the set of permutations of $\{1,\dots,n\}$. 

We verify in this section that if $W$ is a closed subset of $\R^d$ (possibly $W=\R^d$), then $(E,d_1)$ is a locally compact and complete set, strengthening results already obtained in \cite{schuhmacher2008}. We also show that $n(.)$ and $\pi_n(.)$ are continuous under this topology, as claimed in Section~\ref{sec2.3}. We continue with the proof of Proposition~\ref{suitecvgentedeE}, which clarifies the meaning of converging sequences in $(E,d_1)$, and of Proposition~\ref{compactsetofE} that describes the compact sets of $E_n$ and $E$, along with some useful corollaries. We finally show that the Hausdorff distance is not appropriate in our setting, not the least because it does not make $n(.)$ continuous.

In the following, we will often use in a equal way the spaces $\left ( \R^{nd}, \|.\| \right )$ and $\left ( (\R^d)^n , \|.\|_n \right )$ where 
\begin{equation*}
    \|x\|_{n}= \dfrac{1}{n} \sum_{i=1}^n \|x_i\|. \label{normen}
\end{equation*}
 Indeed, introducing the natural bijection $\psi_n : z \in \R^{nd} \mapsto (z_1, \dots,z_n) \in (\R^d)^n$ we observe that for any $z \in  \R^{nd}$, $\| z \|/n \leq \| \psi_n(z) \|_n \leq \| z \|/\sqrt{n}$
by the Cauchy-Schwarz inequality. The norms being equivalent, we henceforth abusively confuse $z$ and $\psi_n(z)$.  Similarly, any function from $\R^{nd}$ to $\R^d$  can be seen as a function from $ (\R^d)^n$ to $\R^d$ and we will confuse the two points of view. 

We start in the following lemmas with the continuity of $n(.)$ and $\pi_n(.)$. We will use the following straightforward property,  for all $x,y\in E$,
\begin{equation}\label{minoredkappa}
 d_1(x,y) \geq \frac{|n(y)-n(x)|}{n(x) \vee n(y)}.
\end{equation}

\begin{lem}\label{continuity n}
The function $\displaystyle n(.) : (E,d_1) \rightarrow (\N,|.|)$ is continuous.
\end{lem}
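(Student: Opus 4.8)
The plan is to exploit the fact that the target space $(\N,|.|)$ is discrete, so that continuity of $n(.)$ at a point $x\in E$ reduces to showing that $n(.)$ is \emph{locally constant} near $x$: it suffices to exhibit, for each $x$, a radius $\ep>0$ such that every $y\in E$ with $d_1(x,y)<\ep$ satisfies $n(y)=n(x)$. First I would fix $x\in E$, write $k=n(x)$, and bound $d_1(x,y)$ from below whenever $n(y)\neq k$, using the elementary inequality \eqref{minoredkappa}, namely $d_1(x,y)\geq |n(y)-n(x)|/(n(x)\vee n(y))$.

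The core is a short case analysis on $m:=n(y)$. If $m>k$ then $k\vee m=m$ and $|m-k|/(k\vee m)=1-k/m\geq 1-k/(k+1)=1/(k+1)$ since $m\geq k+1$; if $m<k$ then $k\vee m=k$ and $|m-k|/(k\vee m)=(k-m)/k\geq 1/k\geq 1/(k+1)$ since $m\leq k-1$. Hence in either case
\begin{equation*}
 d_1(x,y)\;\geq\;\frac{|n(y)-n(x)|}{n(x)\vee n(y)}\;\geq\;\frac{1}{n(x)+1}\qquad\text{whenever } n(y)\neq n(x).
\end{equation*}
Taking $\ep=1/(n(x)+1)>0$ then forces $n(y)=n(x)$ as soon as $d_1(x,y)<\ep$, which is exactly the local constancy required. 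This also covers the boundary case $x=\textnormal{\O}$ (where $k=0$ and $\ep=1$), consistently with the convention $d_1(\textnormal{\O},y)=1$ for $y\neq\textnormal{\O}$.

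There is no genuine obstacle here: the whole argument reduces to the already-recorded inequality \eqref{minoredkappa} together with the discreteness of $\N$. The only point deserving a moment of care is to make the lower bound \emph{uniform} in $y$ (independent of how large $n(y)$ may be), which is why one keeps the numerator $|n(y)-n(x)|$ rather than discarding it; the two-case estimate above is precisely what delivers the clean bound $1/(n(x)+1)$.
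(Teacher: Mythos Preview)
Your proof is correct. Both your argument and the paper's rest on the same inequality \eqref{minoredkappa}, but the route differs slightly: the paper argues sequentially, first showing that any sequence $x^{(p)}\to x$ has $(n(x^{(p)}))_p$ bounded (by contradiction via \eqref{minoredkappa}), and then uses that bound to squeeze $|n(x^{(p)})-n(x)|$ to zero. You instead go straight for local constancy, extracting from \eqref{minoredkappa} the uniform lower bound $d_1(x,y)\geq 1/(n(x)+1)$ whenever $n(y)\neq n(x)$, via a clean two-case estimate. Your version is a bit more direct and yields an explicit neighbourhood radius, while the paper's sequential argument is what one naturally writes when aiming at Proposition~\ref{suitecvgentedeE} next; substantively there is nothing to choose between them.
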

\begin{proof}
Take $x \in E$ and a sequence $(x^{(p)})_{p \geq 0}$ such that $d_1(x^{(p)},x)\to 0$ as $p\to\infty$.  Assume that the sequence $(n(x^{(p)}))_{p \geq 0}$ is not bounded. We then may define a subsequence $(n(x^{(p')}))_{p' \geq 0}$ such that $n(x^{(p')}) \to \infty,$ and by \eqref{minoredkappa} we obtain
        \[ d_1(x,x^{(p')}) \geq \dfrac{|n(x)-n(x^{(p')})|}{n(x) \vee n(x^{(p')})} \underset{p' \rightarrow \infty}{\longrightarrow} 1,\]
        which is a contradiction. The sequence $(n(x^{(p)}))_{p \geq 0}$ is therefore bounded by some $M > 0$, which gives again by \eqref{minoredkappa} 
        \[ |n(x^{(p)}) - n(x) | \leq (M \vee n(x) ) \,  d_1(x^{(p)},x) \underset{p \rightarrow \infty}{\longrightarrow} 0, \]
       that is
        \[ n(x^{(p)})  \underset{p \rightarrow \infty}{\longrightarrow} n(x).\]
\end{proof}

\begin{lem}\label{propContinuousProj}
    The projection $\pi_n : (W^n, \|.\|_n) \rightarrow (E_n,d_1)$ is continuous.
\end{lem}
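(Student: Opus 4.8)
The plan is to prove the slightly stronger statement that $\pi_n$ is $1$-Lipschitz from $(W^n,\|.\|_n)$ to $(E_n,d_1)$, from which continuity is immediate. First I would fix two points $z=(z_1,\dots,z_n)$ and $z'=(z_1',\dots,z_n')$ of $W^n$ and observe that both images $\pi_n(z)$ and $\pi_n(z')$ lie in $E_n$, so that $n(\pi_n(z))=n(\pi_n(z'))=n$ (points being counted with multiplicity). Hence in the defining formula \eqref{defd1} the cardinality penalty $(n(y)-n(x))$ vanishes, and the distance between the two images reduces to
\[
d_1\bigl(\pi_n(z),\pi_n(z')\bigr)=\frac1n\min_{\sigma\in\SS_n}\sum_{i=1}^n\bigl(\|z_i-z_{\sigma(i)}'\|\wedge 1\bigr).
\]

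The key step is then to bound this minimum from above by evaluating the sum at the identity permutation and discarding the truncation via $a\wedge 1\le a$ for $a\ge 0$:
\[
d_1\bigl(\pi_n(z),\pi_n(z')\bigr)\le\frac1n\sum_{i=1}^n\bigl(\|z_i-z_i'\|\wedge 1\bigr)\le\frac1n\sum_{i=1}^n\|z_i-z_i'\|=\|z-z'\|_n.
\]
Applying this to a sequence $z^{(p)}\to z$ in $(W^n,\|.\|_n)$ yields $d_1(\pi_n(z^{(p)}),\pi_n(z))\le\|z^{(p)}-z\|_n\to 0$, which is exactly the continuity of $\pi_n$.

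There is no genuine obstacle in this argument; the only point deserving attention is the elementary bookkeeping that every element of $E_n$ has $n(\cdot)$-value equal to $n$, so that the penalty-free form of $d_1$ applies and the identity permutation is an admissible competitor in the minimum over $\SS_n$. The truncation $\wedge 1$ and the minimisation over permutations can then only decrease the distance, which is why the crude choice $\sigma=\mathrm{id}$ already suffices to produce the Lipschitz bound.
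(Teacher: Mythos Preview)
Your proof is correct and is essentially identical to the paper's: both show that $\pi_n$ is $1$-Lipschitz by bounding the minimum over $\SS_n$ in the definition of $d_1$ via the identity permutation and then dropping the truncation $\wedge 1$. The only cosmetic difference is that the paper writes the inequality chain directly without the surrounding commentary.
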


\begin{proof}
    Let $x,y \in W^n$. Then
    \begin{align*}
        d_1(\pi_n(x),\pi_n(y)) & = \frac{1}{n} \left ( \min_{\s \in \SS_{n}} \sum_{i=1}^{n} (\|x_i-y_{\s(i)}\| \wedge 1) \right ) \\
        & \leq \frac{1}{n} \sum_{i=1}^{n} (\|x_i-y_{i}\| \wedge 1) \\
        & \leq \|x-y\|_n.
    \end{align*}
\end{proof}

From Lemma~\ref{propContinuousProj} we deduce that $(E,d_1)$ is a locally compact space.
\begin{cor} \label{Elocalcompact}
    Let $W$ a closed subset of $\R^d.$ Then $(E,d_1)$ is a locally compact space.
\end{cor}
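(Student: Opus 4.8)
The plan is to reduce local compactness of $(E,d_1)$ to local compactness of each stratum $E_n$, and then to transport compactness from $W^n$ through the projection $\pi_n$. The two ingredients I will use are exactly the two preceding lemmas: continuity of $n(\cdot)$ (Lemma~\ref{continuity n}) to isolate the strata, and continuity of $\pi_n$ (Lemma~\ref{propContinuousProj}) to push forward compact sets.

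First I would observe that each $E_n$ is open in $(E,d_1)$. Indeed, $\N$ carries the discrete topology for $|\cdot|$, so $\{n\}$ is open, and by Lemma~\ref{continuity n} the map $n(\cdot)\colon (E,d_1)\to(\N,|\cdot|)$ is continuous; hence $E_n = n(\cdot)^{-1}(\{n\})$ is open (in fact clopen). Consequently a neighborhood of a point $y$ taken within $E_n$ is automatically a neighborhood of $y$ in $E$, so it suffices to produce, for each $n\geq 1$ and each $y\in E_n$, a compact neighborhood of $y$ inside $E_n$. The case $n=0$ is trivial since $E_0=\{\textnormal{\O}\}$ is a single clopen point.

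Fix $y=\{y_1,\dots,y_n\}\in E_n$ together with a representative $z=(y_1,\dots,y_n)\in W^n$. Since $W$ is closed in $\R^d$, $W^n$ is closed in $\R^{nd}$, so any closed $\|\cdot\|_n$-ball $\overline{B}_{W^n}(z,\epsilon)=\{w\in W^n : \|w-z\|_n\leq\epsilon\}$ is closed and bounded in $\R^{nd}$, hence compact by Heine--Borel. By Lemma~\ref{propContinuousProj} the map $\pi_n$ is continuous, so $K:=\pi_n(\overline{B}_{W^n}(z,\epsilon))$ is compact in $(E_n,d_1)$. It remains to check that $K$ is genuinely a neighborhood of $y$, i.e. that it contains a $d_1$-ball around $y$.

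This last verification is the main obstacle, because the continuous image of a neighborhood need not be a neighborhood; what saves us is that $\pi_n$ is locally open, which I would establish directly from the explicit form of $d_1$ on $E_n$. Choosing $\epsilon\leq 1/n$, suppose $y'=\{y'_1,\dots,y'_n\}\in E_n$ satisfies $d_1(y,y')<\epsilon$. Then $n\,d_1(y,y')=\min_{\sigma\in\SS_n}\sum_{i=1}^n(\|y_i-y'_{\sigma(i)}\|\wedge 1)<n\epsilon\leq 1$, so for the minimizing permutation $\sigma^\ast$ every truncation $\wedge 1$ is inactive and $\sum_{i=1}^n\|y_i-y'_{\sigma^\ast(i)}\|=n\,d_1(y,y')<n\epsilon$. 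Hence $z':=(y'_{\sigma^\ast(1)},\dots,y'_{\sigma^\ast(n)})\in W^n$ is a representative of $y'$ with $\|z-z'\|_n<\epsilon$, so $z'\in\overline{B}_{W^n}(z,\epsilon)$ and $y'=\pi_n(z')\in K$. This shows that the open ball $\{y'\in E_n : d_1(y,y')<\epsilon\}$ is contained in $K$, so $K$ is the desired compact neighborhood of $y$. Collecting these observations over all $n$ yields that $(E,d_1)$ is locally compact.
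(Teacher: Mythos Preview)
Your proof is correct and follows essentially the same route as the paper: push forward a compact $\|\cdot\|_n$-ball in $W^n$ through the continuous map $\pi_n$, then use the explicit form of $d_1$ on $E_n$ to verify that the image contains a small $d_1$-ball around the point. You are in fact a bit more explicit than the paper in two places: you justify openness of each $E_n$ via the continuity of $n(\cdot)$ before restricting to a stratum, and you take $\ep\leq 1/n$ (rather than the paper's $\ep<1/2$), which cleanly guarantees that the truncations $\wedge 1$ are inactive for the minimizing permutation.
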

\begin{proof}
    First recall that $d_1(x,\textnormal{\O})=1$ so $\{ \textnormal{\O} \}$ is a compact neighborhood of \O.
    Now take $x = \{ x_1,\dots,x_n \} \in E_n$ with $n \geq 1$. The space $W^n$ is locally compact so there exists $K \subset W^n$ a compact neighborhood of $(x_1,\dots,x_n).$ Now set $\tilde K = \pi_n(K).$ Then, $x \in \tilde K$ and  $\tilde K$ is a compact set by Lemma~\ref{propContinuousProj}. We show that there is an open set containing $x$ which is included in $\tilde K$. By definition there exists $\ep \in (0,\frac{1}{2})$ such that $B_{\|.\|_n} ( (x_1,\dots,x_n),\ep) \cap W^n \subset K$, where $B_{\|.\|_n} ( (x_1,\dots,x_n),\ep)$ is the open ball  centred at $(x_1,\dots,x_n)$ with radius $\ep$ for the norm $\|.\|_n$. If $z \in B_{d_1}(x,\ep) \cap E_n$ there exists $\s \in \SS_n$ such that 
    \[ 
    \dfrac{1}{n} \sum_{i=1}^n \| x_i-z_{\s(i)} \| < \ep,
    \]
    so $z = \pi_n((z_{\s(1)},\dots, z_{\s(n)}))$ and $(z_{\s(1)},\dots, z_{\s(n)}) \in B_{\|.\|_n} ( (x_1,\dots,x_n),\ep) \cap W^n.$ To sum up,
    \[
    B_{d_1}(x,\ep) \cap E_n \subset \pi_n \left ( B_{\|.\|_n} ( (x_1,\dots,x_n),\ep) \cap W^n \right ) \subset \tilde K,
    \]
    so $ \tilde K$ is a compact neighborhood of $x$ in $E_n$ and so in $E$. 
\end{proof}

A further consequence of Lemma~\ref{propContinuousProj} is the following result, that will turn to be useful when considering the Feller continuous property of a process on $E$. 

\begin{cor} \label{fctCbdeWn}
    If $f \in C_b(E)$ then for any $n \geq 1$,  $f \circ \pi_n \in C_b(W^n)$.
\end{cor}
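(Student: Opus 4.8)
The plan is to observe that this is an immediate consequence of Lemma~\ref{propContinuousProj} together with the definition of $C_b(E)$, so the argument splits into the two defining properties of membership in $C_b(W^n)$: boundedness and continuity.

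For boundedness, I would simply note that since $f \in C_b(E)$ is bounded on all of $E$, and $\pi_n$ takes values in $E_n \subset E$, we have $\|f \circ \pi_n\|_{\infty} = \sup_{z \in W^n} |f(\pi_n(z))| \leq \sup_{x \in E} |f(x)| = \|f\|_{\infty} < \infty$. Thus $f \circ \pi_n$ is bounded on $W^n$.

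For continuity, the key point is that $f \circ \pi_n$ is a composition of two continuous maps. Indeed, by Lemma~\ref{propContinuousProj}, the projection $\pi_n : (W^n, \|.\|_n) \rightarrow (E_n, d_1)$ is continuous. On the other hand, $f \in C_b(E)$ is continuous on $(E,d_1)$, and hence its restriction $f|_{E_n} : (E_n, d_1) \rightarrow \R$ is continuous for the subspace topology on $E_n$ induced by $d_1$ (the restriction of a continuous function to a subset, equipped with the subspace topology, is always continuous). Writing $f \circ \pi_n = (f|_{E_n}) \circ \pi_n$ as the composition
\[
(W^n, \|.\|_n) \xrightarrow{\ \pi_n\ } (E_n, d_1) \xrightarrow{\ f|_{E_n}\ } \R,
\]
continuity of $f \circ \pi_n$ on $W^n$ follows immediately.

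There is essentially no genuine obstacle here: the only point requiring a moment's care is checking that restricting $f$ to $E_n$ preserves continuity, which is automatic once one notes that the topology on $E_n$ as a subspace of $(E,d_1)$ coincides with the metric topology induced by $d_1$ on $E_n$; this is exactly the topology with respect to which $\pi_n$ is shown to be continuous in Lemma~\ref{propContinuousProj}, so the two maps are composable in the right topologies. Combining boundedness and continuity yields $f \circ \pi_n \in C_b(W^n)$.
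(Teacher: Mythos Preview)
Your proof is correct and takes essentially the same approach as the paper: both argue that $f \circ \pi_n$ is bounded by $\|f\|_\infty$ and continuous as the composition of the continuous projection $\pi_n$ (Lemma~\ref{propContinuousProj}) with the continuous function $f$. Your version is simply more detailed in spelling out the subspace-topology point, which the paper leaves implicit.
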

\begin{proof}
    For any $n \geq 1$ and $f \in C_b(E),$ the function $f \circ \pi_n$ is well-defined on $W^n,$ continuous as the composition of two continuous functions and bounded by $\|f\|_{\infty}.$
\end{proof}

Let us now prove that  $(E,d_1)$ is a complete space.

\begin{prop}\label{propWclosed}
    Suppose that $W$ is closed. Then $(E,d_1)$ is a complete space and  for any $n\geq 1$, $(E_n,d_1)$ is also complete.
\end{prop}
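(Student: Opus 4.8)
The plan is to prove first that each $(E_n,d_1)$, $n\geq 1$, is complete, and then to reduce the completeness of $(E,d_1)$ to this fact together with the continuity of $n(\cdot)$. The feature to exploit is that, on $E_n$, the distance $d_1$ is exactly the quotient of the metric $\|\cdot\|_n$ on $W^n$ (equivalent to the Euclidean one by the bijection $\psi_n$) under the action of $\SS_n$, provided the truncation $\wedge 1$ in \eqref{defd1} is inactive. This is the case as soon as $d_1(x,y)<1/n$: then $\sum_{i}(\|x_i-y_{\sigma(i)}\|\wedge 1)=n\,d_1(x,y)<1$ forces every term to be $<1$, so the minimum is attained with genuine distances.

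For $(E_n,d_1)$, let $(x^{(p)})_p$ be a Cauchy sequence. Since a Cauchy sequence converges whenever one of its subsequences does, I would extract a subsequence $(y^{(k)})_k$ with $d_1(y^{(k)},y^{(k+1)})<2^{-k-1}/n$ for all $k$, and build aligned ordered representatives $\tilde y^{(k)}\in W^n$ inductively: pick any $\tilde y^{(0)}\in\pi_n^{-1}(y^{(0)})$; given $\tilde y^{(k)}$, fix an ordered representative $u^{(k+1)}$ of $y^{(k+1)}$, let $\sigma_k\in\SS_n$ realise the minimum defining $d_1(y^{(k)},y^{(k+1)})$ (matching $\tilde y^{(k)}$ against $u^{(k+1)}$), and set $\tilde y^{(k+1)}_i=u^{(k+1)}_{\sigma_k(i)}$. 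Because that minimum is $<1/n$, the truncation is inactive, so $\|\tilde y^{(k)}-\tilde y^{(k+1)}\|_n=d_1(y^{(k)},y^{(k+1)})<2^{-k-1}/n$, whence $(\tilde y^{(k)})_k$ is Cauchy in $(W^n,\|\cdot\|_n)$. As $W$ is closed in $\R^d$, $W^n$ is a closed subset of the complete space $\R^{nd}$, hence complete, so $\tilde y^{(k)}\to\tilde y\in W^n$. By continuity of $\pi_n$ (Lemma~\ref{propContinuousProj}), $y^{(k)}=\pi_n(\tilde y^{(k)})\to\pi_n(\tilde y)\in E_n$, and the full Cauchy sequence converges to the same limit; thus $(E_n,d_1)$ is complete.

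For $(E,d_1)$, let $(x^{(p)})_p$ be Cauchy; I claim $n(x^{(p)})$ is eventually constant. Fix $P$ with $d_1(x^{(p)},x^{(P)})<1/2$ for $p\geq P$. If $x^{(P)}=\textnormal{\O}$, then $d_1(x^{(p)},x^{(P)})<1$ forces $x^{(p)}=\textnormal{\O}$ for all $p\geq P$ and the sequence converges to $\textnormal{\O}$. Otherwise \eqref{minoredkappa} gives $n(x^{(P)})/2<n(x^{(p)})<2n(x^{(P)})$, so $n(x^{(p)})\leq M:=2n(x^{(P)})$ on the tail. Choosing then $P'\geq P$ with $d_1(x^{(p)},x^{(q)})<1/M$ for $p,q\geq P'$, \eqref{minoredkappa} yields $|n(x^{(p)})-n(x^{(q)})|\leq (n(x^{(p)})\vee n(x^{(q)}))\,d_1(x^{(p)},x^{(q)})\leq M\,d_1(x^{(p)},x^{(q)})<1$, hence $n(x^{(p)})=n_0$ is constant for $p\geq P'$. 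The tail then lies in the single space $E_{n_0}$ and is Cauchy there, so it converges in $E_{n_0}\subset E$ by the first part, proving completeness of $(E,d_1)$.

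The main obstacle is the lifting step for $E_n$: one must argue that the per-coordinate truncation in \eqref{defd1} does not interfere, so that the optimal matchings genuinely transport a Cauchy sequence of unordered configurations into a Cauchy sequence of ordered tuples in $W^n$ whose projections recover the original sequence. The threshold $d_1<1/n$ and the inductive re-ordering of representatives are precisely what make $d_1$ coincide locally with the quotient metric of $\|\cdot\|_n$, allowing completeness to be pulled back from $W^n$ through the continuous surjection $\pi_n$.
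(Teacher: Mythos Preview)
Your proof is correct and rests on the same core idea as the paper: once $n(\cdot)$ is eventually constant along a Cauchy sequence, lift to $W^n$ via optimal matchings and use completeness of the closed subset $W^n\subset\R^{nd}$.

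The structural differences are minor but worth noting. First, you prove $(E_n,d_1)$ complete and then deduce $(E,d_1)$ complete by reducing to some $E_{n_0}$; the paper goes the other way, proving $(E,d_1)$ complete directly and then getting $(E_n,d_1)$ for free as a closed subset (via continuity of $n(\cdot)$). Second, for the lift you extract a fast subsequence and chain optimal matchings between consecutive terms; the paper instead fixes a single reference point $x^{(p_0)}$ and matches every tail element to it, obtaining $\|\hat x^{(p)}-\hat x^{(q)}\|_n\leq 2\ep$ by the triangle inequality through $x^{(p_0)}$. The paper's route avoids the subsequence extraction and is marginally shorter, while your chaining argument makes the local identification $d_1\equiv\|\cdot\|_n$ (below the $1/n$ threshold) more explicit. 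Both are standard ways to pull completeness back through a quotient map.
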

\begin{proof}  
Let $(x^{(p)})_{p \geq 0}$ be a Cauchy sequence in $(E,d_1)$.  First, we show that the sequence $ ( n  ( x^{(p)}  )  )_{p \geq 0}$ is constant for $p$ large enough. Fix $\ep \in (0,1)$. There exists $q \geq 0$ such that for any $p \geq q$, $d_1(x^{(p)},x^{(q)}) < \ep$, so by \eqref{minoredkappa} 
    \[
    \left | n  ( x^{(p)}  ) - n ( x^{(q)}  ) \right | \leq (n ( x^{(p)}  ) \vee n  ( x^{(q)}  ) ) \, \ep \leq ( n  ( x^{(p)}  ) + n  ( x^{(q)}  ) ) \, \ep ,
    \]
implying that 
    $ \displaystyle \left ( 1 - \ep \right )  n  ( x^{(p)}  ) \leq \left ( 1 + \ep \right ) n  ( x^{(q)}  ) $ and   
    $\displaystyle n ( x^{(p)}  ) \leq n  ( x^{(q)}  )  (1 + \ep)/(1 - \ep) $. This entails that  the sequence $ ( n  ( x^{(p)}  )  )_{p \geq 0}$ is bounded by some $N_0 >0$. 
    Now take $\ep \in (0,1)$ and $p_1 \geq 0$ such that for any $p \geq p_1$, $ \displaystyle d_1(x^{(p)},x^{(p_1)})<\ep/N_0.$ Write  $n=n(x^{(p_1)})$ for short. Then by \eqref{minoredkappa} one has for any $p \geq p_1$
    \[
    \left | n  ( x^{(p)}  ) - n \right | \leq (n ( x^{(p)}  ) \vee n  ) \, d_1(x^{(p)},x^{(p_1)}) \leq N_0 \, d_1(x^{(p)},x^{(p_1)}) \leq \ep < 1,
    \]
    which implies that  $n(x^{(p)})=n$ for all $p\geq p_1$. 
    
     Second, we may fix $p_2 \geq 0$ such that $d_1(x^{(p)},x^{(q)}) \leq \ep$ for any $p,q \geq p_2.$  Finally let $p_0 = \max (p_1,p_2),$ so that for all $p,q \geq p_0,$ 
    \[d_1 (x^{(p)},x^{(q)} ) = \frac{1}{n}  \min_{\s \in \SS_{n}} \sum_{i=1}^{n} \|x^{(p)}_i-x_{\s(i)}^{(q)}\| \leq \ep.\]
    In particular for $q=p_0,$ this leads to $  \min_{\s \in \SS_{n}} \sum_{i=1}^{n} \|x^{(p_0)}_i-x_{\s(i)}^{(p)}\|/n \leq \ep$ for any $p \geq p_0.$ 
    The minimum over $\sigma$ is reached for some $\sigma_{p_0,p} \in \SS_{n},$ so that we may define the sequence  $(\hat{x}^{(p)})_{p\geq p_0}$  in $W^n$ by $\hat{x}^{(p)}=(x^{(p)}_{\s_{p_0,p}(1)}, \ldots,x^{(p)}_{\s_{p_0,p}(n)})$  satisfying $ \| \hat{x}^{(p)} - x^{(p_0)} \|_n\leq \ep$ for all $p \geq p_0.$ 
    Then for $p,q \geq p_0$, $\| \hat{x}^{(p)} -\hat{x}^{(q)} \|_n\leq 2\ep$. This proves that  the sequence $( \hat{x}^{(p)})_{p \geq p_0}$ is a Cauchy sequence in the finite dimensional vector space $((\R^d)^n,\|.\|_n),$ implying its convergence to some $\hat x \in W^n$ because $W$ is a closed set. Finally for $p \geq p_0$
    \begin{align*}
        d_1(x^{(p)},\pi_n(\hat{x})) & = d_1(\pi_n( \hat{x}^{(p)}),\pi_n( \hat{x})) \leq \|\hat{x}^{(p)}-\hat{x}\|_n \leq 2 \ep,
    \end{align*}
    which proves that $(x^{(p)})_{p \geq 0}$ converges to $\pi_n(\hat{x})$ in $E$, and so $(E,d_1)$ is complete.
    
   Finally for any $n\geq 1$, $(E_n,d_1)$ is also complete as a closed subset of $(E,d_1)$ by continuity of $n(.)$. 
\end{proof}

\subsection{Proof of Proposition~\ref{suitecvgentedeE}}
Let $x\in E$ and set $n=n(x)$. By Lemma~\ref{continuity n}, if $x^{(p)}$ converges to $x$ as $p\to\infty$, i.e. $d_1(x^{(p)},x)\to 0$, then $n(x^{(p)})$ tends to $n$, which means that there exists $p_0\geq 1$ such that $ n(x^{(p)})=n$ for all $p \geq p_0$. 
From the definition of $d_1,$ for any  $p \geq p_0$ there exists a permutation $\sigma_p\in \SS_n$  satisfying  
 \[d_{1}(x^{(p)},x ) = \frac{1}{n} \sum_{i=1}^n ( \|x_i - x_{\sigma_p(i)}^{(p)}\|\wedge 1) .\]
 Assume that there exists $i\in \{1,\ldots,n\}$ such that $\limsup_{p \rightarrow \infty} \|x_i - x_{\sigma_p(i)}^{(p)}\|>0.$ 
 We then may fix $\eta >0$ and a subsequence $(\varphi(p))_{p \geq p_0}$, both depending on $i$, such that for every $p \geq p_0,$ $ \|x_i - x_{\sigma_{\varphi(p)}(i)}^{(\varphi(p))}\| \geq \eta.$ This implies $d_{1}(x^{(\varphi(p))},x ) \geq (\eta \wedge 1)/n$ and $\limsup_{p \rightarrow \infty} d_{1}(x^{(p)},x ) >0$ which is a contradiction. Finally, for every $i = 1,\ldots,n,$ $\limsup_{p \rightarrow \infty} \|x_i - x_{\sigma_p(i)}^{(p)}\| = 0$, proving the result.

\subsection{Proof of Proposition~\ref{compactsetofE} and corollaries}

In order to prove this proposition, we first recall the following definitions  and results (see e.g. \cite{bourbaki1966general}):
\begin{itemize}
\item A finite subset $L$ of a metric space $(X,d)$ is called an $\ep-$net, for $\ep>0$, if the following property is satisfied :
        \[ \forall \, x \in X, \; \exists \, l \in L, \; s.t. \; d(x,l) \leq \ep.  \]
\item A metric space $(X,d)$ is said to be totally bounded if it contains an $\ep-$net for any $\ep >0.$
\item  Let $(X,d)$ a metric space. Then $(X,d)$ is compact if and only if $(X,d)$ is totally bounded and complete.
    \end{itemize}

To prove the first statement of the proposition, let $A$ be a closed subset of $(E_n,d_1)$. We start by assuming that we may fix $\ep \in \left ( 0,1/n \right )$ and $ w \in W$ such that
    \begin{equation}\label{contraposee}
     \forall \, R >0, \; \exists \, x = \{x_1,...,x_n\} \in A,  \; \underset{1 \leq k \leq n}{\max} \{ \|x_k-w\|  \} > R + n \ep,
    \end{equation}
    and we show that $A$ is not a compact set because it does not contain any $\ep-$net. Take $L=\{ l^{(1)}, \dots , l^{(N)} \}$ a finite subset of $A$ and let us define
    \[  R_0 = \underset{1 \leq i \leq N}{\max} \; \underset{1 \leq k \leq n}{\max} \{ \|l_k^{(i)}-w\| \}. \]
    By \eqref{contraposee} we may define $x \in A$ and $1 \leq j \leq n$ such that
    \[ \|x_j-w\| = \underset{1 \leq k \leq n}{\max} \{ \|x_k-w\|  \} > R_0 +  n \ep. \]
    This leads  for all $\s \in \SS_n$ and $1 \leq i \leq N$ to
    \[ \|x_j-l_{\s(j)}^{(i)}\| \geq \left| \|x_j-w\| - \|l_{\s(j)}^{(i)}-w\| \right| = \|x_j-w\| - \|l_{\s(j)}^{(i)}-w\| >  n \ep   \]
    and for any $1 \leq i \leq N$
    \begin{align*}
        d_1(x,l^{(i)}) & = \frac{1}{n} \; \underset{\s \in \SS_n}{\min} \sum_{k=1}^n \left ( \|x_k-l_{\s(k)}^{(i)}\| \wedge 1  \right ) \\
        & \geq  \frac{1}{n} \; \underset{\s \in \SS_n}{\min}  \left ( \|x_j-l_{\s(j)}^{(i)}\|  \wedge 1 \right ) \\
        & > \frac{ n \ep}{n} = \ep.
    \end{align*}
    Therefore $L$ cannot be an $\ep-$net and A cannot be a  compact set.
    
    Let us now prove the converse. Fix $w \in W$ and assume that there exists a positive $R$ such that for all $x \in A,$ \[  \underset{1 \leq k \leq n}{\max} \{ \|x_k-w\|  \} \leq R.\]
    Under this assumption, $A$ is a subset of 
    \[  C := \{ x \in E_n, \dfrac{1}{n} \sum_{k=1}^n \|x_k-w\| \leq R \}. \]
    Let us show that $C$ is a compact set.
    To this end we define \textbf{w} $=  ( w, \ldots, w) \in W^n$ and write  $\bar{B}_{\|.\|_n} ( \textnormal{\textbf{w}},R) $ for the closed ball of radius $R$ and center \textbf{w} for the norm $\|.\|_{n}$ on the finite dimensional vector space $(\R^d)^n.$ The closed set $ \bar{B}_{\|.\|_n} ( \textnormal{\textbf{w}},R) \cap W^n $ is then a compact set of $W^n$ and by continuity of the projection $\pi_n,$ we get that $\pi_n \left (   \bar{B}_{\|.\|_n} ( \textnormal{\textbf{w}},R) \cap W^n \right )$ is a compact set of $E_n$.
    Let us prove that $ \pi_n \left (   \bar{B}_{\|.\|_n} ( \textnormal{\textbf{w}},R) \cap W^n \right )=C$ to conclude the proof.
   First, if $x=\{x_1,\dots,x_n \} \in C$ then $\hat{x}=(x_1,\dots,x_n) \in  \bar{B}_{\|.\|_n} ( \textnormal{\textbf{w}},R) \cap W^n $ and $\pi_n(\hat{x})=x.$
    Second, if $x = \pi_n(\hat{x})$ with $\hat{x} = (x_1,\dots,x_n) \in \bar{B}_{\|.\|_n} ( \textnormal{\textbf{w}},R) \cap W^n, $ then
    \[\dfrac{1}{n} \sum_{k=1}^n \|x_k-w\|= \| \hat{x} - \textnormal{\textbf{w}} \|_n \leq R\]
    which proves the claim. The set $C$ is then compact and so is $A$ because it is a closed set.

Let us finally prove the second statement of Proposition~\ref{compactsetofE} by contradiction. Let $A$ be a compact subset of $E$ and suppose that $\mathcal P= \{ p \geq 0, \, A \cap E_p \neq \emptyset \}$ is infinite. Then we can construct a sequence $(y_p)_{p \in \mathcal P}$ with $y_p \in A \cap E_p$. But $A$ is a compact set so there exists a subsequence $(y_{p'})_{p' \in \mathcal P}$ which  converges to some $y \in E$ when $p' \to \infty$. But by Lemma~\ref{continuity n}, $n(y_{p'})=p' \to n(y) $ as $p'\to\infty$ which is absurd, concluding the proof.

We end this section with two corollaries of Proposition~\ref{compactsetofE}.

\begin{cor}\label{Cor9}
    If $W$ is a compact set, then $(E_n, d_1)$ is a compact set for any $n \geq 1$.
\end{cor}

\begin{proof}
    $W$ is a compact set of $\R^d$ so it is bounded, i.e. we may fix a non-negative $R$ such that $\|w\|\leq R$ for any $w \in W.$  Let $w \in W$ and $x \in E_n$. Then
    \[ \underset{1 \leq k \leq n}{\max} \{ \|x_k-w\| \}  \leq \underset{1 \leq k \leq n}{\max} \|x_k\| +\|w\| \leq 2R.   \]
    $E_n$ is therefore a compact set by the first statement of Proposition~\ref{compactsetofE}.
\end{proof}

\begin{cor} \label{fctC0deWn}
    If $f \in C_0(E)$ then for any $n \geq 1$, $f \circ \pi_n \in C_0(W^n).$
\end{cor}

\begin{proof}
    Take $f \in C_0(E)$ and $\ep >0$. There exists a compact set $B \subset E$ such that if $ x \notin B$ then $ |f(x)|< \ep.$ In this case $B_n := B \cap E_n$ is a compact set because $E_n$ is closed so by Proposition~\ref{compactsetofE}  there exists $w \in W$ and $R \geq 0$ such that for any $x=\{x_1,\dots,x_n\} \in B_n$, $\max_{1 \leq k \leq n} \| x_k-w \| \leq R.$ Then for any $z \notin \bar{B}_{\|.\|_n}(w, R/n)$ we get $|f \circ \pi_n (z)|<\ep.$
\end{proof}

\subsection{The Hausdorff distance is not appropriate } \label{sec6.5}

For systems of particles in $\R^d$, we have equipped $E$ with the distance $d_1$ defined in \eqref{defd1}. 
A common alternative distance between random sets is the Hausdorff distance defined for $x =\{ x_1, \dots, x_{n(x)} \}$ and $y = \{ y_1 , \dots, y_{n(y)}\}$ in $E$ by
    \[  d_H(x,y)= \max \left \{ \max_{1 \leq i \leq n(x)} \min_{1 \leq j \leq n(y)} \|x_i-y_j\|, \max_{1 \leq j \leq n(y)} \min_{1 \leq i \leq n(x)} \|x_i-y_j\| \right \}.\]
Yet we show in this section that this distance does not make the function $n(.)$ continuous, which has serious consequences on the structure of $C_b(E)$ with this topology. In particular, we show that a simple uniform death kernel is not even Feller continuous in this setting. 

As a preliminary, for the Hausdorff distance to be a  proper distance, we must focus on simple point configurations only. 
We therefore consider for any $n \geq 1$
\[ \tilde W_n = \left \{ (x_1,\dots,x_n ) \in \R^n, \quad i \neq j \implies x_i \neq x_j \right \},\]
and the state space is
\[ \tilde E = \bigcup_{n \geq 0} \tilde E_n,\]
where $\tilde E_n={\tilde \pi_n}(\tilde W_n)$ and $\tilde \pi_n$ is the same projection function as in Section~\ref{sec2.3} but defined on $\tilde W_n$. Then we have
\begin{lem}
The Hausdorff distance $d_H$ is a proper distance function on $\tilde E$.
\end{lem}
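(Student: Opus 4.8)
The plan is to check directly the three defining properties of a distance — separation, symmetry, and the triangle inequality — since non-negativity is immediate from the definition. Symmetry is equally immediate, as $d_H$ is the maximum of two symmetric one-sided quantities. The whole point of passing from $E$ to $\tilde E$ appears in the separation step, so I would treat that first and carefully.

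For separation, suppose $d_H(x,y)=0$ for $x=\{x_1,\dots,x_{n(x)}\}$ and $y=\{y_1,\dots,y_{n(y)}\}$ in $\tilde E$. Then $\max_i\min_j\|x_i-y_j\|=0$ forces each $x_i$ to coincide with some $y_j$, and the symmetric term forces each $y_j$ to coincide with some $x_i$; hence the underlying sets of points are equal. The key observation I would stress is that this set equality upgrades to equality in $\tilde E$ precisely because configurations in $\tilde E$ have pairwise distinct points: equal underlying sets then have the same cardinality, so $n(x)=n(y)$ and $x=y$. To motivate the restriction, I would also point out that on the full space $E$ this fails — a configuration with a repeated point such as $\{x_1,x_1\}\in E_2$ and the configuration $\{x_1\}\in E_1$ share the same underlying set, hence have Hausdorff distance $0$, while being distinct elements of $E$.

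The remaining and most technical (though entirely standard) step is the triangle inequality. Here I would introduce the directed excess $e(x,y)=\max_i\min_j\|x_i-y_j\|$, so that $d_H(x,y)=\max\{e(x,y),e(y,x)\}$, and prove $e(x,z)\le e(x,y)+e(y,z)$ for any $x,y,z\in\tilde E$: for each index $i$, choosing $j$ attaining $\min_j\|x_i-y_j\|$ and then $k$ attaining $\min_k\|y_j-z_k\|$, the triangle inequality in $\R^d$ gives $\min_k\|x_i-z_k\|\le\|x_i-y_j\|+\|y_j-z_k\|\le e(x,y)+e(y,z)$, and taking the maximum over $i$ closes the bound. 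Combining this with the analogous inequality for $e(z,x)$ and taking the maximum of the two yields the triangle inequality for $d_H$. I expect no real obstacle here; the only genuinely nontrivial ingredient of the lemma is the separation property and its reliance on simplicity, which is exactly why the restriction to $\tilde E$ is imposed.
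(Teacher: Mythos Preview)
Your proof is correct and follows essentially the same approach as the paper: symmetry is dismissed as obvious, the triangle inequality is handled (the paper simply cites it as well known while you spell out the standard directed-excess argument), and the separation step is proved by showing that $d_H(x,y)=0$ forces the underlying point sets to coincide and then invoking simplicity to conclude $n(x)=n(y)$ and $x=y$. Your added counterexample on $E$ and the explicit triangle-inequality computation go beyond what the paper records, but the core reasoning is the same.
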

\begin{proof}
Symmetry is obvious and triangle inequality is well known for $d_H$. 
We only prove the identity of indiscernibles. Let $x =\{ x_1, \dots, x_{n(x)} \}$ and $y = \{ y_1 , \dots, y_{n(y)} \}$ in $\tilde E$ satisfying $ d_H(x,y)= 0.$ This implies 
    \[ \min_{1 \leq j \leq n(y)} \|x_i-y_j\| = 0 \]
    for any $i \in \{1,...,n(x)\},$ leading for any $i \in \{1,...,n(x)\}$ to the existence of $j \in \{1,...,n(y)\}$ such that $x_i=y_j.$ Since $x$ and $y$ are simple, we deduce that $n(y) \geq n(x).$ We obtain similarly $n(x) \geq n(y)$ and then $n(x)=n(y).$  We then may define a permutation $\s \in \SS_n$ such that for all $i \in \{1,...,n(x)\},$
    $x_i=y_{\s(i)}$ which means that $x=y$ in  $\tilde E$.
\end{proof}

We now verify that $n(.)$ is not continuous for this topology. 

\begin{lem}\label{PropdistanceHausdorff}
Assume that $\mathring W\neq\emptyset$. Then the function $n(.)$ is not continuous on $(\tilde E, d_H).$ 
\end{lem}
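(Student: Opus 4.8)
The plan is to disprove continuity by exhibiting an explicit counterexample: a sequence of configurations that converges in the Hausdorff metric $d_H$ but whose cardinalities do not converge to that of the limit. First I would exploit the hypothesis $\mathring W \neq \varnothing$ to fix a point $a \in \mathring W$ together with a radius $r>0$ such that the ball $B(a,r) \subset W$. Fixing any unit vector $v \in \R^d$, I observe that for every integer $p > 1/r$ the point $a + v/p$ lies in $B(a,r)\subset W$ and is distinct from $a$. Hence $x^{(p)} := \{a,\, a+v/p\}$ is a genuine simple configuration in $\tilde E_2$, so that $n(x^{(p)})=2$, whereas $x := \{a\} \in \tilde E_1$ has $n(x)=1$.

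Next I would compute $d_H(x^{(p)},x)$ directly from the definition. Since the only element of $x$ is $a$, the two directed distances are easily evaluated: the points of $x^{(p)}$ are at distances $0$ and $1/p$ from $a$, so $\max_{1\leq i \leq 2}\min_{j}\|x^{(p)}_i - x_j\| = \max\{0,1/p\}=1/p$, while $\max_{j}\min_{1\leq i\leq 2}\|x^{(p)}_i - x_j\| = \min\{0,1/p\}=0$. Therefore
\[
d_H(x^{(p)},x) = \max\left\{\frac1p,\,0\right\} = \frac1p \xrightarrow[p\to\infty]{} 0,
\]
which shows that $x^{(p)} \to x$ in $(\tilde E,d_H)$.

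Finally I would conclude: although $x^{(p)}\to x$ in $(\tilde E, d_H)$, we have $n(x^{(p)})=2$ for all large $p$ while $n(x)=1$, so $\lim_{p\to\infty} n(x^{(p)}) \neq n(x)$. Hence $n(\cdot)$ is not continuous at $x$, proving the claim. There is no genuine obstacle in this argument; the only points requiring care are that the perturbed configuration stays inside $W$ and remains simple, and these are precisely what the assumption $\mathring W\neq\varnothing$ guarantees (it provides the room to place the additional point $a+v/p$ in $W$, distinct from $a$). This also pinpoints the structural defect: under $d_H$ a configuration can be approximated by configurations with strictly more points, in sharp contrast to the distance $d_1$ for which $n(\cdot)$ is continuous by Lemma~\ref{continuity n}.
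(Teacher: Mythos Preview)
Your proof is correct and follows essentially the same approach as the paper: both construct a sequence of two-point configurations $\{a, a+v/p\}$ (the paper takes $a=0$ after a WLOG reduction) converging in $d_H$ to the singleton $\{a\}$, while the cardinality stays at $2$. Your version is slightly more explicit in verifying both directed Hausdorff distances and in using an arbitrary interior point rather than translating to the origin, but the idea is identical.
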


\begin{proof}
Assume without loss of generality that $0\in \mathring W$. Let $k \geq 1$ and $y\in \R^d$ such that $\|y\|=1/k$. Take $k$ large enough so that $y\in W$. Then $\left | n \left ( \left \{ 0, y \right \} \right ) - n(\{0\}) \right | = 1$ and $ d_H \left ( \left \{ 0, y \right \}  , \{0\} \right ) = 1/k \to 0$ as $k\to\infty$, proving the result.  
\end{proof}

This result reveals a singularity caused by the distance $d_H$. As a consequence, a simple uniform death kernel is not even Feller continuous, as proved in the following lemma. 
\begin{lem}
Assume that $\mathring W\neq\emptyset$ and consider for $f\in M^b( \tilde E)$ the kernel 
\[ Kf(x)=\frac{1}{n(x)} \sum_{i=1}^{n(x)} f(x \, \backslash \, x_i). \]
Then $K  C_b( \tilde E)$ is not included in $C_b( \tilde E)$, i.e. $K$ is not Feller continuous. 
\end{lem}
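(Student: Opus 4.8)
The plan is to exploit the failure of $n(\cdot)$ to be continuous for $d_H$, established in Lemma~\ref{PropdistanceHausdorff}: a configuration with $n+1$ points may converge to one with $n$ points. Since $K$ averages $f$ over the $n(x)$ deletions $x \, \backslash \, x_i$, such a drop in cardinality produces simultaneously a change in the averaging weight $1/n(x)$ and an extra deletion whose image collapses onto a lower sheet; I will show that these two effects do not cancel, so that $Kf$ is discontinuous for a suitable $f \in C_b(\tilde E)$. I would carry this out on a single explicit sequence crossing the $\tilde E_3 \to \tilde E_2$ boundary, which keeps the argument clear of the empty configuration and hence of any convention for $d_H(\textnormal{\O},\cdot)$.

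\textbf{Construction.} Using $\mathring W \neq \emptyset$, fix $a \in \mathring W$, a ball $B(a,r)\subset W$, and a second point $b \in W$ with $0 < \|a-b\| =: \rho$. For $k$ large let $c_k = a + z_k$ with $0 < \|z_k\| = 1/k < \min(r,\rho)$, so that $\{a,b,c_k\}$ is a simple configuration in $W$; set $x^{(k)} = \{a,b,c_k\} \in \tilde E_3$ and $x = \{a,b\} \in \tilde E_2$. A direct computation with the $d_H$ formula gives $d_H(x^{(k)}, x) = 1/k \to 0$. I then take $f \in C_b(\tilde E)$ with $f(\{a\}) = 1$ and $f(\{b\}) = f(\{a,b\}) = 0$; such a function exists since $y \mapsto d_H(y, \{a\})$ is $1$-Lipschitz, so that $f = \max\!\big(0, 1 - \tfrac{2}{\rho}\, d_H(\cdot, \{a\})\big)$ (extended by $f(\textnormal{\O}) = 0$) is continuous, bounded, and vanishes wherever $d_H(\cdot,\{a\}) \ge \rho/2$, in particular at $\{b\}$ and $\{a,b\}$.

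\textbf{The mismatch.} I would next evaluate both sides. On one hand $Kf(x^{(k)}) = \tfrac13\big[f(\{b,c_k\}) + f(\{a,c_k\}) + f(\{a,b\})\big]$, and checking the three $d_H$-limits $\{b,c_k\} \to \{a,b\}$, $\{a,c_k\} \to \{a\}$ and $\{a,b\}$ (constant) — the middle one being precisely the cardinality drop from Lemma~\ref{PropdistanceHausdorff} — continuity of $f$ yields $Kf(x^{(k)}) \to \tfrac13[0 + 1 + 0] = \tfrac13$. On the other hand $Kf(x) = \tfrac12\big[f(\{b\}) + f(\{a\})\big] = \tfrac12$. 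Since $\tfrac13 \neq \tfrac12$, the function $Kf$ is discontinuous at $x$, so $Kf \notin C_b(\tilde E)$ although $f \in C_b(\tilde E)$; this proves $K\,C_b(\tilde E) \not\subset C_b(\tilde E)$, i.e. $K$ is not Feller continuous.

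\textbf{Main obstacle.} The routine parts are the Lipschitz continuity of $f$ and the $d_H$-convergences of the deleted configurations. The point requiring the most care — and which carries the whole argument — is verifying that the deletion $\{a,c_k\} \to \{a\}$ genuinely collapses to the lower sheet while the averaging weight passes from $1/3$ to $1/2$, so that the limit of $Kf(x^{(k)})$ is $\tfrac13$ rather than the value $\tfrac12$ one would obtain if $n(\cdot)$ were continuous. I also need to confirm that all intermediate configurations remain simple and inside $W$, which is exactly what the hypothesis $\mathring W \neq \emptyset$ (used to choose $a$ and the perturbations $c_k$) secures.
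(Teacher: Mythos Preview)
Your proof is correct and follows essentially the same strategy as the paper's: take a sequence $x^{(k)}\in\tilde E_3$ collapsing in $d_H$ to some $x\in\tilde E_2$ and exhibit an $f\in C_b(\tilde E)$ for which the averaging weights $1/3$ versus $1/2$ force $Kf(x^{(k)})\not\to Kf(x)$. The only difference is cosmetic: the paper uses the coordinate function $f(x)=\max_i x_{i,1}\wedge 1$ (shown to be $1$-Lipschitz for $d_H$) in place of your bump function $\max(0,1-\tfrac{2}{\rho}d_H(\cdot,\{a\}))$, obtaining the mismatch $\tfrac{2a_1}{3}\neq\tfrac{a_1}{2}$ instead of your $\tfrac13\neq\tfrac12$.
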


\begin{proof}
Consider the function $f(x)=\max_{1\leq i\leq n(x)} x_{i,1}\wedge 1$ where $x_{i,1}$ is the first coordinate of $x_i\in W$. This function is bounded and satisfies for any $x,y \in \tilde E,$ 
\begin{equation}\label{boundfex}
| f(x)-f(y) | \leq \left | \max_{1 \leq i \leq n(x)} x_{i,1} - \max_{1 \leq j \leq n(y)} y_{j,1} \right |,
\end{equation}
for any $x,y \in \tilde E$.  Let us show  that the latter bound is lower than $d_H(x,y)$. Let $\mathcal I_0=\textrm{argmax}_{1 \leq i \leq n(x)} x_{i,1}$ and $\mathcal J_0=\textrm{argmax}_{1 \leq j \leq n(y)} y_{j,1}$. 
This follows from the fact that for any $i_0\in\mathcal I_0$ and $j_0\in\mathcal J_0$, 
 \begin{align*}
        d_H(x,y) & \geq \max_{1 \leq i \leq n(x)} \min_{1 \leq j \leq n(y)} \|x_i-y_j\| 
         \geq \min_{1 \leq j \leq n(y)} \|x_{i_0}-y_j\| 
         \geq  \min_{1 \leq j \leq n(y)} |x_{i_{0},1}-y_{j,1}| 
        = |x_{i_0}-y_{j_0}|.
    \end{align*}
So by \eqref{boundfex} $| f(x)-f(y) |\leq d_H(x,y)$, proving that $f\in C_b( \tilde E)$. 

Assume without loss of generality that $0\in\mathring W$. Let $a\in W$, $a\neq 0$, and $a_k=(1/k,0,\dots,0)\in \R^d$ with $k$ large enough to ensure $a_k\in W$. Consider the sequence  
$ x^{(k)} = \left \{ 0, a , a_k\right \}$ and let $x= \{ 0, a \}$ so that $ d_H (x^{(k)},x) = 1/k$ tends to $0$ as $k\to\infty$.
On the one hand,
\begin{align*}
    Kf(x^{(k)}) = \frac{1}{3} \left [ f \left ( \left \{ 0, a_k \right \} \right ) + f \left ( \left \{ a, a_k \right \} \right )  + f \left ( \left \{ 0, a \right \} \right ) \right ] = \frac{(1/k)+(1/k)\vee a_1 + a_1}{3} \underset{k \rightarrow \infty}{\longrightarrow} \frac{2 a_1}{3},
\end{align*}
and on the other hand,
\[ Kf(x) = \frac{1}{2} \left ( f(\{0\}) + f(\{a\}) \right ) = \frac{a_1}{2} \]
whereby $Kf\notin  C_b( \tilde E)$.  
\end{proof}

\section{Proof of Proposition~\ref{prop33} }

First we show that if $( Z^{\, |n}_t)_{t \geq 0}$ is a Feller continuous process on $W^n$ for every $n \geq 1$ then $(Y_t)_{t \geq 0}$ is a Feller continuous process on $E$. Indeed, let $x \in E$ and a sequence $(x^{(p)})_{p \geq 0}$ converging to $x.$ By Proposition \ref{suitecvgentedeE} we may fix $p_0 \geq 1$ such that $n(x^{(p)})=n(x):=n$ for any $p \geq p_0$ and a sequence of permutations $\s_p$ of $\{1,\dots,n\}$ such that for any $1 \leq i \leq n$,  $x_{\s_p(i)}^{(p)}  \to x_{i}$ as $p\to\infty$. 
We then obtain for any $f \in C_b(E)$ and $p \geq p_0,$ using the permutation equivariance property of $( Z^{ \, |n}_t)_{t \geq 0}$ (that allows us to arbitrarily choose the ordering of its initial value), the continuity of  its transition kernel, and  Corollary~\ref{fctCbdeWn}, that
\begin{align*}
    \E \left ( f( Y_t) \, | \, Y_0 = x^{(p)} \right ) & = \E \left ( f( Y^{|n}_t) \, | \, Y_0 = x^{(p)} \right ) \\
    & = \E \left ( f \circ \pi_n (Z^{ \, |n}_t) \, | Z^{ \, |n}_0 = (x_{\s_p(1)}^{(p)},\dots,x_{\s_p(n)}^{(p)} ) \right )\\
    & \underset{p \rightarrow \infty}{\longrightarrow} \E \left ( f \circ \pi_n (Z^{ \, |n}_t) \, | Z^{ \, |n}_0 = (x_1,\dots,x_n ) \right )\\
    & = \E \left ( f(Y_t) \, | \, Y_0 = x \right ).
\end{align*}

Second, let us prove that if $( Z^{ \, |n}_t)_{t \geq 0}$ is a Feller process on $W^n$ for every $n \geq 1$ then $(Y_t)_{t \geq 0}$ is a Feller process on $E$. Let $f \in C_0(E)$. We start by the strong continuity.  Take $\ep >0$. By the second statement of Proposition~\ref{compactsetofE} there exists $n_0 \geq 0$ such that $n(x) > n_0 \Rightarrow |f(x)|< \frac{\ep}{4}.$ So for any $x\in E$, 
\begin{align*}
    \left | Q^Y_tf(x)-f(x) \right | & \leq \left |Q^Y_tf(x)-f(x)  \right | \ind_{n(x) \leq n_0} + \E_x [|f(Y_t)|] \ind_{n(x) > n_0} + f(x) \ind_{n(x) > n_0} \\
    & \leq \sum_{n=0}^{n_0} \left | Q_t^{Y^{ \, |n}}f(x)-f(x)  \right | \ind_{x \in E_n} + \frac{\ep}{4} + \frac{\ep}{4} \\
    &  \leq \sum_{n=1}^{n_0} \left |\E \left ( f(\pi_n(Z^{ \, |n}_t))  \left | \right. Z^{ \, |n}_0=(x_{1},\dots,x_{n}) \right ) -  f(\pi_n((x_{1},\dots,x_{n}))) \right | \ind_{x \in E_n} + \frac{\ep}{2} \\
    & \leq \sum_{n=1}^{n_0} \|  Q_t^{Z^{ \, |n}}(f \circ \pi_n)-f \circ \pi_n\|_{\infty} +  \frac{\ep}{2}.
\end{align*}
By Corollary \ref{fctC0deWn}, for any $n=1,\dots,n_0$,  there exists $t_n > 0$  such that \[ t \in (0,t_n) \implies \|  Q_t^{Z^{ \, |n}}(f \circ \pi_n)-f \circ \pi_n\|_{\infty} < \frac{\ep}{2n_0}. \]
So for any $t \in (0,t(\ep))$ where $t(\ep) = \underset{1 \leq n \leq n_0}{\min} t_n$, we get $\| Q_t^Yf - f\|_{\infty} < \ep$, which proves the strong continuity of  $Q_t^Y$ at 0. 

It remains to show that $Q_t^Y C_0(E) \subset C_0(E).$ Continuity follows from above. Take now $f \in C_0(E)$ and fix $\ep >0$ and $B \subset E$ a compact set such that $x \notin B \Rightarrow |f(x)|< \frac{\ep}{2}.$ By Proposition~\ref{compactsetofE}  there exists $n_0 \geq 0$ such that $x \in B \Rightarrow n(x) \leq n_0$. Also by Corollary \ref{fctC0deWn} we can fix for any $n=1,\dots,n_0$ a compact set $A_n$ of $W^n$ such that $z \notin A_n \Rightarrow  \left | Q_t^{Z^{ \, |n}}(f \circ \pi_n)(z) \right | < \ep/(2n_0)$. Then, $A=\bigcup_{n=1}^{n_0} \pi_n(A_n)$ is a compact set of $E$ and for any $ x \notin \{ \textnormal{\O} \} \cup A \cup B$ 
\begin{align*}
    \| Q_t^Yf(x) \| \leq \sum_{n=1}^{n_0}  \| Q_t^{Z^{ \, |n}}(f \circ \pi_n)((x_1,\dots,x_n) \| + \frac{\ep}{2} \leq \ep.
\end{align*}

\section{Proofs of lemmas relating to the coupling of Appendix~\ref{sec6.7}}\label{proofs coupling}
%
%

\subsection{Proof of Lemma \ref{QqisC0}}

    First note that for any $x \in E$, $n \geq 0$ and $h > 0$ one has 
    \begin{align*}
        \P_{(x,n)}(\ck T_1 \leq h) & = \E_{(x,n)}\left (1-\e^{-\int_0^h \ck \a( \ck Y_u) \, \dd u} \right ) \leq \E_{(x,n)}\left ( \int_0^h \ck \a( \ck Y_u) \, \dd u  \right ) \leq 2 \a^* h.
    \end{align*}
   Next take $h >0$. Then 
    \begin{align}\label{accroiss psi}
         \psi_q(t+h) - \psi_q(t)  & =  \E_{(x,n)} \left [ \ind_E(X'_{t+h}) \ind_{\{ q \}}(\eta'_{t+h}) - \ind_E(X'_{t}) \ind_{\{ q \}}(\eta'_{t}) \right ] \nonumber \\
        & = \E_{(x,n)} \left [  \E_{(x,n)} \left ( \ind_E(X'_{t+h}) \ind_{\{ q \}}(\eta'_{t+h}) | \ck \FF_t \right )  - \ind_E(X'_{t}) \ind_{\{ q \}}(\eta'_{t}) \right ]  \nonumber \\
        & =  \E_{(x,n)} \left [ \ck Q_h((X'_t,\eta'_t); E \times \{ q\}) - \ind_E(X'_{t}) \ind_{\{q\}}(\eta'_{t})  \right ]  \nonumber \\
        & = \sum_{k \geq 0} \E_{(x,n)} \left [ ( \ck Q_h((X'_t,\eta'_t); E \times \{ q\}) - \ind_E(X'_{t}) \ind_{\{q\}}(\eta'_{t}) ) \ind_{ \eta'_t = k}  \right ]. \nonumber\\
         & = \sum_{k \geq 0} \E_{(x,n)} \left[ \ck Q_h((X'_t,k);E \times \{ q \}) - \ind_{\{q\}}(k) |\eta'_t = k \right] \, \P_{(x,n)}(\eta'_t = k).
    \end{align}
     For any $k \geq 0$ and $y \in E$ 
            \begin{align*}
        \left | \ck Q_h((y,k),E \times \{ q \}) -  \ind_{\{q\}}(k)  \right | & = \left | \E_{(y,k)} \left ( \ind_E(X'_h) \ind_{\{q\}}(\eta'_h) \right )  - \ind_{\{q\}}(k) \right | \\
        & = \left | \E_{(y,k)} \left (  \ind_{\{ q \}}(\eta'_h) \ind_{ \ck T_1 > h} \right ) + \E_{(y,k)} \left ( \ind_{\{ q \}}(\eta'_h) \ind_{ \ck T_1 \leq h} \right )  - \ind_{\{q\}}(k) \right | \\
        & \leq \E_{(y,k)} \left |( \ind_{\{ q \}}(\eta'_h)- \ind_{\{q\}}(k)) \ind_{ \ck T_1 \leq h} \right | + \E_{(y,k)} \left | ( \ind_{\{ q \}}(\eta'_h)- \ind_{\{q\}}(k)) \ind_{\ck T_1 >h} \right | \\
        & \leq 2 \P_{(x,n)}(\ck T_1 \leq h) + \E_{(y,k)} \left | ( \ind_{\{ q \}}(k)- \ind_{\{q\}}(k)) \ind_{\ck T_1 >h} \right | \\
        & \leq 4 \a^* h,
    \end{align*}
    whereby
    \begin{align*}
        \left |   \psi_q(t+h) - \psi_q(t) \right | \leq \sum_{k \geq 0} 4 \a^* h \, \P_{(x,n)}(\eta'_t = k) = 4 \a^* h \underset{h \searrow 0}{\longrightarrow} 0.
    \end{align*}
On the other hand, we obtain with the same calculations for $h \in [0,t]$  
    \begin{align*}
        \psi_q(t) - \psi_q(t-h) & = \E_{(x,n)} \left [  \E_{(x,n)} \left ( \ind_E(X'_{t}) \ind_{\{ q \}}(\eta'_{t}) | \ck \FF_{t-h} \right )  - \ind_E(X'_{t-h}) \ind_{\{ q \}}(\eta'_{t-h}) \right ] \\
        & = \E_{(x,n)} \left [ \ck Q_h((X'_{t-h},\eta'_{t-h}); E \times \{ q\}) - \ind_E(X'_{t-h}) \ind_{\{q\}}(\eta'_{t-h})  \right ] \\
        & = \sum_{k \geq 0} \E_{(x,n)} \left[ \ck Q_h((X'_{t-h},k);E \times \{ q \}) - \ind_{\{q\}}(k) |\eta'_{t-h} = k   \right ] \, \ck Q_{t-h}((x,n);E \times \{ k \} ) \\
        & \leq 4 \a^* h \underset{h \searrow 0}{\longrightarrow} 0.
    \end{align*}
Therefore the function $t \in \R_+ \mapsto  \psi_q(t)$ is continuous.

\subsection{Proof of Lemma \ref{lem41}}
Take $h >0.$ Recall  from \eqref{accroiss psi} that 
    \begin{align}\label{derivpsi}
          \dfrac{1}{h} \left ( \psi_q(t+h) - \psi_q(t) \right ) 
        & = \dfrac{1}{h} \sum_{k \geq 0} \E_{(x,n)} \left[ \ck Q_h((X'_t,k);E \times \{ q \}) - \ind_{\{q\}}(k) |\eta'_{t}=k \right ] \, \ck Q_t((x,n);E \times \{ k \} ).
    \end{align}
For any $y \in E$ and $ k \geq 0$ 
    \begin{equation}\label{A123}
     \ck Q_h((y,k),E \times \{ q \})  - \ind_{\{q\}}(k)   =\E_{(y,k)} \left (  \ind_{\{q\}}(\eta'_h) - \ind_{\{q\}}(k)  \right ) =A_1(h) + A_2(h) + A_3(h),
         \end{equation}
where $A_1(h)=\E_{(y,k)} \left ( ( \ind_{\{ q \}}(\eta'_h) - \ind_{\{q\}}(k) ) \ind_{ \ck T_1 > h} \right )$, 
$A_2(h)=\E_{(y,k)} \left ( ( \ind_{\{ q \}}(\eta'_h)  - \ind_{\{q\}}(k) ) \ind_{ \ck N_h = 1} \right )$
and $A_3(h)= \E_{(y,k)} \left ( ( \ind_{\{ q \}}(\eta'_h)  - \ind_{\{q\}}(k) ) \ind_{ \ck T_2 < h} \right ).$
Let us treat each term separately.

   First,
    \begin{align*}
         A_1(h)= \E_{(y,k)} \left ( ( \ind_{\{ q \}}(k) - \ind_{\{q\}}(k) ) \ind_{ \ck T_1 > h} \right ) = 0.
    \end{align*}
    
    Second, $A_2(h)$ reads
    \begin{align*}
       &\E_{(y,k)} \left (( \ind_{\{ q \}}(\eta'_h)  - \ind_{\{q\}}(k)) \ind_{ \ck \t_1 \leq h} \ind_{\ck \t_2 > h- \ck \t_1}  \right ) \\
       &=  \E_{(y,k)} \left [  \E_{(y,k)} \left [ (\ind_{\{ q \}}(\eta'_{\ck \t_1})   - \ind_{\{q\}}(k))) \ind_{ \ck \t_1 \leq h} \ind_{\ck \t_2 > h- \ck \t_1} \left | \ck \FF_{\ck \t_1}, \ck Y^{(1)} \right. \right ]   \right ] \\
         & = \E_{(y,k)} \left [ ( \ind_{\{ q \}}(\eta'_{\ck \t_1})   - \ind_{\{q\}}(k)) \ind_{ \ck \t_1 \leq h}  \P_{(y,k)} \left ( \ck \t_2 > h- \ck \t_1 \left | \ck \FF_{\ck \t_1}, \ck Y^{(1)} \right. \right )   \right ] \\
         & =  \E_{(y,k)} \left [  ( \ind_{\{ q \}}(\eta'_{\ck \t_1})   - \ind_{\{q\}}(k))   \ind_{ \ck \t_1 \leq h}  \e^{- \int_0^{h-\ck \t_1} \ck \a \left ( \ck Y_u^{(1)} \right ) \, \dd u }  \right ] \\
         & = \E_{(y,k)} \left [ \ind_{ \ck \t_1 \leq h} ( \ind_{\{ q \}}(\eta'_{\ck \t_1})   - \ind_{\{q\}}(k)) \E_{(y,k)} \left [   \e^{- \int_0^{h-\ck \t_1} \ck \a \left ( \ck Y_u^{(1)} \right ) \, \dd u }  \left | \ck \FF_{\ck \t_1} \right. \right ] \right ] \\
         & = \, \E_{(y,k)} \left [ \ind_{ \ck \t_1 \leq h}  (\ind_{\{ q \}}(\eta'_{\ck \t_1})  - \ind_{\{q\}}(k)) \E_{\ck C_{\ck \t_1}}^{\ck Y} \left [\e^{ - \int_0^{h-\ck \t_1} \ck \a \left ( \ck Y_u \right ) \, \dd u }  \right ] \right ] \\
         & =  \E_{(y,k)} \left [ \ind_{ \ck \t_1 \leq h}  (\ind_{\{ q \}}(\eta'_{\ck \t_1})   - \ind_{\{q\}}(k)) \E_{\ck C_{\ck \t_1}}^{\ck Y} \left [\e^{ - \int_0^{h-\ck \t_1} \ck \a \left ( \ck Y_u \right ) \, \dd u } -1 \right ] \right ]  +  \E_{(y,k)} \left [ \ind_{ \ck \t_1 \leq h}  (\ind_{\{ q \}}(\eta'_{\ck \t_1})  - \ind_{\{q\}}(k))  \right ]. 
         \end{align*}
  For the first term above, 
         \begin{align*}
             \dfrac{1}{h} \left | \E_{(y,k)} \left [ \ind_{ \ck \t_1 \leq h}  (\ind_{\{ q \}}(\eta'_{\ck \t_1}) - \ind_{\{q\}}(k))) \E_{\ck C_{\ck \t_1}}^{\ck Y} \left [\e^{ - \int_0^{h-\ck \t_1} \ck \a \left ( \ck Y_u \right ) \, \dd u } -1 \right ] \right ]  \right | & \leq 4 \a^* \E_{(y,k)}(\ind_{ \ck \t_1 \leq h} ) \leq 8 (\a^*)^2 h.
         \end{align*}
   For the second term, we have 
         \begin{align}\label{A2trans}
             \E_{(y,k)}  \left [ \ind_{ \ck \t_1 \leq h}  (\ind_{\{ q \}}(\eta'_{\ck \t_1})  - \ind_{\{q\}}(k))  \right ] 
             & =  \E_{(y,k)} \left [ \ind_{ \ck \t_1 \leq h}  \E_{(y,k)} \left [ \,  \ind_{\{ q \}} (\eta'_{\ck \t_1})  - \ind_{\{q\}}(k) \left | \ck Y^{(0)}, \ck \t_1 \right. \right ] \right ] \nonumber \\
              & =  \E_{(y,k)} \left [ \ind_{ \ck \t_1 \leq h} \left ( \int_{z_1 \in E} \ck K((Y_{\ck \t_1}^{'(0)},k); \dd z_1 \times \{ q \})   - \ind_{\{q\}}(k)  \right ) \right ] \nonumber \\
              & =  \E_{(y,k)} \left [ \ind_{ \ck \t_1 \leq h}  \left ( \ck K((Y_{\ck \t_1}^{'(0)},k); E \times \{ q \})  - \ind_{\{q\}}(k)  \right ) \right ]. 
              \end{align}
Following the definition of $\ck K$ in Section~\ref{construction coupling}, this formula takes two forms depending on whether $y \notin E_k$ or $y\in E_k$.  If $y \notin E_k$, then 
          \begin{multline*}
             \E_{(y,k)}  \left [ \ind_{ \ck \t_1 \leq h}  (\ind_{\{ q \}}(\eta'_{\ck \t_1})  - \ind_{\{q\}}(k))  \right ]   = \E_{(y,k)} \left [ \ind_{ \ck \t_1 \leq h} \left ( \dfrac{\a \left ( Y_{\ck \t_1}^{'(0)} \right )}{\ck \a \left ( Y_{\ck \t_1}^{'(0)},k \right ) }  -1 \right ) \right ] \ind_{\{ q \}}(k) \\ + \E_{(y,k)} \left [ \ind_{ \ck \t_1 \leq h}  \dfrac{\b_k}{\ck \a \left ( Y_{\ck \t_1}^{'(0)},k \right )}  \right ]  \ind_{\{ q-1 \}}(k)  + \E_{(y,k)} \left [ \ind_{ \ck \t_1 \leq h} \dfrac{\d_k}{\ck \a \left ( Y_{\ck \t_1}^{'(0)},k \right )}  \right ]  \ind_{\{ q+1 \}}(k), 
             \end{multline*}
that is             
              \begin{multline}\label{A2trans2}
             \E_{(y,k)}  \left [ \ind_{ \ck \t_1 \leq h}  (\ind_{\{ q \}}(\eta'_{\ck \t_1})  - \ind_{\{q\}}(k))  \right ]  \\ = \E_{(y,k)} \left [ \ind_{ \ck \t_1 \leq h}  \dfrac{1}{\ck \a \left ( Y_{\ck \t_1}^{'(0)},k \right ) }   \right ] \left ( - \a_k \ind_{ \{ q \}}(k)  + \b_k \ind_{ \{ q-1 \}}(k) + \d_k \ind_{ \{ q +1 \}}(k) \right ).
      \end{multline}
 Since
    \begin{align*}
        \left | \dfrac{1}{h} \E_{(y,k)} \left [ \ind_{ \ck \t_1 \leq h} \dfrac{1}{\ck \a \left ( Y_{\ck \t_1}^{'(0)},k \right )}  \right ] -1 \right | & = \left | \dfrac{1}{h} \E_{(y,k)}^{\ck Y} \left [ \int_0^h  \left ( \e^{- \int_0^s \ck \a(Y'_u,k) \, \dd u}-1  \right ) \dd s \right ] \right |   \leq 2 \a^* h,
    \end{align*}
    we then conclude that 
    \begin{equation}\label{A2sup}
        \underset{k \geq 0, y \notin E_k}{\sup} \left | \frac{A_2(h)}{h} + \a_k  \ind_{\{q\}}(k) - \b_k \ind_{\{q-1\}}(k) - \d_k \ind_{\{ q+1 \}}(k) \right | \underset{h \searrow 0}{\longrightarrow} 0 .
    \end{equation}
    If now $y \in E_k$, we  obtain from \eqref{A2trans}
     \begin{multline*}
             \E_{(y,k)}  \left [ \ind_{ \ck \t_1 \leq h}  (\ind_{\{ q \}}(\eta'_{\ck \t_1})  - \ind_{\{q\}}(k))  \right ]   \\=
              \E_{(y,k)} \left [ \ind_{ \ck \t_1 \leq h} \dfrac{\b(Y_{\ck \t_1}^{'(0)})}{\ck \a(Y_{\ck \t_1}^{'(0)},k)}  \right ] \ind_{\{ q-1 \}}(k)   +  \E_{(y,k)} \left [ \ind_{ \ck \t_1 \leq h}   \dfrac{\b_k-\b(Y_{\ck \t_1}^{'(0)})}{\ck \a(Y_{\ck \t_1}^{'(0)},k)}  \right ] \ind_{\{ q-1 \}}(k) \\
         + \E_{(y,k)} \left [ \ind_{ \ck \t_1 \leq h} \dfrac{\d_k}{\ck \a(Y_{\ck \t_1}^{'(0)},k)}    \right ] \ind_{\{ q+1 \}}(k)  +  \E_{(y,k)} \left [ \ind_{ \ck \t_1 \leq h} \left ( \dfrac{\d(Y_{\ck \t_1}^{'(0)})-\d_k}{\ck \a(Y_{\ck \t_1}^{'(0)},k)} - 1 \right ) \right ] \ind_{\{ q \}}(k),
          \end{multline*}
   that is the same expression as \eqref{A2trans2}. The convergence \eqref{A2sup} then remains true when the supremum is taken over $y\in E_k$, and so over $y\in E$, i.e. 
       \begin{align*}
        \underset{k \geq 0, y \in E}{\sup} \left | \frac{A_2(h)}{h}  + \a_k  \ind_{\{q\}}(k) - \b_k \ind_{\{q-1\}}(k) - \d_k \ind_{\{ q+1 \}}(k) \right | \underset{h \searrow 0}{\longrightarrow} 0 .
    \end{align*}
    
   Third, for $A_3(h)$ in \eqref{A123}, we have, using \eqref{dominationpoisson} and denoting $\ck N_h^* \sim \PP(2\a^*h)$,
    \begin{align*}
        \frac{1}{h} \left | A_3(h) \right | & \leq \frac{2}{h} \P_{(y,k)} \left ( \ck N_h \geq 2 \right ) \\
        & \leq \frac{2}{h} \P \left ( \ck N_h^* \geq 2 \right ) \\
        & \leq \dfrac{2}{h} \left ( 1 - \e^{-2\a^*h}- 2\a^*h \e^{-2\a^*h} \right ) \\
        & = 4 \, (\a^*)^2 \, h + \underset{h \searrow 0}{o}(h).
    \end{align*}
    
    Combining the results for $A_1(h)$, $A_2(h)$ and $A_3(h)$ in \eqref{A123}, we get
    \begin{align*}
        \underset{(y,k) \in \ck E}{\sup} \left |\dfrac{1}{h} \left ( \ck Q_h((y,k);E \times \{ q \}) - \ind_{\{q\}}(k) \right )  + \ind_{ \{ q \} }(k) \a_k  - \b_k \ind_{\{q-1\}}(k) - \d_k \ind_{\{ q+1 \}}(k)  \right | \underset{h \searrow 0}{\longrightarrow} 0.
    \end{align*}

  Finally, coming back to \eqref{derivpsi},  we obtain by uniform convergence, for any $x \in  E$,
    \begin{align*}
  \dfrac{1}{h} \left ( \psi_q(t+h) - \psi_q(t) \right ) 
         &  \underset{h \searrow 0}{\longrightarrow} \sum_{k \geq 0} \left \{  - \a_k \ind_{q}(k) + \b_k \ind_{\{q-1\}}(k) + \d_k \ind_{\{ q+1 \}}(k) \right \} \, \ck Q_t((x,n);E \times \{ k \} ) ,
        \end{align*}
 where the limit reads $- \a_q \, \psi_q(t) + \b_{q-1} \, \psi_{q-1}(t) + \d_{q+1} \,  \psi_{q+1}(t)$ using the convention $\b_{-1}=0$.

\subsection{Proof of Corollary \ref{Marg1diff}}
 For $t\geq 0$, define  
    \begin{align*}
G(t) = &  \psi_q(t) - \ind_{q}(n)  - \int_0^t \left (  - \a_q \, \psi_q(s) + \b_{q-1} \, \psi_{q-1}(s) + \d_{q+1} \,  \psi_{q+1}(s) \right ) \, \dd s .
    \end{align*}
   Then $G$ is continuous, right-differentiable on $\R_+$ by Lemma \ref{lem41}, and $\partial_+G(t)/\partial t =0$. So $G$ is constant. But $G(0)=0$ because $s\mapsto \psi_q(s) $ is bounded  on $\R_+$ for any $q \geq 0$. As a consequence we obtain 
        \begin{align*}
             \psi_q(t) = \ind_{q}(n) + \int_0^t \left (  - \a_q \, \psi_q(s) + \b_{q-1} \, \psi_{q-1}(s) + \d_{q+1} \,  \psi_{q+1}(s) \right ) \, \dd s .
        \end{align*}
    In particular the integrand is continuous by Lemma \ref{QqisC0} so  $\psi_q$  is differentiable.

\subsection{Proof of Lemma~\ref{wsdiff}}
Let us develop $q_s \ind_{\{q\}}$ as
    \begin{equation}\label{qindq}
     q_s \ind_{\{q\}} = \sum_{k \geq 0} q_s(k,\{ q \}) \ind_{\{ k \}} =  \sum_{k \geq 0} \P_k(\eta_s=q) \ind_{\{ k \}}. 
     \end{equation}
    Take $p > q$. Then using  \eqref{dominationpoisson} by denoting $n^*_t \sim \PP(\a^* t)$
    \begin{align*}
        \sum_{k=p}^{\infty} \P_k(\eta_s=q) & \leq \sum_{k=p}^{\infty} \P_k (n_s > k-q) \\
        & \leq \sum_{k=p}^{\infty} \P_k (n_t > k-q) \\
        & \leq \sum_{k=p}^{\infty} \P (n^*_t > k-q)  \\
        & = \sum_{j = p-q}^{\infty} \P(n^*_t > j) \underset{p \rightarrow \infty}{\longrightarrow} 0,
    \end{align*}
    because $\E(n^*_t) < \infty.$
    Coming back to \eqref{qindq}, we thus have that for any $\ep >0$, there exists $p \geq 0$ such that any $d \geq p$ satisfies 
    \begin{equation}\label{supq}
     \sup_{s \in [0,t]} \left \| q_s \ind_{\{q\}} - \sum_{k =0}^d q_s(k,\{ q \}) \ind_{\{ k \}} \right \|_{\infty} < \ep. 
     \end{equation}
    Since $\ck Q_t$ is a continuous linear operator on $\MM_b(E \times \N)$
    \begin{align}\label{wsseries}
        w_s & = \ck Q_{t-s} ( \ind_E \times q_s \ind_{\{q\}}) \nonumber \\
        & = \ck Q_{t-s} \left ( \ind_E \times \lim \limits_{p \rightarrow \infty} \sum_{k =0}^p q_s(k,\{ q \}) \ind_{\{ k \}}  \right ) \nonumber \\
        & = \lim \limits_{p \rightarrow \infty} \ck Q_{t-s} \left ( \ind_E \times \sum_{k =0}^p q_s(k,\{ q \}) \ind_{\{ k \}} \right ) \nonumber \\
        & = \lim \limits_{p \rightarrow \infty} \sum_{k =0}^p q_s(k,\{ q \}) \ck Q_{t-s} \left (  \ind_E \times  \ind_{\{ k \}}  \right ) \nonumber \\
        & = \sum_{k=0}^{\infty}  q_s(k,\{ q \}) \ck Q_{t-s} \left (  \ind_E \times  \ind_{\{ k \}}   \right ).
    \end{align}
    Let $\phi_k(s) = q_s(k,\{ q \}) \ck Q_{t-s} \left (  \ind_E \times  \ind_{\{ k \}}   \right ). $  From the Kolmogorov backward equation \eqref{kbebdm}, we deduce that $\partial q_s(k,\{ q \})/\partial s =- \a_k q_s(k,\{ q \}) + \b_{k} q_s(k+1,\{ q \}) + \d_k q_s(k-1,\{ q \})$. Using in addition Corollary~\ref{Marg1diff}, we deduce that 
 $\phi_k$ is differentiable  and 
     \begin{align*}
        \phi'_k(s) = & \left [ - \a_k q_s(k,\{ q \}) + \b_{k} q_s(k+1,\{ q \}) + \d_k q_s(k-1,\{ q \})  \right ] \ck Q_{t-s}\left (  \ind_E  \times  \ind_{\{ k \}} \right ) \\
        & + q_s(k,\{ q \}) \left [  \a_k \ck Q_{t-s}\left (  \ind_E  \times \ind_{\{ k \}} \right ) - \b_{k-1} \ck Q_{t-s}\left (  \ind_E  \times \ind_{\{ k-1 \}} \right ) - \d_{k+1} \ck Q_{t-s}\left (  \ind_E  \times \ind_{\{ k +1 \}} \right )  \right ].
    \end{align*}
    Since $$  \sup_{s \in [0,t]} \left \| \ck Q_{t-s}\left (  \ind_E  \times  \ind_{\{ k \}} \right ) \right \|_{\infty} \leq 1  $$
    and
    $$  \sup_{s \in [0,t]} \left \|  \a_k \ck Q_{t-s}\left ( \ind_E  \times  \ind_{\{ k \}} \right ) - \b_{k-1} \ck Q_{t-s}\left (   \ind_E  \times \ind_{\{ k-1 \}} \right ) - \d_{k+1} \ck Q_{t-s}\left (   \ind_E  \times \ind_{\{ k +1 \}} \right ) \right \|_{\infty} \leq 3 \a^* ,$$
    we can show similarly as for \eqref{supq} that
    $$ \sup_{s \in [0,t]} \left \| \sum_{k \geq p} \phi'_k(s) \right \|_{\infty} \underset{p \rightarrow \infty}{\longrightarrow} 0.   $$
   Since by \eqref{wsseries} $w_s=\sum_{k\geq 0}  \phi_k(s)$, we deduce that $w_s$ is differentiable on $[0,t]$ and 
    \begin{align*}
        \dfrac{\partial }{\partial s} w_s  = &\sum_{k =0}^{\infty}   \phi'_k(s) \\
       = &\sum_{k=0}^{\infty}  \left [ \b_k q_s(k+1,\{ q \}) \ck Q_{t-s}\left (   \ind_E  \times\ind_{\{ k \}} \right ) - \b_{k-1} q_s(k,\{ q \}) \ck Q_{t-s}\left (  \ind_E  \times \ind_{\{ k-1 \}} \right ) \right ] \\
        &\hspace{1cm} + \sum_{k=0}^{\infty}  \left [ \d_k q_s(k-1,\{ q \}) \ck Q_{t-s}\left (  \ind_E  \times \ind_{\{ k \}} \right ) - \d_{k+1} q_s(k,\{ q \}) \ck Q_{t-s}\left (   \ind_E  \times\ind_{\{ k +1 \}} \right )  \right ],
    \end{align*}
    where $\b_{-1}=\d_0=0$.  The first of these two telescoping series vanishes because $\b_{-1}=0$ and
    $$  \left \|  \b_k q_s(k+1,\{ q \}) \ck Q_{t-s}\left (   \ind_E  \times \ind_{\{ k \}} \right ) \right \|_{\infty}  \leq \a^* q_s(k+1,\{ q \}) \leq \a^* \, \P(n_t^* > k+1-q) \to 0.$$
   The second series vanishes by similar arguments  and we have $\partial w_s/\partial s \equiv0$.

\subsection{Proof of Lemma \ref{marg2C0}}   
       Let $h > 0$, then 
    \begin{align}\label{accrois psi}
    \psi_f(t+h)-\psi_f(t)  & = \E_{(x,n)}\left ( f(X'_{t+h}) \ind_\N(\eta'_{t+h}) \right ) - \E_{(x,n)}\left ( f(X'_{t}) \ind_\N (\eta'_t) \right ) \nonumber \\
        & = \E_{(x,n)} \left [ \E_{(x,n)} \left ( f(X'_{t+h}) \ind_\N(\eta'_{t+h}) \left | \ck \FF_t \right. \right ) \right] - \E_{(x,n)}\left ( f(X'_{t}) \ind_\N (\eta'_t) \right ) \nonumber \\
        & = \E_{(x,n)} \left [ \ck Q_h(f \times \ind_\N)(X'_t,\eta'_t) - f(X'_t)\ind_\N(\eta'_t) \right ].
    \end{align}
    For any $y \in E$ and $k \in \N$ one has 
    \begin{align*}
        \left | \ck Q_h(f \times \ind_\N)(y,k) - f(y)\ind_\N(k) \right | & = \left | \E_{(y,k)} \left ( f(X'_h) \right ) - f(y) \right | \\
        & = \left | \E_{(y,k)} \left ( f(X'_h) (\ind_{\ck T_1 \leq h} + \ind_{\ck T_1 > h} ) \right ) - f(y) \E_{(y,k)} \left (\ind_{\ck T_1 \leq h} + \ind_{\ck T_1 > h}  \right ) \right | \\
        &  \leq 2 \|f\|_{\infty} \P_{(y,k)}(\ck T_1 \leq h) + \left | \E_{(y,k)} \left (  (f(Y^{'(0)}_h)-f(y))  (1-\ind_{\ck T_1 \leq h}) \right ) \right | \\
        & \leq 8 \a^* \|f\|_{\infty} h + \left | \E_{y}^Y \left [ f(Y_h)-f(y)\right ] \right | \\
        & \leq 8 \a^* \|f\|_{\infty} h  +   \left \| Q_h^Yf-f \right \|_{\infty},
    \end{align*}
  where we have used \eqref{QtYcheck} in the second last step. 
     Coming back to \eqref{accrois psi}, we deduce that
    \begin{align*}
        \left | \psi_f(t+h)-\psi_f(t) \right |  \leq 8 \a^* \|f\|_{\infty} h  +   \left \| Q_h^Yf-f \right \|_{\infty} .
    \end{align*}
    As a Feller process, $(Y_t)_{t \geq 0}$ is strongly continuous at $0$, whereby $\psi_f(t+h)\to \psi_f(t)$ as $h \searrow 0$. 
    
  On the other hand, for $h \in [0,t]$,  
    \begin{align*}
        \left | \psi_f(t)-\psi_f(t-h) \right | & = \left | \E_{(x,n)}\left ( f(X'_{t}) \ind_\N(\eta'_{t}) \right ) - \E_{(x,n)}\left ( f(X'_{t-h}) \ind_\N (\eta'_{t-h}) \right ) \right | \\
        & = \left | \E_{(x,n)} \left [ \E_{(x,n)} \left ( f(X'_{t}) \ind_\N(\eta'_{t}) \left | \ck \FF_{t-h} \right. \right ) \right] - \E_{(x,n)}\left ( f(X'_{t-h}) \ind_\N (\eta'_{t-h}) \right ) \right | \\
        & = \left | \E_{(x,n)} \left [ \ck Q_h(f \times \ind_\N)(X'_{t-h},\eta'_{t-h}) - f(X'_{t-h})\ind_\N(\eta'_{t-h}) \right ] \right | \\
        & \leq  8 \a^* \|f\|_{\infty} h  +   \left \| Q_h^Yf-f \right \|_{\infty},
    \end{align*}
    and we conclude similarly that $\psi_f(t-h)\to \psi_f(t)$ as $h \searrow 0$.

\subsection{Proof of Lemma \ref{suptendsvers0}}

Let $h>0$. For any $t \geq 0$
    \begin{align*}
     \left | \dfrac{ \psi_f(t+h)-\psi_f(t) }{h} -   \psi_{\AA f}(t) \right |  & = \left | \ck Q_t \left ( \dfrac{\ck Q_h(f \times \ind_\N)(x,n) - f(x) \ind_\N(n)}{h} - \AA f(x) \times \ind_\N(n)  \right ) \right |  \\
         & \leq  \underset{(y,k)\in \ck E}{\sup} \left | \dfrac{\ck Q_h(f \times \ind_\N)(y,k) - f(y) \ind_\N(k)}{h} - \AA f(y) \times \ind_\N(k) \right |.
    \end{align*}
    The proof thus consists in showing that
        \[
    \underset{(y,k)\in \ck E}{\sup} \left | \dfrac{\ck Q_h(f \times \ind_\N)(y,k) - f(y) \ind_\N(k)}{h} - \AA f(y) \times \ind_\N(k) \right | \underset{h \searrow 0}{\longrightarrow} 0.
    \]
For any $h> 0$, $y \in E$ and $k \geq 0$ 
    \begin{align*}
         \dfrac{1}{h} \Big ( \ck Q_h(f \times \ind_\N)(y,k)& - f(y) \ind_\N(k) \Big ) \\
         & = \E_{(y,k)} \left [ \dfrac{f(X'_h) -f(y)}{h} \right ] \\
         & = \dfrac{1}{h} \left ( \E_{(y,k)} \left [ f(X'_h)  \ind_{\ck T_1 >h}  \right ] + \E_{(y,k)} \left [ f(X'_h)   \ind_{\ck N_h=1} \right ] + \E_{(y,k)} \left [ f(X'_h)   \ind_{\ck T_2 <h}   \right ] - f(y) \right ).
    \end{align*}
    But
    \begin{align*}
        \frac{1}{h} & \left (  \E_{(y,k)} \left [ f(X'_h)  \ind_{\ck T_1 >h} \right ] -f(y) \right ) \\
        &=  \frac{1}{h} \left ( \E_{(y,k)} \left [ f(Y_h^{'(0)})  \ind_{\ck T_1 >h} \right ] -f(y) \right )\\
        & = \dfrac{1}{h} \left ( \E_{(y,k)} \left [ f(Y^{'(0)}_h)  \P_{(y,k)} \left ( \ck T_1 > h \left | \ck Y^{(0)}\right. \right ) \right ] - f(y) \right ) \\
        & = \dfrac{1}{h}  \left ( \E_{(y,k)} \left [ f(Y^{'(0)}_h)  \e^{- \int_0^h \ck \a( Y_u^{'(0)},k) \, \dd u} \right ] - f(y) \right ) \\
        & =  \E_{y}^Y \left [ \dfrac{f(Y_h) -f(y)}{h} \right ] + \dfrac{1}{h} \E_{(y,k)}^{\ck Y} \left [ f(Y^{'}_h) \left ( \e^{- \int_0^h \ck \a( Y_u^{'},k) \, \dd u} -1 + \int_0^h \ck \a( Y_u^{'},k) \, \dd u \right ) \right ]  \\
        &\hspace{0.3cm} - \dfrac{1}{h} \E_{(y,k)}^{\ck Y} \left [ \int_0^h  f(Y^{'}_h) \ck \a( Y_u^{'},k) \, \dd u \right ].
    \end{align*}
Using this result and the expression of    $\AA f(y)$ from Theorem~\ref{theogenerator}, we can write 
      \begin{align*}
         \dfrac{1}{h}  \Big ( \ck Q_h(f \times \ind_\N)(y,k)& - f(y) \ind_\N(k) \Big)  - \AA f(y) \\
         &= \dfrac{1}{h}  \left ( \ck Q_h(f \times \ind_\N)(y,k) - f(y) \ind_\N(k) \right ) - \AA^Y f(y) + \a(y) f(y) - \a(y) Kf(y) \\
         &= A_1(h) + A_2(h) + A_3(h) + A_4(h) + A_5(h),
    \end{align*}
where
 \begin{align*}
 A_1(h) &=\E^Y_y \left [ \dfrac{f(Y_h) -f(y)}{h} \right ] - \AA^Y f(y), \\
  A_2(h) &= \ck \a(y,k) f(y) -  \dfrac{1}{h} \E_{(y,k)}^{\ck Y} \left [ \int_0^h  f(Y^{'}_h) \ck \a( Y_u^{'},k) \, \dd u \right ], \\
    A_3(h) &=\dfrac{1}{h} \E_{(y,k)}^{\ck Y} \left [ f(Y^{'}_h) \left ( \e^{- \int_0^h \ck \a( Y_u^{'},k) \, \dd u} -1 + \int_0^h \ck \a( Y_u^{'},k) \, \dd u \right ) \right ], \\
     A_4(h) &=\dfrac{1}{h} \E_{(y,k)} \left [ f(X'_h) \ind_{\ck N_h=1}  \right] - \a(y) Kf(y) + (\a(y)-\ck \a(y,k))f(y), \\
     A_5(h) &=\dfrac{1}{h} \E_{(y,k)} \left [ f(X'_h) \ind_{\ck T_2<h}  \right].
  \end{align*}

The end of the proof consists in proving that each of these five terms tends  uniformly to 0 as $h \searrow 0$.

 For the first one, note that
    \begin{align*}
      \E^Y_y \left [ \dfrac{f(Y_h) -f(y)}{h} \right ] = \dfrac{Q_h^Y f(y)-f(y)}{h} ,
    \end{align*}
    and since $f \in \DD_\AA^Y$, by the definition of $\AA^Y$, $\sup_{(y,k) \in \ck E} \left | A_1(h)  \right |$ tends to 0 as $h\to 0$.
    
To show that ${\sup}_{(y,k) \in \ck E} | A_2(h)| \underset{h \searrow 0}{\longrightarrow} 0
$, we consider two cases whether $y \notin E_k$ or $y \in E_k$. 
     First suppose that $y \notin E_k$. Then  $\ck \a (y,k) = \a (y) + \a_k$ and
    \begin{align*}
     A_2(h) &= \dfrac{1}{h} \int_0^h \E_{(y,k)}^{\ck Y} \left [  \ck \a(y,k) f(y)  -   f(Y^{'}_h) \ck \a( Y_u^{'},k) \right ]  \, \dd u \\
        & = \a_k \E_{y}^Y \left [   f(y)  -   f(Y_h)  \right ]  +  \dfrac{1}{h} \int_0^h \E_{y}^Y \left [   \a(y) f(y)  -   f(Y_h)  \a( Y_u) \right ]  \, \dd u 
        \end{align*}
    where the switch from $  \E_{(y,k)}^{\ck Y}$ to $\E_{y}^Y$ is a consequence of \eqref{QtYcheck}, specifically the bivariate generalization  of it. Therefore,

        \begin{align*}       
        & A_2(h)  = \a_k \left ( f(y) - Q_h^Yf(y) \right ) + \dfrac{1}{h} \int_0^h \E_y^Y \left [ \E_y^Y \left [ \a(y) f(y)  -   f(Y_h) \a( Y_u) \left | Y_u \right. \right ] \right ]  \, \dd u \\
        & = \a_k \left ( f(y) - Q_h^Yf(y) \right ) + \dfrac{1}{h} \int_0^h \E_y^Y \left [   \a(y) f(y)  -   Q^Y_{h-u} f(Y_u)  \a( Y_u) \right ]  \, \dd u \\
        & = \a_k \left ( f(y) - Q_h^Yf(y) \right ) + \dfrac{1}{h} \int_0^h \E_y^Y \left [   \a(y) f(y)  -    f(Y_u) \a( Y_u) \right ]  \, \dd u  \\ 
        &\hspace{5cm}+ \dfrac{1}{h} \int_0^h \E_y^Y \left [    f(Y_u) \a( Y_u)-    Q^Y_{h-u} f(Y_u)  \a( Y_u) \right ]  \, \dd u \\
        & =  \a_k \left ( f(y) - Q_h^Yf(y) \right ) + \dfrac{1}{h} \int_0^h  \left ( f \times \a  -   Q_u^Y (f \times \a) \right  )(y)  \, \dd u + \dfrac{1}{h} \int_0^h \E_y^Y \left [   \a( Y_u) \left ( f -  Q^Y_{h-u} f \right )(Y_u) \right ]  \, \dd u \\
        & =  \a_k \left ( f(y) - Q_h^Yf(y) \right ) +  \int_0^1  \left ( f \times \a  -   Q_{hv}^Y (f \times \a) \right  )(y)  \, \dd v +  \int_0^1 \E_y^Y \left [   \a( Y_{hv}) \left ( f -  Q^Y_{h(1-v)} f \right )(Y_{hv}) \right ]  \, \dd v.
    \end{align*}
    So when $y \notin E_k$,
    \begin{align}\label{A2final}
        \left | A_2(h) \right | 
         \leq \a^* \|Q_h^Y f-f\|_{\infty} + \int_0^1 \|Q_{hv}^Y (f \times \a) -  f \times \a\|_{\infty} \, \dd v + \a^* \int_0^1 \|  Q^Y_{h(1-v)} f - f \|_{\infty}  \, \dd v ,
    \end{align}
    that does not depend on $(y,k) \in \ck E$ and converges to zero when $h$ goes to $0$ by the dominated convergence theorem because $f \in \DD_{\AA}^Y \subset C_0(E)$. 
    When $y \in E_k$, $\ck \a(y,k) = \b_k + \d(y)$ and we obtain with the same computations the same inequality \eqref{A2final}, leading to the same convergence. So ${\sup}_{(y,k) \in \ck E} | A_2(h)| \underset{h \searrow 0}{\longrightarrow} 0$.

   Regarding $A_3(h)$, its uniform convergence towards 0 is easily obtained from 
       \begin{align*}
        \left | A_3(h) \right | & \leq \dfrac{\|f\|_{\infty}}{2h} \E_{(y,k)}^{\ck Y}  \left [ \left ( \int_0^h \ck \a( Y_u^{'},k) \, \dd u \right )^2 \right ] 
         \leq  \dfrac{\|f\|_{\infty}  \, (2 \a^* h)^2}{2h} = 2 h \|f\|_{\infty} ( \a^*)^2.
    \end{align*}
    
   Let us now prove that ${\sup}_{(y,k) \in \ck E} | A_4(h)| \underset{h \searrow 0}{\longrightarrow} 0$.
    We compute 
    \begin{align*}
         \dfrac{1}{h} & \E_{(y,k)} \left [ f(X'_h) \ind_{ \ck N_t=1}   \right ] =  \dfrac{1}{h} \E_{(y,k)} \left [ f(X'_h) \ind_{ \ck \t_1 \leq h} \ind_{\ck \t_2 > h- \ck \t_1}  \right ] \\
        & =  \dfrac{1}{h} \E_{(y,k)} \left [  \E_{(y,k)} \left [ f(X'_h) \ind_{ \ck \t_1 \leq h} \ind_{\ck \t_2 > h- \ck \t_1}  \left | \ck \FF_{\ck \t_1}, \ck Y^{(1)} \right. \right ]   \right ] \\
         & =  \dfrac{1}{h} \E_{(y,k)} \left [ f(Y^{'(1)}_{h-\ck \t_1}) \ind_{ \ck \t_1 \leq h}  \P_{(y,k)} \left ( \ck \t_2 > h- \ck \t_1 \left | \ck \FF_{\ck \t_1}, \ck Y^{(1)} \right. \right )   \right ] \\
         & =  \dfrac{1}{h} \E_{(y,k)} \left [  f(Y^{'(1)}_{h-\ck \t_1})   \ind_{ \ck \t_1 \leq h}  \e^{ - \int_0^{h-\ck \t_1} \ck \a \left ( \ck Y_u^{(1)} \right ) \, \dd u } \right ] \\
         & =  \dfrac{1}{h} \E_{(y,k)} \left [ \ind_{ \ck \t_1 \leq h}  \E_{(y,k)} \left [     f(Y^{'(1)}_{h-\ck \t_1})  \e^{ - \int_0^{h-\ck \t_1} \ck \a \left ( \ck Y_u^{(1)} \right ) \, \dd u } \left | \ck \FF_{\ck \t_1} \right. \right ] \right ] \\
         & =  \dfrac{1}{h}  \E_{(y,k)} \left [ \ind_{ \ck \t_1 \leq h}   \E^{\ck Y}_{\ck C_{\ck \t_1}} \left [ f(Y'_{h-\ck \t_1})     \e^{ - \int_0^{h-\ck \t_1} \ck \a \left ( \ck Y_u \right ) \, \dd u }  \right ] \right ] \\
         & = \dfrac{1}{h}  \E_{(y,k)} \left [ \ind_{ \ck \t_1 \leq h}   \E^{\ck Y}_{\ck C_{\ck \t_1}} \left [ f(Y'_{h-\ck \t_1})     \right ] \right ] + \dfrac{1}{h}  \E_{(y,k)} \left [ \ind_{ \ck \t_1 \leq h}   \E^{\ck Y}_{\ck C_{\ck \t_1}} \left [ f(Y'_{h-\ck \t_1})  \left (   \e^{ - \int_0^{h-\ck \t_1} \ck \a \left ( \ck Y_u \right ) \, \dd u } -1 \right ) \right ] \right ] .
    \end{align*}
    The second term converges uniformly to $0$ because its norm is bounded by    
    $$ \E_{(y,k)}  \left [ \dfrac{\ind_{\ck \t_1 \leq h} \, \| f \|_{\infty}}{h}  \E^{\ck Y}_{\ck C_{\ck \t_1}} \left | \int_0^{h-\ck \t_1} \ck \a(\ck Y_u) \, \dd u \right | \right ] 
          \leq \frac{2 h \a^* \| f \|_{\infty}}{h} \P_{(y,k)}(\ck \t_1 \leq h) \leq 4 h  (\a^*)^2 \| f \|_{\infty}.$$
  Let us prove that the first term converges uniformly to $\a(y)Kf(y) -  (\a(y)-\ck \a(y,k))f(y)$, proving that ${\sup}_{(y,k) \in \ck E} | A_4(h)| \underset{h \searrow 0}{\longrightarrow} 0$. We have
    \begin{align*}
        \dfrac{1}{h}  \E_{(y,k)} \left [ \ind_{ \ck \t_1 \leq h}   \E^{\ck Y}_{\ck C_{\ck \t_1}} \left [ f(Y'_{h-\ck \t_1})     \right ] \right ] & = \dfrac{1}{h} \E_{(y,k)}  \left [  \ind_{ \ck \t_1 \leq h}  \E_{(y,k)}  \left [ \E^{\ck Y}_{\ck C_{\ck \t_1}} \left [  f(Y'_{h-\ck \t_1}) \right ] \left | \ck Y^{(0)}, \ck \t_1 \right. \right ] \right ] \\
         & = \dfrac{1}{h} \E_{(y,k)} \left [ \ind_{ \ck \t_1 \leq h}  \int_{z_1 \in E} \sum_{q \geq 0}  \E^{ \ck Y}_{(z_1,q)} \left [  f(Y'_{h-\ck \t_1}) \right ]      \ck K \left ( (Y_{\ck \t_1}^{'(0)},k) ; \dd  z_1 \times \{ q \} \right ) \right ] .
    \end{align*}
We separate as before the cases  $y \notin E_k$ and $y\in E_k$. If $y \notin E_k$, we obtain 
    \begin{align}\label{A4temp}
         \dfrac{1}{h}  \E_{(y,k)} \left [ \ind_{ \ck \t_1 \leq h}   \E^{\ck Y}_{\ck C_{\ck \t_1}} \left [ f(Y'_{h-\ck \t_1})     \right ] \right ] & =  \dfrac{1}{h} \E_{(y,k)} \left [ \ind_{ \ck \t_1 \leq h} \dfrac{\a \left ( Y_{\ck \t_1}^{'(0)} \right )}{\ck \a \left ( Y_{\ck \t_1}^{'(0)},k \right )} \int_E \E^{ \ck Y}_{(z_1,k)} \left [  f(Y'_{h-\ck \t_1}) \right ] K(Y_{\ck \t_1}^{'(0)}, \dd z_1)  \right ]  \nonumber\\
         & \hspace{0.5cm}+  \dfrac{1}{h} \E_{(y,k)} \left [ \ind_{ \ck \t_1 \leq h}  \dfrac{\b_k}{\ck \a \left ( Y_{\ck \t_1}^{'(0)},k \right )} \E^{\ck Y}_{ (Y_{\ck \t_1}^{'(0)},k+1)} \left [  f(Y'_{h-\ck \t_1}) \right ]  \right ]   \nonumber \\
         & \hspace{0.5cm} +  \dfrac{1}{h} \E_{(y,k)} \left [ \ind_{ \ck \t_1 \leq h} \dfrac{\d_k}{\ck \a \left ( Y_{\ck \t_1}^{'(0)},k \right ) } \E^{\ck Y}_{ (Y_{\ck \t_1}^{'(0)},k-1)} \left [  f(Y'_{h-\ck \t_1}) \right ] \right ] .
    \end{align}
   Let us show that the first term in \eqref{A4temp} converges to $\a(y) Kf(y)$:
    \begin{align*}
          \dfrac{1}{h} \E_{(y,k)} &\left [ \ind_{ \ck \t_1 \leq h} \dfrac{\a \left ( Y_{\ck \t_1}^{'(0)} \right )}{\ck \a \left ( Y_{\ck \t_1}^{'(0)},k \right )} \int_E \E^{\ck Y}_{(z_1,k)} \left [ f(Y'_{h-\ck \t_1})     \right ] K(Y_{\ck \t_1}^{'(0)}, \dd z_1)  \right ] - \a(y) Kf(y) \\
         & = \dfrac{1}{h} \E_{(y,k)} \left [ \ind_{ \ck \t_1 \leq h} \dfrac{\a \left ( Y_{\ck \t_1}^{'(0)} \right )}{\ck \a \left ( Y_{\ck \t_1}^{'(0)},k \right )} \int_E Q^{\ck Y}_{h - \ck \t_1}(f\times \1_\N)(z_1,k) K(Y_{\ck \t_1}^{'(0)}, \dd z_1)  \right ]  - \a(y) Kf(y) \\ 
         & = \dfrac{1}{h} \E_{(y,k)} \left [ \E_{(y,k)} \left [ \ind_{ \ck \t_1 \leq h} \dfrac{\a \left ( Y_{\ck \t_1}^{'(0)} \right )}{\ck \a \left ( Y_{\ck \t_1}^{'(0)},k \right )} \int_E Q^{\ck Y}_{h - \ck \t_1}(f\times \1_\N)(z_1,k) K(Y_{\ck \t_1}^{'(0)}, \dd z_1) \left | \ck Y^{(0)} \right. \right ]  \right ]  - \a(y) Kf(y) \\
        & = \dfrac{1}{h}\E^{\ck Y}_{(y,k)} \left [ \int_0^h \a \left ( Y_{s}^{'} \right ) \int_E  Q^Y_{h - s}f(z_1) K(Y_{s}^{'}, \dd z_1) \e^{- \int_0^s \ck \a(Y_u^{'},k) \, \dd u} \dd s \right ]  - \a(y) Kf(y) \\
        & = \E^{\ck Y}_{(y,k)} \left [ \int_0^1 \a \left ( Y'_{hv} \right ) \int_E  Q^Y_{h(1 - v)}f(z_1) K(Y'_{hv}, \dd z_1) \e^{- \int_0^{hv} \ck \a(Y'_u,k) \, \dd u} \dd v \right ]  - \a(y) Kf(y) \\
        & = \E^{\ck Y}_{(y,k)} \left [ \int_0^1 \a \left ( Y'_{hv} \right ) \int_E  Q^Y_{h(1 - v)}f(z_1) K(Y'_{hv}, \dd z_1) \dd v \right ]  - \a(y) Kf(y) \\
        & \hspace{0.5cm} + \E^{\ck Y}_{(y,k)}  \left [ \int_0^1 \a \left ( Y'_{hv} \right ) \int_E  Q^Y_{h(1 - v)}f(z_1) K(Y'_{hv}, \dd z_1) \left ( \e^{- \int_0^{hv} \ck \a(Y'_u,k) \, \dd u} -1 \right ) \dd v \right ] 
    \end{align*}
     But for the last term,
    \begin{align*}
        \Big |  \E^{\ck Y}_{(y,k)}  \Big [ \int_0^1 \a \left ( Y'_{hv} \right ) \int_E  Q^Y_{h(1 - v)}f(z_1) &K(Y'_{hv}, \dd z_1) \left ( \e^{- \int_0^{hv} \ck \a(Y'_u,k) \, \dd u} -1 \right ) \dd v \Big ]  \Big | \\
        & \leq \a^* \| f \|_{\infty}  \E^{\ck Y}_{(y,k)}  \left [ \int_0^1 \int_0^{hv} \ck \a(Y'_u,k) \, \dd u \, \dd v \right ] 
         \leq 2 h ( \a^*)^2  \| f \|_{\infty}
    \end{align*}
    and from (the bivariate version of) \eqref{QtYcheck}, we can replace $ \E_{(y,k)}^{\ck Y}$ by $\E_{y}^Y$  in the other  term to get 
    \begin{align*}
        & \left | \E_y^Y  \left [ \int_0^1 \a( Y_{hv} ) \int_E  Q^Y_{h(1 - v)}f(z_1) K(Y_{hv}, \dd z_1) \dd v \right ]  -  \a(y) Kf(y) \right | \\
         & \leq  \int_0^1 \left |  \E_y^Y \left [ \a ( Y_{hv}  ) K  Q^Y_{h(1 - v)}f(Y_{hv}) - \a(Y_{hv}) Kf(Y_{hv}) \right ] \right | \, \dd v 
          +  \int_0^1 \left | \E_y^Y \left [ \a(Y_{hv}) Kf(Y_{hv}) - \a(y) Kf(y) \right ] \right |  \, \dd v  \\
         & \leq \a^* \int_0^1 \| KQ_{h(1-v)}^Yf - Kf \|_{\infty} \, \dd v + \int_0^1 \left | Q_{hv}^Y(\a \times Kf) (y) - (\a \times Kf)(y) \right | \, \dd v \\
         & \leq \a^* \int_0^1 \| Q_{h(1-v)}^Yf - f \|_{\infty} \, \dd v + \int_0^1 \| Q_{hv}^Y(\a \times Kf)  - (\a \times Kf) \|_{\infty} \, \dd v,
    \end{align*}
    converges to $0$ when $h$ goes to $0$ by the dominated convergence theorem, using the fact that $f \in \DD_{\AA}^Y \subset C_0(E)$ and $KC_0(E) \subset C_0(E)$. This proves the convergence to $\a(y) Kf(y)$ of the first term in \eqref{A4temp}. 
    Concerning the second  and third terms  in \eqref{A4temp}, their sum converges to  $(\beta_k+\delta_k)f(y)=(\ck \a(y,k) -  \a(y))f(y)$. Indeed for any $q$, 
        \begin{align*}
        & \dfrac{1}{h} \left | \E_{(y,k)} \left [ \ind_{ \ck \t_1 \leq h} \dfrac{1}{\ck \a \left ( Y_{\ck \t_1}^{'(0)},k \right ) } \E^{ \ck Y}_{ (Y_{\ck \t_1}^{'(0)},q)} \left [  f(Y'_{h-\ck \t_1}) \right ] \right ] - f(y) \right | \\
        & = \dfrac{1}{h} \left | \int_0^h \E^{\ck Y}_{(y,k)} \left [\E^{ Y}_{ Y_{s}^{'}} \left [  f(Y_{h-s}) \right ] \e^{- \int_0^s \ck \a(Y_u^{'},k) \, \dd u} \right ] \, \dd s - f(y) \right | \\
        & =  \left | \int_0^1  \E^{\ck Y}_{(y,k)} \left [\E^{ Y}_{ Y_{hv}^{'}} \left [  f(Y_{h(1-v)}) \right ] \e^{- \int_0^{hv} \ck \a(Y_u^{'},k) \, \dd u} \right ] \, \dd v - f(y) \right | \\
        & \leq \left | \int_0^1  \E^{\ck Y}_{(y,k)}\left [\E^{ Y}_{ Y_{hv}^{'}} \left [  f(Y_{h(1-v)}) \right ] \left ( \e^{- \int_0^{hv} \ck \a(Y_u^{'},k) \, \dd u}-1 \right ) \right ] \, \dd v \right | + \left | \int_0^1 \E_y^Y \left [\E^{ Y}_{ Y_{hv}} \left [  f(Y_{h(1-v)}) \right ]  \right ] \, \dd v - f(y) \right | \\
        & \leq \int_0^1 \|f\|_{\infty}   \E^{\ck Y}_{(y,k)} \left [ \left | \int_0^{hv} \ck \a(Y_u^{'},k) \, \dd u \right | \right] \, \dd v + \int_0^1 \left |  \E_y^Y \left [ Q_{h(1-v)}^Y f(Y_{hv}) \right ]  - f(y) \right | \, \dd v\\
        & \leq  \a^* \|f\|_{\infty}  h + \int_0^1 \left | Q_h^Yf(y)-f(y) \right | \, \dd v \\
        & \leq \a^* \|f\|_{\infty}  h + \|Q_h^Yf-f\|_{\infty},
    \end{align*}
that converges to $0$ when $h$ goes to $0$ by  the dominated convergence theorem because $f \in \DD_{\AA}^Y \subset C_0(E)$. This completes the proof of the claimed convergence of $A_4(h)$ when $y\notin E_k$. 

Suppose now that $y \in E_k.$ The development made in \eqref{A4temp} becomes in this case
    \begin{align}\label{A4temp2}
         \dfrac{1}{h}  \E_{(y,k)} \Big [ \ind_{ \ck \t_1 \leq h}  & \E^{\ck Y}_{\ck C_{\ck \t_1}} \left [ f(Y_{h-\ck \t_1})     \right ] \Big ]  \nonumber\\
         =&  \dfrac{1}{h}  \E_{(y,k)} \left [ \ind_{ \ck \t_1 \leq h} \dfrac{\b(Y_{\ck \t_1}^{'(0)})}{\ck \a(Y_{\ck \t_1}^{'(0)},k)}   \int_{E }   \E^{ \ck Y}_{(z_1,k+1)} \left [ f(Y'_{h-\ck \t_1})     \right ]  K_\b \left (  Y_{\ck \t_1}^{'(0)} , \dd z_1 \right ) \right ]   \nonumber \\
       &  +  \dfrac{1}{h} \E_{(y,k)} \left [ \ind_{ \ck \t_1 \leq h}   \dfrac{\b_k-\b(Y_{\ck \t_1}^{'(0)})}{\ck \a(Y_{\ck \t_1}^{'(0)},k)} \E^{\ck Y}_{ (Y_{\ck \t_1}^{'(0)},k+1)} \left [ f(Y'_{h-\ck \t_1})     \right ]  \right ]   \nonumber \\
       &  +  \dfrac{1}{h} \E_{(y,k)} \left [ \ind_{ \ck \t_1 \leq h} \dfrac{\d_k}{\ck \a(Y_{\ck \t_1}^{'(0)},k)}   \int_{E }    \E^{\ck Y}_{ (Y_{\ck \t_1}^{'(0)},k-1)}  \left [ f(Y'_{h-\ck \t_1})     \right ]  K_\d \left (  Y_{\ck \t_1}^{'(0)} , \dd z_1 \right ) \right ]   \nonumber\\
        &+   \dfrac{1}{h} \E_{(y,k)} \left [ \ind_{ \ck \t_1 \leq h} \dfrac{\d(Y_{\ck \t_1}^{'(0)})-\d_k}{\ck \a(Y_{\ck \t_1}^{'(0)},k)}    \int_{E }   \E^{\ck Y}_{ (Y_{\ck \t_1}^{'(0)},k)} \left [ f(Y'_{h-\ck \t_1})     \right ]  K_\d \left (  Y_{\ck \t_1}^{'(0)} , \dd z_1 \right ) \right ].
    \end{align}
    The first, third and fourth terms above can be treated exactly as the first term in \eqref{A4temp} to prove that they converge uniformly towards $\beta(y)K_\beta f(y)$, $\delta_k K_\delta f(y)$ and $(\delta(y)-\delta_k)K_\delta f(y)$, respectively, the sum of which is $\alpha(y)K f(y)$. 
  For the second term, we compute
    \begin{align*}
         \Bigg | \dfrac{1}{h} \E_{(y,k)} \Bigg [ \ind_{ \ck \t_1 \leq h}&\dfrac{\b(Y_{\ck \t_1}^{'(0)})}{\ck \a(Y_{\ck \t_1}^{'(0)},k)}  \E^{\ck Y}_{ (Y_{\ck \t_1}^{'(0)},k+1)} \left [ f(Y'_{h-\ck \t_1})     \Bigg ]  \Bigg ] - \b(y)f(y) \right | \\
        & =\left | \dfrac{1}{h} \E_{(y,k)} \left [ \E_{(y,k)} \left [ \ind_{ \ck \t_1 \leq h}   \dfrac{\b(Y_{\ck \t_1}^{'(0)})}{\ck \a(Y_{\ck \t_1}^{'(0)},k)} \E^{\ck Y}_{ (Y_{\ck \t_1}^{'(0)},k+1)} \left [ f(Y'_{h-\ck \t_1})     \right ] \left | \ck Y^{(0)} \right. \right ] \right ] - \b(y)f(y) \right | \\
        & = \left | \dfrac{1}{h} \int_0^h  \E^{\ck Y}_{(y,k)} \left [ \b(Y_s^{'})  \E^{ Y}_{Y_{s}^{'}} \left [ f(Y_{h-s}) \right ] \e^{- \int_0^s \ck \a(Y^{'}_u,k+1) \, \dd u} \right ] \, \dd s - \b(y)f(y)  \right | \\
        & =  \left |  \int_0^1  \E^{\ck Y}_{(y,k)}  \left [ \b(Y_{hv}^{'})  \E^{ Y}_{Y_{hv}^{'}} \left [ f(Y_{h(1-v)}) \right ] \e^{- \int_0^{hv} \ck \a(Y^{'}_u,k+1) \, \dd u} \right ] \, \dd v - \b(y)f(y)  \right | \\
        & \leq \left |  \int_0^1  \E^{\ck Y}_{(y,k)}  \left [ \b(Y_{hv}^{'})  \E^{ Y}_{Y_{hv}^{'}} \left [ f(Y_{h(1-v)}) \right ] \left ( \e^{- \int_0^{hv} \ck \a(Y^{'}_u,k+1) \, \dd u} -1 \right ) \right ] \, \dd v  \right | \\
        & \hspace{1cm}+  \left |  \int_0^1  \E_y^Y \left [ \b(Y_{hv})  \E^{ Y}_{Y_{hv}} \left [ f(Y_{h(1-v)}) \right ] \right ] \, \dd v - \b(y)f(y)  \right |  \\
        & \leq \a^* \| f \|_{\infty}  \int_0^1 \int_0^{hv}   \E^{Y}_{y} \left[\ck \a(Y_u,k+1)\right] \, \dd u \, \dd v +    \left |  \int_0^1  \E_y^Y \left [ \b(Y_{hv})  Q_{h(1-v)}^Y f(Y_{hv})   \right ] \, \dd v - \b(y)f(y)  \right |  \\
        &  \leq  (\a^*)^2 \| f \|_{\infty} h + \left |  \int_0^1  \E_y^Y \left [ \b(Y_{hv})  Q_{h(1-v)}^Y f(Y_{hv}) - \b(Y_{hv})f(Y_{hv})   \right ] \, \dd v  \right | \\
        & \hspace{1cm}+  \left |  \int_0^1  \E_y^Y \left [ \b(Y_{hv})   f(Y_{hv})  \right ] \, \dd v - \b(y)f(y)  \right | \\
        & \leq   (\a^*)^2 \| f \|_{\infty} h + \a^* \int_0^1 \| Q^Y_{h(1-v)} f - f \|_{\infty} \, \dd v + \int_0^1 \| Q^Y_{hv}(\b \times f) - (\beta \times f) \|_{\infty} \, \dd v,
    \end{align*}
    that converges to $0$ when $h$ goes to $0$ by the dominated convergence theorem because $f \in \DD_{\AA}^Y \subset C_0(E)$. Given this result and the convergence already proven for the second term in \eqref{A4temp}, we deduce that the second term  in \eqref{A4temp2} converges uniformly in $(y,k)$ to $(\b_k - \b(y)) f(y) = (\ck \a(y,k)-\a(y)) f(y).$ 
    
    The study for $y \notin E_k$ and $y \in E_k$ yields to  the same convergence results, so in conclusion
        $$ \sup_{(y,k) \in E \times \N} \left |  \E_{(y,k)} \left ( f(X'_h) \ind_{ \ck N_t=1}   \right ) +(\a(y)-\ck \a(y,k))f(y) -\a(y)Kf(y) \right | \underset{h \searrow 0}{\longrightarrow} 0, $$
     that is ${\sup}_{(y,k) \in \ck E} | A_4(h)| \underset{h \searrow 0}{\longrightarrow} 0$.
    
    To finish the proof, it remains to handle $A_5(h)$ using \eqref{dominationpoisson} where $\ck N_h^* \sim \PP(2\a^*h)$ 
    \begin{align*}
        \left |A_5(h) \right |& \leq \frac{\|f\|_{\infty}}{h} \P_{(y,k)} \left ( \ck N_h \geq 2 \right ) \\
        & \leq \frac{\|f\|_{\infty}}{h} \P \left ( \ck N_h^* \geq 2 \right ) \\
        & \leq \dfrac{\|f\|_{\infty}}{h} \left ( 1 - \e^{-2\a^*h}- 2\a^*h \e^{-2\a^*h} \right ) \\
        & =  \dfrac{\|f\|_{\infty}}{h} \left ( \dfrac{(2\a^* h)^2}{2} +  \underset{h \searrow 0}{o}(h^2) \right ) \\
        & = 2 \|f\|_{\infty} \, (\a^*)^2 \, h + \underset{h \searrow 0}{o}(h),
    \end{align*}
that converges uniformly to $0$ when $h$ goes to $0$.

\subsection{Proof of Corollary \ref{coro46}}
 Let 
    \begin{align*}
    G(t)=& \psi_f(t)  -  f(x) - \int_0^t    \psi_{\AA f}(s)  \, \dd s.
    \end{align*}
    This function is continuous, right-differentiable on $\R_+$ from Lemmas~\ref{marg2C0} and \ref{suptendsvers0}, and $\partial_+ G(t)/\partial t=0$. So $G$ is constant. But $G(0)=0$ because $s \geq 0 \mapsto  \psi_{\AA f}(s)$ is bounded. As a consequence, we obtain \eqref{diffpsi}. 
  Moreover $\AA f \in C_0(E)$ so by Lemma \ref{marg2C0} the function $s \geq 0 \mapsto \psi_{\AA f}(s)$ is continuous, and by  \eqref{diffpsi} we deduce that $\psi_f$ is differentiable.

\section{Proof of Theorem \ref{existunicitémesinvariante}}\label{proof 4.1}

We recall and complete notations introduced  in Appendix~\ref{sec6.7} regarding the coupling between  $X$ and  $\eta$. The coupled process is $\ck C=(X',\eta')$, where from Theorem~\ref{Marginals}, $X'$ and $\eta'$ have the same distributions as $X$ and $\eta$. 
  We denote by $T_j$ and $t_j$ the jump times of $X$ and $\eta$. Similarly we denote by $T_j'$ and $t_j'$ the jump times of $X'$ and $\eta'$. To prove Theorem~\ref{existunicitémesinvariante}, we start with the following lemma where $s_0:=\inf \{ t \geq t_1, \; \eta_t=0 \}$ is the time of the first return of $\eta$ in the state $0$ and $S_ \textnormal{\O}:=\inf \{ t \geq T_1, \; X_t= \textnormal{\O} \}$ is the time of the first return of $(X_t)_{t \geq 0}$ in the state \O.

\begin{lem} \label{prop54}
 Suppose that $0$ is an ergodic state for the simple process $\eta$, that is $\E_0(s_0)< \infty$. Then $\lim \limits_{t \rightarrow \infty} Q_t( \textnormal{\O}, A)$ exists for all $A \in \EE.$ 
     Suppose moreover that for all $n \geq 0$, $\E_n(s_0)< \infty$. Then, $\lim \limits_{t \rightarrow \infty} Q_t( x, A)$ exists for all $x \in E$, $A \in \EE$, and is independent of $x$.
\end{lem}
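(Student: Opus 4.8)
The plan is to use the empty configuration $\textnormal{\O}$ as a regeneration point: each time $(X_t)_{t\geq 0}$ returns to $\textnormal{\O}$ it restarts afresh by the strong Markov property, which is available here because $(X_t)_{t\geq 0}$ is Feller under the present hypotheses (Theorem~\ref{conditionsfeller}). The successive returns to $\textnormal{\O}$ then form a renewal process, and the existence of $\lim_{t\to\infty}Q_t(\textnormal{\O},A)$ will follow from the key renewal theorem of \cite{feller1971}, once we know that $\textnormal{\O}$ is positive recurrent and that the return-time law is non-lattice. Positive recurrence is exactly where the coupling of Appendix~\ref{sec6.7} enters.

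First I would prove $\E_{\textnormal{\O}}(S_{\textnormal{\O}})<\infty$. Running the coupled process $\ck C=(X',\eta')$ from $(\textnormal{\O},0)$, Theorem~\ref{Marginals} gives $X'\ed X$ and $\eta'\ed\eta$ (items~1--2) together with $n(X'_s)\leq\eta'_s$ for all $s$ almost surely (item~3). Hence $\{s:X'_s\neq\textnormal{\O}\}\subseteq\{s:\eta'_s\neq 0\}$, so every excursion of $X'$ off $\textnormal{\O}$ lies within an excursion of $\eta'$ off $0$; together with the finite-mean sojourn of $X$ at $\textnormal{\O}$ (the waiting time there has rate $\beta(\textnormal{\O})$), this yields $\E_{\textnormal{\O}}(S_{\textnormal{\O}})<\infty$. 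Writing $F$ for the law of $S_{\textnormal{\O}}$ under $\P_{\textnormal{\O}}$ and $g(t):=\P_{\textnormal{\O}}(X_t\in A,\,S_{\textnormal{\O}}>t)$, conditioning on $S_{\textnormal{\O}}$ and using the strong Markov property at $S_{\textnormal{\O}}$ produces the renewal equation
\begin{equation*}
Q_t(\textnormal{\O},A)=g(t)+\int_0^t Q_{t-s}(\textnormal{\O},A)\,dF(s),
\end{equation*}
whose solution is $g\ast U$ with $U=\sum_{k\geq 0}F^{\ast k}$ the renewal measure. Because the inter-jump waiting times are absolutely continuous (Section~\ref{jump-move general}), $F$ has an absolutely continuous component and is therefore non-lattice, while $g\leq\P_{\textnormal{\O}}(S_{\textnormal{\O}}>t)$ is dominated by a non-increasing integrable function and hence directly Riemann integrable. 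The key renewal theorem then gives $\lim_{t\to\infty}Q_t(\textnormal{\O},A)=\mu(A)$ with $\mu(A)=\E_{\textnormal{\O}}(S_{\textnormal{\O}})^{-1}\int_0^\infty g(s)\,ds$, which is the first assertion.

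For the second assertion, fix $x\in E_n$ and run the coupling from $(x,n)$; since $n(x)=n$, Theorem~\ref{Marginals} again gives $n(X'_s)\leq\eta'_s$, so the first hitting time of $\textnormal{\O}$ by $X$ started at $x$ is dominated by the first hitting time of $0$ by $\eta$ started at $n$, whence $\E_x(S_{\textnormal{\O}})\leq\E_n(s_0)<\infty$ and in particular $S_{\textnormal{\O}}<\infty$ $\P_x$-a.s. Conditioning on the first hitting of $\textnormal{\O}$ and using the strong Markov property as before,
\begin{equation*}
Q_t(x,A)=\P_x(X_t\in A,\,S_{\textnormal{\O}}>t)+\int_0^t Q_{t-s}(\textnormal{\O},A)\,dF_x(s),
\end{equation*}
where $F_x$ is the (proper) law of $S_{\textnormal{\O}}$ under $\P_x$. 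Letting $t\to\infty$, the first term vanishes since $\P_x(S_{\textnormal{\O}}>t)\to 0$, and the integral converges to $\mu(A)$ by dominated convergence, using $Q_{t-s}(\textnormal{\O},A)\to\mu(A)$ from the first assertion and $F_x(\infty)=1$. Thus $\lim_{t\to\infty}Q_t(x,A)=\mu(A)$ for every $x$, independent of $x$.

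The main obstacle is the rigorous justification of the regeneration structure and of the hypotheses of the key renewal theorem. The strong Markov property at the hitting time $S_{\textnormal{\O}}$, which validates both renewal equations, rests on the Feller property of $(X_t)_{t\geq 0}$ inherited from the assumptions of Theorem~\ref{Marginals}; the non-lattice character and direct Riemann integrability are routine but must be checked, and follow from the absolute continuity of the waiting times and the finiteness of $\E_{\textnormal{\O}}(S_{\textnormal{\O}})$. The one genuinely delicate input, namely that $X$ reaches $\textnormal{\O}$ no later than the dominating process $\eta$ reaches $0$ (and the associated excursion containment used for the return from $\textnormal{\O}$ itself), is supplied entirely by the third point of Theorem~\ref{Marginals}.
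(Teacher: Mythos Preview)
Your proposal is correct and follows essentially the same route as the paper: use the coupling of Theorem~\ref{Marginals} to transfer positive recurrence from $\eta$ to $X$, derive the renewal equation for $Q_t(\textnormal{\O},A)$ by conditioning on $S_{\textnormal{\O}}$ and the strong Markov property, apply the key renewal theorem (non-lattice because the waiting times have densities, direct Riemann integrability via domination by $t\mapsto\P_{\textnormal{\O}}(S_{\textnormal{\O}}>t)$), and finish for general $x$ by splitting at the first hit of $\textnormal{\O}$ and dominated convergence.

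The only noticeable difference is in how you obtain $\E_{\textnormal{\O}}(S_{\textnormal{\O}})<\infty$. The paper argues the pointwise tail inequality $\P_{\textnormal{\O}}(S_{\textnormal{\O}}>t)\le\P_0(s_0>t)$ directly from $n(X'_t)\le\eta'_t$, which immediately yields $\E_{\textnormal{\O}}(S_{\textnormal{\O}})\le\E_0(s_0)$. Your ``excursion containment plus finite sojourn'' formulation reaches the same conclusion but is a bit roundabout: once you note that $\b_0=\b(\textnormal{\O})$ (since $E_0=\{\textnormal{\O}\}$), the coupled processes leave $(\textnormal{\O},0)$ simultaneously, so the first excursion of $X'$ sits inside the \emph{first} excursion of $\eta'$ and the sojourn bookkeeping is unnecessary; the direct inequality $\ck S_{\textnormal{\O}}\le\ck s_0$ then follows. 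This is a cosmetic difference, not a gap.
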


\begin{proof} 
Let $\ck s_0:=\inf \{ t \geq t_1', \; \ck C_t \in E\times \{0\} \}$. Using the first statement of Theorem~\ref{Marginals}, we can prove that $\P_{(\textnormal{\O},0)}(\ck s_0>t)=\P_0(s_0>t)$. Similarly, by the second statement of this theorem, 
$\P_{(\textnormal{\O},0)}(\ck S_ \textnormal{\O}>t)=\P_{\textnormal{\O}}(S_{\textnormal{\O}}>t)$ where 
$\ck S_ \textnormal{\O}:=\inf \{ t \geq T_1', \; \ck C_t \in  \{\textnormal{\O}\}\times \N \}$. We thus have
\[\P_{\textnormal{\O}}(S_{\textnormal{\O}}>t) = \P_{(\textnormal{\O},0)}(\ck S_ \textnormal{\O}>t) \leq \P_{(\textnormal{\O},0)}(\ck s_0>t)=\P_0(s_0>t),\]
where the inequality comes from the third statement of Theorem~\ref{Marginals}.

By the assumptions of Lemma~\ref{prop54}, this implies that $S_{\textnormal{\O}}< \infty \; \P_{\textnormal{\O}}-a.s.$ and that
\begin{align*}
    \E_{\textnormal{\O}}(S_{\textnormal{\O}})  = \int_0^{\infty} \P_{\textnormal{\O}}(S_{\textnormal{\O}}>t) \, \dd t 
     \leq \int_0^{\infty} \P_0(s_0>t) \, \dd t < \infty,
\end{align*}
proving that  \O \ is an ergodic state for the process $(X_t)_{t \geq 0}$. Note moreover that  $S_{\textnormal{\O}}$ has a density with respect to the Lebesgue measure, that we denote by $\mu_{\textnormal{\O}}.$  This comes from the fact that $\tau_j$ has a density for any $j$, so does $T_j$, whereby given a Lebesgue null set  $I \in \BB (\R)$, $\P_\textnormal{\O} ( S_\textnormal{\O} \in I) \leq \sum_{j=1}^{\infty} \P_\textnormal{\O} (T_j \in I) = 0$. 

We have the following equation 
\begin{align*}
    Q_t(\textnormal{\O},A) & = \P_{\textnormal{\O}}(X_t \in A, S_\textnormal{\O} > t) + \int_0^t \P_{\textnormal{\O}}(X_t \in A, S_\textnormal{\O} \in \, \dd s) \\
    & = \P_{\textnormal{\O}}(X_t \in A, S_\textnormal{\O} > t) + \int_0^t \P_{\textnormal{\O}}(X_t \in A | S_\textnormal{\O} = s) \mu_\textnormal{\O}(s) \, \, \dd s \\
    & = \P_{\textnormal{\O}}(X_t \in A, S_\textnormal{\O} > t) + \int_0^t Q_{t-s}(\textnormal{\O},A) \mu_\textnormal{\O}(s) \, \, \dd s.
    \end{align*}
This is a renewal equation and we may apply the renewal theorem given in \cite[Chapter XI]{feller1971}. To this end, denote by $\ZZ(t)=Q_t(\textnormal{\O},A)$, $\xi(t)=\P_{\textnormal{\O}}(X_t \in A, S_\textnormal{\O} > t)$ and $F \{ I \}=\P_{\textnormal{\O}}(S_\textnormal{\O} \in I).$  Remark that $\ZZ$ is bounded, $\xi$ is non-negative, bounded by $1$ and directly Riemann integrable on $\R_+$  because it is dominated by the monotone integrable function $t \mapsto  \P_{\textnormal{\O}}( S_\textnormal{\O} > t) $. 
Moreover, $0 <\E_{\textnormal{\O}}(S_\textnormal{\O} ) < \infty $ and 
since $S_\textnormal{\O}$ has a density, $F$ is not arithmetic. Then, by the renewal theorem, we obtain:
\begin{equation}\label{conv Qt}
Q_t(\textnormal{\O},A) = \ZZ(t) \underset{t \rightarrow \infty}{\longrightarrow}  \dfrac{1}{\E_{\textnormal{\O}}(S_\textnormal{\O})}  \int_0^{\infty} \xi(u) \, \, \dd u = \dfrac{1}{\E_{\textnormal{\O}}(S_\textnormal{\O})} \int_0^{\infty} \P_{\textnormal{\O}}(X_u \in A, S_\textnormal{\O} > u) \, \, \dd u 
\end{equation}
which proves the first statement of Lemma~\ref{prop54}.

Let now turn to the second part of Lemma~\ref{prop54}. 
Let $x \in E_n$. By the arguments as in the beginning of the proof, we get that $S_\textnormal{\O} < \infty, \; \; \P_{x}-a.s.$ and that $\E_{x}(S_\textnormal{\O}) \leq \E_{n}(s_0) < \infty. $ We have 
\begin{align*}
    Q_t(x,A) & = \P_x(X_t \in A) \\
    & = \P_{x}(X_t \in A, S_\textnormal{\O} > t) + \int_0^t \P_{x}(X_t \in A | S_\textnormal{\O} = s) \mu_\textnormal{\O}(s) \, \, \dd s \\
    & = \P_{x}(X_t \in A, S_\textnormal{\O} > t) + \int_0^t \P_{\textnormal{\O}}(X_{t-s} \in A) \mu_\textnormal{\O}(s) \, \, \dd s \\ 
    & = \P_{x}(X_t \in A, S_\textnormal{\O} > t) + \int_0^t Q_{t-s}(\textnormal{\O},A) \mu_\textnormal{\O}(s) \, \, \dd s.  
\end{align*}
The first term tends to $0$ as $t\to\infty$ because it is dominated by $\P_{x}( S_\textnormal{\O} > t)$ and we know that   $\P_x(S_\textnormal{\O} < \infty)=1.$
For the second term, for all $s \geq 0$, we have by \eqref{conv Qt}
 $$Q_{t-s}(\textnormal{\O},A) \ind_{[0,t]}(s) \underset{t \rightarrow \infty}{\longrightarrow} \dfrac{1}{\E_{\textnormal{\O}}(S_\textnormal{\O})} \int_0^{\infty} \P_{\textnormal{\O}}(X_u \in A, S_\textnormal{\O} > u) \, \, \dd u . $$
Moreover  $| Q_{t-s}(\textnormal{\O},A) \ind_{[0,t]}(s) \mu_\textnormal{\O}(s) | \leq  \mu_\textnormal{\O}(s)$ which is integrable. So by the dominated convergence theorem,
$$ Q_t(x,A) \underset{t \rightarrow \infty}{\longrightarrow} \dfrac{1}{\E_{\textnormal{\O}}(S_\textnormal{\O})} \int_0^{\infty} \P_{\textnormal{\O}}(X_u \in A, S_\textnormal{\O} > u) \, \, \dd u $$
which is independent of $x$.
 \end{proof}

We are now in position to prove Theorem~\ref{existunicitémesinvariante}. The conditions \eqref{eq30} or \eqref{eq31} of \cite{karlin1957} imply the assumptions made in Lemma~\ref{prop54}. We  then deduce that $\mu(A):=\lim_{t \rightarrow \infty} Q_t(x,A)$ exists for all $x \in E$ and $A \in \EE$, and is independent of $x$. It is a probability measure because for any $t \geq 0$ and $x \in E$, $Q_t(x,.)$ is a probability measure.

Let us prove that $\mu$ is an invariant measure. The previous convergence reads 
\begin{equation}\label{cv temp renewal}
\int_E f(y) Q_s(x,dy) \underset{s \rightarrow \infty}{\longrightarrow} \int_E f(y) \mu(dy).
\end{equation}
where $f=\ind_A$ with $A \in \EE$. It is not difficult to extend it to any step function and by limiting arguments to any 
$f \in \mathcal{M}_b^+(E)$. By the Markov property,  for all $t,s \geq $, $x \in E$ and $A \in \EE$,
    \begin{equation*}
        Q_{t+s}(x,A)=\int_E Q_t(y,A) \, Q_s(x,dy).
    \end{equation*}
    Letting $s$ tend to $\infty$, we obtain that the   left hand side converges to $\mu(A)$, while for the right hand side, we may apply \eqref{cv temp renewal}  to $f=Q_t(.,A) \in \mathcal{M}_b^+(E)$ to finally obtain
    $$\mu(A) =  \int_E Q_t(y,A) \mu(dy).$$
    
Finally, if $\nu$ is a probability measure on $E$, such that for any $A\in \EE$
    \begin{equation*}
        \nu(A)=\int_E Q_t(y,A) \nu(dy),
    \end{equation*}
    then as $Q_t(x,A) \leq 1$, taking $t \rightarrow  \infty$, we get by the dominated convergence theorem
    \begin{equation*}
        \nu(A)=\int_E \mu(A) \nu(dy)=\mu(A).
    \end{equation*}
    Hence $\mu$ is the unique invariant probability measure.

\bibliographystyle{chicago}
\bibliography{bibref}

\begin{thebibliography}{}

\bibitem[\protect\citeauthoryear{Athreya and Ney}{Athreya and
  Ney}{2012}]{athreya2012}
Athreya, K.~B. and P.~E. Ney (2012).
\newblock {\em Branching processes}, Volume 196.
\newblock Springer Science \& Business Media.

\bibitem[\protect\citeauthoryear{Bansaye and M{\'e}l{\'e}ard}{Bansaye and
  M{\'e}l{\'e}ard}{2015}]{bansaye2015}
Bansaye, V. and S.~M{\'e}l{\'e}ard (2015).
\newblock {\em Stochastic models for structured populations}, Volume~16.
\newblock Springer.

\bibitem[\protect\citeauthoryear{Bass}{Bass}{2011}]{Bass}
Bass, R.~F. (2011).
\newblock {\em Stochastic processes}, Volume~33.
\newblock Cambridge University Press.

\bibitem[\protect\citeauthoryear{Bourbaki}{Bourbaki}{1966}]{bourbaki1966general}
Bourbaki, N. (1966).
\newblock {\em General topology: elements of mathematics}.
\newblock Addison-Wesley.

\bibitem[\protect\citeauthoryear{{\c{C}}inlar and Kao}{{\c{C}}inlar and
  Kao}{1991}]{cinlar1991}
{\c{C}}inlar, E. and J.~S. Kao (1991).
\newblock Particle systems on flows.
\newblock {\em Applied Stochastic Models and Data Analysis\/}~{\em 7\/}(1),
  3--15.

\bibitem[\protect\citeauthoryear{Comas}{Comas}{2009}]{comas2009}
Comas, C. (2009).
\newblock Modelling forest regeneration strategies through the development of a
  spatio-temporal growth interaction model.
\newblock {\em Stochastic Environmental Research and Risk Assessment\/}~{\em
  23\/}(8), 1089--1102.

\bibitem[\protect\citeauthoryear{Davis}{Davis}{1984}]{davis1984}
Davis, M.~H. (1984).
\newblock Piecewise-deterministic markov processes: A general class of
  non-diffusion stochastic models.
\newblock {\em Journal of the Royal Statistical Society: Series B
  (Methodological)\/}~{\em 46\/}(3), 353--376.

\bibitem[\protect\citeauthoryear{Dynkin}{Dynkin}{1965}]{dynkin1965markov}
Dynkin, E.~B. (1965).
\newblock {\em Markov processes}.
\newblock Springer.

\bibitem[\protect\citeauthoryear{Fattler and Grothaus}{Fattler and
  Grothaus}{2007}]{fattler2007}
Fattler, T. and M.~Grothaus (2007).
\newblock Strong feller properties for distorted brownian motion with
  reflecting boundary condition and an application to continuous n-particle
  systems with singular interactions.
\newblock {\em Journal of Functional Analysis\/}~{\em 246\/}(2), 217--241.

\bibitem[\protect\citeauthoryear{Feller}{Feller}{1971}]{feller1971}
Feller, W. (1971).
\newblock {\em An introduction to probability theory and its applications, Vol
  2}.
\newblock John Wiley \& Sons, New York.

\bibitem[\protect\citeauthoryear{H{\"a}bel, Myllym{\"a}ki, and
  Pommerening}{H{\"a}bel et~al.}{2019}]{habel2019}
H{\"a}bel, H., M.~Myllym{\"a}ki, and A.~Pommerening (2019).
\newblock New insights on the behaviour of alternative types of
  individual-based tree models for natural forests.
\newblock {\em Ecological modelling\/}~{\em 406}, 23--32.

\bibitem[\protect\citeauthoryear{Ikeda, Nagasawa, and Watanabe}{Ikeda
  et~al.}{1968}]{ikeda1968}
Ikeda, N., M.~Nagasawa, and S.~Watanabe (1968).
\newblock Branching markov processes ii.
\newblock {\em Journal of Mathematics of Kyoto University\/}~{\em 8\/}(3),
  365--410.

\bibitem[\protect\citeauthoryear{Kallenberg}{Kallenberg}{2017}]{kallenberg}
Kallenberg, O. (2017).
\newblock {\em Random measures, theory and applications}.
\newblock Springer, Cham.

\bibitem[\protect\citeauthoryear{Karlin and McGregor}{Karlin and
  McGregor}{1957}]{karlin1957}
Karlin, S. and J.~McGregor (1957).
\newblock The classification of birth and death processes.
\newblock {\em Transactions of the American Mathematical Society\/}~{\em
  86\/}(2), 366--400.

\bibitem[\protect\citeauthoryear{Lavancier and Le~Guével}{Lavancier and
  Le~Guével}{2021}]{Lavancier_LeGuevel}
Lavancier, F. and R.~Le~Guével (2021).
\newblock Spatial birth–death–move processes: basic properties and
  estimation of their intensity functions.
\newblock {\em Journal of the Royal Statistical Society: Series B (Statistical
  Methodology)\/}~{\em 83\/}(4), 798--825.

\bibitem[\protect\citeauthoryear{L{\"o}cherbach}{L{\"o}cherbach}{2002}]{locherbach2002likelihood}
L{\"o}cherbach, E. (2002).
\newblock Likelihood ratio processes for markovian particle systems with
  killing and jumps.
\newblock {\em Statistical inference for stochastic processes\/}~{\em 5\/}(2),
  153--177.

\bibitem[\protect\citeauthoryear{Markley}{Markley}{2004}]{markley}
Markley, N.~G. (2004).
\newblock {\em Principles of differential equations}.
\newblock John Wiley \& Sons.

\bibitem[\protect\citeauthoryear{Masuda and Holme}{Masuda and
  Holme}{2017}]{masuda2017}
Masuda, N. and P.~Holme (2017).
\newblock {\em Temporal network epidemiology}.
\newblock Springer.

\bibitem[\protect\citeauthoryear{M{\o}ller}{M{\o}ller}{1989}]{moller1989}
M{\o}ller, J. (1989).
\newblock On the rate of convergence of spatial birth-and-death processes.
\newblock {\em Annals of the Institute of Statistical Mathematics\/}~{\em
  41\/}(3), 565--581.

\bibitem[\protect\citeauthoryear{M{\o}ller and Waagepetersen}{M{\o}ller and
  Waagepetersen}{2004}]{moeller:waagepetersen:03}
M{\o}ller, J. and R.~P. Waagepetersen (2004).
\newblock {\em Statistical inference and simulation for spatial point
  processes}.
\newblock Chapman and Hall/CRC, Boca Raton.

\bibitem[\protect\citeauthoryear{{\O}ksendal}{{\O}ksendal}{2013}]{oksendal}
{\O}ksendal, B. (2013).
\newblock {\em Stochastic differential equations: an introduction with
  applications}.
\newblock Springer Science \& Business Media.

\bibitem[\protect\citeauthoryear{Pommerening and Grabarnik}{Pommerening and
  Grabarnik}{2019}]{pommerening2019}
Pommerening, A. and P.~Grabarnik (2019).
\newblock {\em Individual-based methods in forest ecology and management}.
\newblock Springer.

\bibitem[\protect\citeauthoryear{Preston}{Preston}{1975}]{preston}
Preston, C. (1975).
\newblock Spatial birth and death processes.
\newblock {\em Advances in applied probability\/}~{\em 7\/}(3), 371--391.

\bibitem[\protect\citeauthoryear{Renshaw, Comas, and Mateu}{Renshaw
  et~al.}{2009}]{renshaw2009}
Renshaw, E., C.~Comas, and J.~Mateu (2009).
\newblock Analysis of forest thinning strategies through the development of
  space--time growth--interaction simulation models.
\newblock {\em Stochastic Environmental Research and Risk Assessment\/}~{\em
  23\/}(3), 275--288.

\bibitem[\protect\citeauthoryear{Renshaw and S{\"a}rkk{\"a}}{Renshaw and
  S{\"a}rkk{\"a}}{2001}]{renshaw2001}
Renshaw, E. and A.~S{\"a}rkk{\"a} (2001).
\newblock Gibbs point processes for studying the development of
  spatial-temporal stochastic processes.
\newblock {\em Computational statistics \& data analysis\/}~{\em 36\/}(1),
  85--105.

\bibitem[\protect\citeauthoryear{Revuz and Yor}{Revuz and
  Yor}{1991}]{RevuzYor1991}
Revuz, D. and M.~Yor (1991).
\newblock {\em Continuous martingales and Brownian motion}.
\newblock Springer-Verlag, Berlin.

\bibitem[\protect\citeauthoryear{Schilling and Partzsch}{Schilling and
  Partzsch}{2012}]{schilling2012}
Schilling, R.~L. and L.~Partzsch (2012).
\newblock {\em Brownian motion : an introduction to stochastic processes}.
\newblock De Gruyter.

\bibitem[\protect\citeauthoryear{Schuhmacher and Xia}{Schuhmacher and
  Xia}{2008}]{schuhmacher2008}
Schuhmacher, D. and A.~Xia (2008).
\newblock A new metric between distributions of point processes.
\newblock {\em Advances in applied probability\/}~{\em 40\/}(3), 651--672.

\bibitem[\protect\citeauthoryear{Skorokhod}{Skorokhod}{1964}]{skorokhod1964}
Skorokhod, A.~V. (1964).
\newblock Branching diffusion processes.
\newblock {\em Theory of Probability \& Its Applications\/}~{\em 9\/}(3),
  445--449.

\end{thebibliography}

\end{document}